\def\M{{\mathcal{M}}}
\def\oC{\overline{\mathcal{C}}}
\def\oM{\overline{\mathcal{M}}}
\def\cM{{\mathcal{M}}}
\def\Om{{\Omega}}
\def\oOm{\overline{\Omega}}
\def\cH{{\mathcal{H}}}
\def\cO{\mathcal{O}}
\def\cA{\mathcal{A}}
\def\cB{\mathcal{B}}
\def\cS{\mathcal{S}}
\def\cF{\mathcal{F}}
\def\cE{\mathcal{E}}
\def\frakR{\mathfrak{R}}
\def\DR{{\rm DR}}
\def\log{{\rm log}}
\def\LG{\mathrm{LG}}
\def\Aut{\mathrm{Aut}}
\def\sh{{\rm sinh}}
\def\ch{{\rm cosh}}
\def\sp{{\rm spin}}
\def\odd{{\rm odd}}
\def\even{{\rm even}}
\def\Ind{{\rm Ind}}
\def\mod{{\rm mod}}
\def\LCM{{\rm lcm }}
\def\oGamma{{\overline{\Gamma}}}
\def\RR{\mathbb{R}}
\def\CC{\mathbb{C}}
\def\ZZ{\mathbb{Z}}
\def\QQ{\mathbb{Q}}
\def\PP{\mathbb{P}}
\theoremstyle{definition}
\newtheorem{definition}{Definition}[section]
\newtheorem{remark}[definition]{Remark}
\newtheorem{claim}[definition]{Claim}
\newtheorem{assumption}[definition]{Assumption}
\theoremstyle{plain}
\newtheorem{conjecture}[definition]{Conjecture}
\newtheorem{question}[definition]{Question}
\newtheorem{theorem}[definition]{Theorem}
\newtheorem{proposition}[definition]{Proposition}
\newtheorem{lemma}[definition]{Lemma}
\newtheorem{corollary}[definition]{Corollary}
\newcommand{\rcomment}[1]{\textcolor{black}{ #1}}
\newcommand{\rcommenttwo}[1]{\textcolor{black}{ #1}}
\newcommand{\rcommentthree}[1]{\textcolor{black}{ #1}}
\newcommand{\rcommentfour}[1]{\textcolor{black}{ #1}}
\newcommand{\rcommentfive}[1]{\textcolor{black}{ #1}}
\title[Integrals of $\psi$-classes on twisted double ramification cycles]{Integrals of $\psi$-classes on twisted double ramification cycles and spaces of differentials}
\date{\today}
\author{Matteo Costantini}
\address{Universität Duisburg-Essen
Fakultät für Mathematik
45117 Essen, , Germany}
\email{matteo.costantini@uni-due.de}
\author{Adrien Sauvaget}
\address{CNRS, Laboratoire AGM, Université de Cergy-Pontoise, 2 av. Adolphe Chauvin, 95000 Cergy-Pontoise, France}
\email{adrien.sauvaget@math.cnrs.fr}
\author{Johannes Schmitt}
\address{Department Mathematik,
R\"amistrasse 101,
CH-8092 Z\"urich,
Switzerland}
\email{johannes.schmitt@math.uzh.ch}
\begin{document}

\maketitle

\begin{abstract}
We prove a closed formula for the integral of a power of a single $\psi$-class on strata of $k$-differentials. In general, these integrals correspond to intersection numbers on twisted double ramification cycles.  

Then, we conjecture an expression of a refinement of double ramification cycles according to the parity of spin structures. Assuming that this conjecture is valid, we also compute the integral of a single $\psi$-class on the even and odd components of strata of $k$-differentials.

As an application of these results we give a closed formula for the Euler characteristic of minimal strata of abelian differentials. 
\end{abstract}

\setcounter{tocdepth}{1}
\tableofcontents

\section{Introduction}
The Hodge bundle over the moduli space of curves is the moduli space of abelian differentials, i.e.  Riemann surfaces endowed with a holomorphic 1-form. The complement of the zero section admits a stratification according to the orders of  the zeros of the one-form. These strata may be   studied from different perspectives. 

% The strata of differentials are endowed with a canonical action of $\rm{SL}_2(\RR)$ obtained by representing holomorphic 1-forms   as flat surfaces, i.e. as polygons in the Euclidean plane. By considering the action of the 1-parameter diagonal subgroup, we obtain a structure of dynamical system which is often referred to as Teichm\"uller dynamics (see \cite{EsMi},
% \cite{EsMiMo}, \cite{Filip}, \cite{EFW}, \cite{EMMW} just to mention a small portion of the works in this direction). 
These strata appear naturally in many areas of mathematics, such as the study of flat surfaces, Teichm\"uller dynamics and associated counting problems (see \cite{EsMi},
\cite{EsMiMo}, \cite{Filip}, \cite{EFW}, \cite{EMMW} to only cite a few of many works in these directions).

%\jcomment{I think it's good to give some context, but the previous paragraph went in too much detail considering that we don't need any of that stuff later. Maybe you can insert some appropriate references? Also feel free to substitute with other words (you know more about this than me); for the rhythm of the sentence it would be nice to have three things.}

Since these spaces can be described as algebraic objects, we may use intersection theory to understand their numerical invariants and dynamics. For instance, using a smooth compactification of the moduli spaces of abelian differentials (\cite{BCGGM3}, \cite{CMZarea}), it was possible to apply intersection theory to compute the Masur-Veech volumes and the sum of the Lyapunov exponents of the strata (\cite{Sau:volumeminimal}, \cite{chmosaza}), as well as their Euler characteristic (\cite{CMZ20}). 
%\jcomment{Please double-check the last sentence for grammar. Also, are we lying here since the strata we described above have unordered zeros?}
More generally one may consider the moduli spaces of holomorphic $k$-forms, and once again numerical invariants of these spaces may be computed by intersection theory (see~\cite{CMSvol},~\cite{Zoretal} and~\cite{Sau3}).

Another interesting connection to enumerative geometry is the relation between strata of $k$-differentials and  twisted double ramification cycles (see \cite{FP, Sch18, HS19, BHPSS20}). Classical double ramification cycles were introduced to study Gromov-Witten invariants via degeneration techniques (\cite{JLi}). For instance, the Gromov-Witten theory of curves may be reduced to integrals of $\psi$-classes on double ramification cycles (\cite{OkoPan},~\cite{BSSZ}).

The aim of the present work is to generalize the results of~\cite{BSSZ} and compute integrals of the top power of a single $\psi$-class on strata of $k$-differentials and on  twisted double ramification cycles.  The results we present allow both to simplify the computations appearing in \cite{Sau3} about volumes of strata of flat surfaces with rational singularities and to give an explicit formula for the orbifold Euler characteristic of minimal strata of abelian differentials based on results from \cite{CMZ20}. This last application allows us to formulate a conjecture about the asymptotic growth of the orbifold Euler characteristic of such strata for large values of the genus. 

Moreover, if the zeros of a $1$-form are even, one may associate a canonical spin structure to it (i.e. a square root of the cotangent bundle of the curve). This spin structure defines an invariant called the parity of the spin structure. Below, we formulate a conjecture on a spin analogue of double ramification cycles. We show that this conjecture allows to compute 
intersection numbers of powers of $\psi$-classes and the orbifold Euler characteristics of strata of differentials weighted by a sign depending on the parity of the spin structure.  

We will now provide a more in depth summary of the results of this work.

\subsection{Strata of $k$-differentials}

Let $g$ and $n$ be non-negative integers satisfying $2g-2+n>0$. We denote by $\M_{g,n}$ and $\oM_{g,n}$ the moduli spaces of genus $g$, smooth and stable curves with $n$ distinct markings, respectively. 

Let $k$ be an integer, and let $a=(a_1,\ldots,a_n)\in \ZZ^n$ be a vector satisfying 
$$|a| \coloneqq \sum_{i=1}^n a_i=k(2g-2+n).$$ 
We denote\footnote{Note that we do not include $k$ in the above notation as it is determined from $a$ and $g$.} by $\M_g(a)\subset \M_{g,n}$ the locus of marked curves $(C,x_1,\ldots,x_n)$ such that 
\begin{equation} \label{eqn:omegalogcondition}
\omega_\log^{\otimes k}\simeq \cO_C\left(a_1x_1+\ldots+a_nx_n\right).
\end{equation}
Since $\M_g(-a)=\M_g(a)$, we can restrict our study to the case $k \geq 0$ from now on. For $k>0$ we call $\M_g(a)$ the {\em stratum of $k$-differentials of type $a$}, while if $k=0$ we call it the {\em Hurwitz scheme of type $a$}. 
To justify these names, observe that for \rcomment{$C$ smooth and} $k>0$,  the equality \eqref{eqn:omegalogcondition} of line bundles is equivalent to the existence of a meromorphic $k$-differential on $C$ with zeros and poles at the points $x_i$ of orders $a_i-k$, and this $k$-differential is unique up to scaling. On the other hand, for $k=0$ condition \eqref{eqn:omegalogcondition} is equivalent to the existence of a Hurwitz cover $C \to \mathbb{P}^1$ with the points $x_i$ forming the preimages of $0,\infty$ and local ramification orders $a_i$ (over $0$) and $-a_i$ (over $\infty$).
%\jcomment{Added sentence to explain notation; ok like this?}
\rcomment{By \cite{Sch18}, t}he space $\M_g(a)$ is a smooth subspace of $\M_{g,n}$ of dimension
$$
\left\{
\begin{array}{cl}
3g-3+n, & \text{if $a=0$, $k=0$}\\
(2g-2+n), & \text{if $a\in \ZZ_{>0}^n,$ and $k=1$}\\
\text{mixed dimension}, & \text{if $a\in k\ZZ_{>0}^n,$ and $k>1$}\\
(2g-3+n), & \text{otherwise}
\end{array}
 \right.
$$ 
 If $k>0,$ and $a \in k\ZZ_{>0}$, then the space $\M_{g}(a/k)$ of dimension $2g-2+n$ sits naturally inside $\M_{g}(a)$ as the open and closed subspace of $k$-differentials obtained as the $k$th power of a $1$-differential. Meanwhile, all other components of $\M_{g}(a)$ are of dimension $2g-3+n$, thus explaining the mixed dimension in the third line.  We denote by $\oM_{g}(a)$ the Zariski closure of $\M_g(a)$ in $\oM_{g,n}$.
 
 \subsection{Double ramification cycles} 
 There exists a family of cycles, the {\em twisted}\footnote{Here, the word \emph{twisted} refers to the fact that the cycles compactify the condition $\omega_{\log}^{\otimes k} \cong \mathcal{O}_C(\sum_i a_i x_i)$ for arbitrary $k \in \mathbb{Z}$ instead of the more restrictive case $k=0$ appearing e.g. in the paper \cite{BSSZ}. In particular, we mention the word in the title of our paper to distinguish our results from theirs. That having been said, we will, in the interest of brevity, mostly omit the word \emph{twisted} in the following text.} {\em double ramification cycles}: 
$$\DR_g:\ZZ^n \to A^g(\oM_{g,n}),$$ satisfying the following properties:
\begin{itemize}
    \item we have $\DR_g(a)_{|\M_{g,n}} = [\M_g(a)] \in A^g(\M_{g,n},\QQ)$, if  $a\notin k\ZZ_{>0}$,
    \item the function $\DR_g(a)$ is a cycle-valued polynomial of degree $2g$ in the entries of the vector $a$.
\end{itemize}
We refer the reader to Section~\ref{Sec:ConjA} for a precise definition of these cycles. The purpose of the present paper is to compute the following polynomial defined by intersection theory:
\begin{eqnarray}
\cA_{g}: \ZZ^n &\to& \QQ, \label{eqn:Agdef}\\
 a &\mapsto& \int_{\oM_{g,n}} \DR_g(a)\cdot \psi_1^{2g-3+n}, \nonumber
\end{eqnarray}
where we recall that $\psi_i$ is the first Chern class of the cotangent line at the $i$-th marking for all $1\leq i\leq n$. 

\begin{theorem}\label{th:main}
For all $g, n \geq 0$ satisfying $2g-2+n>0$ and $a \in \mathbb{Z}^n$, we have
\begin{eqnarray}\label{eqn:main}
\cA_{g}(a)= [z^{2g}] \,\,\, {\rm exp}\left(\frac{a_1z\cdot \cS'(kz)}{\cS(kz)} \right) \frac{\prod_{i>1} \cS(a_iz)}{\cS(z)\cS(kz)^{2g-1+n}},
\end{eqnarray}
where $k=|a|/(2g-2+n)$,  $\cS(z)=\frac{{\rm sinh}(z/2)}{z/2},$ and $[z^{2g}](\cdot) $ stands for the coefficient of $z^{2g}$ in the formal series.
\end{theorem}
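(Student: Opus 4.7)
My plan is to adapt and extend the approach of \cite{BSSZ}, who proved the untwisted ($k=0$) analogue of Theorem~\ref{th:main}. Both sides of~\eqref{eqn:main} are polynomial in $a$ with $g$, $n$, $k$ fixed: the left side by Pixton's polynomiality of DR cycles, the right side because $\cS$ is an even power series with $\cS(0)=1$, so extracting $[z^{2g}]$ produces a polynomial in the $a_i$. Hence it suffices to prove the identity on a Zariski-dense subset of $\ZZ^n$. A convenient choice is the regime $a\in\ZZ_{>0}^n\setminus k\ZZ_{>0}^n$, where $\DR_g(a)$ restricts on the open part of $\oM_{g,n}$ to the fundamental class $[\M_g(a)]$, and the integral $\cA_g(a)$ can be recast as $\int_{\oM_g(a)}\psi_1^{2g-3+n}$ on the compactified stratum (up to well-controlled contributions from the boundary of $\oM_g(a)$ inside $\DR_g(a)$, which can be treated as further inductive terms).

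The core of the argument is a recursion obtained by peeling off one power of $\psi_1$ at a time. Using the incidence variety compactification of~\cite{BCGGM3}, one can compare $\psi_1$ on $\oM_g(a)$ with $\psi_1$ on $\oM_{g,n}$: the difference is supported on the boundary divisors where the marked point $x_1$ bubbles off together with a subset $I\subset\{2,\ldots,n\}$ of the other markings, carrying its own residue profile. Each such divisor contributes a product $\cA_{g_1}(a')\cdot\cA_{g_2}(a'')$ of integrals with strictly smaller $g+n$, weighted by an explicit combinatorial factor depending on the $a_i$, the orders of the new nodes, and the twist $k$. Iterating this reduction and recording the base case $g=0$ (where the integral can be computed by hand) determines $\cA_g(a)$ uniquely.

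It then remains to verify that the right-hand side of~\eqref{eqn:main} satisfies the same recursion. Each factor in the closed formula has a geometric role: the $\cS(a_iz)$ factors arise from the generating series for the combinatorics of the bubblings at the marking $x_i$; the denominator $\cS(kz)^{2g-1+n}$ reflects the $k$-twisting of $\omega_{\log}^{\otimes k}$ entering the normal bundles at the new nodes; and the exponential $\exp(a_1z\cS'(kz)/\cS(kz))$ encodes the distinguished role of $x_1$ as the point carrying the top power of $\psi$. The main obstacle is establishing the recursion with the correct combinatorial coefficients for general $k>0$: one must carefully track how the $k$-twisting interacts with the splitting formulas at the nodes and, in the mixed-dimension case $a\in k\ZZ_{>0}^n$, isolate the correct component of $\oM_g(a)$ whose contribution matches $\DR_g(a)\cdot\psi_1^{2g-3+n}$. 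Once the recursion is written explicitly, matching it with~\eqref{eqn:main} is a formal manipulation of power series in $z$, essentially of Lagrange-inversion type.
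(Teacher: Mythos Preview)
Your outline has the right high-level shape (establish a recursion for $\cA_g$, then check the closed formula obeys it), but several of the key steps are either stated incorrectly or are genuine gaps.

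\textbf{The splitting mechanism is not the one you describe.} You propose to ``compare $\psi_1$ on $\oM_g(a)$ with $\psi_1$ on $\oM_{g,n}$'', but $\psi_1$ on the stratum is simply the restriction of $\psi_1$ from $\oM_{g,n}$; there is no difference to peel off. What the paper actually uses is the relation on the multi-scale compactification $\PP\Xi_g(a)$ between $a_i\psi_i$ and the tautological class $\eta$ (Proposition~\ref{prop:Adrienrel}), and the decisive trick is to take the \emph{difference} $(a_s\psi_s-a_t\psi_t)$ for two markings $s\neq t$ so that $\eta$ cancels. This yields a splitting of $(a_s\psi_s-a_t\psi_t)\cdot[\oM_g(a)]$ over two-vertex level graphs (Lemma~\ref{Lem:kdiffsplitting}), which is then promoted to a splitting formula for $(a_s\psi_s-a_t\psi_t)\cdot\DR_g(a)$ via Theorem~\ref{Thm:ConjA} (Proposition~\ref{prop:psiDRformula}). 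In particular you need at least two markings; for the actual recursion the paper introduces an auxiliary $(n{+}1)$st marking with weight $0$ and takes $s=1$, $t=n{+}1$. After the dimension count, only graphs whose second vertex is genus~$0$ with three half-edges survive, so the resulting identity~\eqref{eqn:Agid2} is \emph{linear} in $\cA_g$ and $\cA_{g-1}$, not a product $\cA_{g_1}\cdot\cA_{g_2}$ as you write.

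\textbf{The recursion by itself does not determine $\cA_g$.} Identity~\eqref{eqn:Agid2} must be combined with the forgetful invariance $\cA_g(a,k)=\cA_g(a)$ (identity~\eqref{eqn:Agid1}) and with the restriction to $k=0$ (identity~\eqref{eqn:Agid3}, which is precisely the result of \cite{BSSZ} that you cite). Even then the uniqueness argument (Proposition~\ref{prop:constraint}) is not automatic: one passes to shifted variables, expands in the basis $y_1^d e_\mu(y)$ of partially symmetric polynomials, and runs a double induction on $d$ and on the number of parts equal to $1$ in $\mu$. You also need both base cases $g=0$ and $g=1$, not just $g=0$. Finally, the verification that the right-hand side of~\eqref{eqn:main} satisfies~\eqref{eqn:Agid2} is not Lagrange inversion; it is a direct computation exploiting that $[z^{2g}]\,z^{2g+1}\frac{d}{dz}(z^{-2g}F(z))=0$ for any formal series $F$, together with an explicit hyperbolic-sine summation identity~\eqref{eqn:sinhsumformula}.
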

This result extends Theorem~1 of \cite{BSSZ}, which proved this formula in the case $|a|=k=0$.
As in their paper, we prove the formula \eqref{eqn:main} by first giving a list of properties satisfied by the functions $\cA_g$ which characterize them uniquely (see Lemma \ref{lem:identities}). Then we verify that the formula on the right-hand side of \eqref{eqn:main} indeed has all desired features and thus computes the value of $\cA_g$. The analogous properties in \cite{BSSZ} for $k=0$ were proven using a definition of the double ramification cycle via the Gromov-Witten theory of the projective line. For $k>0$ no such definition is available. Instead we use properties of the formula for $\DR_g(a)$ in terms of tautological classes studied in \cite{JPPZ17, PZ21} and results about the geometry of the strata of $k$-differentials (combined with their connection to the double ramification cycles).

% to $k>0$. 
%Note that the case $k=0$ may be recovered by polynomiality of the function $\cA_g$ thus providing an alternative proof of the result of~\cite{BSSZ}. \jcomment{This is cheating since we use BSSZ as input in our proof.}
%\jcomment{I added the above paragraph on our proof strategy. You think it's working?}
One of the reasons why the case $k>0$ of Theorem \ref{th:main} is particularly interesting is that 
the corresponding values of $\cA_g(a)$ are related to intersection numbers on strata of $k$-differentials. Indeed, we have
\begin{equation}\label{eq:drstrata}
\cA_{g}(a) =\left\{
\begin{array}{cl}
\int_{\oM_{g,n}} [\oM_{g}(a)]\cdot \psi_1^{2g-3+n} & \text{if $k>0$ and $a_1\notin k\ZZ_{>0}$},\\
-a_1\int_{\oM_{g,n}} [\oM_{g}(a)]\cdot \psi_1^{2g-2+n} & \text{if $k=1$ and $a\in\ZZ^n_{>0}$.}
\end{array}
 \right.
\end{equation}
This equality follows from a relationship between strata of $k$-differentials and double ramification cycles conjectured (see Conjecture A in \cite{FP, Sch18}) which was recently established in \cite{HS19, BHPSS20} (see Section~\ref{Sec:ConjA} for details). More generally, the intersection number of a power of a single $\psi$ class on an arbitrary stratum of $k$-differentials may be recovered from the function $\cA_g$ (see~\cite{Sau3}, Section 2.2). %\jcomment{I strengthened this statement a bit, introducting the word "arbitrary"; you agree?}

%If $k>0$, then these intersection numbers  may be evaluated as intersection numbers on $\oM_{g}(a)$ (see~\cite{Sau3}, Lemma~2.1)
%\begin{eqnarray*}
%\cA_{g}(a) &=& \int_{\oM_{g,n}} [\oM_{g}(a)]\cdot \left(\psi_1^{2g-3+n} - \frac{a_1}{k} \psi_1^{2g-2+n}\right) \\&& \!\!\!\!\!\!\!\!\!\!\!\!+\!\!\!\! \sum_{1<i<j\leq n} \int_{\oM_{g,n-1}} \!\!\!\!\!\! [\oM_{g}(\ldots,\widehat{a_i},\ldots,\widehat{a_j},\ldots, a_i+a_j-k)]\cdot \frac{a_i+a_j-k}{k} \psi_1^{2g-3+n}.
%\end{eqnarray*}

\subsection{Spin refinement} \label{Sect:spin_refinement_intro} \rcommenttwo{If $k, a$ are odd (i.e. $k$ and all entries of $a$ are odd)}, then a marked curve $(C,x_1,\ldots,x_n)\in \M_{g}(a)$ carries a natural {\em spin structure} (i.e. a line bundle $L$ satisfying $L^{\otimes 2}\simeq \omega_C$) defined by 
\begin{equation}\label{eq:spinbundle}
L = \omega_C^{\otimes(-k+1)/2} \otimes \mathcal{O}_C\left(\frac{a_1-k}{2}x_1+\ldots+\frac{a_n-k}{2}x_n\right).
\end{equation}
The parity of a spin structure $L\to C$, defined as the parity of $h^0(C,L)$, is invariant in a family of spin structures over smooth curves (see~\cite{Mum}). In particular the space $\M_g(a)$ may be decomposed as the disjoint union
\[
\M_g(a) = \M_{g}(a)^{\rm odd} \sqcup \M_{g}(a)^{\rm even}
\]
of two (possibly themselves disconnected) subspaces defined by the parity of the above canonical spin structure.
\begin{remark}
The reader should be careful as there is another natural definition of the parity of a $k$-differential for $k>1$. Indeed, one can define the parity of a $k$-differential as the parity of the associated \emph{canonical cover} (this choice is made in~\cite{ChenGendron} for example).  This second definition is more natural from the point of view of flat surfaces (for example it is well defined for values of $k$ even and positive). However, it does not agree with our definition of parity. For instance, the parity of a $k$-differential of genus 0 may be odd with this alternative definition while it is always even with the definition that we use in the text. 
\end{remark}
We will consider the following cycle
$$
[\oM_g(a)]^\sp = [\oM_{g}(a)^{\rm even}]-[\oM_{g}(a)^{\rm odd}] \in A^*(\oM_{g,n},\QQ).
$$
Moreover, if $a_1\notin k\ZZ_{>0}$, we set:
\begin{eqnarray*}
\cA_{g}^\sp(a) &=& \int_{\oM_{g,n}} [\oM_{g}(a)]^\sp\cdot \psi_1^{2g-3+n}. 
\end{eqnarray*}
%\begin{eqnarray*}
%\cA_{g}^\sp(a) &=& \int_{\oM_{g,n}} [\oM_{g}(a)]^\sp\cdot \left(\psi_1^{2g-3+n} - \frac{a_1}{k} \psi_1^{2g-2+n}\right) \\&& \!\!\!\!\!\!\!\!\!\!\!\!\!\!\!\!\!\!+\!\!\!\! \sum_{1<i<j\leq n} \int_{\oM_{g,n-1}} \!\!\!\!\!\!\!\!\! [\oM_{g}(\ldots,\widehat{a_i},\ldots,\widehat{a_j},\ldots, a_i+a_j-k)]^\sp\cdot \frac{a_i+a_j-k}{k} \psi_1^{2g-3+n}.
%\end{eqnarray*}
In the text, we will make the following assumption: 
\begin{assumption}\label{assumption}
There exists a family of cycles $\DR_g^\sp:\ZZ^n\to A^g(\oM_{g,n})$ such that:
\begin{enumerate}
    \item The function $\DR_g^\sp$ is a symmetric polynomial of degree $2g$;
    \item It holds
    \[
    \pi^* \DR_g^\sp(a_1, \ldots, a_n) = \DR_g^\sp(a_1, \ldots, a_n, k)
    \]
    where $\pi: \oM_{g,n+1} \to \oM_{g,n}$ is the forgetful morphism of the last marking and $k=(a_1+ \ldots +a_n)/(2g-2+n)$;
    \item If $a$ and $k$ are odd, and $P$ is a monomial in classes $\psi_i$ for which $a_i$ is negative or not divisible by $k$, we have 
    $$\int_{\oM_{g,n}} [\oM_g(a)]^\sp \cdot P =\int_{\oM_{g,n}}   \DR_g^\sp(a) \cdot P;$$
    in particular, when $a_1\notin k\ZZ_{>0}$, we have
    \begin{eqnarray*}
    \cA_{g}^\sp(a)&=& \int_{\oM_{g,n}} \DR_g^\sp(a)\cdot \psi_1^{2g-3+n},
    \end{eqnarray*}
    and thus the function $\cA_{g}^\sp(a)$ is a polynomial of degree $2g$;
    \item if $a$ is positive and $k=1$, the polynomial extension of $\cA_{g}^\sp(a)$ satisfies
     \begin{eqnarray*}
   \cA_{g}^\sp(a)&=& -a_1\int_{\oM_{g,n}} [\oM_{g}(a)]^\sp\cdot \psi_1^{2g-2+n} .
        \end{eqnarray*}
\end{enumerate}
\end{assumption}
Concerning the validity of the assumptions, we first note that all of them hold verbatim for the usual (i.e. non-spin) double ramification cycles and strata of differentials. 
Moreover, in Section~\ref{Sect:spinDR} we present an explicit candidate for the function $\DR_g^\sp$. It satisfies parts (1) and (2) of the assumption by unpublished work \cite{PZ21} of Pixton and Zagier. Moreover, in Conjecture \ref{conj:DRspinformula} we formulate a spin analogue of Conjecture A from~\cite{FP, Sch18} which would imply the remaining parts of the assumption (see Proposition \ref{prop:spinConjAimpliesAssumption}). We also describe the outline of a possible proof for this new conjecture.%\mcomment{reference to Prop...} \jcomment{Added reference to Prop; agreed?}

Assuming the properties above, we show that the techniques used to prove Theorem~\ref{th:main} can be applied to give an explicit formula for $\cA_{g}^\sp(a)$.

%An explicit candidate for the function $\DR_g^\sp$, as well as some explanations concerning the assumptions above, are presented in Section~\ref{Sect:spinrefinement}.\mcomment{Is it true that we can replace the first 3 assumptions with the assumption that: $\cA_{g}^\sp(a)$ is a polynomial of degree $2g$?}\mcomment{I would say something about the assumptions also here: e.g. that (1) is allegedly proven, while (2) and (3) should follow from the spin conj. A.}

\begin{theorem}\label{th:spin} If parts (1)-(3) of Assumption~\ref{assumption} are valid, then we have:
\begin{eqnarray}\label{eqn:mainspin}
 \cA_{g}^\sp (a)
 &=& 2^{-g} [z^{2g}] \,\,\, {\rm exp}\left(\frac{a_1 z\cdot \cS'(kz)}{\cS(kz)} \right) \frac{\ch(z/2)}{{\cS}(z)} \frac{\prod_{i>1} \cS(a_iz)}{\cS(kz)^{2g-1+n}} .
\end{eqnarray}
\end{theorem}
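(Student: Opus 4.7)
The plan is to follow the same two-step strategy as for Theorem~\ref{th:main}: first, extract from Assumption~\ref{assumption} a list of identities and boundary conditions that characterize $\cA_g^{\sp}$ uniquely among symmetric polynomials in $a$; second, verify that the right hand side of \eqref{eqn:mainspin} satisfies them all.

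For the first step, parts (1) and (3) of the assumption immediately give polynomiality of degree $2g$ and full symmetry: whenever $a_1 \notin k\ZZ_{>0}$, one has $\cA_g^{\sp}(a) = \int_{\oM_{g,n}} \DR_g^{\sp}(a) \cdot \psi_1^{2g-3+n}$, which is polynomial of degree $2g$ by part~(1), and this polynomial extends uniquely to all of $\QQ^n$. Part~(2) supplies the crucial recursion: setting $a_{n+1}=k$ on $\oM_{g,n+1}$, integrating against $\psi_1^{2g-2+n}$, and invoking $\DR_g^{\sp}(a_1,\ldots,a_n,k) = \pi^*\DR_g^{\sp}(a_1,\ldots,a_n)$ together with the projection formula expresses $\cA_g^{\sp}(a_1,\ldots,a_n,k)$ in terms of the pairing of $\DR_g^{\sp}(a_1,\ldots,a_n)$ with $\pi_*\psi_1^{2g-2+n}$. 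This string-type equation, combined with polynomiality and symmetry, is intended to mirror the identities of Lemma~\ref{lem:identities} and to pin down $\cA_g^{\sp}$ inductively from a small set of base values.

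For the verification step, observe that the right hand side of \eqref{eqn:mainspin} differs from that of \eqref{eqn:main} only by the global factor $2^{-g}\ch(z/2)$, which is independent of the $a_i$. Consequently, polynomiality of degree $2g$ and the behavior under the specialization $a_{n+1}\to k$ are inherited almost verbatim from the non-spin calculation already carried out in the proof of Theorem~\ref{th:main}. The residual check reduces to a purely generating-function identity, handled by standard manipulations that exploit in particular $\ch(z/2)\cS(z) = \cS(2z)$ and the logarithmic derivative formula $\cS'(z)/\cS(z) = \tfrac12\coth(z/2)-1/z$.

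The main obstacle I anticipate lies in the characterization step: the recursions derivable solely from Assumption~\ref{assumption}(2) look weaker than the array of identities available in the non-spin setting, where the explicit Pixton-type formula for $\DR_g$ and the connection to strata of $k$-differentials supply additional relations used in Lemma~\ref{lem:identities}. One must therefore either promote those identities to spin analogues directly from the hypotheses of Assumption~\ref{assumption}, or combine the pullback relation of part~(2) with enough low-dimensional base values (computable for instance on $\oM_{0,n}$ or $\oM_{g,1}$ using the candidate formula for $\DR_g^{\sp}$ from Section~\ref{Sect:spinDR}) to single out the polynomial $\cA_g^{\sp}$ and match it with the right hand side of \eqref{eqn:mainspin}.
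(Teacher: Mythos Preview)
Your high-level two-step plan is right, but the characterization step is missing its engine.  The identity you extract from Assumption~\ref{assumption}(2) is only the forgetful identity $\cA_g^{\sp}(a,k)=\cA_g^{\sp}(a)$, the spin analogue of \eqref{eqn:Agid1}.  That identity by itself, even together with polynomiality and symmetry, is nowhere near enough to pin down the function: in the non-spin proof the decisive constraint is the recursion \eqref{eqn:Agid2}, which comes from the \emph{splitting formula} (Proposition~\ref{prop:psiDRformula}), not from the pullback property.  You have not proposed any spin analogue of this splitting formula, and that is the real work in the paper's argument.

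Concretely, the paper first proves geometric parity results for boundary divisors of the multi-scale compactification (Propositions~\ref{prop:parityct} and~\ref{prop:multibanana}): for a two-vertex level graph with all odd twists the parity is additive over the vertices, while if any twist is even the contributions to $[\oM_g(a)]^{\sp}$ cancel.  This yields a spin splitting formula (Lemma~\ref{Lem:kdiffspinsplitting}) in which only graphs with \emph{odd} twists survive.  Pairing with $\psi_1^{\mathrm{top}}$ then gives the spin recursion \eqref{eqn:Agspinid2}, whose right-hand side is a sum over \emph{odd} $j$ with $0<j<k$ rather than all $j$.  A separate argument produces the $k=0$ constraint \eqref{eqn:Agspinid3}, which replaces the BSSZ input \eqref{eqn:Agid3} (unavailable here).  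Your claim that the verification is ``inherited almost verbatim'' from the non-spin case is therefore also off: because the recursion sums over odd $j$, the relevant hyperbolic identity is \eqref{eqn:sinhsumformula2} (with $\sinh(z)$ in the denominator) rather than \eqref{eqn:sinhsumformula}, and this is precisely what accounts for the extra factor $\ch(z/2)$ and the power $2^{-g}$.  Without the parity analysis and the odd-twist splitting formula, neither the characterization nor the verification goes through.
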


%\begin{remark} We have numerical evidence for Assumption~\ref{assumption}: Indeed the proper candidate for the function $\DR_g^\sp$ appears to be a variation on Pixton's class as defined in Section 1.1 of~\cite{JPPZ17}. If $k$ and $a$ are odd, define the classes $P_{g}^{d,r,k,\rm spin}(a)$ as the class $P_{g}^{d,r,k}(a)$ but by summing only over weight functions with odd values. Then the $P_{g}^{d,r,k,\rm spin}(a)$ turns out to be polynomial in $r$ and we denote by $P_{g}^{d,k,\rm spin}(a)$ its value at $r=0$. Setting $\DR_g^\sp(a)=2^{-g}P_g^{g,k,\sp}(a)$ appears to be compatible with our assumption.
%\end{remark}

\subsection{Applications} In~\cite{Sau3}, the second author showed that the volume of moduli spaces of flat surfaces with rational singularities, as well as the Masur-Veech volumes of strata of $k$-differentials (if all 
$a_i$ are in $\ZZ_{>0}\setminus k\ZZ_{>0}$) may be computed. The input needed to calculate these volumes are the functions $\cA_g$. Therefore the present work simplifies these computations by making the input explicit.

In Section \ref{Sect:Eulerchar} below we present a second application of this formula. The first author with M. M\"oller and J. Zachhuber, provided an expression for the orbifold Euler characteristic of strata of abelian differentials in terms of intersection numbers on these strata (see~\cite{CMZ20}). We use this result together with Theorem~\ref{th:main} to obtain a closed formula for the orbifold Euler characteristic of the minimal strata $\M_g(2g-1)$ for $g\geq 1$.

In order to state this formula, we introduce the following formal series in $\QQ(y)[\![z]\!]$:
$$\cH(y,z)=\sum_{g\geq 1} (2g-1)!\, b_g \left(\frac{z\cS(z)}{y}\right)^{2g} \frac{\cS((2g-1)z)}{\cS(z)},
$$
where the numbers $b_g$ are defined by $\cS(z)^{-1}=1+\sum b_g z^{2g}$. Then we define:
$$
\chi(y,z)= \frac{y+1-y^2\frac{\partial \cH}{\partial y}}{y\cS(z)^{2}}\cdot {\rm exp}\left(y\left(\frac{z\cS'(z)}{\cS(z)}-{\rm ln}(\cS(z))-\mathcal{H}\right)\right).
$$

\begin{theorem}\label{thm:eulerseries} For all $g\geq 1,$ the orbifold Euler characteristic of the minimal stratum $\M_g(2g-1)$ is given by 
$$\chi(\M_g(2g-1))= (2g-1)^{2g-1} [z^{2g}] \chi(2g-1,z).$$
\end{theorem}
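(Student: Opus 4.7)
The plan is to combine the main result of~\cite{CMZ20}, which expresses $\chi(\cM_g(2g-1))$ as a tautological intersection expression on the compactified stratum, with the closed formula of Theorem~\ref{th:main} that evaluates the relevant intersection numbers explicitly. For the minimal stratum we are in the case $k=1$, $n=1$, $a=(2g-1)$, and the dimension count $\dim \cM_g(a)=2g-1$ together with~\eqref{eq:drstrata} yields
\[
\int_{\oM_{g,1}} [\oM_g(2g-1)]\cdot \psi_1^{2g-1} \;=\; -\tfrac{1}{2g-1}\,\cA_g(2g-1),
\]
whose right hand side is read off from the generating series~\eqref{eqn:main}. The first step is therefore to rewrite every ingredient in the CMZ20 formula (both the top intersection on $\oM_g(2g-1)$ and any boundary contributions from the compactification used there) as an evaluation of $\cA_{g'}$ for appropriate $g'\le g$, and then substitute the closed form of Theorem~\ref{th:main}.

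The second step is to reorganize the resulting expression as a coefficient extraction in the two-variable series appearing in the statement. The variable $z$ is inherited directly from Theorem~\ref{th:main}, while $y$ plays the role of a formal analogue of $2g-1$, allowing one to package the sum over $g$ into a single identity. The auxiliary series $\cH(y,z)$ collects the contributions of the Hurwitz-type factor $\cS((2g-1)z)/\cS(z)$ weighted by $(2g-1)!\,b_g$, where $b_g$ is the coefficient appearing in $\cS(z)^{-1}=1+\sum b_g z^{2g}$; these combinatorial data should match precisely the Taylor coefficients of the part of the CMZ20 formula that captures boundary contributions. The exponential shape of~\eqref{eqn:main}, in particular the factor $\exp(a_1 z\,\cS'(kz)/\cS(kz))$ together with the denominator $\cS(kz)^{2g-1+n}$, feeds into the exponential $\exp(y(z\cS'(z)/\cS(z)-{\rm ln}\,\cS(z)-\cH))$ in $\chi(y,z)$, while the outer prefactor $(y+1-y^2\partial_y\cH)/(y\,\cS(z)^2)$ encodes the remaining Chern-class factors from the Gauss--Bonnet-type expression in~\cite{CMZ20}.

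The main obstacle is the careful generating-function bookkeeping in the second step. Identifying $y^2\partial_y \cH$ and the combination $z\cS'(z)/\cS(z)-{\rm ln}\,\cS(z)-\cH$ as the correct generating functions for boundary and bulk contributions will require matching sums of products of evaluations $\cA_{g'}(a')$ with the $y$-expansion of $\chi(y,z)$. I expect this to amount to a Lagrange-inversion or residue identity translating the $g$-by-$g$ sums of the CMZ20 formula into the single formal series identity of Theorem~\ref{thm:eulerseries}; checking that the substitution $y\mapsto 2g-1$ correctly produces the coefficient of $z^{2g}$ with the overall factor $(2g-1)^{2g-1}$ will require care with how the genus appears on both sides, and is the step most likely to need a residue-calculus shortcut rather than direct coefficient comparison.
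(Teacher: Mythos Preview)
Your high-level plan is on the right track, but Step 1 hides essentially all of the work, and as stated it would fail without several ideas you have not mentioned.

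First, the formula from~\cite{CMZ20} for $\chi(\cM_g(2g-1))$ is a sum over \emph{all} level graphs $\oGamma'$ (of arbitrary depth $L$), and the integrand at each level is a top power of $\eta$, not of a $\psi$-class. So none of the terms are directly values of $\cA_{g'}$. The paper first shows (Proposition~\ref{prop:toppsiformula} and Corollary~\ref{cor:eclg1}) that one can regroup the sum to run only over $2$-level graphs, with a top-$\eta$ integral on the top level stratum and a top-$\psi_0$ integral on the bottom level. This regrouping uses the relation $\eta=a_0\psi_0-\sum_\oGamma \ell(\oGamma)[\PP\Xi(\oGamma)]$ iteratively and relies on $\psi_0$ always being supported on the bottom level; it is not an automatic consequence of the CMZ20 formula.

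Second, even among $2$-level graphs there are many, and the top level is a product of holomorphic strata. The key geometric input you are missing is the vanishing (from~\cite{Sau:volumeminimal}) of the top power of $\eta$ on any non-minimal holomorphic stratum. This is what kills all graphs except star graphs with minimal strata on top, and it is essential for the formula to collapse to something tractable (Proposition~\ref{prop:ECmin}).

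Third, the bottom level of such a star graph is a stratum with \emph{residue conditions} coming from the global residue condition, so its top-$\psi_0$ integral is $\cA_{g^\bot}^{\frakR(n-1)}(d_g,-d_{\underline{g}})$, not an $\cA_{g'}$. The paper develops a recursion (Lemma~\ref{lem:residuecond}, Proposition~\ref{prop:fullresidue}) expressing these numbers as tree sums of ordinary $\cA_{g^\bot}$-values, and then uses a separate combinatorial identity (Lemma~\ref{lem:regrouping}, equivalent to the relation between the numbers $a_g=-\int_{\PP\Xi_g(2g-1)}\eta^{2g-1}$ and $b_g$ via~\cite{Sau:volumeminimal}) to repackage the tree sums. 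This is precisely where the coefficients $b_g$ and the series $\cH(y,z)$ enter; they do not come from Theorem~\ref{th:main} at all but from the top-$\eta$ integrals on minimal strata.

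Only after these three reductions does one arrive at an expression (Theorem~\ref{thm:ECminformula}) where each summand is a single $\cA_{g^\bot}$ evaluated by Theorem~\ref{th:main}, and then the generating-function repackaging of your Step~2 can be carried out. Your instinct that the final step is a formal series manipulation is correct; it does not require Lagrange inversion, only the observation~\eqref{eqn:crazyequationwithLaurentseries} that the operator $(y+1+y\partial_y)$ applied to a suitable Laurent series recovers the weighted sum over $g^\bot$.
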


This result simplifies drastically the computation of the Euler characteristic of the minimal strata from the formula shown in \cite{CMZ20} involving a sum over a high number of combinatorial graphs. 
In particular, this makes it feasible to explicitly compute the value of $\chi(\M_g(2g-1))$ for large values of $g$.

Finally, we have a spin analogue of Theorem~\ref{thm:eulerseries}. 
We define  
\[\chi(\M_g(2g-1))^\sp:= \chi(\M_g(2g-1)^\even)-\chi(\M_g(2g-1)^\odd)\]
to be the difference of the orbifold Euler characteristics of the even and odd components of the minimal stratum $\M_g(2g-1)$.
%\begin{assumption}\label{assumption:spin}
% If $k=1$ and $a\in \ZZ^n_{>0}$, then the polynomial extension of $\cA_{g}^\sp$ satisfies
%     \begin{eqnarray*}
%   \cA_{g}^\sp(a)&=& -a_1\int_{\oM_{g,n}} [\oM_{g}(a)]^\sp\cdot \psi_1^{2g-2+n} .
%        \end{eqnarray*}
%\end{assumption}

\begin{theorem}\label{thm:eulerseriesspin} If Assumption~\ref{assumption} holds, then for all $g\geq 1,$ we have
\begin{eqnarray*}
\chi(\M_g(2g-1))^\sp=2^{-g}(2g-1)^{(2g-1)} [z^{2g}] \chi^\sp(2g-1,z),
\end{eqnarray*}
where we define:
\begin{eqnarray*}
\cH^\sp(y,z)&=&-\sum_{g\geq 1} (2g-1)!\, \frac{2^{2g-1}}{2^{2g-1}-1} b_g \left(\frac{z\cS(z)}{y}\right)^{2g} \frac{\cS((2g-1)z)}{\cS(z)},\\
\chi^{\sp}(y,z)&=&\frac{y+1-y^2\frac{\partial \cH^\sp}{\partial y}}{y\cS(z)^{2}{\rm cosh}(z/2)^{-1}}\cdot {\rm exp}\left(y\left(\frac{z\cS'(z)}{\cS(z)}-{\rm ln}(\cS(z))-\cH^\sp\right)\right).
\end{eqnarray*}
\end{theorem}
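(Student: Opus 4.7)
The plan is to imitate the structure of the proof of Theorem~\ref{thm:eulerseries}, replacing its intersection-theoretic input by its spin analogue and using Theorem~\ref{th:spin} in place of Theorem~\ref{th:main}.

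First, I would establish a spin refinement of the graph-sum formula of \cite{CMZ20} underlying the proof of Theorem~\ref{thm:eulerseries}. That formula expresses $\chi(\M_g(2g-1))$, via a logarithmic Gauss--Bonnet argument, as a sum over decorated stable graphs $\Gamma$ of products over vertices of intersection numbers of the form $\int_{\oM_{g_v,n_v}}[\oM_{g_v}(a_v)]\cdot P_v$, where each $P_v$ is a monomial in $\psi$-classes (and the vectors $a_v$ arise from restricting $\omega_{\log}$ to the components of a degeneration). For the minimal stratum we have $k=2g-1$ odd and odd entries, so the canonical spin structure $L$ extends across every relevant boundary stratum, and its parity decomposes in a controlled way over the vertices and edges of $\Gamma$. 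Tracking this should yield an analogous formula
\[
\chi(\M_g(2g-1))^\sp = \sum_\Gamma w(\Gamma) \prod_v \int_{\oM_{g_v,n_v}}[\oM_{g_v}(a_v)]^\sp \cdot P_v,
\]
where the edge weights $w(\Gamma)$ encode how the component parities combine to the parity of $L$.

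Second, I would apply Theorem~\ref{th:spin}, valid under Assumption~\ref{assumption}, to evaluate each vertex integral. Compared to Theorem~\ref{th:main}, this substitution introduces at each vertex a local prefactor $2^{-g_v}$ and replaces $1/\cS(z)$ by $\ch(z/2)/\cS(z)$. Both modifications act multiplicatively on each vertex factor, so they propagate cleanly through the graph sum.

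Third, I would repeat the generating-function resummation used for Theorem~\ref{thm:eulerseries}: the graph sum is reorganized as the exponential of a one-vertex series. The vertex-local $\ch(z/2)$ factor becomes the $\cosh(z/2)$ in the denominator of $\chi^\sp(y,z)$. The vertex factors $2^{-g_v}$ collect, via the identity $\sum_v g_v + h_1(\Gamma) = g$, into a global $2^{-g}$ matching the prefactor in the statement, while the residual $2^{h_1(\Gamma)}$ combines with the edge-local parity weights to produce, after an elementary geometric resummation, the modified coefficient $-\tfrac{2^{2g-1}}{2^{2g-1}-1}b_g$ of $\cH^\sp$ in place of $b_g$ in $\cH$. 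The main obstacle is the first step: deriving the spin refinement of \cite{CMZ20} and tracking how the parity of $L$ behaves under every degeneration of the minimal stratum (including the analysis of nodal spin structures of Ramond/Neveu--Schwarz type); once this is in place, the remainder of the argument is a routine adaptation of the non-spin case.
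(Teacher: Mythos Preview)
Your outline has the right shape but misidentifies the source of the modified coefficient $-\tfrac{2^{2g-1}}{2^{2g-1}-1}b_g$, and this is a genuine gap. In the paper, after applying the logarithmic Gauss--Bonnet formula from \cite{CMZ20} and the vanishing of top-$\eta$ on non-minimal holomorphic strata (\cite{Sau:volumeminimal}), \emph{only star-shaped two-level graphs survive}; these are all of compact type, so $h^1(\Gamma)=0$ throughout. There are no graphs with loops and hence no ``residual $2^{h_1(\Gamma)}$'' and no edge-local parity weights of the banana type entering the Euler characteristic computation. The parity tracking that is actually needed is only the additivity on compact-type graphs (Proposition~\ref{pro:parityct_improved}).

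Moreover, in the resulting graph sum only the single bottom vertex carries a factor $\cA^\sp_{g^\bot}$ to which Theorem~\ref{th:spin} is applied; the top vertices contribute the numbers $a^\sp_{g_i}=-\int_{[\PP\Xi_{g_i}(d_{g_i})]^\sp}\eta^{d_{g_i}}$, not further top-$\psi$ integrals. The replacement of $b_g$ by $\widetilde b_g=-\tfrac{2^{g-1}}{2^{2g-1}-1}b_g$ comes from the spin analogue of Lemma~\ref{lem:regrouping}, and its key identity relating $a_g^\sp$ to the $\widetilde b_g$ is taken from \cite[Corollary~6.11]{chmosaza} (a representation-theoretic computation of spin Masur--Veech volumes), not from any elementary geometric resummation. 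The global prefactor $2^{-g}$ then arises because $g=g^\bot+\sum_i g_i$: the $2^{-g^\bot}$ from Theorem~\ref{th:spin} and the factors $2^{g_i}\widetilde b_{g_i}=-\tfrac{2^{2g_i-1}}{2^{2g_i-1}-1}b_{g_i}$ combine to give $2^{-g}$ times the coefficients of $\cH^\sp$. Without the input from \cite{chmosaza}, your step~3 cannot be carried out.
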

In Table \ref{cap:EulerHolo} we present the orbifold Euler characteristic of the even and odd spin components of minimal strata of abelian differentials in  low genera. 
%\jcomment{Note to myself: double-check with Matteo's code}
  
\begin{figure}[h]
$$ \begin{array}{|c|c|c|c|c|c|c|}
\hline  &&&&&& \\ 
g  & 1 & 2 & 3 & 4 & 5 & 6 \\
\hline &&&&&& \\
\chi(\M_g(2g-1)^\even) %\,(\oamoduli(1^{2g-2})) 
& 0  & 0& -\frac{1}{84} & -\frac{269}{720} & - \frac{693}{40} & 
-\frac{76466}{63}  \\
 &&&&&&\\
\hline &&&&&& \\ 
\chi(\M_g(2g-1)^\odd) %\,(\oamoduli(1^{2g-2})) 
& -\frac{1}{12}  &  -\frac{1}{40} & -\frac{7}{72} & -\frac{5}{4} & 
-\frac{3933}{110} & -\frac{5841833}{3120}  \\
&&&&&&\\
\hline
\end{array}
$$
\captionof{table}{Euler characteristics of even and odd components of some minimal holomorphic strata.}
\label{cap:EulerHolo}
\end{figure}

\subsection{Outlook and open questions}
A first natural question to ask is how Theorem \ref{th:main} generalizes to arbitrary monomials in $\psi$-classes.
\begin{question}
Given $g,n \geq 0$ and nonnegative integers $e_1, \ldots, e_n$ summing to $2g-3+n$, compute the function
\[
\mathbb{Z}^n \to \mathbb{Q},\quad a \mapsto \int_{\oM_{g,n}} \DR_g(a) \cdot \psi_1^{e_1} \cdots \psi_n^{e_n}\,.
\]
\end{question}
Restricted to vectors $a$ with $|a|=0$, this question was answered in \cite[Theorem 2]{BSSZ}. The proof of this Theorem was achieved by using the semi-infinite wedge formalism and by computing integrals of $\psi$-classes on classical DR cycles as vacuum expectations (see \cite[Theorem 7]{BSSZ}).

While the intersection numbers above are readily computable for many pairs $(g,n)$ using the software \texttt{admcycles} \cite{admcycles}, it proved difficult to guess a formula generalizing the one presented in \cite{BSSZ}. Alternatively one could try to understand how to naturally account for the twisting parameter $k$ in the semi-infinite wedge formalism. To our knowledge, such approach does not exist in the literature yet.  A first step would be to define a ``natural'' operator acting on the infinite wedge space, whose vacuum expectation is given by the right-hand side of~\eqref{eq:drstrata}.

%\jcomment{Should we mention the problem of generalizing the formula above to arbitrary intersection numbers with $\psi$-monomials? Adrien always had some interesting comments about this, maybe it's worth to mention them in a short paragraph?}

A second direction of study is the asymptotic behaviour of the orbifold Euler characteristic of minimal strata of differentials.
From numerical experiments based on the formula given in Theorem \ref{thm:eulerseries}, we propose the following conjecture.
\begin{conjecture} \label{Conj:Eulercharasymptotic}
For all $g \geq 1$, the orbifold Euler characteristic $\chi(\M_g(2g-1))$ is negative. Moreover, there exist positive constants $A$ and $B$ such that
%$${\rm ln}\left(-\chi(\M_g(2g-1))\right) \underset{g\to \infty}{\sim} {\rm ln}\left((2g)!\right).$$
\[A\frac{(2g-1)!}{(2g-1)^4}\leq -\chi(\M_g(2g-1))\leq B\frac{(2g-1)!}{(2g-1)^3}\]
for all $g$.
\end{conjecture}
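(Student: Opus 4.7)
The plan is to work directly with the exact generating-function formula of Theorem~\ref{thm:eulerseries},
$$\chi(\M_g(2g-1)) = (2g-1)^{2g-1}\,[z^{2g}]\,\chi(2g-1,z),$$
and perform a careful asymptotic analysis of the coefficient $[z^{2g}]\chi(2g-1,z)$ as $g\to\infty$. Applying Stirling's formula $(2g-1)^{2g-1} = (2g-1)!\,e^{2g-1}/\sqrt{2\pi(2g-1)}\cdot(1+o(1))$, the conjecture becomes equivalent to showing that the coefficient is strictly negative for every $g\geq 1$ and decays like $e^{-(2g-1)}/(2g-1)^{\beta}$ with $\beta\in[5/2,\,7/2]$. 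The asymptotic behaviour depends on three ingredients: the bulk exponential $\exp(y F(z))$ with $F(z)=z\cS'(z)/\cS(z)-\ln\cS(z)$, the divergent series $\cH$, and the rational prefactor $(y+1-y^2\,\partial_y\cH)/(y\cS(z)^2)$, all evaluated at $y=2g-1$.

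The first step is to control the auxiliary series. Since $\cS(z)^{-1}=(z/2)/\sinh(z/2)$, one computes $b_g=2(1-2^{2g-1})B_{2g}/\bigl((2g)!\,4^g\bigr)$, and the classical Bernoulli-number asymptotics $B_{2g}\sim(-1)^{g+1}\,2(2g)!/(2\pi)^{2g}$ then give $(2g-1)!\,b_g\sim 2(-1)^g(2g-1)!/(2\pi)^{2g}$. In particular, $\cH(y,z)$ is a factorially divergent, sign-alternating formal series in $z$, and the extraction of $[z^{2g}]$ from the exponential has to be handled via a contour integral
$$[z^{2g}]\chi(2g-1,z) = \frac{1}{2\pi i}\oint\chi(2g-1,z)\,z^{-2g-1}\,dz,$$
combined with a saddle-point or Borel-type analysis. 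The expansion $F(z)=z^2/24+O(z^4)$ shows that the exponential factor alone would localize the integral near $z=0$, but the competing $z^{-2g}$ and the individual terms of $\cH$ push the saddle to a scale $z_\ast\sim c$ of order unity, whose precise location should determine the exponent $\beta$ and the leading constant. The dominant contribution to the prefactor $(y+1-y^2\partial_y\cH)/(y\cS(z)^2)$ at the saddle, together with the sign $(-1)^g$ of the Bernoulli contributions and the matching alternation coming from the integration, should conspire to produce a sign-definite negative answer.

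The main obstacles are threefold. First, both the order $2g$ of extraction and the parameter $y=2g-1$ scale with $g$, so standard singularity-analysis or transfer theorems do not apply directly; terms of $\cH$ of small index behave very differently from those of index comparable to $g$, and a uniform treatment of their combined contribution to $\exp(-y\cH)$ is delicate. Second, establishing strict negativity of $\chi(\M_g(2g-1))$ for \emph{every} $g\geq 1$, rather than merely asymptotically, is a much more rigid statement and probably demands either a manifestly sign-definite rewriting of the formula or an inductive/combinatorial argument based on the tautological structure entering~\cite{CMZ20}. Third, the factor $(2g-1)$ gap between the conjectured lower and upper bounds suggests that even identifying the exact exponent $\beta$ requires precise control of the subleading corrections at the saddle; proving the looser two-sided bound in the conjecture should be easier but still nontrivial, and it may be productive to first establish the upper bound by crude term-by-term majorization of the series for $\chi(y,z)$ and then isolate a single explicit negative contribution to obtain the matching lower bound.
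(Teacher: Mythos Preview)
The statement you are attempting to prove is a \emph{conjecture} in the paper, not a theorem. The paper provides no proof whatsoever: the authors explicitly state that Conjecture~\ref{Conj:Eulercharasymptotic} is proposed ``from numerical experiments based on the formula given in Theorem~\ref{thm:eulerseries}.'' There is therefore nothing to compare your proposal against.

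Your proposal is an honest outline of a possible attack via saddle-point analysis of the generating-function formula, but it is not a proof and you acknowledge this yourself. The obstacles you identify are real. In particular: (i) the coupling of the extraction order $2g$ with the parameter $y=2g-1$ genuinely prevents off-the-shelf transfer theorems from applying; (ii) the strict negativity claim for \emph{every} $g\geq 1$ is not an asymptotic statement and would require a separate argument (the paper notes that the related question of whether $\M_g(a)$ is K\"ahler-hyperbolic, which would imply the sign, is open); and (iii) the factor-of-$(2g-1)$ gap between the conjectured bounds indicates that even the authors have not pinned down the true growth rate from numerics. Your sketch contains no concrete mechanism for overcoming any of these, so it remains a plan rather than a proof. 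If you pursue this, the most tractable partial result would likely be a one-sided asymptotic bound; the full conjecture appears to be genuinely open.
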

Note that this asymptotic growth rate is much higher than the one of $\M_{g,1}$, which was shown in \cite{HaZa} to be $(-1)^g\frac{(2g-1)!}{2(2\pi)^g}$. The conjecture is also a strong indication that, as in the case of $\M_{g,n}$, the cohomology of minimal strata is not spanned by tautological classes. However, in order to formally prove this, one would have to compute the topological Euler characteristic, and not the orbifold one. Note that in the case of $\M_{g,n}$,  it was shown in \cite{HaZa} that the growth rates of the topological and orbifold Euler characteristics agree.
%\jcomment{propose: "it was shown in \cite{HaZa} that the growth rates of the topological and orbifold Euler characteristics agree."}

The global geometry of strata of abelian differentials is not well understood. The above conjecture fits into the setting of two natural questions that are still open:
\begin{question}
Is $\M_g(a)$ k\"ahler-hyperbolic?
\end{question}
The analogous result in the case of $\M_{g,n}$ was proven in \cite{mcmullenkahlerhyp}. This property would imply that the sign of the Euler characteristic of $\M_g(a)$ is $(-1)^{\text{dim}(\M_g(a))}$. In particular, since the minimal strata $\M_g(2g-1)$ are always of odd dimension $2g-1$, this prediction is consistent with the first part of Conjecture~\ref{Conj:Eulercharasymptotic}.

\begin{question}
Are the connected components of $\M_g(a)$ classifying spaces for their fundamental groups?
\end{question}
This question was posed as a conjecture by Kontsevich and Zorich. The image of the fundamental group of the connected components of $\M_{g}(a)$ in the mapping class group was described in~\cite{saltercalderon} to be a framed mapping class group. In the same work, it was shown to be a finitely generated infinite index subgroup of the associated mapping class group.  However, the kernel of the natural map to the mapping class group is still ill-understood. Some analogous results appeared also in \cite{Ham}. %, in which there is also a deeper discussion about the conjecture and its actual status in low genera. 
    One approach for \emph{disproving} the conjecture of Kontsevich and Zorich, would be to use Conjecture~\ref{Conj:Eulercharasymptotic} to show that the Euler characteristic of the minimal strata and of their the fundamental groups are different for large values of $g$. This is not possible until more information on the fundamental group of the strata is known.

From numerical experiments, using the formula given in  Theorem \ref{thm:eulerseriesspin},  we also conjecture that the Euler characteristics of the odd and even components are asymptotically equivalent for large values of  $g$. This is implied by the following stronger  conjecture.
%\jcomment{Are we implicitly conjecturing the spin Euler characteristic to be positive here? Or do we want some absolute value signs? If it turns out to be solidly positive in examples, then we should mention this as an explicit part of the conjecture}
\begin{conjecture}
There exist  positive constants $A$ and $B$ such that 
$$
A\frac{g}{2^g}< \frac{\chi(\M_g(2g-1))^\sp}{-\chi(\M_{g}(2g-1))} < B \frac{g^2}{2^g} 
$$
for all $g$.
In particular  $\chi(\M_g(2g-1))^\sp>0$ and 
\[\frac{\chi(\M_g(2g-1)^\even)}{\chi(\M_g(2g-1)^\odd)}\longrightarrow 1\quad \text{for }\ g\longrightarrow \infty.\]
\end{conjecture}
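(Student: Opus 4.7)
The plan is to exploit the explicit generating-series formulas of Theorems~\ref{thm:eulerseries} and~\ref{thm:eulerseriesspin}. Setting $y = 2g-1$, the ratio of interest becomes
$$\frac{\chi(\M_g(2g-1))^\sp}{-\chi(\M_g(2g-1))} \;=\; 2^{-g} \cdot \frac{[z^{2g}]\,\chi^\sp(2g-1,z)}{-[z^{2g}]\,\chi(2g-1,z)},$$
so the factor $2^{-g}$ is already built in by Theorem~\ref{thm:eulerseriesspin}, and the conjecture reduces to bounding the ratio of the two $[z^{2g}]$-extractions between a positive multiple of $g$ and a positive multiple of $g^2$.

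First I would compare the two generating series term by term. The series $\cH$ and $\cH^\sp$ differ only through the factor $\tfrac{2^{2g-1}}{2^{2g-1}-1}=1+O(4^{-g})$ and an overall sign, so asymptotically $\cH^\sp\approx-\cH$. Meanwhile, the prefactor of $\chi^\sp$ carries an extra $\ch(z/2)$ compared with that of $\chi$, and the sign change $-y\cH\leftrightarrow -y\cH^\sp\approx y\cH$ in the exponent doubles the effective $\cH$-dependence. To leading order, $\chi^\sp(2g-1,z)$ should therefore factor (after a formal manipulation) as $\ch(z/2)\cdot\chi(2g-1,z)\cdot e^{2(2g-1)\cH} \cdot (1+\text{exponentially small})$, and the problem translates into controlling the $[z^{2g}]$-coefficient of this product against that of $\chi(2g-1,z)$ alone.

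Next I would attack the coefficient extraction via a saddle-point analysis. Writing $\chi(y,z)=R(y,z)\,\exp\bigl(y\,\Phi(y,z)\bigr)$ with $\Phi(y,z)=\tfrac{z\cS'(z)}{\cS(z)}-\log\cS(z)-\cH(y,z)$, Cauchy's formula for $[z^{2g}]\chi(2g-1,z)$ becomes an integral with exponent $(2g-1)\Phi(2g-1,z)-2g\log z$, which I would analyse by locating its saddle $z_*=z_*(g)$. The analogous saddle for $\chi^\sp$ is obtained by replacing $\cH$ with $\cH^\sp$ and inserting $\ch(z/2)$; since $\cH^\sp+\cH$ is exponentially small in $g$ the two saddles coincide to very high order. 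The dominant ratio is then $\ch(z_*/2)\cdot e^{2(2g-1)\cH(2g-1,z_*)}$ up to polynomial corrections, and the polynomial-in-$g$ growth of the ratio should come from the value of the $(y+1-y^2\partial_y\cH^\sp)$-factor in $\chi^\sp$ versus its analogue in $\chi$ at the saddle.

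The main obstacle will be making this saddle-point analysis rigorous. The coefficients $(2g-1)!\,b_g$ in $\cH$ grow faster than any exponential, so $\cH$ is only a formal series in $z$; however, only the terms with $2g'\le 2g$ contribute to $[z^{2g}]\chi(2g-1,z)$, so one must truncate $\cH$ at a degree depending on $g$ and show that the truncation faithfully captures both the location and the Hessian of the saddle. A parallel delicate point is the strict positivity $\chi(\M_g(2g-1))^\sp>0$, which requires ruling out cancellation in the leading saddle contribution; this hinges on a precise sign analysis of $R(2g-1,z_*)$ and of the $\ch(z_*/2)$-insertion, and is coupled to the sign part of Conjecture~\ref{Conj:Eulercharasymptotic} (itself still only supported by numerical evidence). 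I expect this sign non-cancellation, rather than the asymptotic order, to be the genuinely hard step.
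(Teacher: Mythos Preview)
The statement you are addressing is presented in the paper as a \emph{conjecture}, introduced explicitly as arising ``from numerical experiments''; the paper gives no proof, so there is nothing to compare your proposal against.

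As for the proposal itself, it is a heuristic outline rather than a proof, and it contains at least one concrete slip. When you assert that $\cH^\sp+\cH$ is ``exponentially small in $g$'' because the per-term factor $\tfrac{2^{2g'-1}}{2^{2g'-1}-1}=1+O(4^{-g'})$, you are conflating the summation index $g'$ in the definition of $\cH$ with the target genus $g$: for $g'=1$ that factor equals $2$, so the $z^2$-coefficient of $\cH+\cH^\sp$ has the same magnitude as that of $\cH$ itself. Any smallness of $(\cH+\cH^\sp)(2g-1,z_*)$ at a bounded saddle would have to come from the $y^{-2g'}$ weights in the definition of $\cH$, not from the arithmetic factor you cite.

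More fundamentally, your argument halts precisely at the two hard points: making a saddle-point analysis rigorous for a divergent formal series whose truncation error you do not control, and establishing the sign $\chi(\M_g(2g-1))^\sp>0$, for which you explicitly fall back on the still-open Conjecture~\ref{Conj:Eulercharasymptotic}. You also do not flag that Theorem~\ref{thm:eulerseriesspin}, on which your entire approach rests, is itself conditional on the unproven Assumption~\ref{assumption}. What you have written is a research plan, not a proof; the authors, with the same formulas in hand, deliberately left the statement as a conjecture.
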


\subsection{Organization of the paper}
We begin the paper by 
recalling in~Section \ref{Sect:DRmultiscale} the definition of double ramification cycles and their relation to the strata of $k$-differentials  as well as the smooth compactification of these strata by the spaces of multi-scale differentials.
In particular, in Section \ref{Sect:spinDR} we present our proposal for the spin double ramification cycles and the corresponding generalization of Conjecture A. 
In Section~\ref{Sect:splitting}, we prove a {\em splitting formula} for $\psi$-classes on double ramification cycles, i.e. a family of relations between $\psi$-classes on double ramification cycles. The splitting formula is then used to show a list of properties of the functions $\cA_g$ in Section \ref{Sect:Agidentities} which uniquely determine these functions. We finish the proof of Theorem~\ref{th:main} by verifying these properties for the right-hand side of equation \eqref{eqn:main}. We conclude the paper by discussing the spin refinements of our results in Section \ref{Sect:spinrefinement}, and the application to the Euler characteristic of minimal strata of differentials in Section \ref{Sect:Eulerchar}.

In Appendix \ref{Sect:Polyproperties} we collect some results about polynomiality properties of some formulas in our paper, which are used in the proofs of Section \ref{Sect:Agidentities}.

\subsection*{Acknowledgements}
We would like to thank Dawei Chen for providing important insights about the spin components and the boundary of the BCGGM compactification, and Alessandro Giachetto, Reinier Kramer, and Danilo Lewa\'nski for early communications on the spin refinement of Chiodo classes. We are also grateful to Amol Aggarwal, David Holmes,
Martin Möller, Rahul Pandharipande, Aaron Pixton, Paolo Rossi,  Johannes Schwab, and Dimitri Zvonkine
for inspiring discussions.

\rcomment{We are deeply grateful to the anonymous referee whose meticulous review and insightful suggestions significantly strengthened our argumentation and enhanced the overall quality of our paper.}

The first author  has been supported by the DFG Research Training Group 2553.

The second author would like to thank the Mathematical Institute of Leiden University who hosted him during part of the research. 

The third author was supported by the Early Postdoc.Mobility grant 184245 and the grant 184613 of the Swiss National Science Foundation and thanks the Max Planck Institute for Mathematics in Bonn for its hospitality.

\section{Double ramification cycles and moduli spaces of multi-scale differentials} \label{Sect:DRmultiscale}

In this section we recall the definitions of the cycles $\DR_g(a)$ and moduli spaces of multi-scale differentials, which will be central in the rest of the paper. In particular we recall the notion of twisted and level graphs and we fix the notation that will be used in the next sections.

\rcomment{Unless stated otherwise, we assume that $k\geq 1$. Most of the results will be established in this context and then some properties will be extended to $k=0$ by using the polynomiality of double ramification cycles.}

\subsection{Twisted and level graphs} Let $g,n \geq 0$ with $2g-2+n>0$. A {\em stable graph} is the datum of $$\Gamma=(V,H,g:V\to \mathbb{Z}_{\geq 0},i:H\to H,\phi:H\to V,H^i\simeq [\![1,n]\!]),$$
where:
\begin{itemize}
\item The function $i$ is an involution of $H$.
\item  The cycles of length $2$ for $i$ are called {\em edges} while the fixed points are called {\em legs}. We fix the identification of the set of legs with the set $[\![1,n]\!]$ of integers from $1$ to $n$.
\item An element of $V$ is called a {\em vertex} and for a half-edge $h$ with $\phi(h)=v$ we say that $h$ is \emph{incident} to $v$. We denote by $H_\Gamma(v)$ the set of half-edges incident to $v$ and by $n(v)$ the {\em valency} of the vertex $v$, i.e. the cardinality of $H_\Gamma(v)$. 
\item For all vertices $v$ we have $2g(v)-2+n(v)>0$.
\item The genus of the graph, defined as
\[
g(\Gamma) = h^1(\Gamma)+\sum_{v\in V} g(v),\text{ with }h^1(\Gamma)=|E|-|V|+1
\]
is equal to $g$.
\item The graph is connected.
\end{itemize}

A stable graph determines a moduli space $\oM_\Gamma=\prod_{v\in V} \oM_{g(v),n(v)}$ and a morphism $\zeta_\Gamma: \oM_\Gamma\to \oM_{g,n}$ (see e.g. \cite[Appendix A]{GP03}).

We fix a value of $k\geq 0$ and a vector $a$ such that $|a|=k(2g-2+n)$.
\begin{definition}[\cite{FP}]
A {\em twist} (compatible with $a$) on a stable graph $\Gamma$ is a function $I:H\to \RR$ satisfying:
\begin{itemize}
\item For all $v\in V$, we have
$$\sum_{h\in \phi^{-1}(v)} I(h) =  k(2g(v)-2+n(v)).$$
\item   If $(h,h')$ is an edge of $\Gamma$, then we have $I(h)=-I(h')$.
\item If $(h_1,h_1')$ and $(h_2,h_2')$ are edges between the same vertices $v$, $v'$, then $I(h_1)\geq 0\Leftrightarrow I(h_2)\geq 0$. In which case we denote $v\geq v'$.
\item The relation $\geq $ defines a partial order on the set of vertices. 
\item The twist at the leg with label $i$ has value $a_i$.
\end{itemize}
Given a twisted graph $(\Gamma,I)$, we define its multiplicity as 
$$m(\Gamma,I)=\prod_{(h,h')\in E(\Gamma)} \sqrt{-I(h)I(h')}\,.$$

\end{definition}

We now introduce the objects that parametrize the boundary component of the moduli space of multi-scale differentials.

Let $(\Gamma,I)$ be a twisted graph. A {\em level function} of depth $L>0$ on $(\Gamma,I)$ is a surjective function $l:V(\Gamma)\to \{0,-1,\ldots,-L\}$ satisfying  $l(v_1)\geq l(v_2)$ if $v_1\geq v_2$, for any $v_1$ and $v_2$ in $V(\Gamma)$,

\begin{definition}\label{def:levelgraph}
A {\em level graph} $\overline{\Gamma}=(\Gamma,I,l)$ is the datum of a twisted graph together with a level function. We denote by ${\rm LG}(g,a)$ the set of level graphs for the vector $a$ and with no horizontal edges (edge between vertices of the same level) and ${\rm LG}_L(g,a)$ the set of such level graphs of depth $L$.

If $\oGamma$ is a level graph in ${\rm LG}_L(g,a)$, and $1\leq i\leq L$, then we denote by $m(\oGamma)^{[i]}$ and $\ell(\oGamma)^{[i]}$ the product and lcm of the twists at all the edges crossing the level-passage between levels $-i+1$ and $i$ \rcommenttwo{(the $i$-th level passage is a horizontal line just above level $-i$)}. Here, the twist of an edge $e=\{h,h'\}$ is the positive integer $|I(h)|=|I(h')|$.
%\jcomment{We mean here "crossing the level-passage between levels $-i+1$ and $i$"? Also, should we add "Here, the twist of an edge $e=\{h,h'\}$ is the positive integer $|I(h)|=|I(h')|$." ?}
Finally we define $m(\oGamma)=\prod_{i} m(\oGamma)^{[i]}$, and $\ell(\oGamma)=\prod_i \ell(\oGamma)^{[i]}$.
\end{definition}

\subsection{A formula for twisted double ramification cycles} \label{Sec:ConjA}
The double ramification cycle $\DR_g(a)$ on $\oM_{g,n}$ is a cycle compactifying the condition $\omega_\log^{\otimes k} \cong \mathcal{O}_C(a_1 x_1 + \ldots + a_n x_n)$ on $\M_{g,n}$. Many geometric approaches have been proposed to make this statement precise (see \cite[Section 1.6]{HS19} for an overview) and over the last years, all of them have been shown to be equivalent.

A particularly explicit approach is to express the cycle $\DR_g(a)$ in terms of the generators of the tautological ring of $\oM_{g,n}$.
To state the corresponding formula below, we write $\mathcal{G}_{g,n}$ for the set of stable graphs $\Gamma$ of genus $g$ with $n$ legs. 
Fix a vector $a \in \mathbb{Z}^n$ such that the number $k=|a|/(2g-2+n)$ is an integer. Then for a stable graph $\Gamma$, an \emph{admissible weighting} modulo $r$ (with respect to $a$) is a map $w : H(\Gamma) \to \{0, \ldots, r-1\}$ satisfying:
\begin{enumerate}[label=\alph*)]
    \item For every vertex $v \in V(\Gamma)$, we have
    \[
    \sum_{h \in H_\Gamma(v)} w(h) \equiv k(2g-2+n)\ \mathrm{mod}\  r\,.
    \]
    \item For every edge $e=(h,h') \in E(\Gamma)$, we have
    \[
    w(h) + w(h') \equiv 0\ \mathrm{mod}\  r\,.
    \]
    \item For every $i=1, \ldots, n$ and $h_i$ the half-edge associated to the $i$-th marking, we have
    \[
    w(h_i) \equiv a_i\ \mathrm{mod}\  r\,. 
    \]
\end{enumerate}
We write $W_{\Gamma,r}(a)$ for the set of admissible weightings modulo $r$ on $\Gamma$. Then, we define a mixed-degree tautological class $P_g^{r,\bullet}(a)$ on $\oM_{g,n}$ by the formula
\begin{align} \label{eqn:Pixton}
    P_g^{r,\bullet}(a) = r^{2g} \cdot \sum_{\Gamma \in \mathcal{G}_{g,n}} \sum_{w \in W_{\Gamma,r}(a)} \frac{r^{-h^1(\Gamma)}}{|\Aut(\Gamma)|} \xi_{\Gamma *} \mathrm{Cont}_{a,\Gamma, w, r}\,,
\end{align}
where the class $\mathrm{Cont}_{a,\Gamma, w, r}$ on the domain $\oM_\Gamma$ of the gluing map $\xi_\Gamma$ is defined by
\begin{align*}
    \mathrm{Cont}_{a,\Gamma, w, r} = &\prod_{v \in V(\Gamma)}  \exp({- \sum_{m \geq 1} (-1)^{m-1} \frac{B_{m+1}(k/r)}{m(m+1)} \kappa_m(v)})\\
    \cdot & \prod_{i=1, \ldots, n}  \exp({\sum_{m \geq 1} (-1)^{m-1} \frac{B_{m+1}(a_i/r)}{m(m+1)} \psi_i^m})\\
    \cdot &\prod_{\substack{e\in E(\Gamma)\\e=(h,h')}} \frac{1-\exp({\sum_{m \geq 1} (-1)^{m-1} \frac{B_{m+1}(w(h)/r)}{m(m+1)} (\psi_h^m-(-\psi_{h'})^m})}{\psi_h + \psi_{h'}}\,.
\end{align*}
For sufficiently large values of $r$, this class is a mixed degree tautological class on $\oM_{g,n}$ whose coefficients are polynomials in $r$ (see~\cite{JPPZ17}). We denote by $P_g^\bullet(a)$ the evaluation of this class by substituting $r=0$ in its coefficients. Then we define the double ramification cycle as the multiple
$$
\DR_g(a) = P_g^{g}(a)
$$
of the Chow degree $g$ part of this cycle.

If $k>0$ and $a\notin k\ZZ_{>0}^n$, the double ramification cycle $\DR_g(a)$ has a geometric interpretation as the weighted fundamental class of the locus of twisted $k$-differentials in $\oM_{g,n}$. This relationship was first conjectured in \cite{FP} for $k=1$ and in \cite{Sch18} for $k>1$ and was proved in \cite{HS19, BHPSS20}. We recall this expression here.

\begin{definition}
A \emph{simple star graph} $(\Gamma,I)$ is a twisted graph such that 
\begin{itemize}
    \item the vertices of $\Gamma$ consist of a unique \emph{central vertex} $v_0$ and a set $V_{\rm Out}$ of outlying vertices, such that all edges of $\Gamma$ go from $v_0$ to one of the outlying vertices,
    \item the twists at half-edges adjacent to outlying vertices are positive and divisible by $k$. 
\end{itemize}
\end{definition}

%\begin{figure}
%\centering

%  \begin{minipage}[b]{0.4\textwidth}
%\begin{tikzpicture}[scale=0.5]
%\draw (0,0) -- (-1.5,0.4);
%\draw (0,0) -- (-1.5,-0.4);
%\draw (5,-3) -- (6.7,-3);

%\draw (0,0) to[bend left] node[midway,above]{$I(e)$} (5,3) ;
%\draw (0,0) to[bend right] (5,3);
%\draw (0,0) to (5,-3);

%\filldraw[white] (0,0) circle (0.7cm);
%\draw (0,0) circle (0.7cm) node {$v_0$};
%\filldraw[white] (5,3) circle (0.7cm);
%\draw (5,3) circle (0.7cm) node {$v_1$};
%\filldraw[white] (5,-3) circle (0.7cm);
%\draw (5,-3) circle (0.7cm) node {$v_m$};
%\end{tikzpicture}    
%    \caption{A graph $(\Gamma,I)\in {\rm Star}_g(a)$.} \label{fig:simplestar}
%  \end{minipage}
%  \hfill
%    \begin{minipage}[b]{0.4\textwidth}
%  \centering
%\begin{tikzpicture}[scale=0.5]
%\draw (0,0) -- (0.4,-1.5);
%\draw (0,0) -- (-0.4,-1.5);
%\draw (0,5) -- (0,6.7);

%\draw (0,0) to[bend left] node[midway,left]{$I(e)$} (0,5) ;
%\draw (0,0) to[bend right] (0,5);
%\draw (0,0) to (0,5);

%\filldraw[white] (0,0) circle (0.7cm);
%\draw (0,0) circle (0.7cm) node {$v_{-1}$};
%\filldraw[white] (0,5) circle (0.7cm);
%\draw (0,5) circle (0.7cm) node {$v_0$};
%\end{tikzpicture}    
%    \caption{A graph $(\Gamma,I)\in 2-{\rm twist}(g,a)$.} \label{fig:2twistgraph}
%  \end{minipage}

%\end{figure}

Two simple star graphs are called isomorphic if there is an isomorphism of the underlying stable graphs sending the central vertex to the central vertex and respecting the respective twists. 
Denote by ${\rm Star}_g(a)$ the set of isomorphism classes of simple star graphs for the given genus $g$ and vector $a$. 

Given $(\Gamma,I)$ and $v \in V(\Gamma)$ a vertex, we denote by $I(v)$ the vector of integers  $I(h)$ indexed by the half-edges $h$ at $v$. Then we define
\begin{equation}
    \oM_{\Gamma, I} = \oM_{g(v_0)}(I(v_0)) \times \prod_{v \in V_{\rm Out}} \oM_{g(v)}(I(v)/k)\,.
\end{equation}
The space $\oM_{\Gamma,I}$ is naturally a closed substack of $\oM_\Gamma$.

With this notation in place, we are ready to state the relationship between the strata of $k$-differentials and the double ramification cycle. %This theorem will serve as a definition for double ramification cycles.
\begin{theorem}[\cite{HS19, BHPSS20}] \label{Thm:ConjA}
Let $g,n\geq 0,$ $k>0$, and $a \in \mathbb{Z}^n$ such that $|a|=k(2g-2+n)$ and such that $a \notin  k\ZZ_{>0}^n$. Then we have
\begin{equation} \label{eqn:ConjA}
\DR_g(a) = \sum_{(\Gamma,I)\in {\rm Star}_g(a)} \frac{m(\Gamma,I)}{k^{|V_{\rm Out}|}|{\rm Aut}(\Gamma,I)| } \cdot \zeta_{\Gamma *}[\oM_{\Gamma,I}].    
\end{equation}
\end{theorem}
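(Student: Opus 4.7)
The plan is to identify both sides of equation \eqref{eqn:ConjA} as pushforwards from a common geometric compactification of the space of twisted $k$-differentials, namely the moduli space of multi-scale differentials of \cite{BCGGM3}, whose boundary is stratified by level graphs. As an initial sanity check, both sides restrict to the class $[\M_g(a)]$ on the open locus $\M_{g,n}$: the only star graph contributing on the interior is the one consisting of the central vertex alone (for which the weight in \eqref{eqn:ConjA} equals $1$), and under the hypothesis $a\notin k\ZZ_{>0}^n$ Pixton's formula is known to have the correct interior behavior. The heart of the matter is therefore to match boundary contributions.

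For the right-hand side, I would interpret each term $\zeta_{\Gamma *}[\oM_{\Gamma,I}]$ as a contribution from a boundary stratum in the multi-scale compactification: the central vertex carries a twisted $k$-differential at top level, while the outlying vertices support $k$-th powers of abelian differentials at lower levels. The coefficient $m(\Gamma,I)/k^{|V_{\rm Out}|}$ should then arise from the local normal crossing description of \cite{BCGGM3}, with $m(\Gamma,I)$ coming from the prong matching multiplicities at each edge and $k^{-|V_{\rm Out}|}$ from the gerbe structure associated to extracting $k$-th roots at the outlying vertices. To connect this with $\DR_g(a)$, I would prefer a uniqueness/characterization strategy rather than a direct coefficient-by-coefficient comparison: the DR cycle can be pinned down by (i) its restriction to $\M_{g,n}$, (ii) polynomiality of degree $2g$ in the entries of $a$, and (iii) compatibility with pullback along the forgetful map $\pi:\oM_{g,n+1}\to\oM_{g,n}$ adding a marking of twist $k$, namely $\pi^*\DR_g(a)=\DR_g(a,k)$. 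Verifying these properties for the right-hand side then requires analyzing how an added marking of twist $k$ interacts with the star-graph combinatorics, balancing the contribution of placing it on an existing vertex against the contribution of splitting off a new outlying vertex carrying that marking.

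The main obstacle is the combinatorial reconciliation of the weights $m(\Gamma,I)/(k^{|V_{\rm Out}|}|\Aut(\Gamma,I)|)$ with what emerges from Pixton's tautological formula \eqref{eqn:Pixton} after substituting $r=0$ and extracting the Chow degree $g$ part. For $k=0$ this comparison reduces to the analysis of \cite{JPPZ17}; for $k>0$ the novel feature is the distinct role played by the central vertex, which carries the Bernoulli polynomial $B_{m+1}(k/r)$, as opposed to the outlying vertices whose adjacent half-edge twists are divisible by $k$ and thus produce rational Bernoulli contributions of a different flavor. Matching these tautological expressions precisely to the multi-scale boundary geometry, and in particular correctly extracting the prong matching and gerbe multiplicities as limits of the $r\to 0$ specialization, is the most delicate step of the argument and is the main technical content of \cite{HS19, BHPSS20}.
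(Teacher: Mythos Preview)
The paper does not supply its own proof of this theorem: it is stated as a result of \cite{HS19, BHPSS20} and used as a black box thereafter. So there is no in-paper argument to compare against, only the cited literature.

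That said, your proposed strategy has a genuine gap. You suggest pinning down $\DR_g(a)$ by the three properties (i) restriction to $\M_{g,n}$, (ii) polynomiality of degree $2g$, and (iii) compatibility with the forgetful pullback $\pi^*\DR_g(a)=\DR_g(a,k)$. These properties do \emph{not} characterize $\DR_g(a)$ uniquely. Any boundary-supported cycle of codimension $g$ that pulls back correctly under $\pi$ and depends polynomially on $a$ could be added without violating (i)--(iii); there are many such cycles. So even if you verified (i)--(iii) for the right-hand side of \eqref{eqn:ConjA}, you would not be able to conclude the equality. The actual arguments in the cited references proceed differently: \cite{HS19} works on a suitable compactified Picard stack, defining a geometric double ramification class via the closure of the Abel--Jacobi section and computing its pushforward to $\oM_{g,n}$ via an analysis of intersection multiplicities (this is where the factors $m(\Gamma,I)$ genuinely arise, as local intersection numbers rather than as prong-matching counts); \cite{BHPSS20} establishes Pixton's formula as computing this universal class directly. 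Neither argument is a uniqueness-by-characterization argument of the type you propose.

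Your opening paragraph and your closing acknowledgment that the hard work lies in the cited papers are accurate, but the middle paragraph's characterization scheme would not succeed as written.
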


% \begin{theorem}[ \cite{JPPZ17},\cite{BHPSS20}, Theorem 58] \label{Thm:ConjA} Let $g,n\geq 0,$ $k>0$, and $a \notin  k\ZZ_{>0}^n$. Then we have: $\DR_g(a)=P_g(a).$
% \end{theorem}

\subsection{The spin double ramification cycle} \label{Sect:spinDR}
In this section, we give an explicit proposal for the cycle $\DR_g^\sp$ from Assumption \ref{assumption} as well as a conjecture generalizing Conjecture A of~\cite{FP} and~\cite{Sch18} to the spin setting. The proposal for  $\DR_g^\sp$  is inspired by recent work \cite{spinhurwitz} on spin Hurwitz numbers. Its formula consists in a small modification of the original formulas for the double ramification cycle presented above.

First we define the spin analogue of Pixton's class above. Let  $r$ be an even number, and $a \in \mathbb{Z}^n$ a vector of odd integers such that the number $k=|a|/(2g-2+n)$ is an odd integer.  Given a stable graph $\Gamma$, we write $W_{\Gamma,r}(a)^\mathrm{odd}$ for the set of admissible weightings modulo $r$ on $\Gamma$ satisfying that all numbers $w(h)$, $h \in H(\Gamma)$, are odd. It is easy to see that
\begin{itemize}
    \item for $\Gamma$ a tree there exists a unique admissible weighting modulo $r$ and it is automatically odd,
    \item for arbitrary $\Gamma$, the cardinality of $W_{\Gamma,r}(a)^\mathrm{odd}$ is precisely $(r/2)^{h^1(\Gamma)}$.
\end{itemize}
Then, we define a mixed-degree tautological class $P_g^{r,\sp,\bullet}(a)$ on $\oM_{g,n}$ by the formula
\begin{align} \label{eqn:spinPixton}
    P_g^{r,\sp,\bullet}(a) = r^{2g}2^{-g} \cdot \sum_{\Gamma \in \mathcal{G}_{g,n}} \sum_{w \in W_{\Gamma,r}(a)^{\mathrm{odd}}} \frac{(r/2)^{-h^1(\Gamma)}}{|\Aut(\Gamma)|} \xi_{\Gamma *} \mathrm{Cont}_{a,\Gamma, w, r}\,,
\end{align}
where the class $\mathrm{Cont}_{a,\Gamma, w, r}$ is the class defined in the previous section. Using similar arguments as presented in \cite[Appendix A]{JPPZ17}, one sees that the cycle $P_g^{r,\sp,\bullet}(a)$ become polynomial in $r$ for sufficiently large (even) $r$. Denote by $P_g^{g,\sp}(a)$ the value of the degree $g$ part of this polynomial at $r=0$. Then we propose that the spin double ramification cycle should be given by
\begin{equation} \label{eqn:DRspinformula}
    \DR_g^\sp(a) = P_g^{g,\sp}(a)\,.
\end{equation}
As mentioned before, the formula above is inspired by the paper \cite{spinhurwitz}. There, the authors introduce a spin Chiodo class $C^\vartheta(r,k;\widetilde a)$, where the modified vector $\widetilde a$ is defined by the convention $2 \widetilde a_i +1 = a_i$. The comparison to the class $ P_g^{r,\sp,\bullet}(a)$ is
\[
C^\vartheta(r,k;\widetilde a) = r^{-1}  P_g^{r,\sp,\bullet}(a)\,,
\]
%\acomment{Isn't there a power of $2$ of difference here?}
%\jcomment{I will have to check again carefully; for my original version in main.tex I did try to check very carefully that the constant factors were correct.}
%\jcomment{I checked everything again and I believe this is correct as stated.}
see \cite[Proposition 9.7, Proposition 9.20]{spinhurwitz}. We used an implementation by Danilo Lewa\'nski of the formula for $\DR_g^\sp(a)$ in the software \texttt{admcycles} \cite{admcycles} to verify that for the cases
\[
(g=1, n \leq 4), (g=2, n \leq 2), (g=3,n=1)
\]
the top-$\psi$ intersection number of the corresponding cycle agrees with the prediction from Theorem \ref{th:spin} for \emph{all} admissible input vectors $a$.

Concerning the statement in Assumption~\ref{assumption} that the cycle \eqref{eqn:DRspinformula} is a polynomial  in $a$ of degree $2g$, it will be proved in the paper \cite{PZ21} by Pixton and Zagier, alongside the corresponding statement for the classical double ramification cycle. Furthermore, the statement $\pi^* \DR_g(a)^\sp = \DR_g(a,k)^\sp$ can be shown from the formula above using a short direct computation.

As for the remaining parts of Assumption \ref{assumption}, namely the relations between top $\psi_1$-intersection numbers of $\DR_g^\sp(a)$ and $[\oM_g(a)]^\sp$ (for suitable $k,a$), these would be implied by a spin variant of Conjecture A, which we present in the following. For this, we denote by ${\rm Star}_g(a)^\odd$ the set of star graphs with odd values of twists at all half-edges. Given such a pair $(\Gamma, I) \in {\rm Star}_g(a)^\odd$ we write 
\begin{equation}
    [\oM_{\Gamma, I}]^\sp = [\oM_{g(v_0)}(I(v_0))]^\sp \otimes \prod_{v \in V_{\rm Out}} [\oM_{g(v)}(I(v)/k)]^\sp\,.
\end{equation}
Then we conjecture the following relationship between the spin cycles of the strata of $k$-differentials and the spin double ramification cycle above.

% . Then by analogy with the definition of $\DR$-cycles above, we define the {\em spin $\DR$-cycle} in $A^g(\oM_{g,n},\QQ)$ as:
% \jcomment{We need to restrict here to particular vectors $a$, otherwise the formula does not make sense (odd entries, at least one nonpositive or divisible by $k$); I am also against calling this $\DR_g^\sp(a)$}
% \begin{equation} 
% \DR_g^\sp(a) = ,
% \end{equation}
% \acomment{Check of potential coefficient of the form $2^{h_1(\Gamma)}$.} 
% where:

\begin{conjecture}\label{conj:DRspinformula} Let $g,n\geq 0,$ $k>0$, and $a \in \mathbb{Z}^n$ such that $|a|=k(2g-2+n)$ satisfying that $a \notin  k\ZZ_{>0}^n$ and that all entries of $a$ and $k$ are odd. Then we have
\begin{equation} \label{eqn:spinConjA}
  \sum_{(\Gamma,I)\in {\rm Star}_g(a)^\odd} \frac{m(\Gamma,I)}{k^{|V_{\rm Out}|}|{\rm Aut}(\Gamma,I)| } \cdot \zeta_{\Gamma *}[\oM_{\Gamma,I}]^\sp  = \DR_g^\sp(a)\,.
\end{equation}
\end{conjecture}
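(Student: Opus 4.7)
The plan is to adapt the strategy from \cite{BHPSS20} that proves the classical Conjecture A (Theorem \ref{Thm:ConjA}) to the spin setting. In that work, one expresses $\DR_g(a)$ as a cohomology class on a moduli space of $r$-th roots of a suitable line bundle and performs a torus localization whose fixed loci are indexed by star graphs, producing exactly the right-hand side of \eqref{eqn:ConjA} after taking $r \to 0$. Here the natural ambient space is a moduli of $2r$-th roots of $\omega_{\log}^{\otimes k}(-\sum_i a_i x_i)$ subject to an odd-parity condition on the admissible weighting; by \cite[Propositions 9.7 and 9.20]{spinhurwitz} this moduli has (up to normalization) $P_g^{r,\sp,\bullet}(a)$ as a pushforward, so that the degree-$g$ part evaluated at $r=0$ yields a localizable presentation of $\DR_g^\sp(a)$.

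First I would run the torus localization on this moduli of odd $2r$-th roots. Its fixed loci are parametrized by decorated stable graphs with half-edge weights, and the odd-parity restriction forces every half-edge decoration to be odd, so in the $r \to 0$ limit only graphs in ${\rm Star}_g(a)^{\odd}$ survive. For each such $(\Gamma,I)$, the contribution should factor as a product over vertices of characteristic classes on the components $\oM_{g(v)}(I(v)/k)$ for outlying vertices and $\oM_{g(v_0)}(I(v_0))$ for the central vertex. After bookkeeping of automorphisms, edge multiplicities, and the $2^{-g}$ normalization, the combinatorial prefactor should reproduce $m(\Gamma,I)/(k^{|V_{\rm Out}|}|\Aut(\Gamma,I)|)$ as in \eqref{eqn:spinConjA}.

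The crucial identification is then that the vertex contribution produced by localization equals the signed cycle $[\oM_{g(v)}(I(v)/k)]^{\sp}$ (respectively $[\oM_{g(v_0)}(I(v_0))]^{\sp}$). In the non-spin case of \cite{BHPSS20}, the analogous step reduces to the definition of the strata of $k$-differentials as the fundamental cycle of a line-bundle locus. In the spin setting one needs the further fact that the characteristic class produced at each vertex computes $(-1)^{h^0(L)}$ times the fundamental class, which amounts to recognizing it as a virtual Euler class of $R\pi_* L$ for the universal spin structure — essentially an index-theoretic computation on the moduli of spin curves.

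The main obstacle will be controlling the parity of the spin structure at the boundary of $\oM_g(a)^{\sp}$. Along the BCGGM compactification, the spin structure degenerates at each node and its parity is redistributed among the components according to Cornalba's exchange rule. One must verify that this exchange rule matches precisely the combinatorics of odd admissible weightings used in \eqref{eqn:spinPixton}, so that the sign contribution at each odd-weight edge in the localization mirrors the jump of parity across the corresponding node. The explicit computational verification for small $(g,n)$ mentioned in the text gives strong evidence that the combinatorics align, but making this rigorous across the entire boundary is the core difficulty left open by the plan.
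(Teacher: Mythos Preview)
The statement you are attempting to prove is labelled a \emph{conjecture} in the paper and is explicitly left open there; the authors give no proof, only numerical and low-genus evidence together with a sketch of a possible approach (items (i)--(iii) following Proposition~\ref{prop:spinConjAimpliesAssumption}) which they ``plan to pursue \ldots\ in the future.'' So there is no proof in the paper for your proposal to be compared against, and your own text honestly concedes that the core boundary-parity verification is ``left open by the plan.'' In other words, you have written a proof outline for an open conjecture, not a proof.

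That said, it is worth contrasting your outline with the one the paper sketches. The paper proposes to work on the compactified moduli of spin curves $\overline{\mathcal{S}}_{g,n}$: define a lifted double ramification cycle $\widehat{\DR}_g(a)$ by pulling back the closure of the zero section along a section of the universal Picard stack, then (ii) adapt the geometric machinery of \cite{HS19} to identify $F_*\big(\widehat{\DR}_g(a)\cdot[\overline{\mathcal{S}}_{g,n}]^\sp\big)$ with the star-graph sum on the left of \eqref{eqn:spinConjA}, and (iii) adapt the tautological formula of \cite{BHPSS20} on the Picard stack, combined with a computation analogous to \cite[Proposition~9.18]{spinhurwitz}, to identify the same pushforward with $\DR_g^\sp(a)$. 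Your plan instead aims to run a torus localization directly on a moduli of odd $2r$-th roots and extract both sides simultaneously from the fixed-locus analysis; this is closer in spirit to the original \cite{JPPZ17} localization than to the Picard-stack route of \cite{BHPSS20}. The paper's two-step strategy separates the geometric identification of intersection multiplicities (your ``Cornalba exchange rule'' step) from the algebraic polynomial interpolation, which may make each half more tractable; your single localization would, if it worked, be more self-contained but concentrates all the difficulty into the vertex-contribution identification you flag as the obstacle.

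The specific gap you name---showing that the localized vertex class equals $[\oM_{g(v)}(I(v))]^{\sp}$ rather than merely $[\oM_{g(v)}(I(v))]$---is genuine and is exactly the content that neither you nor the paper supplies. Your appeal to ``a virtual Euler class of $R\pi_*L$'' is suggestive but not a computation: one would need to prove that the Chiodo-type class at each vertex, restricted to the locus where the root is a square root of $\omega$, picks out $(-1)^{h^0(L)}$ times the fundamental class, and this does not follow from \cite[Propositions~9.7, 9.20]{spinhurwitz} alone. Until that step is carried out, your proposal remains, like the paper's, a plausible strategy rather than a proof.
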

\begin{proposition} \label{prop:spinConjAimpliesAssumption}
Assuming Conjecture \ref{conj:DRspinformula} is true, the class $\DR_g^\sp(a)$ satisfies properties (3) and (4) of Assumption~\ref{assumption}.
\end{proposition}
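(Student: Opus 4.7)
Plan. For property~(3), apply Conjecture~\ref{conj:DRspinformula} to expand $\DR_g^\sp(a)$ as a sum over odd simple star graphs. The trivial star graph (a single vertex) contributes $[\oM_g(a)]^\sp$ with coefficient $1$. For any non-trivial graph $(\Gamma,I)$ with $|V_{\rm Out}| \geq 1$, apply the projection formula
\[
\int_{\oM_{g,n}}\zeta_{\Gamma,*}[\oM_{\Gamma,I}]^\sp\cdot P = \int_{\oM_\Gamma}[\oM_{\Gamma,I}]^\sp\cdot \zeta_\Gamma^*P.
\]
The hypothesis on $P$ forces every $\psi_i$ in $P$ to correspond to a leg on the central vertex $v_0$, since each leg at an outlying vertex has twist $a_i$ positive and divisible by $k$. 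Thus $\zeta_\Gamma^*P$ is concentrated on the central factor of $\oM_\Gamma = \prod_v \oM_{g(v),n(v)}$, and the integral factors as a product over vertices. For each outlying vertex $v$, the factor $\int_{\oM_{g(v),n(v)}}[\oM_{g(v)}(I(v)/k)]^\sp$ vanishes: the integrand has codimension $g(v)-1$, strictly less than $\dim\oM_{g(v),n(v)}=3g(v)-3+n(v)$, the gap $2g(v)-2+n(v)$ being positive by stability. So only the trivial star graph contributes, proving (3).

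For property~(4), the hypothesis $a\in k\ZZ_{>0}^n$ with $k = 1$ places $a$ outside the direct scope of Conjecture~\ref{conj:DRspinformula}. I reduce to an $(n+1)$-marked setting via polynomial interpolation. Consider the family $\tilde a(s) = (a_1-2s,\ a_2,\ldots,a_n,\ 2s+1)$ on $\oM_{g,n+1}$; it satisfies $\tilde k = 1$ and has all odd entries for every integer $s$. By Assumption~(1), the function
\[
F(s) := \int_{\oM_{g,n+1}}\DR_g^\sp(\tilde a(s))\cdot \tilde\psi_1^{2g-2+n}
\]
is a polynomial in $s$ of degree at most $2g$. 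At $s = 0$, property~(2) gives $\DR_g^\sp(\tilde a(0)) = \pi^*\DR_g^\sp(a)$; combining this with the decomposition $\tilde\psi_1^m = (\pi^*\psi_1)^m + D_{1,n+1}(\pi^*\psi_1)^{m-1}$ (which follows from $\tilde\psi_1 = \pi^*\psi_1 + D_{1,n+1}$ and $\tilde\psi_1\cdot D_{1,n+1} = 0$), the vanishing $\pi_*1 = 0$, and the isomorphism $D_{1,n+1}\simeq\oM_{g,n}$, one obtains $F(0) = \cA_g^\sp(a)$. For $s\geq (a_1+1)/2$, the first entry of $\tilde a(s)$ is non-positive, so property~(3)---just established---yields $F(s) = \int[\oM_g(\tilde a(s))]^\sp\tilde\psi_1^{2g-2+n}$.

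It remains to interpolate the polynomial $F$ from the accessible range $s\geq (a_1+1)/2$ back to $s = 0$ and to recognize the value there as $-a_1\int[\oM_g(a)]^\sp\psi_1^{2g-2+n}$. The factor $-a_1$ emerges from the discrete jump in codimension of the stratum cycle $[\oM_g(\tilde a(s))]^\sp$ across the transition $s \approx a_1/2$, where the stratum passes from the generic case of codimension $g$ to the mixed-dimension case $k=1$, $a>0$ of codimension $g-1$, combined with the $D_{1,n+1}$-splitting identity above applied to $\pi^*[\oM_g(a)]^\sp = [\oM_g(\tilde a(0))]^\sp$. The main technical obstacle is implementing this polynomial interpolation cleanly and extracting the coefficient $-a_1$; the argument mirrors the non-spin case, where the analogous statement in equation~\eqref{eq:drstrata} is derived from Theorem~\ref{Thm:ConjA} along the same lines, and our spin refinement follows in direct parallel.
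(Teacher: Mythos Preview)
Your argument for property~(3) is correct but takes a different route from the paper's. The paper observes that, for a nontrivial star graph, the central vertex carries a stratum of dimension $2g_0-3+n_0$, and shows by a short count that this is strictly smaller than $\deg P = 2g-3+n$, so the product with $P$ vanishes there. You instead factor the integral and kill it at the outlying vertices. Both work; your version implicitly uses that every outlying vertex has $g(v)\geq 1$ (since $I(v)/k\in\ZZ_{>0}^{n(v)}$ sums to $2g(v)-2+n(v)$, forcing $2g(v)-2\geq 0$), which you should state, as otherwise the formula ``codimension $g(v)-1$'' is not valid.

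For property~(4) your setup is correct: the family $\tilde a(s)$ has $\tilde k=1$, odd entries, and via Assumption~(2) and the standard $\tilde\psi_1 = \pi^*\psi_1 + D_{1,n+1}$ identity you correctly obtain $F(0)=\cA_g^\sp(a)$; and for large $s$ property~(3) applies. But the ``interpolation'' step is not a proof as written. Knowing that $F$ is a polynomial and that $F(s)=\int[\oM_g(\tilde a(s))]^\sp\tilde\psi_1^{2g-2+n}$ for large $s$ does not by itself produce the value $-a_1\int[\oM_g(a)]^\sp\psi_1^{2g-2+n}$ at $s=0$; indeed, naively substituting $s=0$ into the stratum side gives $0$ (both terms of the $\tilde\psi_1$-expansion die for dimension reasons when paired with $\pi^*[\oM_g(a)]^\sp$), so a genuine correction term must be identified, and ``discrete jump in codimension'' does not supply it. The paper's own proof of~(4) is equally brief---it simply points to \cite[Section~2.2]{Sau3}---so your level of detail is comparable. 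But the mechanism in that reference is not an interpolation in an auxiliary parameter; it proceeds via the relation $\eta = a_1\psi_1 - (\text{boundary})$ on the multi-scale space (Proposition~\ref{prop:Adrienrel}) together with the star-graph expansion, which is what actually produces the factor $-a_1$. If you want to complete your route, you would need an independent, closed formula for $\int[\oM_g(\tilde a(s))]^\sp\tilde\psi_1^{2g-2+n}$ as a polynomial in $s$, which amounts to redoing that argument anyway.
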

\begin{proof}
To show property (3) we intersect both sides of the equality \eqref{eqn:spinConjA} with the monomial $P$ in classes $\psi_i$ for which $a_i \notin  k \mathbb{Z}_{>0}$. This last assumption on the $a_i$ forces all markings associated to classes $\psi_i$ appearing in $P$ to be on the central vertex of any graph $(\Gamma,I)\in {\rm Star}_g(a)^\odd$. If $\Gamma$ is non-trivial, a short computation shows that the class $[\oM_{g(v_0)}(I(v_0))]^\sp$ inserted at the central vertex has dimension strictly less than the degree $2g-3+n$ of $P$. Thus for the intersection number of $P$ with the left-hand side of \eqref{eqn:spinConjA}, the only surviving term comes from the trivial star graph. This gives the left-hand side of the equation from property (3) and thus finishes the proof. Finally, property (4) can be shown by similar arguments as appear in \cite[Section 2.2]{Sau3}.
\end{proof}
%If this conjecture is valid then the class 
%\acomment{say a word on this using what Johannes typed before.}
%\jcomment{I added the proposition and proof above; ok like this?}
Apart from theoretical evidence for Conjecture \ref{conj:DRspinformula} which we discuss below, there are a few cases where it can be verified directly. 

In genus $g=0$ it is trivial: a spin bundle $\mathcal{L}$ on $\mathbb{P}^1$ is always even and so both sides of the conjecture equal the fundamental class of $\oM_{0,n}$.

In genus $g=1$, for a point $(C, x_1, \ldots, x_n) \in \mathcal{M}_1(a)$ with $a$ odd, the spin bundle 
$$L = \mathcal{O}_C\left(\frac{a_1-k}{2}x_1 + \ldots + \frac{a_n-k}{2}x_n \right)$$
on an elliptic curve $C$ is a $2$-torsion line bundle. We can distinguish two cases: for $L=\mathcal{O}_C$ it has precisely one section and thus it is odd, whereas for $L \neq \mathcal{O}_C$ it has no section and thus is even. Therefore the odd components of $\mathcal{M}_1(a)$ are precisely the loci where $L$ is trivial and thus they are given by $\mathcal{M}_{1}((a_1-k)/2, \ldots, (a_n-k)/2)$. Using that $[\rcomment{\oM_1}(a)]^\sp = [\rcomment{\oM_1}(a)]-2\cdot[\rcomment{\oM_1}(a)]^{\mathrm{odd}}$ we then have
\begin{align} \label{eqn:M1aspin}
[\oM_{1}(a_1, \ldots, a_n)]^\sp = [\oM_{1}(a_1, \ldots, a_n)] -2 \cdot [\oM_{1}(\frac{a_1-k}{2}, \ldots, \frac{a_n-k}{2})]\,.
\end{align}
The cycles $[\oM_1(a)]$ themselves are determined by the original Conjecture A.
Thus in genus $g=1$ one can explicitly compute both sides of the equality \eqref{eqn:spinConjA} since in the sum over star graphs, we only need to compute the cycles $[\oM_0(a)]^\sp = [\oM_0(a)]$ and $[\oM_1(a)]^\sp$.
% which are calculated by the formula \jcomment{make appropriate reference; maybe just explain here and cite in proof below}  
% \[
% [\oM_{1,n}(a_1, \ldots, a_n)]^\sp = [\oM_{1,n}(a_1, \ldots, a_n)] -2 \cdot [\oM_{1,n}(\frac{a_1-k}{2}, \ldots, \frac{a_n-k}{2})]\,.
% \]
A short calculation (similar to one appearing in \cite[Section 3.3.1]{Sch18}) then verifies Conjecture \ref{conj:DRspinformula}. 

Finally, using the software \texttt{admcycles} \cite{admcycles}, the conjecture can be checked for $g=2$, $a=(5,-1)$. In the sum over star graphs $(\Gamma, I)$, the only genus $2$ spin cycles that appear as factors of $[\oM_{\Gamma,I}]^\sp$ are $[\oM_2(3)]^\sp$ and  $[\oM_2(5,-1)]^\sp$. In these cases, it follows from the classification of the connected components of $\oM_2(3)$ and $\oM_2(5,-1)$  presented in \cite{KZ, Boissy} that the spaces have precisely two components: a hyperelliptic one of odd parity and a non-hyperelliptic one of even parity. This implies that
\[
[\oM_2(3)]^\sp = [\oM_2(3)] -2 \cdot  [\overline{\mathrm{Hyp}}_{2,1}], \quad 
[\oM_2(5,-1)]^\sp = [\oM_2(5,-1)] -2 \cdot  [\overline{\mathrm{Hyp}}_{2,2}]\,,
\]
where $\overline{\mathrm{Hyp}}_{2,1} \subseteq \oM_{2,1}$ and $\overline{\mathrm{Hyp}}_{2,2} \subseteq \oM_{2,2}$ are the loci of (hyperelliptic) curves with one or two marked Weierstrass points. Their fundamental classes can be computed using the methods described in \cite{SvZ}. With these inputs, all terms appearing in Conjecture \ref{conj:DRspinformula} can be computed as tautological classes and it is verified that the claimed equality follows from known tautological relations.

\rcomment{After the first publication of the present paper, Wong \cite{Wong_spin} presented an algorithm for calculating the classes $[\oM_{g}(a)]^{\rm spin}$ in many examples. Implementing this in the software \cite{diffstrata}, he checked Conjecture \ref{conj:DRspinformula} in several further cases in genus $g=2,3,4$ (see \cite[Section 7.3]{Wong_spin}).}

A natural path towards a proof of Conjecture~\ref{conj:DRspinformula} should be as follows:
\begin{enumerate}[label=(\roman*)]
    \item Consider a compactification $\overline{\mathcal{S}}_{g,n}$ of the moduli space 
    $$\mathcal{S}_{g,n} = \{(C, x_1, \ldots, x_n, \mathcal{L}) : \mathcal{L}^{\otimes 2} \cong \omega_C \}$$ 
    of spin curves (see \cite{Cornalba,C08, AJ03}). As seen before, this space decomposes into odd and even components, according to the parity of $\mathcal{L}$, and we define
    \[[\overline{\mathcal{S}}_{g,n}]^\sp = [\overline{\mathcal{S}}_{g,n}]^{\mathrm{even}} -  [\overline{\mathcal{S}}_{g,n}]^{\mathrm{odd}}\,.\]
    \item On the other hand, we can consider the universal Picard stack $\mathcal{P}ic_{g,n}$ over $\overline{\mathcal{S}}_{g,n}$ parameterizing a curve $C$ in $\overline{\mathcal{S}}_{g,n}$ together with a line bundle $\mathcal{M}$ on $C$.
    Let $e \subseteq \mathcal{P}ic_{g,n}$ be the codimension $g$ locus where $\mathcal{M}=\mathcal{O}_C$ is trivial, and denote by $\overline{e}$ its closure. 
    
    Given $k$ odd and a vector $a$ of odd integers summing to $k(2g-2+n)$, we can consider the section $\sigma_a : \overline{\mathcal{S}}_{g,n} \to \mathcal{P}ic_{g,n}$ given by
    $$
     \mathcal{M} = \mathcal{L}^\vee \otimes \omega_C^{\otimes(-k+1)/2} \otimes \mathcal{O}_C\left(\sum_{i=1}^n \frac{a_i-k}{2}x_i\right).
    $$
    We define the cycle 
    $$\widehat{\DR}_g(a) = \sigma^* [\overline{e}] \in A^{g}(\overline{\mathcal{S}}_{g,n})\,.$$
    Intuitively, this cycle compactifies the condition 
    $$
     \mathcal{L} \cong \omega_C^{\otimes(-k+1)/2} \otimes \mathcal{O}_C\left(\sum_{i=1}^n \frac{a_i-k}{2}x_i\right).
    $$    
    that we used in the definition of the spin refinement of the strata of $k$-differentials. We then expect that the machinery of the paper \cite{HS19} can be adapted to show that the cycle
    \begin{equation*} \label{eqn:FpushDR}
        F_* \left(\widehat{\DR}_g(a) \cdot [\overline{\mathcal{S}}_{g,n}]^\sp \right)
    \end{equation*}
    is precisely given by the linear combination of cycles on the left-hand side of \eqref{eqn:spinConjA}, with the numbers $m(\Gamma, I)$ being related to intersection multiplicities of the section $\sigma$ with the locus $\overline{e}$.

    % Over the stack $\overline{\mathcal{S}}_{g,n}$ 
    % Given $k$ odd and a vector $a$ of odd integers summing to $k(2g-2+n)$, we define the lift $\widehat{\DR}_g(a)\in A^{g}(\overline{\mathcal{S}}_{g,n})$ of $\DR_g(a)$ to be the cycle defined by the equality of line bundles in the universal Picard stack over  $\overline{\mathcal{S}}_{g,n}$ (as defined for example in~\cite{??}) \jcomment{I don't know if this is defined anywhere}:

    % This cycle satisfies the following equality:
    % \begin{equation*} \label{eqn:FpushSbar}
    %     F_* \left([\widehat{\DR}_g(a)] \cdot [\overline{\mathcal{S}}_{g,n}]^\sp \right) |_{\M_{g,n}}= [\M_g(a)]^\sp\,.
    % \end{equation*}
    % and we expect that it should satisfy the more refined equality:
    %  \begin{equation*} \label{eqn:FpushDR}
    % F_* \left(\widehat{\DR}_g(a) \cdot [\overline{\mathcal{S}}_{g,n}^\sp] \right) = \DR_g^\sp(a).
    % \end{equation*}
    % Proving this formula would be the hard part of the proof as one would require adapt the results from~\cite{HS19} to $\overline{\mathcal{S}}_{g,n}$.
        \item On the other hand, generalizing the machinery of~\cite{BHPSS20}, the class $[\overline{e}] \in A^g(\mathcal{P}ic_{g,n})$ should have a formula in the tautological ring of $\mathcal{P}ic_{g,n}$ which is structurally very similar to Pixton's formula for the classical double ramification cycle. Pulling this back under $\sigma$ we obtain a formula for $\widehat{\DR}_g(a)$ and a computation analogous to \cite[Proposition 9.18]{spinhurwitz} should imply
    \begin{equation*} \label{eqn:FpushDR*}
    F_* \left(\widehat{\DR}_g(a) \cdot [\overline{\mathcal{S}}_{g,n}^\sp] \right) = \DR_g(a)^\sp,
    \end{equation*}
    which would conclude the proof.
\end{enumerate}

%\begin{remark} Assumption~\ref{assumption} would also be valid if one proved a weaker version of Conjecture~\ref{conj:DRspinformula} (that we do not state here) that would only require the precise analysis of the boundary terms of $\widehat{\DR}_g(a)$ in the moduli space of compact type spin curves. Such a weaker result seems more accessible as the parity of spin structures on curves of compact type is easier to track. 
%\jcomment{Adrien: I must say that I do not fully understand this Remark. Do you mean something like the following?\\
%"We note that the left-hand side of the equality \eqref{eqn:spinConjA} in Conjecture \eqref{conj:DRspinformula} is a weighted sum of the irreducible components of the moduli space of twisted $k$-differentials which was defined in \cite{FP}. For the argument in Proposition \ref{prop:spinConjAimpliesAssumption} it would be sufficient to know that $\DR_g^\sp(a)$ is given by such a weighted fundamental class and that the weights in compact type agree with those from equality \eqref{eqn:spinConjA}. Such a weaker result seems more accessible as the parity of spin structures on curves of compact type is easier to track."
%}
%\end{remark}

We plan to pursue these directions further in the future.

\subsection{The moduli space of multi-scale differentials.} \label{Sec:multiscaled}

We recall now the definition and some features of the moduli space of multi-scale differentials, a smooth modular compactification of strata of differentials with normal crossing boundary divisors.

\subsubsection{The incidence variety compactification}
Given a vector $P=(p_1,\ldots,p_n)$ of positive integers, we denote by $\oOm_{g,n}^k(P)$ the vector bundle on $\oM_{g,n}$ defined as $$\pi_{*} \omega_{\rm log}^{\otimes k}(p_1\sigma_1+\ldots,p_n\sigma_n)$$ where we recall that $\pi:\oC_{g,n}\to \oM_{g,n}$ is the universal curve and the $\sigma_i:\oM_{g,n}\to \oC_{g,n}$ are the sections associated to the markings. If $a$ is a vector of integers satisfying $|a|=k(2g-2+n)$, then for a sufficiently large value of $P$, we define $\Om_{g}(a)$ as the sub-space of $\oOm_{g,n}^k(P)$ of smooth curves with a $k$-differentials with singularities prescribed by $a$. We denote by $\PP\oOm_{g}(a)$ the Zariski closure of $\PP\Om_{g}(a)$ in $\PP\oOm_{g,n}^k(P)$ (the geometry of this space does not depend on the choice of $P$). This  space is the { incidence variety compactification of $\M_g(a)$} (which is not smooth). See \cite{BCGGM2} for the description of the boundary of this compactification.

\subsubsection{The moduli space of multi-scale differentials} The  incidence variety admits a modular desingularization $\PP\Xi_g(a)\to \PP\oOm_{g}(a)$, the {\em moduli space of multi-scale differentials}, which is defined in~\cite{BCGGM3} in the case $k=1$ and in~\cite{CMZarea} for higher values of $k$. We denote by $p:\PP\Xi_g(a)\to \oM_g(a)$ the morphism defined as the composition of the desingularization and the forgetful morphism $\PP\oOm_g(a)\to \oM_{g}(a)$. This morphism restricts to an isomorphism over $\M_g(a)$. 

The boundary $\PP\Xi_g(a)\setminus \M_g(a)$ is a simple normal crossing divisor. The boundary components of this space $\PP\Xi_g(a)$ are parametrized by level graphs, as introduced in Definition \ref{def:levelgraph}. The sum of the depth and the number of horizontal edges of a level graph is the codimension of the associated boundary component. In particular divisors are indexed by either level graphs of depth 0 with 1 (horizontal) edge, or level graphs in ${\rm LG}_1(g,a)$. We will denote by $\PP\Xi(\oGamma)\subseteq \PP\Xi_g(a)$ the boundary component corresponding to the level graph $\oGamma$. 

If $\oGamma$ is a graph in ${\rm LG}_L(g,a)$, then the associated boundary component $\PP\Xi(\oGamma)$ parametrizes {\em multi-scale differentials} compatible with $\oGamma$ modulo an equivalence relation given by the action of a torus, called {\em level rotation torus}. We explain now the details that we will need for the case of $k=1$ in Section \ref{sec:spinboundary}, where we investigate the spin parity of some boundary divisors. We refer to \cite{BCGGM3} for the full description of this compactification. 

An {\em abelian twisted differential} is a tuple $(\omega_v)_{v\in V(\oGamma)}$ of abelian differentials on each component of the stable curve defined by $\oGamma$. We say that an abelian twisted differential  is compatible with the level graph $\oGamma$ if each $\omega_v$ has vanishing orders prescribed by the twists of $\oGamma$ and if it satisfies the {\em global residue condition} (GRC)  defined in~\cite[Definition 1.4]{BCGGM2}. 
An abelian multi-scale differential compatible with $\oGamma$ is a  twisted differential  compatible with $\oGamma$ together with the choice of a prong-matching for each non-horizontal edge of $\Gamma$. A {\em prong-matching} on a vertical edge of $\oGamma$ is a \rcommenttwo{cyclic-order-reversing} bijection between the horizontal directions defined the twisted differentials on the components of the stable curve at the upper and lower ends of the edge.

If we denote by $L(\oGamma)$ the depth of $\oGamma$, the equivalence relation is given by the \rcomment{exponential} action of $\CC^{L(\oGamma)}$ 
%\jcomment{This is $(\CC^*)^{L(\oGamma)}$? Here and everywhere below.}\mcomment{No, it's really $\CC^{L(\oGamma)}$ (see e.g. \cite[Sec. 3.3]{CMZarea})} 
which rescales the differentials on each level and simultaneously acts by fractional Dehn twists on the prong-matchings \rcomment{(see e.g.\cite[Sec. 3.3]{CMZarea}))}. The subgroup $\mathrm{Tw}_\oGamma\subseteq\CC^{L(\oGamma)}$ acting trivially on the prong-matchings is called the Twist group, and so the action factors through the quotient $T_\oGamma=\CC^{L(\oGamma)}/\mathrm{Tw}_\oGamma$, which is called the level rotation torus. Two multi-scale differentials compatible with $\oGamma$ are defined to be equivalent, if they differ by the action of $T_\oGamma$. 

Now that we have explained which objects are parametrized by boundary divisors, we would like to recall that the boundary divisors $\PP\Xi(\oGamma)$ are commensurable (see \cite[Prop. 4.4]{CMZ20} for the details) to the product of lower dimensional spaces of multi-scale differentials, one for each level. We will denote by $\PP\Xi(\oGamma)^{[i]}$ the moduli-space of multi-scale differentials defined by the $i$-th level of $\oGamma$. More specifically, $\PP\Xi(\oGamma)^{[i]}$ is the multi-scale differential compactification of the stratum 

\[  \PP\Big(\prod_{v\in \ell^{-1}(i)} \Om_{g(v)}(I(v))\Big)^{\frakR_i}\]
where \rcomment{we used $\frakR_i$ to indicate the locus inside of the projectivized bundle obtained by imposing the conditions on residues induced by the GRC.} 
%\rcommenttwo{We recall from \cite[Sec. 4.2]{CMZ20} that there are \emph{clutching maps} 
%which  induce morphisms between the Chow ring of the product of the level strata to the Chow ring of the ambient stratum. These morphisms allow for example to start from a degeneration of a level stratum of a boundary component and obtain a degeneration of the original boundary component. We say that these maps 
%$$\CH\left(\prod_{i=0}^{-L}\PP\Xi(\oGamma)^{[i]}\right)\to \CH(\PP\Xi_g(a))$$ are the \emph{clutching operations}. Recall finally from \cite[Sec. 3.3]{CMZ20} that contracting levels passages of a nonhorizontal level graph induces undegeneration maps
%$\LG_L(g,a)\to \LG_n(g,a)$ for $L\geq n$.} 

 \rcommentthree{Recall that the datum of a Riemann surface $X$  together with a meromorphic $k$-differential $q$ determines a cyclic $k$-fold cover $\pi \colon \widehat{X} \to X$
such that $\pi^* q = \omega^k$ is the $k$-power of an abelian differential, see e.g. \cite[Sec. 2.1]{BCGGM2}. This construction is called the $k$-canonical cover construction and allows to consider a stratum of $k$-differentials as a subset of a stratum of abelian differentials. In the case of $k>1$, the multi-scale compactification of a given stratum of $k$-differentials is defined (up to stacky issues) as the closure of the embedding of the stratum into the corresponding stratum of abelian differentials obtained after the $k$-canonical cover construction. As explained in more detail in \cite[Lemma 7.1]{CMS},  a multi-scale $k$-differential can be hence defined as an abelian multi-scale differential together with an order $k$ automorphism $\tau$ for which the multi-scale differential is an eigenvector with primitive $k$-th root of unity eigenvalue, such that the prong-matching is equivariant with respect to the action of $\tau$ and such that $\tau$ acts as a permutation of the marked points as it naturally should.}

\rcommentthree{To a multi-scale $k$-differential, we can associate its quotient by $\tau$, which consists of a stable curve with a tuple of $k$-differentials on each components. The quotient by $\tau$ of the level graph of the multi-scale $k$-differential can be canonically made into an enhanced level graph for which the vanishing orders of the tuple of $k$-differentials are prescribed by the twists. We call this graph a $k$-level graph associated to the multi-scale $k$-differential and it is indeed a level graph as in Definition \ref{def:levelgraph}. Viceversa, a cover of $k$-level graphs $\widehat{\Gamma}\to \overline{\Gamma}$ together with a tuple of $k$-differentials $((C_i,[\eta_i])_{i=0,\dots,-L}$ on each components with vanishing orders prescribed by the twists of $\overline{\Gamma}$ gives rise to a unique abelian twisted differential on the canonical $k$-cover of the $k$-level graph. If the abelian twisted differential satisfies the GRC, this, up to the chioce of a prong-matching equivalence class, gives then rise to a unique multi-scale $k$-differential and hence we say that $((C_i,[\eta_i])_{i=0,\dots,-L}$ is compatible with $\widehat{\Gamma}\to \overline{\Gamma}$.}

\rcommentthree{For an enhanced $k$-level graph $\overline{\Gamma}$ with $L+1$ levels, we define the associated boundary divisor $\PP\Xi(\oGamma)$ which parametrises  multi-scale $k$-differentials associated to a tuple $((C_i,[\eta_i])_{i=0,\dots,-L},[\sigma])$, where $(C_i,[\eta_i])_{i=0,\dots,-L}$ is a tuple of $k$-differentials compatible with a cover of $k$-level graphs $\widehat{\Gamma}\to \overline{\Gamma}$, and $[\sigma]$ is \rcommentfour{the equivalence class of} an equivariant prong-matching  on the corresponding abelian $k$-cover multi-scale differential.}

The generalization to $k$-differentials of level strata can be done analogously to the $k=1$ situation (see \cite[Sec. 7.3]{CMS}).

\begin{remark}\label{rem:GRC}
We will not recall the GRC here but we mention the following facts that will be used below.
\begin{itemize}
    \item If every connected component of the subgraph above level $i$ contains at least one half-edge with a twist not in $k\ZZ_{>0}$, then the residue condition $\frakR_i$  is empty.
    \item If $k=1$, $n=1$, and $\oGamma$ is a 2-level graph of compact type, i.e. with only separating edges, then the GRC states that all residues at the poles of the  multi-scale differential on level $-1$ are trivial.
\end{itemize}
\end{remark}
\begin{remark}\label{rem:numberPMEC}
\rcommentthree{For an enhanced $k$-level graph $\overline{\Gamma}$ with $2$ levels, the number of possible prong-matching equivalence classes is}
\[ \frac{\prod_{e\in E(\overline{\Gamma})}\widehat{\kappa}_e}{\LCM_{e\in E(\overline{\Gamma})}\left(\widehat{\kappa}_e\right)},\quad  \widehat{\kappa}_e=\frac{\kappa_e}{\gcd(\kappa_e,k)}
\]
\rcommentthree{where $\kappa_e$ is the absolute value of the twist at the edge $e$. For $k=1$ this was shown in \cite[Sec. 3.4]{CMZ20} and for $k>1$ this follows from \cite[Lemma.7.4]{CMS}, where it was shown that the number of prong-matchings of the abelian cover contained in the linear submanifold defined by the stratum of $k$-differentials is exactly the product of all $\widehat{\kappa}_e$.}
\end{remark}
%We finally recall that the commensurability diagram involving a level graph  $\PP\Xi(\oGamma)$ without horizontal edges and the product of all its level strata  $\PP\Xi(\oGamma)^{[i]}$ yields a comparison coefficient given by 
%\begin{equation}\label{eq:comparison}
%\frac{m(\oGamma)}{|{\rm Aut}(\oGamma)|\ell(\oGamma)}.
%\end{equation}
%This was proven in \cite[Lemma 4.5]{CMZ20} for the case of $k=1$, however the case of $k>1$ can be shown in the same way.
%Putting all pieces together, the stratum $\PP\Xi(\oGamma)$ is isomorphic to:
%$$
%\left(\left(\prod_{-L\leq i\leq 0} \PP\Xi(\oGamma)^{[i]}\right)\big/T_{\oGamma} \right)\Big/ {\rm Aut}(\oGamma)$$

\subsubsection{Intersection theory on the space of multi-scale differentials} As $\PP\oOm_{g}(a)$ is subspace of a projective bundle, it has a tautologial bundle $\cO(-1)$. We denote by $\eta \in A^1(\PP\Xi_g(a))$ the Chern class of the pull-back of this line bundle. We will also denote by $\psi_i\in A^1(\PP\Xi_g(a))$ the pull-back of the $\psi$-classes from $\oM_g(a)$. 

First of all we recall a relation between the tautological class $\eta$ and the $\psi$-classes. %The following was proven in the case $k=1$ in  \cite[Theorem~6(1)]{Sau} for the incidence variety compactification and in \cite[Prop. 8.2]{CMZ20} for the space of multi-scale differentials. This relation was partially extended in~\cite{
%\mcomment{Is it really fine to simply say the proof generalizes to $k>1?$}
In the case $k=1$, the following statement was proven in  \cite[Theorem~6(1)]{Sau} for the incidence variety compactification and in \cite[Prop. 8.2]{CMZ20} for the space of multi-scale differentials. The case of general $k \geq 1$ is treated in~\cite[Theorem~3.12]{Sau3} in the context of the incidence variety compactification, but the proof directly shows the following statement about the space of multi-scale differentials.
\begin{proposition} \label{prop:Adrienrel} For all  $\oGamma \in {{\rm LG}}{_1}(g,a),$ and all irreducible components $D$ of $\PP\Xi(\oGamma)$, there exists a rational number $m(D)$ satisfying the following conditions: 
\begin{itemize}
    \item for all $1\leq i\leq n$, we have
\begin{equation}\label{eq:xirel}
\eta    = a_i \psi_{i}\,\, -  \sum_{\oGamma \in \tensor[_i]{{\rm LG}}{_1}(g,a)} \sum_{D}
m(D) \cdot \zeta_{D *} \left([D^{[0]}]
\otimes[D^{[-1]}]\right),
\end{equation}
where the set $\tensor[_i]{{\rm LG}}{_1}(g,a)$ consists of the two-level graphs $\oGamma$ without horizontal edges where the $i$-th half-edge is on level $-1$, and $D^{[i]}$ are the $i$-th level strata induced by $D$.
\item if $k=1$, or if all vertices of $\oGamma$ have a half-edge of order not divisible by $k$, then $m(D)=m(\oGamma)/|\Aut(\oGamma)|$.
\end{itemize}
\end{proposition}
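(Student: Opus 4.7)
The plan is to adapt the argument of \cite[Theorem~3.12]{Sau3}, which establishes an analogous relation on the incidence variety compactification $\PP\oOm_g(a)$ for general $k$, and to verify that its proof carries through verbatim on the multi-scale compactification $\PP\Xi_g(a)$. The key observation is that both $\eta$ and $\psi_i$, as well as the relevant boundary classes, are pulled back naturally under the desingularization $p: \PP\Xi_g(a) \to \PP\oOm_g(a)$, and the local multiplicity computation is more transparent on $\PP\Xi_g(a)$ because its boundary is normal crossing.

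The central tool is an evaluation map at the $i$-th marked point. The universal $k$-differential $\omega$ on $\PP\Xi_g(a)$, well-defined up to the scaling parametrized by $\cO(-1)$, has prescribed vanishing of order $a_i - k$ at $x_i$. Extracting the leading coefficient of $\omega$ in a local coordinate at $x_i$ produces a canonical morphism of line bundles
$$
\mathrm{ev}_i : \cO_{\PP\Xi_g(a)}(-1) \longrightarrow \LL_i^{\otimes a_i},
$$
where $\LL_i$ is the $i$-th cotangent line (so that $\psi_i = c_1(\LL_i)$). On the open part $\M_g(a)$ this morphism is nowhere vanishing, because a genuine $k$-differential has exact order $a_i - k$ at $x_i$. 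Viewing $\mathrm{ev}_i$ as a global section of $\LL_i^{\otimes a_i}\otimes\cO(1)$ and taking first Chern classes therefore yields
$$
a_i\psi_i - \eta \,=\, [Z(\mathrm{ev}_i)],
$$
where $Z(\mathrm{ev}_i)$ is the scheme-theoretic vanishing locus, supported on the boundary.

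It remains to identify $Z(\mathrm{ev}_i)$ and compute its multiplicities. The map $\mathrm{ev}_i$ vanishes along a boundary divisor $\PP\Xi(\oGamma)$ precisely when the universal $k$-differential on the top-level component containing $x_i$ is identically zero, i.e., when $x_i$ is placed on a level strictly below $0$ in $\oGamma$. This identifies the support of $Z(\mathrm{ev}_i)$ with the union of the divisors $\PP\Xi(\oGamma)$ indexed by $\oGamma \in \tensor[_i]{{\rm LG}}{_1}(g,a)$. The vanishing order along each irreducible component $D$ is computed in local plumbing coordinates of the multi-scale compactification, using the explicit description of the scaling parameter between levels $0$ and $-1$ together with the twists at the edges crossing this passage. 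This gives the existence of a rational number $m(D)$, yielding the first bullet point.

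For the second bullet point, one checks that in both special cases the irreducible components of $\PP\Xi(\oGamma)$ are in bijection with $\oGamma$ itself: for $k=1$ this is the content of \cite[Prop.~8.2]{CMZ20}, and the local computation of the vanishing order recovers a product of prong numbers, giving $m(D) = m(\oGamma)/|\Aut(\oGamma)|$; for general $k$ with every vertex having a half-edge of order not divisible by $k$, Remark~\ref{rem:GRC} ensures that the GRC imposes no residue conditions and the $k$-canonical covering construction does not produce further components, so the same local computation applies. The main obstacle to a uniform formula lies outside these cases: when all half-edges at some vertex have twists divisible by $k$, the boundary divisor $\PP\Xi(\oGamma)$ may split into several irreducible components indexed by distinct residue assignments and $k$-canonical covers, each with its own multiplicity that must be tracked individually, which is why the proposition only asserts the existence of a rational $m(D)$ in general.
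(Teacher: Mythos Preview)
The paper does not supply its own proof of this proposition; it cites \cite[Theorem~6(1)]{Sau}, \cite[Prop.~8.2]{CMZ20}, and \cite[Theorem~3.12]{Sau3}, noting only that the argument of the last reference carries over directly to the multi-scale compactification. Your proposal correctly sketches precisely that argument---the evaluation morphism $\mathrm{ev}_i$, the identification $a_i\psi_i-\eta=[Z(\mathrm{ev}_i)]$, and the local multiplicity computation in plumbing coordinates---and is fully consistent with the approach the paper points to.
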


Finally we recall the relation between the class of the subspace cut out by a residue condition and other tautological classes in the case of $k=1$. As before, the following relation was proven in  \cite[Prop.~7.6]{Sau} for the incidence variety compactification and in \cite[Prop. 8.3]{CMZ20} for the space of multi-scale differentials.
%but the same proof generalizes to the case of $k>1$.

\begin{proposition} \label{prop:AdrienR}
Let $k=1$ and let $\PP\Xi^\frakR_g(a)$ be a stratum cut out by a set of residue conditions $\frakR$.  Assume that $\PP\Xi^\frakR_g(a)$ is a divisor in the stratum $\PP\Xi^{\frakR_0}_g(a)$  cut out by a smaller set $\frakR_0$ of residue conditions obtained by removing one condition from $\frakR$.
Then we have the following relation
\begin{equation}\label{eq:GRCremove}
    [\PP\Xi^\frakR_g(a)] =\ - \eta \,\, - \sum_{\oGamma \in {\rm LG}_{1,\frakR}(g,a)}
\ell(\oGamma) \cdot [\PP\Xi(\oGamma)]\,
\end{equation}
in $A^1(\PP\Xi^{\frakR_0}_g(a))$,
where ${\rm LG}_{1,\frakR}(g,a)$ is the union of the sets of  non-horizontal two-level graphs where the
GRC on top level induced by~$\frakR$ does no longer introduce an extra condition
and  the set of non-horizontal two-level graphs where all the half-edges  involved in the condition forming $\frakR \setminus \frakR_0$ go to lower
level.
\end{proposition}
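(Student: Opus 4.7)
The plan is to realize the divisor class $[\PP\Xi^{\frakR}_g(a)]$ as the zero locus of a natural section of the tautological line bundle $\mathcal{O}(1)$ on $\PP\Xi^{\frakR_0}_g(a)$. The single extra residue condition separating $\frakR$ from $\frakR_0$ is a linear functional $\ell_\frakR = \sum_j c_j \, \mathrm{res}_{x_{i_j}}$ on differentials. It is linear in $\omega$ and transforms as $\ell_\frakR(\lambda\omega) = \lambda \ell_\frakR(\omega)$, so it descends to a global section $\sigma_\frakR$ of $\mathcal{O}(1)$ on $\PP\Xi^{\frakR_0}_g(a)$. The class of the zero divisor of $\sigma_\frakR$ is $c_1(\mathcal{O}(1)) = -\eta$, and by construction $\PP\Xi^{\frakR}_g(a)$ is contained in this zero locus with multiplicity one. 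The remaining components of the zero locus must lie in the boundary of $\PP\Xi^{\frakR_0}_g(a)$, and the problem reduces to enumerating them and computing their multiplicities.

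To identify which boundary divisors $\PP\Xi(\oGamma)$ are contained in the vanishing locus, I would observe that $\sigma_\frakR$ vanishes identically on $\PP\Xi(\oGamma)$ in exactly the two situations listed in the statement. In the first case, the top-level GRC induced by $\frakR_0$ on $\PP\Xi(\oGamma)$ already forces $\ell_\frakR = 0$ on the top-level differential, so the section vanishes identically. In the second case, every half-edge $x_{i_j}$ involved in $\ell_\frakR$ lies on the lower level: the residues at the $x_{i_j}$ are then computed from the lower-level differential, which is rescaled by the lower-level parameter $s$ of the level rotation torus, and $s \to 0$ as one approaches the boundary. A case-by-case check using the local description of multi-scale differentials from \cite{BCGGM3} rules out any other boundary divisors from the zero locus.

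The multiplicity computation is the technical heart of the argument. Near a generic point of a boundary divisor $\PP\Xi(\oGamma) \in \mathrm{LG}_{1,\frakR}(g,a)$, I would work in local plumbing coordinates for the multi-scale compactification. The normal coordinate $t$ to the boundary is related to the lower-level rescaling parameter $s$ by $t = s^{\ell(\oGamma)}$, where the exponent appears because $\ell(\oGamma)$ is the lcm of the twists at edges crossing the single level passage and therefore governs the prong-matching identifications encoded in the Twist group $\mathrm{Tw}_\oGamma \subset \CC^{L(\oGamma)}$. Since residues on the lower level scale linearly in $s$, and for the first type of divisor a parallel expansion of the top-level GRC expresses $\ell_\frakR$ as a multiple of $s$ times a nonvanishing function, the section $\sigma_\frakR$ vanishes to order $\ell(\oGamma)$ in $t$ along $\PP\Xi(\oGamma)$ in both cases. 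Summing these boundary contributions against the global class $-\eta$ then yields the relation \eqref{eq:GRCremove}. The main obstacle is making this local plumbing analysis rigorous with the correct normalization of parameters, which is precisely the work carried out in \cite[Prop.~7.6]{Sau} for the incidence variety compactification and in \cite[Prop.~8.3]{CMZ20} for the multi-scale compactification itself; once the local model is set up, the identification of the multiplicity with $\ell(\oGamma)$ is a direct consequence of the definition of the level rotation torus.
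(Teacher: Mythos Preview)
Your proposal is essentially correct and matches the standard approach. Note, however, that the paper does not actually prove this proposition: it is stated as a recalled result, with the proof deferred entirely to \cite[Prop.~7.6]{Sau} and \cite[Prop.~8.3]{CMZ20}. Your sketch---realizing the extra residue condition as a section of $\mathcal{O}(1)$, identifying its interior vanishing with $\PP\Xi^{\frakR}_g(a)$, classifying the boundary divisors on which it vanishes identically, and reading off the multiplicity $\ell(\oGamma)$ from the relation $t=s^{\ell(\oGamma)}$ between the transversal coordinate and the lower-level scaling parameter---is exactly the strategy carried out in those references, as you yourself note in the final paragraph.
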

In order to convert the previous $\ell(\oGamma)$ coefficients when considering  the commensurability diagram involving a level graph  $\PP\Xi(\oGamma)$ without horizontal edges and the product of all its level strata  $\PP\Xi(\oGamma)^{[i]}$, we recall that we have to multiply by  
\begin{equation}\label{eq:convfactor}
    \frac{m(\oGamma)}{|{\rm Aut}(\oGamma)|\ell(\oGamma)}.
\end{equation}
This was proven in \cite[Lemma 4.5]{CMZ20}.

\section{Splitting formulas for \texorpdfstring{$\psi$}{psi}-classes on double ramification cycles} \label{Sect:splitting}

The purpose of this section is to prove the {\em splitting formula for $\psi$-classes on double ramification cycles}, i.e. a family of relations between $\psi$-classes on double ramification cycles (see Proposition~\ref{prop:psiDRformula} below). Theorem~\ref{Thm:ConjA} will play a key-role in the proof as it allows to reduce intersection with double ramification cycles to intersection with strata of $k$-differentials.

To state this formula, we introduce the following  notation:  ${\rm LG}_1^2(g,a)$ is the set of level graphs with exactly 2 vertices $v_0$ of level 0 and $v_{-1}$ of level $-1$, and no horizontal edges. 
\begin{proposition}[Splitting formula for $\psi$-classes on double ramification cycles] \label{prop:psiDRformula}
Let $g,n \geq 0$ with $2g-2+n>0$, let $a=(a_1, \ldots, a_n) \in \mathbb{Z}^n$ with $\sum_{i=1}^n a_i = k(2g-2+n)$.  Then for any two different elements $s,t \in \{1, \ldots, n\}$, we have
\begin{align}
    & (a_s \psi_s - a_t \psi_t) \DR_g(a_1, \ldots, a_n) \nonumber\\
    &= \sum_{(\oGamma,I)\in {\rm LG}_1^2(g,a)}\!\!\!\! f_{s,t}(\oGamma) \frac{m(\oGamma)}{|{\rm Aut}(\oGamma)|}   \cdot \zeta_{\Gamma *}\left(\DR_{g_0}( I(v_0)) \otimes \DR_{g_{-1}}( I(v_{-1}))\right). \label{eqn:psiDRformula}
\end{align}
Here, the function $f_{s,t}$ is defined by 
$$
f_{s,t}(\oGamma)=\left\{\begin{array}{cl}
    0 & \text{if $s$ and $t$ belong to the same vertex,} \\
    1 & \text{if $s$ belong to $v_{-1}$ and $t$ to $v_0$,}\\
    -1 & \text{otherwise}.
\end{array}\right.
$$
\end{proposition}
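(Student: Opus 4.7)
The plan is to derive \eqref{eqn:psiDRformula} by first proving a strata version of the identity on the moduli space of multi-scale differentials, and then upgrading strata classes to DR cycles via Theorem \ref{Thm:ConjA}. As a preliminary reduction, I would argue that both sides of \eqref{eqn:psiDRformula} are polynomials in $a$: the LHS because $\DR_g(a)$ is polynomial of degree $2g$, and the RHS because for each combinatorial type of two-vertex stable graph (with a fixed bipartition of legs and a fixed number of edges), the sum over admissible edge twists is a polynomial expression summed over lattice points of a simplex, which is again polynomial in $a$. Thus it suffices to verify the identity on the Zariski-dense subset $\{a : k>0,\ a\notin k\ZZ_{>0}^n\}$, where Theorem \ref{Thm:ConjA} gives a geometric interpretation of $\DR_g(a)$.

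On $\PP\Xi_g(a)$, I would apply Proposition \ref{prop:Adrienrel} twice, with $i=s$ and $i=t$, and subtract the two identities. The $\eta$ terms cancel, giving
\begin{equation*}
(a_s\psi_s - a_t\psi_t) \cap [\PP\Xi_g(a)] = \sum_{\oGamma \in \LG_1(g,a)} f_{s,t}(\oGamma) \sum_D m(D)\,[D],
\end{equation*}
where $f_{s,t}(\oGamma)$ emerges automatically from the subtraction: a graph with $s$ on level $-1$ and $t$ on level $0$ appears only in the first sum (contributing $+1$), the opposite configuration contributes $-1$, and graphs with both markings on the same level cancel.

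I would then push this identity forward along $p\colon \PP\Xi_g(a) \to \oM_g(a)\subseteq \oM_{g,n}$, which is birational onto its image. By the projection formula and $p_*[\PP\Xi_g(a)] = [\oM_g(a)]$, the LHS becomes $(a_s\psi_s - a_t\psi_t) \cdot [\oM_g(a)]$. A dimension count shows that $p_*[D]=0$ whenever the underlying graph $\oGamma$ has more than two vertices, since the image of such $D$ in $\oM_{g,n}$ has codimension strictly greater than $g+1$, forcing $p|_D$ to have positive-dimensional fibers. For two-vertex graphs, applying the commensurability factor \eqref{eq:convfactor} assembles the surviving contributions into the \emph{strata version} of the formula:
\begin{equation*}
(a_s\psi_s - a_t\psi_t) \cdot [\oM_g(a)] = \sum_{(\oGamma,I) \in \LG_1^2(g,a)} f_{s,t}(\oGamma)\,\frac{m(\oGamma)}{|\Aut(\oGamma)|}\,\zeta_{\Gamma*}\left([\oM_{g_0}(I(v_0))] \otimes [\oM_{g_{-1}}(I(v_{-1}))]\right).
\end{equation*}

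Finally, I would convert this strata identity into the claimed DR identity. Using Theorem \ref{Thm:ConjA}, expand $\DR_g(a)$ as a sum over star graphs $(\Gamma',I')$ of pushforwards of $[\oM_{\Gamma',I'}]$. Multiplying by $(a_s\psi_s - a_t\psi_t)$ and applying the strata splitting formula at the central vertex of each star graph (which must contain both $s$ and $t$ when $a_s, a_t \notin k\ZZ_{>0}$; the remaining configurations are handled by polynomiality) expresses the LHS of \eqref{eqn:psiDRformula} as a sum indexed by pairs (outlying-vertex star graph, 2-level split of the central vertex). Dually, expanding each factor $\DR_{g_i}(I(v_i))$ on the RHS of \eqref{eqn:psiDRformula} via Theorem \ref{Thm:ConjA} produces a sum indexed by pairs (2-level split of $(g,a)$, star graph at each level vertex). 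These two index sets are in natural bijection — both encode the same ``two-level plus outlying branches'' combinatorial degeneration — and a verification that the multiplicity and automorphism factors agree would complete the proof. The main obstacle is precisely this last bookkeeping step: checking that the product $\frac{m(\Gamma',I')}{k^{|V_{\rm Out}|}|\Aut(\Gamma',I')|} \cdot \frac{m(\oGamma)}{|\Aut(\oGamma)|}$ from the two decompositions matches across the bijection. The dimension-vanishing argument ruling out multi-vertex 2-level graphs in the pushforward step also requires careful justification.
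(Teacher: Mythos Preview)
Your approach is essentially the paper's: reduce by polynomiality to good $a$, prove a strata-level splitting formula via Proposition~\ref{prop:Adrienrel} (this is the paper's Lemma~\ref{Lem:kdiffsplitting}), then expand both sides via Theorem~\ref{Thm:ConjA} and match the resulting star-graph combinatorics. The paper carries out exactly the multiplicity bookkeeping you flag as the main obstacle, using an edge-colouring device and a passage to graphs with \emph{ordered} edges to replace the automorphism factors by factorials.

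Two points where your sketch needs sharpening. First, the Zariski-dense subset should be $\{a : k>0,\ a_i\notin k\ZZ\text{ for all }i\}$, not merely $a\notin k\ZZ_{>0}^n$; this stronger condition forces \emph{every} marking to the central vertex of every star graph, which is what lets you apply the strata splitting formula at the central vertex and, in Step~3, apply Theorem~\ref{Thm:ConjA} at both vertices of $(\widehat{\Gamma}^\ell,\widehat{I}^\ell)$. Second, your dimension argument for killing multi-vertex two-level graphs is stated backwards and is incomplete: positive-dimensional fibres of $p$ are the \emph{cause}, not the consequence, of the image having excess codimension. For graphs with several level-$0$ vertices this follows from independent rescaling of the differential, but for graphs with one level-$0$ vertex and several level-$-1$ vertices the vanishing rests on the global residue condition being empty (Remark~\ref{rem:GRC}), which holds here precisely because the level-$0$ vertex carries a marking with twist in $\ZZ\setminus k\ZZ$. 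Without invoking the GRC this step does not go through.
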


\subsection{Splitting formulas for strata of $k$-differentials} \label{Sec:kdiffsplitting}

In order to prove Proposition \ref{prop:psiDRformula}, we will show that a similar statement holds when we replace $\DR_g(a)$ by $[\oM_g(a)]$. An analogous splitting formula was proved in the case $k=1$ in~\cite{Sau}.

\begin{lemma}\label{Lem:kdiffsplitting}
Let $g,n\geq 0,$ $k>0$, and  $a\in \ZZ^n$ with $|a|=k(2g-2+n)$. Let $s\neq t \in \{1,\ldots,n\}$ be indices such that $k$ does not divide $a_s$ or $a_t$. Then, we have the following relation:
\begin{align}
    &(a_s \psi_s - a_t \psi_t)\cdot [\oM_g(a)] \nonumber \\
    &= \sum_{(\oGamma,I)\in {\rm LG}_1^2(g,a)}\!\!\!\! f_{s,t}(\oGamma) \frac{m(\oGamma)}{|{\rm Aut}(\oGamma)|} \cdot  \zeta_{\Gamma *}\left([\oM_{g_0}( I(v_0))] \otimes [\oM_{g_{-1}}( I(v_{-1}))]\right) \label{eqn:kdiffsplitting}
\end{align}
\end{lemma}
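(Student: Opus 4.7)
The plan is to derive \eqref{eqn:kdiffsplitting} on $\oM_{g,n}$ by applying Proposition \ref{prop:Adrienrel} twice on the space $\PP\Xi_g(a)$ of multi-scale differentials, and then pushing the resulting identity down along $p:\PP\Xi_g(a)\to \oM_{g,n}$. This mirrors the strategy used for $k=1$ in \cite{Sau}.

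First, I would write relation \eqref{eq:xirel} once for $i=s$ and once for $i=t$, and subtract them. The $\eta$ classes cancel, leaving an identity of the form $a_s\psi_s - a_t\psi_t = \Sigma_s - \Sigma_t$ in $A^*(\PP\Xi_g(a))$, where $\Sigma_i$ is the boundary sum indexed by pairs $(\oGamma,D)$ with $\oGamma\in \tensor[_i]{{\rm LG}}{_1}(g,a)$. The crucial combinatorial observation is that a graph $\oGamma\in{\rm LG}_1^2(g,a)$ belongs to $\tensor[_i]{{\rm LG}}{_1}(g,a)$ precisely when the leg $i$ sits on the lower vertex $v_{-1}$. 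Consequently, graphs with $s$ and $t$ on the same level appear with identical weights in both sums and cancel, while those with $s$ and $t$ on different levels survive with sign exactly $f_{s,t}(\oGamma)$.

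For each surviving graph both vertices carry a half-edge whose twist is not divisible by $k$ — namely the legs $s$ and $t$, by hypothesis — so the second bullet of Proposition \ref{prop:Adrienrel} applies and $m(D)$ can be replaced uniformly by $m(\oGamma)/|\Aut(\oGamma)|$ for every irreducible component $D$ of $\PP\Xi(\oGamma)$. I would then push the identity forward along $p$. Since $p$ is an isomorphism over $\M_g(a)$ and $\psi_s,\psi_t$ are pulled back from $\oM_{g,n}$, the projection formula produces $(a_s\psi_s - a_t\psi_t)\cdot [\oM_g(a)]$ on the left-hand side. For the boundary contributions, the commensurability of $\PP\Xi(\oGamma)$ with the product of its level strata (with the conversion factor \eqref{eq:convfactor}), combined with the fact that each level stratum $\PP\Xi(\oGamma)^{[i]}$ maps onto the corresponding stratum $\oM_{g(v)}(I(v))$ of $k$-differentials, converts the surviving terms into the desired right-hand side of \eqref{eqn:kdiffsplitting}.

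The main obstacle is the bookkeeping in this final pushforward. One must carefully combine the multiplicity from \eqref{eq:convfactor} with the coefficient $m(\oGamma)/|\Aut(\oGamma)|$, handle the fact that for $k>1$ a single $\PP\Xi(\oGamma)$ may split into several irreducible components $D$ corresponding to distinct residue conditions $\frakR_i$, and verify that these components recombine upon pushforward to yield the product class $[\oM_{g_0}(I(v_0))]\otimes[\oM_{g_{-1}}(I(v_{-1}))]$. A subtle but crucial point is that for the surviving graphs the top vertex carries a half-edge of twist not divisible by $k$, so Remark \ref{rem:GRC} guarantees the top-level residue condition is vacuous, which is precisely what allows the components $D$ to assemble into the full stratum class after pushforward.
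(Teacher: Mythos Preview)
Your overall strategy matches the paper's: subtract the two instances of \eqref{eq:xirel}, then push forward along $p$. However there is a genuine gap. The sums $\Sigma_s,\Sigma_t$ range over all of $\tensor[_i]{{\rm LG}}{_1}(g,a)$, not just the two-vertex graphs ${\rm LG}_1^2(g,a)$. After the cancellation you describe (graphs with $s,t$ on the same level), there remain level graphs with more than two vertices in which $s$ and $t$ lie on different levels. You never explain why these contribute trivially, and your invocation of the second bullet of Proposition~\ref{prop:Adrienrel} fails for such graphs, since a vertex carrying neither $s$ nor $t$ need not have any half-edge of twist coprime to $k$.

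The paper handles this by a dimension argument on the fibers of $p$. If $\oGamma$ has at least two vertices on level $0$, the fibers of $p$ restricted to $\PP\Xi(\oGamma)$ are positive-dimensional (there is an extra projective scaling on the top level), so the pushforward vanishes. Among the remaining graphs with a unique top vertex, that vertex carries one of $s,t$; since neither $a_s$ nor $a_t$ is divisible by $k$, Remark~\ref{rem:GRC} makes the residue condition on level $-1$ vacuous. Hence if there are at least two vertices on level $-1$, the fibers of $p$ are again positive-dimensional and the contribution vanishes. Only then are you reduced to ${\rm LG}_1^2(g,a)$, where your coefficient argument goes through. Your final paragraph touches on the GRC point but uses it only for reassembling components $D$ into a product class; you need it earlier, and in combination with the fiber-dimension argument, to eliminate the multi-vertex graphs altogether.
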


In order to prove this lemma we will work with the multi-scale compactification of $\M_g(a)$ introduced in Section \ref{Sec:multiscaled}.

\begin{proof}[Proof of Lemma~\ref{Lem:kdiffsplitting}]
 Recall that we denote by $p:\PP\Xi_g(a)\to \oM_g(a)$ the morphism defined as the composition of the desingularization and the forgetful morphism to $\oM_{g}(a)$.
 By the projection formula, the class $\psi_s[\oM_{g}(a)]$ is equal to $p_*(p^*\psi_i)$. Thus we now study the intersection theory on  $\PP\Xi_{g}(a)$. 
 
 Using Proposition \ref{prop:Adrienrel}, we can write $(a_s\psi_s-\eta)$ or $(a_t\psi_t-\eta)$ on $\PP\Xi_{g}(a)$ as a sum on irreducible components of the boundary divisors indexed by the graphs in ${\rm LG}_1(g,a)$ such that $s$ or $t$ respectively  are adjacent to a level $-1$ vertex. Moreover, the coefficients of an irreducible divisor appearing in the expression of $(a_s\psi_s-\eta)$ or $(a_t\psi_t-\eta)$ are equal. Thus, we write $(a_s\psi_s-a_t\psi_t)$ as $(a_s\psi_s-\eta)-(a_t\psi_t-\eta)$ to express it as a sum over irreducible components of the boundary divisors indexed by ${\rm LG}_1(g,a)$, where $s$ and $t$ are adjacent to distinct levels. 

If $\oGamma\in {\rm LG}_1(g,a)$ is a level graph with at least 2 vertices of level 0, then the fibers of $p$ restricted to the corresponding divisors have positive dimension. Thus such a graph contributes trivially to the expression of $(a_s \psi_s - a_t \psi_t)\cdot [\oM_g(a)]$.

Moreover, for all graphs $\oGamma\in {\rm LG}_1(g,a)$ involved in the expression of $(a_s\psi_s-a_t\psi_t)$, either $s$ or $t$ is adjacent to this unique vertex of level 0. Thus the global residue condition defined is empty as $k$ does not divide $a_s$, nor $a_t$ (see Remark \ref{rem:GRC}). Therefore, if such a graph has at least 2 vertices of level $-1$, then the fibers of $p$ restricted to the corresponding divisor have positive dimension and once again such graphs contribute trivially to the expression of $(a_s \psi_s - a_t \psi_t)\cdot [\oM_g(a)]$.

Hence $(a_s\psi_s-a_t\psi_t)[\oM_g(a)]$ is expressed as a sum over level graphs in ${\rm LG}_1^2(g,a)$ and the coefficient for each such graph is exactly  $f_{s,t}(\oGamma)m(\oGamma)/|\Aut(\oGamma)|$, again by Proposition \ref{prop:Adrienrel} .
\end{proof}

\subsection{Proof of Proposition \ref{prop:psiDRformula}}
Finally, we can combine the results of Sections  \ref{Sec:ConjA} and \ref{Sec:kdiffsplitting} to prove Proposition \ref{prop:psiDRformula}. As a first step, we observe that it suffices to show the proposition for particular input vectors $a$.
\begin{lemma} \label{Lem:psisplittingrestricta}
Let $g,n \geq 0$ such that $2g-2+n>0$. Then Proposition \ref{prop:psiDRformula} is true if and only if it is true for $k>0$ and $a \in (\mathbb{Z} \setminus k \mathbb{Z})^n$.
\end{lemma}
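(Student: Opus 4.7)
The ``if'' direction is immediate, so the plan is to extend the restricted identity to all $a \in \ZZ^n$ by a polynomial-interpolation argument. Concretely, I would show (a) that both sides of \eqref{eqn:psiDRformula} are cycle-valued polynomials in $a$ (with $k = |a|/(2g-2+n)$ treated as a linear function of $a$), and (b) that the restricted locus $S := \{a \in \ZZ^n : k \in \ZZ_{>0},\ a_i \notin k\ZZ \text{ for all }i\}$ is Zariski-dense in $\CC^n$. Since two polynomials agreeing on a Zariski-dense subset must agree everywhere, the lemma follows from (a) and (b).

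For (a), the left-hand side is polynomial because $\DR_g(a)$ has degree $2g$ in $a$ and the classes $\psi_s,\psi_t$ do not depend on $a$. For the right-hand side I would group the sum by the underlying combinatorial type of the $2$-vertex level graph $\oGamma$, forgetting the twist $I$. For a fixed type with $e$ edges, the admissible twists are parametrized by tuples $(i_1,\dots,i_e) \in \ZZ_{>0}^e$ with $\sum_j i_j = N(a)$, where $N(a)$ is a specific linear function of $a$. The corresponding summand $m(\oGamma)\cdot \DR_{g_0}(I(v_0))\otimes \DR_{g_{-1}}(I(v_{-1}))$ is polynomial in $a$ and in the edge twists, by polynomiality of the DR cycle on each vertex, and summing a polynomial over the lattice points of the simplex $\{\sum_j i_j = N,\ i_j>0\}$ gives a polynomial in $N$, hence in $a$. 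The necessary sum-over-simplex polynomiality is collected in Appendix~\ref{Sect:Polyproperties}.

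For (b), for each fixed $k_0 \in \ZZ_{>0}$ the affine hyperplane $H_{k_0} = \{a \in \CC^n : |a| = k_0(2g-2+n)\}$ meets $\ZZ^n$ in a rank $(n-1)$ lattice, and the complement in it of the $n$ proper full-rank sublattices $\{a_i \in k_0 \ZZ\}$ (for $i=1,\ldots,n$) is Zariski-dense in $H_{k_0}$; this shows that $S \cap H_{k_0}$ is Zariski-dense in $H_{k_0}$. Since the family $\{H_{k_0}\}_{k_0 \geq 1}$ consists of infinitely many parallel affine hyperplanes in $\CC^n$, any polynomial on $\CC^n$ vanishing on $\bigcup_{k_0} H_{k_0}$ restricts on any line transverse to these hyperplanes to a one-variable polynomial with infinitely many roots, hence vanishes identically. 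The main obstacle in this plan is part (a); the density step in (b) is essentially routine.
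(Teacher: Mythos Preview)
Your approach is essentially the same as the paper's, but your description of step~(a) for the right-hand side glosses over the real difficulty. If you fix a \emph{level} graph $\oGamma$ (with a chosen orientation) and sum over its twists, you get a polynomial $Q(N)$ only for $N(a)\geq 0$; for $N(a)<0$ the sum is empty and equals $0$, not $Q(N)$. So each individual level-graph contribution is only piecewise polynomial. What the paper actually does in Lemma~\ref{Lem:DRsplittingpolynomiality} is group by the underlying stable graph $\Gamma$ without a level structure: the two orientations $\Gamma^\pm$ contribute on the half-spaces $\{c(a)\geq 0\}$ and $\{c(a)\leq 0\}$ respectively, and one must check these two polynomial pieces agree. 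This uses the evenness of $\DR_g$ together with the oddness property of Lemma~\ref{Lem:technicalsummationQ} (that the simplex sum is odd in $c$ and divisible by $c$), plus the sign flip from $f_{s,t}$ under reversing levels. Your sentence ``summing a polynomial over the lattice points of the simplex gives a polynomial in $N$'' is correct but is not the issue; the issue is matching the two orientations.

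Two smaller points. First, you should separate out $n=1$: there the statement of Proposition~\ref{prop:psiDRformula} is vacuous (no choice of $s\neq t$), and your density argument fails since $a_1=k(2g-1)\in k\ZZ$ always. Second, in your density argument you should take $k_0\geq 2$, since for $k_0=1$ the condition $a_i\notin k_0\ZZ$ is impossible; infinitely many hyperplanes $H_{k_0}$ with $k_0\geq 2$ still suffice.
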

\begin{proof}
First note that  for valid input vectors $a \in \Lambda$ contained in the sublattice $\Lambda \subset \mathbb{Z}^n$ of vectors whose sum is divisible by $2g-2+n$, the parameter $k=k(a)$ can be computed from $a$. Thus the statement in Proposition \ref{prop:psiDRformula} is purely a statement about cycles depending on these input vectors. The crucial observation is that the two sides of the equality \eqref{eqn:psiDRformula} in Proposition \ref{prop:psiDRformula} are polynomial in the entries of $a$ by \cite{PZ21} (for the left-hand side) and Lemma \ref{Lem:DRsplittingpolynomiality} (for the right-hand side). Thus we conclude by observing that the statement of the proposition is vacuous for $n=1$ and that for $n \geq 2$, the set of vectors $a \in \Lambda$ with $k=k(a)>0$ and $a \in (\mathbb{Z} \setminus k \mathbb{Z})^n$ is Zariski-dense in $\mathbb{R}^n$ and so any polynomial equality satisfied for such $a$ is satisfied everywhere.
\end{proof}
\begin{remark} Note that for a level graph in ${\rm LG}_1^2(g,a)$, the level structure is uniquely determined by the underlying twisted graph (and the automorphisms of the level graph are automorphisms of the underlying twisted graph). Thus in the following proof, we will consider these objects as twisted graphs.
\end{remark}
\begin{proof}[Proof of Proposition \ref{prop:psiDRformula}]
By Lemma \ref{Lem:psisplittingrestricta} it suffices to show equality \eqref{eqn:psiDRformula} for vectors $a \in \mathbb{Z}^n$ summing to some integer multiple $k(2g-2+n)$ of $2g-2+n$ such that $a\in (\ZZ\setminus k\ZZ)^n$ . The overall strategy of our proof is as follows:
\begin{itemize}
    \item[Step 1] In the left-hand side of \eqref{eqn:psiDRformula}, we use Theorem \ref{Thm:ConjA} to replace the double ramification cycle by a sum over star graphs with strata of $k$-differentials and strata of $1$-differentials at the vertices. By the assumption on $a$, all markings must go to the central vertex here.
    \item[Step 2] Then we use the splitting formula from Lemma \ref{Lem:kdiffsplitting} on the central vertex (again made possible by the assumption on $a$) to replace it by a sum over $2$-twisted graphs glued into this vertex. At this stage, we have expressed the left-hand side of \eqref{eqn:psiDRformula} as a sum over graphs \eqref{eqn:graphsum_middle} with appropriate twists on all half-edges and strata of $k$- and $1$-differentials in the vertices.
    \item[Step 3] In the final step we interpret the top and bottom part of the graph \eqref{eqn:graphsum_middle} as simple star graphs and recombine the corresponding sub-summations into double ramification cycles using Theorem \ref{Thm:ConjA} in the opposite of the direction used before. We are left with a sum over $2$-twisted graphs with double ramification cycles at the vertices, obtaining the right-hand side of \eqref{eqn:psiDRformula}.
\end{itemize}
The outline above contains all relevant mathematical ideas going into the proof, and a reader satisfied by this outline may safely skip to the next section. The remainder of the argument below will focus on controlling the combinatorics and multiplicities involved with the above manipulations of graph sums.

\begin{equation} \label{eqn:graphsum_top}
\begin{tikzpicture}[scale=0.5, vert/.style={circle,draw,font=\Large,scale=1.3, outer sep=0}, cvert/.style={circle,draw,font=\Large,scale=2, outer sep=0}, thick]
\node [cvert] (A) at (0,0) {};
\node [vert, black!30!green] (B) at (5,4) {};
\node [vert,black!30!green] (C) at (5,0) {};
\node [vert,red] (D) at (5,-4) {};

\draw [-,black!30!green] (A) to (B);
\draw [-,black!30!green] (A) to [bend left=12] (C);
\draw [-,brown] (A) to  (C);
\draw [-,brown] (A) to [bend right=12] (C);
\draw [-,red] (A) to [bend left=6] (D);
\draw [-,red] (A) to [bend right=6] (D);

\draw (A) -- (150:1.4cm);
\draw (A) -- (170:1.4cm);
\draw (A) -- (190:1.4cm);
\draw (A) -- (210:1.4cm);

\node at (2.5,-5.5) {$(\Gamma^s, I^s)$};
\end{tikzpicture}    
\quad  \quad \quad \quad
\begin{tikzpicture}[scale=0.5, vert/.style={circle,draw,font=\Large,scale=1.3, outer sep=0}, cvert/.style={circle,draw,font=\Large,scale=2, outer sep=0}, thick]
\node [cvert, blue] (A) at (0,3.5) {};
\node [cvert, blue] (B) at (0,-3.5) {};

\draw [-, blue] (A) to [bend left=21] (B);
\draw [-, blue] (A) to [bend left=7] (B);
\draw [-, blue] (A) to [bend left=-7] (B);
\draw [-, blue] (A) to [bend left=-21] (B);

\draw (A) -- ++(150:1.4cm);
\draw (A) -- ++(140:1.4cm);
\draw (A) -- ++(130:1.4cm);
\draw (B) -- ++(220:1.4cm);
\draw[black!30!green] (A) -- ++(10:1.4cm);
\draw[black!30!green] (A) -- ++(35:1.4cm);
\draw[brown] (B) -- ++(45:1.4cm);
\draw[brown] (B) -- ++(35:1.4cm);
\draw[red] (B) -- ++(-10:1.4cm);
\draw[red] (B) -- ++(-20:1.4cm);

\node at (0,-5.5) {$(\Gamma^\ell, I^\ell)$};
\end{tikzpicture}    
\end{equation}

\begin{equation} \label{eqn:graphsum_middle}
\begin{tikzpicture}[scale=0.5, vert/.style={circle,draw,font=\Large,scale=1.3, outer sep=0}, cvert/.style={circle,draw,font=\Large,scale=2, outer sep=0}, thick]
\node [cvert, blue] (A) at (0,3.5) {};
\node [cvert, blue] (B) at (0,-3.5) {};
\node [vert, black!30!green] (C) at (5,4) {};
\node [vert, black!30!green] (D) at (5,6) {};
\node [vert, red] (E) at (5,-4.5) {};

\draw [-, blue] (A) to [bend left=21] (B);
\draw [-, blue] (A) to [bend left=7] (B);
\draw [-, blue] (A) to [bend left=-7] (B);
\draw [-, blue] (A) to [bend left=-21] (B);
\draw [-, black!30!green] (A) to (C);
\draw [-, black!30!green] (A) to (D);
\draw [-, brown] (B) to [bend right=15] (C);
\draw [-, brown] (B) to [bend right=5] (C);
\draw [-, red] (B) to [bend left=5] (E);
\draw [-, red] (B) to [bend right=5] (E);

\draw (A) -- ++(150:1.4cm);
\draw (A) -- ++(140:1.4cm);
\draw (A) -- ++(130:1.4cm);
\draw (B) -- ++(220:1.4cm);

\node at (2.5,-5.5) {$(\Gamma, I)$};
\end{tikzpicture}    
\end{equation}

\begin{equation} \label{eqn:graphsum_bottom}
\begin{tikzpicture}[scale=0.5, vert/.style={circle,draw,font=\Large,scale=1.3, outer sep=0}, cvert/.style={circle,draw,font=\Large,scale=2, outer sep=0}, thick]
\node [cvert, black!60!green] (A) at (0,3.5) {};
\node [cvert, black!30!red] (B) at (0,-3.5) {};

\draw [-, brown] (A) to [bend left=35] (B);
\draw [-, brown] (A) to [bend left=21] (B);
\draw [-, blue] (A) to [bend left=7] (B);
\draw [-, blue] (A) to [bend left=-7] (B);
\draw [-, blue] (A) to [bend left=-21] (B);
\draw [-, blue] (A) to [bend left=-35] (B);

\draw (A) -- ++(150:1.4cm);
\draw (A) -- ++(140:1.4cm);
\draw (A) -- ++(130:1.4cm);
\draw (B) -- ++(220:1.4cm);

\node at (0,-5.5) {$(\widehat{\Gamma}^\ell, \widehat{I}^\ell)$};
\end{tikzpicture}    
\quad  \quad \quad \quad
\begin{tikzpicture}[scale=0.5, vert/.style={circle,draw,font=\Large,scale=1.3, outer sep=0}, cvert/.style={circle,draw,font=\Large,scale=2, outer sep=0}, thick]
\node [cvert, blue] (A) at (0,3.5) {};
\node [cvert, blue] (B) at (0,-3.5) {};
\node [vert, black!30!green] (C) at (5,2) {};
\node [vert, black!30!green] (D) at (5,5) {};
\node [vert, red] (E) at (5,-3.5) {};

\draw [-, black!30!green] (A) to (C);
\draw [-, black!30!green] (A) to (D);
\draw [-, red] (B) to [bend left=5] (E);
\draw [-, red] (B) to [bend right=5] (E);

\draw (B) -- ++(220:1.4cm);

\node at (2.5,0.5) {$(\Gamma^{0,s}, I^{0,s})$};
\node at (2.5,-5) {$(\Gamma^{-1,s}, I^{-1,s})$};

\draw (A) -- ++(150:1.4cm);
\draw (A) -- ++(140:1.4cm);
\draw (A) -- ++(130:1.4cm);
\draw (B) -- ++(220:1.4cm);

\draw[blue] (A) -- ++(250:1.4cm);
\draw[blue] (A) -- ++(260:1.4cm);
\draw[blue] (A) -- ++(270:1.4cm);
\draw[blue] (A) -- ++(280:1.4cm);

\draw[brown] (C) -- ++(250:1cm);
\draw[brown] (C) -- ++(260:1cm);

\draw[brown] (B) -- ++(60:1.4cm);
\draw[brown] (B) -- ++(70:1.4cm);
\draw[blue] (B) -- ++(80:1.4cm);
\draw[blue] (B) -- ++(90:1.4cm);
\draw[blue] (B) -- ++(100:1.4cm);
\draw[blue] (B) -- ++(110:1.4cm);

\end{tikzpicture}    
\end{equation}
\noindent \textbf{Step 1}: 
We start with the expression $(a_s \psi_s - a_t \psi_t) \DR_g(a)$. By the assumption that $a\in (\ZZ\setminus k\ZZ)^n$, we can apply Theorem \ref{Thm:ConjA} to replace the double ramification cycle by a sum over star graphs $(\Gamma^s, I^s)$ with strata of $k$-differentials/$1$-differentials at the central and outlying vertices (see the left side of \eqref{eqn:graphsum_top}). Moreover, all (legs corresponding to) markings must be on the central vertex since all of their weights are in $\ZZ\setminus k\ZZ$ (see the definition of a simple star graph). In particular, the markings $s,t$ belong to the same stratum $\oM_{g(v_0)}(I_s(v_0))$ of $k$-differentials that is glued into $v_0$ and the term $a_s \psi_s - a_t \psi_t$ can be pulled back to that vertex.

\noindent \textbf{Step 2}:
Again using the assumption $a\in (\ZZ\setminus k \mathbb{Z})^n$ combined with the fact that all entries of $I_s(v_0)$ are negative, we can then apply Lemma \ref{Lem:kdiffsplitting} to replace the expression 
\[
(a_s \psi_s - a_t \psi_t) \cdot \oM_{g(v_0)}(a, I_s(v_0))
\]
on the central vertex by a sum over $2$-twisted graphs $(\Gamma^\ell, I^\ell)$ glued into this vertex  (see the right side of \eqref{eqn:graphsum_top}).

Overall, we have by now written $(a_s \psi_s - a_t \psi_t) \DR_g(a)$ as a sum indexed by simple star graphs $(\Gamma^s, I^s)$ and $2$-twisted graphs $(\Gamma^\ell, I^\ell)$ gluable into the central vertex of $\Gamma^s$. The individual summands are described by the twisted graphs $(\Gamma, I)$ depicted in \eqref{eqn:graphsum_middle} obtained by performing this gluing operation and, up to rational coefficients which we make more precise below, they are given by a pushforward under $\xi_\Gamma$ of strata of $k$-differentials and $1$-differentials at the two central, respectively the outlying vertices of $\Gamma$. Let us make a couple of observations at this point:
\begin{itemize}
    \item Due to the factor $f_{s,t}(\Gamma^\ell, I^\ell)$ appearing in Lemma \ref{Lem:kdiffsplitting}, any $(\Gamma, I)$ appearing with nonzero coefficient will have to satisfy that $s,t$ appear at two different vertices (the two central vertices depicted in blue). Using this observation, we see that we can uniquely\footnote{Strictly speaking the unique reconstruction requires the additional data of an identification of the edges of $\Gamma^s$ with the edges in $\Gamma$ incident to the non-central vertices. We will suppress this detail for now and return to it during the last part of the proof, when we match the precise coefficients and multiplicities of all involved terms.} reconstruct $(\Gamma^s, I^s)$ and $(\Gamma^\ell, I^\ell)$ given $(\Gamma,I)$: we obtain $\Gamma^s$ by contracting all edges between the two central components, and we obtain $\Gamma^\ell$ by cutting all other edges and removing all other vertices.
    \item The outlying vertices of $\Gamma$ (which are in natural bijection to the outlying vertices of $\Gamma^s$) can naturally be placed at the top or bottom of $\Gamma$ (and coloured green and red). The rule is that any vertex connecting \emph{only} to the bottom central vertex will be at the bottom (red) and all others will be at the top (green). This is motivated by the fact that the twist $I$ at all half-edges at the outlying vertices is positive, and thus in the partial order on $V(\Gamma)$ coming from the twist $I$ they should be strictly above any other vertex they connect to. 
    \item We color all the edges of $\Gamma$ by
    \begin{itemize}
        \item blue for the edges between the two central vertices,
        \item green and red for the edges on the top and bottom level between central and outlying vertices,
        \item brown for the edges connecting the lower central vertex of $\Gamma$ with an upper outlying vertex.
    \end{itemize}
\end{itemize}
\noindent \textbf{Step 3}:
Finally, we regroup the terms in the sum over $(\Gamma,I)$ to obtain the right-hand side of \eqref{eqn:psiDRformula}. For this, given $(\Gamma, I)$ we compute a triple of twisted graphs as follows:
\begin{itemize}
    \item We obtain a $2$-twisted graph $(\widehat{\Gamma}^\ell, \widehat{I}^\ell)$ by \emph{contracting} all red and green edges (and preserving the brown edges).
    \item We obtain simple star graphs $(\Gamma^{i,s}, I^{i,s})$, $i=0,-1$, by \emph{cutting} all blue and brown edges and taking the connected components of the top and bottom levels.
\end{itemize}
From the construction, it is clear that $(\Gamma,I)$ can be reconstructed by gluing the graphs $(\Gamma^{i,s}, I^{i,s})$ into the vertices of $(\widehat{\Gamma}^\ell, \widehat{I}^\ell)$, so that again Step 3 defined a combinatorial bijection.

The graphs $(\widehat{\Gamma}^\ell, \widehat{I}^\ell)$ appearing like this are precisely the indices of the sum on the  right-hand side of \eqref{eqn:psiDRformula}. On the other hand, the double ramification cycles appearing in this right-hand side can be replaced by sums over star graphs by Theorem \ref{Thm:ConjA}. Indeed, since $a_s, a_t$ are assumed to be not divisible by $k$ and since $s,t$ go to opposite sides of $\widehat{\Gamma}^\ell$, the assumptions of Theorem \ref{Thm:ConjA} are satisfied and the simple star graphs that appear are precisely of the form $(\Gamma^{i,s}, I^{i,s})$.

\noindent \textbf{Comparison of coefficients}:
At this point we have described how to use Theorem \ref{Thm:ConjA} and Lemma \ref{Lem:kdiffsplitting} to expand the left-hand side of \eqref{eqn:psiDRformula} into a graph sum (with insertions being strata of $k$-differentials and $1$-differentials) and how to regroup this sum using Theorem \ref{Thm:ConjA} to obtain the right-hand side of \eqref{eqn:psiDRformula}. We established these expansions and regroupings on the levels of the involved combinatorial objects (i.e. twisted graphs), but it remains to be verified that in the final step, all summands appear with the correct rational coefficients. Looking at the relevant formulas \eqref{eqn:psiDRformula}, \eqref{eqn:ConjA} and \eqref{eqn:kdiffsplitting} most of the factors are easily matched:\footnote{Instead of performing Step 3 in the direction described above (regrouping terms $(\Gamma,I)$), the reader might find it easier to go in the opposite direction and expand the right-hand side of \eqref{eqn:psiDRformula} into a graph sum. With this interpretation, the equations \eqref{eqn:comparemfac},\eqref{eqn:comparekfac}, \eqref{eqn:compareffac}, \eqref{eqn:compareautfac} below show how the coefficient of the graph sum term $(\Gamma,I)$ from Steps 1,2 agrees with the coefficient obtained from this inverse of Step 3.}

For the multiplicities $m(\Gamma,I)$ we see that the union of half-edges (and twists) of $\Gamma^s, \Gamma^\ell$ is in natural correspondence to the union of half-edges (and twists) of $(\widehat{\Gamma}^\ell, \widehat{I}^\ell)$ and the $(\Gamma^{i,s}, I^{i,s})$, and thus
    \begin{equation} \label{eqn:comparemfac}
        m(\Gamma^s,I^s) \cdot m(\Gamma^\ell, I^\ell) = m(\widehat{\Gamma}^\ell, \widehat{I}^\ell) \cdot m(\Gamma^{0,s}, I^{0,s}) \cdot m(\Gamma^{-1,s}, I^{-1,s})\,.
    \end{equation}
    
Since the outlying vertices of $\Gamma^s$ are the union of the outlying vertices of the $\Gamma^{i,s}$, we have
    \begin{equation} \label{eqn:comparekfac}
        k^{|V_{\mathrm{Out}}(\Gamma^s)|} =  k^{|V_{\mathrm{Out}}(\Gamma^{0,s})|}\cdot k^{|V_{\mathrm{Out}}(\Gamma^{-1,s})|}\,.
    \end{equation}

As the levels in $\Gamma^\ell, \widehat{\Gamma}^\ell$ to which the markings $s,t$ go are the same, we also have
    \begin{equation} \label{eqn:compareffac}
        f_{s,t}(\Gamma^\ell, I^\ell) = f_{s,t}(\widehat{\Gamma}^\ell, \widehat{I}^\ell)\,.
    \end{equation}

It remains to match the automorphism factors. As might be expected, this is the most tricky part of the argument, and the naive equality of products of sizes of automorphism groups in fact fails. The reason is that implicit in Steps 1 to 3 we have assumed e.g. an identification between half-edges at some vertex and the legs of a graph that is glued into this vertex. 

To make precise statements here, it is advantageous to reformulate the graph sums in \eqref{eqn:psiDRformula}, \eqref{eqn:ConjA} and \eqref{eqn:kdiffsplitting} in terms of sums over twisted graphs $(\Gamma,I)$ together with a bijective numbering $o: E(\Gamma) \to [[1, \ldots, e]]$ of all involved edges, where $e = |E(\Gamma')|$. The symmetric group $S_e$ operates transitively on all such numberings and the stabilizer of an isomorphism class of a numbered graph is precisely the automorphism group of the underlying twisted graph. Using the Orbit-Stabilizer Theorem from group theory one checks that formulas \eqref{eqn:psiDRformula}, \eqref{eqn:ConjA} and \eqref{eqn:kdiffsplitting} remain valid under replacing
\begin{itemize}
    \item the sums over isomorphism classes of $(\Gamma,I)$ satisfying the respective properties with sums over isomorphism classes of $(\Gamma, I, o: E(\Gamma) \xrightarrow{\sim} [[1, \ldots, e]]$),
    \item replacing the factors $1/|\mathrm{Aut}(\Gamma,I)|$ by $1/e!$.
\end{itemize}
With this insight, we can essentially finish the argument.
To set notation, let $e_{\mathrm{bl}}, e_{\mathrm{br}}, e_{\mathrm{g}}, e_{\mathrm{r}}$ be the numbers of blue, brown, green and red edges in the pictures \eqref{eqn:graphsum_top}, \eqref{eqn:graphsum_middle} and  \eqref{eqn:graphsum_bottom}. 
Then passing to the numbered version of formulas \eqref{eqn:psiDRformula}, \eqref{eqn:ConjA} and \eqref{eqn:kdiffsplitting} has the following effects on the coefficients of $(\Gamma,I)$ in \eqref{eqn:graphsum_middle}:
\begin{itemize}
    \item In \eqref{eqn:graphsum_top} we can assume to have a numbering on the $e_{\mathrm{g}}+e_{\mathrm{br}}+e_{\mathrm{r}}$ edges of $\Gamma^s$ and legs of $\Gamma^\ell$ and that the gluing respects this ordering. Similarly we have a numbering on the $e_{\mathrm{bl}}$ edges of $\Gamma^\ell$. Overall, the glued graph $(\Gamma,I)$ appears with a coefficient of $$\frac{1}{(e_{\mathrm{g}}+e_{\mathrm{br}}+e_{\mathrm{r}})! e_{\mathrm{bl}}!}$$
    and inherits two numberings (on the green, brown and red edges and on the blue edges).
    \item In \eqref{eqn:graphsum_bottom} we can have a numbering on the $e_{\mathrm{bl}}+e_{\mathrm{br}}$ edges of $\widehat{\Gamma}^\ell$ and legs of $\Gamma^{i,s}$ with the gluing respecting this ordering. Similarly we have a numbering on the $e_{\mathrm{g}}$ edges of $\Gamma^{0,s}$ and the $e_{\mathrm{r}}$ edges of $\Gamma^{-1,s}$. Overall, the glued graph $(\Gamma,I)$ appears with a coefficient of $$\frac{1}{(e_{\mathrm{bl}}+e_{\mathrm{br}})! e_{\mathrm{g}}! e_{\mathrm{r}}!}$$
    and inherits three numberings (on the blue and brown, the green and the red edges).
\end{itemize}
To conclude we must not just compare the coefficients mentioned above but also with how many different numberings each $(\Gamma,I)$ can appear. To do this, we use the following result:

\noindent \textbf{Fact} : Given sets $M_1, \ldots, M_u$ of sizes $m_1, \ldots, m_u$, the map from the set of orderings of $M=M_1 \sqcup \cdots \sqcup M_u$ to the product of the set of orderings on each $M_i$ taking the induced order on the subsets $M_i \subset M$ has fibres of size
\[
\binom{m_1 + \ldots + m_u}{m_1, \ldots, m_u}\,.
\]

Thus given a graph $(\Gamma,I)$ with four orderings on its edges (one for each colour), there are $\binom{e_{\mathrm{g}}+e_{\mathrm{br}}+e_{\mathrm{r}}}{e_{\mathrm{g}},e_{\mathrm{br}},e_{\mathrm{r}}}$ orderings in \eqref{eqn:graphsum_top} inducing the given four orders, and $\binom{e_{\mathrm{bl}}+e_{\mathrm{br}}}{e_{\mathrm{bl}},e_{\mathrm{br}}}$ orderings from \eqref{eqn:graphsum_bottom}. Overall, the desired equality of the coefficients in \eqref{eqn:psiDRformula} then follows from the identity 
\begin{equation} \label{eqn:compareautfac}
\frac{1}{(e_{\mathrm{g}}+e_{\mathrm{br}}+e_{\mathrm{r}})! e_{\mathrm{bl}}!} \binom{e_{\mathrm{g}}+e_{\mathrm{br}}+e_{\mathrm{r}}}{e_{\mathrm{g}},e_{\mathrm{br}},e_{\mathrm{r}}} = \frac{1}{(e_{\mathrm{bl}}+e_{\mathrm{br}})! e_{\mathrm{g}}! e_{\mathrm{r}}!}\binom{e_{\mathrm{bl}}+e_{\mathrm{br}}}{e_{\mathrm{bl}},e_{\mathrm{br}}}\,. 
\end{equation}
\end{proof}

\section{Identities satisfied by \texorpdfstring{$\cA_{g}$}{Ag}} \label{Sect:Agidentities}

In this section we use Proposition~\ref{prop:psiDRformula} to determine three identities satisfied by the functions $\cA_g$. We show that these identities determine the functions $\cA_g$ to prove Theorem~\ref{th:main}.

\begin{lemma}\label{lem:identities} The functions $\cA_{g}=\cA_g(a_1, \ldots, a_n)$ are polynomials in the entries $a_i$, of total degree at most $2g$ and symmetric in the arguments $a_2, \ldots, a_n$. Moreover, they satisfy the following identities:
\begin{eqnarray}
\cA_{g}(a_1,\ldots,a_n, k) &=& \cA_{g}(a_1,\ldots,a_n)\quad \text{ for }k=\frac{a_1+\ldots + a_n}{2g-2+n}, \label{eqn:Agid1}\\
a_1 \cdot \cA_g(a_1,\ldots,a_n,0)\,\, + && \!\!\!\!\!\!\!\!\!\!\!\!\! \sum_{i>1} (a_i-k)  \cA_{g}(\ldots,a_i-k,\ldots) \label{eqn:Agid2}\\
\nonumber &=& \frac{1}{2} \sum_{j=0}^k j(k-j)\cdot \cA_{g-1}(a_1,\ldots,a_n,-j,j-k)\\
 \quad  \text{ for }k=\frac{a_1+\ldots + a_n}{2g-2+n+1},&& \nonumber\\
\cA_{g}|_{|a|=0}&=&[z^{2g}] \frac{\prod_{i=1}^n \cS(a_iz)}{\cS(z)}. \label{eqn:Agid3}
\end{eqnarray}
\end{lemma}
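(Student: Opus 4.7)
The polynomiality of $\cA_g(a)$ in the entries of $a$, with total degree at most $2g$, is inherited directly from the polynomiality of $\DR_g(a)$ itself recalled in the introduction. Symmetry in $a_2, \ldots, a_n$ follows because $\DR_g(a)$ is symmetric under simultaneous permutations of markings and weights, while the integrand $\psi_1^{2g-3+n}$ distinguishes only the first marking.

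For the string-type identity \eqref{eqn:Agid1}, I would use the pullback property $\pi^* \DR_g(a) = \DR_g(a, k)$ for the forgetful morphism $\pi \colon \oM_{g,n+1} \to \oM_{g,n}$ with $k = |a|/(2g-2+n)$, which is the non-spin analog of property~(2) of Assumption~\ref{assumption} and is visible directly from Pixton's formula \eqref{eqn:Pixton}. (It encodes that adding a marking with weight $k$ does not alter the condition $\omega_{\log}^{\otimes k} \cong \cO(\sum a_i x_i)$.) Combining this with the projection formula and the standard pushforward identity $\pi_*(\psi_1^{m+1}) = \psi_1^m$ (which follows from $\pi^*\psi_1 = \psi_1 - D_{1,n+1}$ together with $\psi_1 \cdot D_{1,n+1} = 0$ on $\oM_{g,n+1}$) gives
\[
\cA_g(a_1, \ldots, a_n, k) = \int_{\oM_{g,n}} \DR_g(a) \cdot \pi_*\bigl(\psi_1^{2g-2+n}\bigr) = \int_{\oM_{g,n}} \DR_g(a) \cdot \psi_1^{2g-3+n} = \cA_g(a).
\]
Identity \eqref{eqn:Agid3} reduces to \cite[Theorem~1]{BSSZ}: when $|a|=0$ we have $k=0$ and the twisted DR cycle coincides with the classical double ramification cycle of Buryak--Shadrin--Spitz--Zvonkine, so their closed formula applies verbatim.

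The main work lies in identity \eqref{eqn:Agid2}, which I would derive from the splitting formula of Proposition~\ref{prop:psiDRformula} applied to $\DR_g(a_1, \ldots, a_n, 0)$ on $\oM_{g,n+1}$ with $s = 1$ and $t = n+1$. Since $a_{n+1} = 0$, the left-hand side becomes $a_1 \psi_1 \DR_g(a, 0)$; multiplying by $\psi_1^{2g-3+n}$ and integrating over $\oM_{g,n+1}$ yields exactly $a_1 \cA_g(a, 0)$. For the right-hand side, one analyses the contributions of two-level two-vertex graphs $(\oGamma, I) \in \mathrm{LG}_1^2(g, (a, 0))$. The factor $f_{1,n+1}(\oGamma)$ vanishes unless $1$ and $n+1$ lie on different vertices, and a dimension count at the vertex not containing marking~$1$ forces it to have genus~$0$ and exactly three legs. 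For $g \geq 1$, this leaves only two families of graphs: those with one edge whose small vertex carries $\{n+1, i, \text{half-edge}\}$ for some $i > 1$, and those with two edges whose small vertex carries $\{n+1, \text{half-edge}, \text{half-edge}\}$.

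Solving the twist equations shows the first family has half-edge weight $\pm(a_i - k)$ and, combined with the $\psi_1^{2g-3+n}$ integral on the opposite vertex, contributes $-\sum_{i>1}(a_i - k)\,\cA_g(\ldots, a_i-k, \ldots)$; the two possible level assignments give the same polynomial contribution but are only alternately realised for fixed~$a$, so no double counting occurs. The second family has half-edge weights $j$ and $k-j$ with $1 \le j \le k-1$, reduces the opposite vertex's genus to $g-1$, and, after accounting for the $|\Aut|=2$ edge-swap symmetry that arises when $j = k/2$, contributes $\tfrac{1}{2}\sum_{j=0}^{k} j(k-j)\,\cA_{g-1}(a_1, \ldots, a_n, -j, j-k)$. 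Rearranging the resulting equality yields \eqref{eqn:Agid2}. The principal obstacle is precisely this careful combinatorial bookkeeping of the signs $f_{s,t}$, the edge multiplicities $m(\oGamma)$, and the automorphism factors in the graph sum, together with the dimension argument verifying that all other graph topologies contribute zero.
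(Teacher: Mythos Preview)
Your proposal is correct and follows essentially the same approach as the paper. The paper's proof is terser but uses the identical ingredients: polynomiality from \cite{PZ21}, the pullback property $\pi^*\DR_g(a)=\DR_g(a,k)$ for \eqref{eqn:Agid1}, \cite[Theorem~1]{BSSZ} for \eqref{eqn:Agid3}, and for \eqref{eqn:Agid2} the splitting formula (Proposition~\ref{prop:psiDRformula}) with $s=1$, $t=n+1$ applied to $\DR_g(a,0)$, followed by the same dimension argument forcing the vertex without marking~$1$ to be genus~$0$ with three half-edges, yielding the two families you describe. Your remark that the two possible level assignments in the one-edge family give the same polynomial contribution (realised alternately depending on the sign of $a_i-k$) is a useful clarification that the paper leaves implicit.
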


\begin{proof}
The polynomiality and degree bound for $\cA_g$ follow from \cite{PZ21} and the symmetry in $a_2, \ldots, a_n$ follows from the $S_n$-equivariance of the double ramification cycle in its entries and the resulting $S_{n-1}$-invariance of the definition \eqref{eqn:Agdef} of $\cA_g$.

The identity~\eqref{eqn:Agid1} follows from the fact that the class $\DR_g(a_1,\ldots,a_n,k)$ is the pull-back of $\DR_g(a_1,\ldots,a_n)$ under the forgetful morphism of the last marking (this follows from Invariance II of \cite{BHPSS20}). The identity~\eqref{eqn:Agid3} was proved in~\cite[Theorem 1]{BSSZ}. Thus it remains to prove the identity~\eqref{eqn:Agid2}. We rewrite:
$$a_1 \cA_{g}(a_1,\ldots, a_n, 0)=\int_{\oM_{g,n+1}} \psi_1^{2g-3+n}\cdot  (a_1\,\psi_1- 0 \cdot \psi_{n+1})\DR_g(a_1,\ldots,a_n,0).$$
Using Proposition~\ref{prop:psiDRformula}, we can express the right hand side as a sum on graphs with $2$ vertices, with markings $1$ and $n+1$ being on different vertices. For dimension reasons, the only graphs contributing non-trivially are the graphs such that the vertex that does not contain $1$ (and hence \emph{does} contain $n+1$) is of genus $0$ and has exactly $3$ half-edges. It occurs in 2 family of cases: 
\begin{itemize}
    \item either the $(n+1)$st marking is on a vertex with another marking $i\neq 1$, connected to the other vertex by 1 edge;
    \item or the $(n+1)$st marking is on a vertex with no other marking,  connected to the other vertex by 2 edges.
\end{itemize}
Keeping only these two contributions gives the second relation.
\end{proof}
Next, we prove that the properties above are sufficient to characterize $\cA_g$.
\begin{proposition}\label{prop:constraint}
All functions $\cA_{g}$ are uniquely determined by $\cA_0$, $\cA_1$, and the properties in Lemma~\ref{lem:identities}.  
\end{proposition}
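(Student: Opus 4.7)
The plan is to argue by induction on the genus $g$. The cases $g=0$ and $g=1$ are given, and for $g\geq 2$ I assume that $\cA_{g-1}$ is uniquely determined for every valid $n$. If $\cA_g$ and $\tilde\cA_g$ both satisfy the identities of Lemma~\ref{lem:identities}, their difference $D_g=\cA_g-\tilde\cA_g$ satisfies the \emph{homogeneous} versions of those identities: the right-hand side of \eqref{eqn:Agid2} vanishes because it depends only on the inductively determined $\cA_{g-1}$, and by \eqref{eqn:Agid3} we have $D_g|_{|a|=0}\equiv 0$. The goal reduces to showing that $D_g$ is identically zero, for every $n$.

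The natural route is a secondary induction on the number of markings $n$. In the inductive step, suppose $D_g^{(n)}\equiv 0$, and consider $D_g^{(n+1)}(a_1,\dots,a_{n+1})$, a polynomial of degree $\leq 2g$ symmetric in $a_2,\dots,a_{n+1}$. The three identities immediately supply three codimension-one vanishing conditions on $D_g^{(n+1)}$: identity \eqref{eqn:Agid1} forces it to vanish on the hyperplane $(2g-2+n)\,a_{n+1}=a_1+\dots+a_n$; identity \eqref{eqn:Agid3} forces it to vanish on $a_1+\dots+a_{n+1}=0$; and the homogeneous \eqref{eqn:Agid2}, in which the shifted $n$-variable terms $D_g^{(n)}(\dots,a_i-k,\dots)$ drop out by the inductive hypothesis, reduces to $a_1 D_g^{(n+1)}(a_1,\dots,a_n,0)=0$ and hence forces vanishing on $a_{n+1}=0$. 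Thus $D_g^{(n+1)}$ is divisible by a product of three linearly independent linear forms.

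The main obstacle is that after this one round of peeling, the quotient is a polynomial of degree $\leq 2g-3$, which is generally nonzero when $g\geq 2$. To close the argument one needs to apply further constraints: either iterate the peeling using identities for the quotient (requiring verification that the quotient inherits identities of the same structural form as \eqref{eqn:Agid1}, \eqref{eqn:Agid2}, \eqref{eqn:Agid3} modulo non-vanishing rational scalars coming from ratios like $|a|/(|a|\pm k)$), or combine the three vanishing loci above with the additional polynomial constraints produced by the homogeneous \eqref{eqn:Agid2} at configurations where the shifted terms $D_g^{(n+1)}(\dots,a_i-k,\dots)$ contribute nontrivially via the induction hypothesis on $g$. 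The careful combinatorial verification that the identities propagate to a system sufficient to kill the quotient polynomial—and that this descent process terminates once the total degree budget $2g$ is exhausted—is the hardest part of the proof and where the detailed bookkeeping occurs.
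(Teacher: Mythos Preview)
Your outline is honest about being incomplete, and the gap is real; as written this is not a proof.

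\textbf{Base case.} Your secondary induction on $n$ never gets started. For $g\ge 2$ the smallest admissible number of markings is $n=1$, and there you must show $D_g^{(1)}(a_1)\equiv 0$ for a univariate polynomial of degree $\le 2g$. Identity~\eqref{eqn:Agid3} only gives $D_g^{(1)}(0)=0$; identities~\eqref{eqn:Agid1} and~\eqref{eqn:Agid2} relate $D_g^{(2)}$ to $D_g^{(1)}$ but give no further direct constraint on $D_g^{(1)}$. Without this base case the upward induction on $n$ cannot begin, and since~\eqref{eqn:Agid1} also transmits vanishing \emph{downward} in $n$, the scheme is circular.

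\textbf{Inductive step.} Even granting $D_g^{(n)}\equiv 0$, you only extract three vanishing hyperplanes. (A minor point: their defining linear forms satisfy $\ell_1+\ell_2=(2g-1+n)\ell_3$ and are thus not linearly independent, though they are pairwise non-proportional, which is what divisibility needs.) After dividing, you are left with a polynomial of degree $\le 2g-3$ and no mechanism for continuing: the quotient does not inherit identities of the same shape, and the ``careful combinatorial verification'' you defer is essentially the entire content of the proposition.

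The paper takes a different route that sidesteps both problems. After the change of variables $\widetilde\cA_g(y)=\cA_g(y_1+k,\dots,y_n+k)$ with $k=|y|/(2g-2)$, identity~\eqref{eqn:Agid1} becomes $\widetilde\cA_g(y_1,\dots,y_n,0)=\widetilde\cA_g(y_1,\dots,y_n)$. Expanding $\widetilde\cA_g=\sum_{d,\mu} c_{g,d,\mu}\,y_1^d e_\mu(y)$ in elementary symmetric polynomials, this forces the coefficients $c_{g,d,\mu}$ to be \emph{independent of $n$}. One then works at a single fixed $n>2g+1$: identity~\eqref{eqn:Agid3} (which amounts to setting $e_1=0$) determines all $c_{g,d,\mu}$ with $\mu$ having no parts equal to $1$, and identity~\eqref{eqn:Agid2} is shown, via an explicit computation with symmetric functions, to yield a \emph{triangular} linear system in the remaining coefficients (ordered by $d$ and by the number $M(\mu)$ of parts of $\mu$ equal to $1$) with nonzero diagonal entries. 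This is a finite linear-algebra check on a fixed coefficient space rather than an unbounded peeling procedure, and it requires no induction on $n$.
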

The proof of the previous Proposition uses in a substantial way the polynomiality and symmetry properties of $\cA_g$. Before we begin the proof, we need some technical lemmas concerning such symmetric polynomial functions.
\newcommand{\sfunc}{g}
\begin{lemma}\label{Lem:symmfunc}
Let $\sfunc: \mathbb{Z}^n \to \mathbb{Q}$ be a polynomial function of total degree (at most) $D$ in the variables $y_1, \ldots, y_n$, which is symmetric in $y_2, \ldots, y_n$. Then there exist coefficients
\[
c_{d,\mu} \in \mathbb{Q} \text{ for }0 \leq d \leq D \text{ and } \mu = (m_1, \ldots, m_\ell) \text{ a partition of size at most }D-d
\]
such that
\begin{equation} \label{eqn:symmfunc}
    \sfunc(y) = \sum_{d,\mu} c_{d,\mu} \cdot y_1^d e_\mu(y)
\end{equation}
where $e_\mu(y)$ is the elementary symmetric polynomial defined by:
\begin{eqnarray*}
\prod_{i=1}^n(X+y_i)&=&\sum_{i=0}^n e_{n-i}(y) X^i\\
e_\mu&=&\prod_{i=1}^\ell e_{m_i}.
\end{eqnarray*}
For $n>D$, the coefficients $c_{d,\mu}$ in the representation \eqref{eqn:symmfunc} are unique. In other words, the functions $y_1^d e_\mu(y)$ form a basis of the space of all functions $\sfunc$ as above.

Furthermore, recalling that $e_m(y_1, \ldots, y_n)=0$ for $n<m$, the function $\sfunc$ with representation \eqref{eqn:symmfunc} satisfies
\begin{equation} \label{eqn:symmfuncrest}
\sfunc(y_1, \ldots, y_{n-1},0) = \sum_{d,\mu} c_{d,\mu} \cdot y_1^d e_\mu(y_1, \ldots, y_{n-1})\,.
\end{equation}
\end{lemma}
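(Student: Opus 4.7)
The plan is to reduce to the fundamental theorem of symmetric functions together with a triangular change of basis between $e_\mu(y_2, \ldots, y_n)$ and $e_\mu(y_1, \ldots, y_n)$.

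For existence, since $g$ is symmetric in $y_2, \ldots, y_n$, I first expand $g(y_1, \ldots, y_n) = \sum_{d \geq 0} y_1^d g_d(y_2, \ldots, y_n)$ where each $g_d$ is symmetric in $y_2, \ldots, y_n$ of total degree at most $D-d$. The fundamental theorem of symmetric functions yields an expansion $g_d = \sum_\lambda b_{d,\lambda} e_\lambda(y_2, \ldots, y_n)$ with $|\lambda| \leq D-d$. Then the key identity, proved by induction on $m$ starting from $e_m(y_1, \ldots, y_n) = e_m(y_2, \ldots, y_n) + y_1 e_{m-1}(y_2, \ldots, y_n)$, is
\[
e_m(y_2, \ldots, y_n) = \sum_{k=0}^m (-1)^k y_1^k e_{m-k}(y_1, \ldots, y_n)\,.
\]
Substituting this into each factor of $e_\lambda(y_2, \ldots, y_n)$ and collecting produces the desired representation $g = \sum_{d,\mu} c_{d,\mu} y_1^d e_\mu(y_1, \ldots, y_n)$. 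Each substitution is homogeneous (the right hand side above is pure degree $m$), so the total degree bound $d + |\mu| \leq D$ is preserved.

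For uniqueness when $n > D$, I would compare dimensions. Under the hypothesis $n - 1 \geq D$, the fundamental theorem guarantees that the symmetric functions $e_1(y_2, \ldots, y_n), \ldots, e_{D}(y_2, \ldots, y_n)$ are algebraically independent in the degree range we care about, so the monomials $y_1^d e_\mu(y_2, \ldots, y_n)$ with $d + |\mu| \leq D$ form a basis of the space $V_{D,n}$ of polynomials of total degree at most $D$ symmetric in $y_2, \ldots, y_n$. The set $\{y_1^d e_\mu(y_1, \ldots, y_n) : d + |\mu| \leq D\}$ is indexed by the same pairs $(d,\mu)$, hence has the same cardinality; the change of basis above shows it spans $V_{D,n}$, so it is itself a basis. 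This gives uniqueness.

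The restriction statement \eqref{eqn:symmfuncrest} follows at once from setting $X = X$, $y_n = 0$ in the generating function $\prod_{i=1}^n (X + y_i) = \sum_i e_{n-i}(y) X^i$, which yields $e_m(y_1, \ldots, y_{n-1}, 0) = e_m(y_1, \ldots, y_{n-1})$ for every $m$. Plugging this identity term-by-term into the expansion of $g$ gives the claim.

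The routine part is the triangular substitution and the bookkeeping on degrees; the only genuine subtlety is pinning down the exact role of the hypothesis $n > D$, namely that the bound $d + |\mu| \leq D < n$ forces all parts $m_i$ of $\mu$ to satisfy $m_i < n$, so that no elementary symmetric polynomial $e_{m_i}(y_2, \ldots, y_n)$ in the expansion is forced to vanish, and the algebraic independence of the $e_j(y_2, \ldots, y_n)$ in degrees up to $D$ is exactly what prevents hidden linear relations from spoiling uniqueness.
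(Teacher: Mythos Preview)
Your proof is correct and follows essentially the same approach as the paper: both use the fundamental theorem of symmetric polynomials to get the basis $y_1^d e_\mu(y_2,\ldots,y_n)$, convert via the relation $e_m(y)=e_m(y_2,\ldots,y_n)+y_1 e_{m-1}(y_2,\ldots,y_n)$, and conclude uniqueness by a dimension count once $n>D$ forces all parts of $\mu$ to be at most $n-1$. Your version is slightly more explicit in writing out the closed formula $e_m(y_2,\ldots,y_n)=\sum_{k=0}^m(-1)^k y_1^k e_{m-k}(y)$, whereas the paper leaves this as an induction, but the substance is identical.
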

\begin{proof}
The set of functions $g$ as above forms the degree at most $D$ part of the ring
\[Q = \mathbb{Q}[y_1] \otimes_{\mathbb{Q}} \mathbb{Q}[y_2, \ldots, y_n]^{S_{n-1}}\,,\]
which, by the fundamental theorem of symmetric polynomials, has a canonical basis given by $y_1^d e_\mu(y_2, \ldots, y_n)$ where all parts of $\mu$ are of size at most $n-1$. We want to show that the system $\{y_1^d e_\mu(y) : d, \mu\}$ is a generating set of $Q$, for $y=(y_1,\dots,y_n)$. Since the system is closed under multiplication and contains $y_1$, it suffices to show that all functions $g= e_m(y_2, \ldots, y_n)$ have a representation \eqref{eqn:symmfunc}. This easily follows by induction on $m$ using the fact that
\[
e_m(y) - e_m(y_2, \ldots, y_n) = y_1 \cdot h \text{ for }h \in Q \text{ of degree at most }m-1\,.
\]
For $n>D$ we have that automatically all partitions $\mu$ of size at most $D$ have that all parts are bounded by $n-1$. Thus the functions $y_1^d e_\mu(y_2, \ldots, y_n)$ with $d + |\mu| \leq D$ form a basis of $Q_{\leq D}$. But since the number of such pairs $d,\mu$ equals the number of elements $y_1^d e_\mu(y)$ and since these generate $Q_{\leq D}$ by the first part of the proof, they form a basis as desired. The last statement \eqref{eqn:symmfuncrest} of the lemma is immediate from the definition of the elementary symmetric polynomials.
\end{proof}

\begin{lemma} \label{Lem:annoyingsums}
Let $n \geq 1$, $\ell \geq 0$ and $\mu=(m_1, \ldots, m_\ell)$ a partition. Let furthermore $y_1, \ldots, y_n, k$ be formal variables. Then we have
\begin{align} 
    e_\mu(y_1, \ldots, y_n, -k) &= e_\mu + O(k)\,, \label{eqn:annoyingsum1}\\
    \sum_{i=1}^n y_i e_\mu(y_1, \ldots, y_i-k, \ldots, y_n) &= e_1 e_\mu - |\mu| k e_\mu + O(k^2)\,,\label{eqn:annoyingsum2}
\end{align}
where on the right-hand sides $e_1 = e_1(y)$, $e_\mu=e_\mu(y)$.\footnote{Here the notations $O(k)$, $O(k^2)$ stand for the sets of elements of the polynomial ring $\mathbb{Q}[y_1, \ldots, y_n,k]$ divisible by $k, k^2$, respectively.}
\end{lemma}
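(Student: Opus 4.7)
The plan is to treat the two identities in turn, both reducing to very standard facts about elementary symmetric polynomials.

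For \eqref{eqn:annoyingsum1}, I would simply specialize $k=0$ on both sides. Since $e_m(y_1, \ldots, y_n, 0) = e_m(y_1, \ldots, y_n)$ for every $m$ (adjoining a zero variable does not change any elementary symmetric polynomial), the same holds for any product $e_\mu = \prod_i e_{m_i}$. Thus $e_\mu(y_1, \ldots, y_n, -k) - e_\mu(y_1, \ldots, y_n)$ vanishes at $k=0$ and therefore lies in $k\cdot\mathbb{Q}[y_1, \ldots, y_n, k]$, which is exactly the meaning of $O(k)$.

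For \eqref{eqn:annoyingsum2}, I would Taylor-expand each summand to first order in $k$. Writing $F(y) = e_\mu(y)$, we have
\[
e_\mu(y_1, \ldots, y_i - k, \ldots, y_n) = F(y) - k \, \partial_{y_i} F(y) + O(k^2),
\]
so that
\[
\sum_{i=1}^n y_i\, e_\mu(y_1, \ldots, y_i-k, \ldots, y_n) = \Bigl(\sum_{i=1}^n y_i\Bigr) F(y) \;-\; k \sum_{i=1}^n y_i\, \partial_{y_i} F(y) + O(k^2).
\]
The first term is $e_1(y)\, e_\mu(y)$. For the second term I apply Euler's relation: since $e_\mu = \prod_j e_{m_j}$ is homogeneous of degree $|\mu| = \sum_j m_j$, the Euler operator acts as $\sum_i y_i \partial_{y_i} e_\mu = |\mu|\, e_\mu$. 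Combining, the sum equals $e_1 e_\mu - k |\mu| e_\mu + O(k^2)$, which is exactly the stated identity.

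Neither step presents any real obstacle; the only point worth emphasizing is that all manipulations are identities in the polynomial ring $\mathbb{Q}[y_1, \ldots, y_n, k]$ (so the $O(k)$ and $O(k^2)$ notation is meant in the algebraic sense of divisibility), and that the homogeneity of $e_\mu$ as a polynomial in $y$ is what makes the Euler-operator computation valid even though $\mu$ can have arbitrarily many parts.
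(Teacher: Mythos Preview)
Your proof is correct. For \eqref{eqn:annoyingsum1} you and the paper do essentially the same thing (the paper writes the slightly more explicit $e_m(y_1,\ldots,y_n,-k)=e_m-k\,e_{m-1}$, but only $O(k)$ is needed). For \eqref{eqn:annoyingsum2} your route is genuinely different and cleaner: the paper expands the product $e_\mu=\prod_j e_{m_j}$ term by term, uses the explicit derivative $\partial_{y_i}e_m=e_{m-1}(y_1,\ldots,\widehat{y_i},\ldots,y_n)$, and then invokes the combinatorial identity $\sum_i y_i\,e_{m-1}(y_1,\ldots,\widehat{y_i},\ldots,y_n)=m\,e_m$ for each part before summing the $m_j$ to $|\mu|$. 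Your Euler-operator argument bypasses all of this by exploiting homogeneity of $e_\mu$ directly, which is both shorter and more transparent. The paper's approach has the minor advantage of making the exact first-order coefficients visible along the way, but for the stated lemma your argument is preferable.
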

\begin{proof}
The statement \eqref{eqn:annoyingsum1} follows immediately from the fact that
\[
e_m(y_1, \ldots, y_n,-k) = e_m(y_1, \ldots, y_n) -ke_{m-1}(y_1, \ldots, y_n) = e_m + O(k)\,.
\]
For the second property, we expand $e_\mu = \prod_{i=1}^\ell e_{m_j}$ and approximate the result up to first order in $k$:
\begin{align*}
&\sum_{i=1}^n y_i \prod_{j=1}^\ell e_{m_j}(y_1, \ldots, y_i-k, \ldots, y_n)\\
=&\sum_{i=1}^n y_i \prod_{j=1}^\ell \left( e_{m_j}(y) - k e_{m_j-1}(y_1, \ldots, \widehat{y_i}, \ldots, y_n) \right)\\
=&\sum_{i=1}^n y_i \left(e_\mu(y) - k  \sum_{j=1}^\ell e_{\widehat{\mu}^j} e_{m_j-1}(y_1, \ldots, \widehat{y_i}, \ldots, y_n) \right)+ O(k^2)\,
\end{align*}
where in the last line we use the notation $\widehat{\mu}^j = (m_1, \ldots, \widehat{m_j}, \ldots, m_\ell)$ for the partition obtained from $\mu$ by removing the $j$-th part. Pulling the sum over $i$ we obtain
\begin{align*}
&\sum_{i=1}^n y_i \left(e_\mu(y) - k  \sum_{j=1}^\ell e_{\widehat{\mu}^j} e_{m_j-1}(y_1, \ldots, \widehat{y_i}, \ldots, y_n) \right)+ O(k^2)\\
=& e_1 e_\mu - k  \sum_{j=1}^\ell e_{\widehat{\mu}^j} \sum_{i=1}^n y_i e_{m_j-1}(y_1, \ldots, \widehat{y_i}, \ldots, y_n) + O(k^2)\\
=& e_1 e_\mu - k  \sum_{j=1}^\ell e_{\widehat{\mu}^j} m_j e_{m_j} + O(k^2)\\
=& e_1 e_\mu - k |\mu| e_\mu + O(k^2)\,,
\end{align*}
which finishes the proof. Here in the second to last step we used that
\[
\sum_{i=1}^n y_i e_{m-1}(y_1, \ldots, \widehat{y_i}, \ldots, y_n) = m e_{m}(y)\,.
\]
And indeed, the two sides of this last equality are both sums of terms $y_{i_1} \cdots y_{i_{m}}$ for subsets $\{i_1, \ldots, i_m\} \subseteq \{1, \ldots, n\}$ of size $m$, and each such subset appears precisely $m$ times (e.g. on the left for each choice of $i=i_u$, $u=1, \ldots, m$).
\end{proof}

\begin{proof}[Proof of Proposition \ref{prop:constraint}] For $g>1$ we denote by $\widetilde{\cA}_{g}:\ZZ^n\to \QQ$ the function defined by $$\widetilde{\cA}_{g}(y_1,\ldots,y_n)={\cA}_{g}(y_1+k,\ldots,y_n+k) \text{ for }k=\frac{\sum_i y_i}{2g-2}.$$
The functions $\cA_g$ and $\widetilde{\cA}_g$ determine each other and so it suffices to show that $\widetilde{\cA}_g$ is determined by the analog of the properties from Lemma~\ref{lem:identities} together with the data in genus $0,1$. 

We continue to fix $g>1$ and assume for the moment that $n>2g$. Then since $\widetilde{\cA}_g$ is a polynomial in its entries, of degree at most $2g$ and symmetric in the last $n-1$ arguments, we know from Lemma \ref{Lem:symmfunc} that there exist \emph{unique} coefficients $c_{g,d,\mu} \in \mathbb{Q}$ for all $d\geq 0$, and $\mu=(m_1,\ldots,m_\ell)$ a partition of size $|\mu|$ at most $2g-d$, such that
\begin{equation} \label{eqn:Agtildecoeffs}
    \widetilde{\cA}_{g}(y)=\sum_{g,\mu} c_{g,d,\mu}\cdot  y_1^d\, e_\mu(y)\,.
\end{equation}
Moreover, the first identity \eqref{eqn:Agid1} of Lemma~\ref{lem:identities} implies that $\widetilde{\cA}_{g}(y_1,\ldots,y_n,0)=\widetilde{\cA}_{g}(y_1,\ldots,y_n)$. 
Therefore, by the last part of Lemma \ref{Lem:symmfunc} the coefficients $c_{g,d,\mu}$ are not only unique but in fact \emph{independent of $n$}, since the corresponding expression \eqref{eqn:Agtildecoeffs} restricts correctly to all lower numbers of marked points.

Now consider the third property \eqref{eqn:Agid3}. Note that the operation of restricting $\widetilde{\cA}_g$ to the locus where $e_1(y)=y_1 + \ldots + y_n$ vanishes precisely corresponds to setting $e_1(y)=0$ in the expression \eqref{eqn:Agtildecoeffs}. Given a partition $\mu$, we denote by $M(\mu)$ its number of entries equal to $1$. Then the third identity \eqref{eqn:Agid3} determines all coefficients $c_{g,d,\mu}$, with $M(\mu)=0$. Therefore, we will show that the second identity \eqref{eqn:Agid2} provides a relation allowing to compute the remaining coefficients, inductively on $g,$ $d,$ and $M(\mu)$.

Fix now $g>1$ and $n>2g+1$.  Then the second relation \eqref{eqn:Agid2} translates into the conventions of $\widetilde{\cA}_g$ as\footnote{There is a small annoying point: for $g=2$ the term $\widetilde{\cA}_{g-1}$ appearing below is not defined, since in genus $1$ the value of $k$ cannot be reconstructed from the values of the $y_i$. To fix the argument that follows, one can just replace the evaluation of $\widetilde{\cA}_{g-1}$ on the right by the expression $\cA_1(y_1+k, \ldots, y_n+k, -j,j-k)$ and the entire argument works verbatim. Indeed, the only thing we need is that this term can be written as a linear combination of $y_1^d e_\mu(y)$ and that by assumption we know the coefficients of the linear combination in genus $1$.}
\begin{align}
&(y_1+k) \cdot \widetilde{\cA}_g(y_1,\ldots,y_n,-k)\,\, +   \sum_{i>1} y_i  \widetilde{\cA}_{g}(y_1,\ldots,y_i-k,\ldots,y_n) \label{eqn:tildAgid2}\\
&\quad \quad \nonumber = \frac{1}{2} \sum_{j=0}^k j(k-j)\cdot \widetilde{\cA}_{g-1}(y_1,\ldots,y_n,-j-k,j-2k)
 \nonumber
\end{align}
for any integers $y_1, \ldots, y_n$ and where $k=(y_1 + \ldots + y_n)/(2g-1)$.
Indeed, the reader can verify that applying the definition of $\widetilde{\cA}$ to equation \eqref{eqn:tildAgid2} and substituting $y_i = a_i-k$ precisely gives the second relation \eqref{eqn:Agid2}.
Note that the formula for $k$ was specifically chosen in such a way that
\[
\frac{y_1 + \ldots + y_n -k}{2g-2} = k\,,
\]
so that the value of $k$ computed from the arguments of the functions $\widetilde{\cA}_g$ and $\widetilde{\cA}_{g-1}$ in \eqref{eqn:tildAgid2} agrees with the defining formula of $k$.

Both the left and right hand side are polynomials in
\begin{equation} \label{eqn:ambring}
\QQ[y_1]\otimes \QQ[y_2, \ldots, y_n]^{S_{n-1}}
\end{equation}
of degree at most $2g+1$. Thus by Lemma \ref{Lem:symmfunc} and the assumption $n>2g+1$ both sides are unique linear combinations of the functions $y_1^d e_\mu(y)$.
Comparing coefficients, we obtain a system of relations between the coefficients $c_{g,d,\mu}$ and $c_{g-1,d,\mu}$ of the functions $\widetilde{\cA}_g$ and $\widetilde{\cA}_{g-1}$ which appear. We will argue that this system uniquely determines the coefficients $c_{g,d,\mu}$ assuming the values of the coefficients $c_{g-1,d,\mu}$ are known by induction.
In particular, since the right-hand side of equation \eqref{eqn:tildAgid2} is assumed to be known (and thus forms the inhomogeneous part of the linear system between the $c_{g,d,\mu}$), we can focus on the left-hand side of the equation.

%$\QQ[y_1]\otimes {\rm Sym}((y)$ determined by the coefficient $c_{g,d,\mu}$. The sum in the RHS of the second identity is fully determined by lower genus coefficient, thus we only need to consider the coefficients of the LHS to show that the equality of these polynomials allows to compute the coefficient $c_{g,d,\mu}$ by induction on $d$. 

To control the notation below, we declare that for $A,B,C$ elements of the ring \eqref{eqn:ambring}, the equation $A=B + (C)$ means that $A-B$ is contained in the ideal generated by $C$. We begin with some preparatory computations: recall that  $k=e_1/(2g-1)$. 
Given any partition $\mu$, we write it as $\mu = (1^{M(\mu)})+\mu'$, so that $\mu'$ contains no parts $1$.
Then by Lemma \ref{Lem:annoyingsums} and using that $(k)=(e_1)$ as ideals in \eqref{eqn:ambring}, we have
\begin{eqnarray*}
e_{\mu'}(y_1, \ldots, y_n,-k)&=&e_{\mu'}+(e_1)\,,\\
\sum_{i\geq 1} y_i e_{\mu'}(y_1,\ldots,y_i-k, \ldots, y_n)&=&\left(1-\frac{|\mu'|}{2g-1}\right) e_1 e_{\mu'}+(e_1^2)\,.
\end{eqnarray*}
Now we put back the power $e_1^{M(\mu)}$ in the two equations above. For this observe that
\begin{align*}
    e_1(y_1, \ldots, y_n,-k) = e_1(y_1, \ldots, y_i-k, \ldots, y_n) = e_1 - k = \frac{2g-2}{2g-1}e_1\,.
\end{align*}
Multiplying each summand on the left hand side above with $e_1^{M(\mu)}$, we obtain
\begin{eqnarray*}
e_{\mu}(y_1, \ldots, y_n,-k)\!\!&=&\!\!\left(\frac{2g-2}{2g-1}\right)^{M(\mu)} e_{\mu}+(e_1^{M(\mu)+1})\,,\\
\sum_{i\geq 1} y_i e_{\mu}(y_1,\ldots,y_i-k, \ldots, y_n)\!\!&=&\!\!\left(\frac{2g-2}{2g-1}\right)^{M(\mu)} \left(1-\frac{|\mu'|}{2g-1}\right) e_1 e_{\mu}+e_1^{M(\mu)+2}.
\end{eqnarray*}
Now we start putting together the terms appearing in the left-hand side of \eqref{eqn:tildAgid2}. For this observe that
\begin{align*}
&(y_1+k)e_\mu(y,-k)+ \sum_{i>1} y_i e_\mu(\ldots,y_i-k,\ldots)\\
=& (y_1+k)e_\mu(y,-k)-y_1 e_{\mu}(y_1-k,\ldots) + \sum_{i\geq 1} y_i e_\mu(\ldots,y_i-k,\ldots)\\
=& \left(y_1 + \frac{e_1}{2g-1} \right) \cdot \left(\left(\frac{2g-2}{2g-1}\right)^{M(\mu)} e_{\mu}+(e_1^{M(\mu)+1}) \right) - (y_1) \\ &+ \left(\frac{2g-2}{2g-1}\right)^{M(\mu)} \left(1-\frac{|\mu'|}{2g-1}\right) e_1 e_{\mu}+(e_1^{M(\mu)+2})\\
=&\left(\frac{2g-2}{2g-1}\right)^{M(\mu)} \cdot \left(\frac{1}{2g-1}+1-\frac{|\mu'|}{2g-1} \right)e_1 e_\mu + (y_1) + (e_1^{M(\mu)+2})\,.
%=& k(2g-1) e_\mu + y_1 f_\mu+ e(1) \sum_{\mu', M(\mu')<M(\mu)} C_{\mu',\mu}''
\end{align*}
Multiplying the two sides of the above equality by $y_1^d$, we precisely get the terms on the left-hand side of \eqref{eqn:tildAgid2} that are multiplied by the coefficient $c_{g,d,\mu}$. Thus we see that $c_{g,d,\mu}$ appears as a factor in front of a term $y_1^{\widetilde{d}} e_{\widetilde{\mu}}$ only if either
\begin{itemize}
    \item $\widetilde{d}=d$ and $\widetilde{\mu}=(1)+\mu$, in which case it appears with the coefficient
    \begin{equation} \label{eqn:cgdmucoeff}
        \left(\frac{2g-2}{2g-1}\right)^{M(\mu)} \cdot \frac{2g-|\mu'|}{2g-1}\,,
    \end{equation}
    \item or possibly when $\widetilde{d}>d$ or $M(\widetilde{\mu})>M(\mu)+1$.
\end{itemize}
To conclude, we first remark that the coefficient \eqref{eqn:cgdmucoeff} can only vanish when $|\mu'|=2g$. Since the total degree of $\widetilde{\cA}_g$ is bounded by $2g$, this can only happen when $\mu=\mu'$ and thus $M(\mu)=0$, in which case we already know the coefficient $c_{g,d,\mu}$ by the third assumption \eqref{eqn:Agid3}. Thus we can assume that \eqref{eqn:cgdmucoeff} does not vanish and looking at the total factor in front of $y_1^d e_1 e_\mu$ on the left-hand side of \eqref{eqn:tildAgid2}, we see a nonzero multiple of $c_{g,d,\mu}$ as well as some multiples of coefficients $c_{g,d'',\mu''}$ satisfying $d''<d$ or $M(\mu'')<M(\mu)$. By doing an induction on $d$ and $M(\mu)$, which we can start with the cases $d=-1$, $M(\mu)=-1$ which vanish, we can assume that we have already computed these lower coefficients. Thus we can uniquely determine $c_{g,d,\mu}$ as claimed above, finishing the proof.
% where the symmetric polynomial $f_\mu$ and the coefficients $C_{\mu',\mu}''$ depend only on $g,$ $\mu$ and $\mu'$. Therefore, the coefficient of the generator $y_1^d e(1)e_\mu$ in the polynomial: 
% $$(y_1+k)\widetilde{\cA}_{g}(y,-k) + \sum_{i>1} y_i \widetilde{\cA}_{g}(\ldots,y_i-k,\ldots),
% $$
% is equal to 
% $$\frac{2g-1}{2g-2}c_{g,d,\mu} + \sum_{d',\mu'} K_{d,d',\mu,\mu'} c_{g,d',\mu'} $$ 
% where the coefficients $K_{d,d',\mu,\mu'}$ are fully determined by $g$ and the indices, and vanish if $d'>d$ or if $d'=d$, and $M(\mu')>M(\mu)$. Thus the coefficient $c_{g,d,\mu}$ are determined by induction on $g,d,$ and $M(\mu)$ .
\end{proof}

\begin{proof}[Proof of Theorem~\ref{th:main}]
We denote by 
\begin{eqnarray*}
B_g:\ZZ^n &\to& \QQ\\
a &\mapsto & [z^{2g}] \,\,\, {\rm exp}\left(\frac{a_1z\cdot \cS'(kz)}{\cS(kz)} \right) \frac{\prod_{i>1} \cS(a_iz)}{\cS(z)\cS(kz)^{2g-1+n}}
\end{eqnarray*}
Using  Proposition~\ref{prop:constraint}, we will prove
prove Theorem~\ref{th:main} by showing that $B_{g}$ is a polynomial of degree $2g$ that satisfies the four identites of Lemma~\ref{lem:identities}, and that $\cA_0=B_0$ and $\cA_1=B_1$.

The polynomiality and degree bound of $B_g$ follows by the rules of expansion of power series: the variable $z$ appears in the definition of $B_g$ always with coefficients that are either constant or linear in the $a_i$. Thus after substituting into power series and taking products of these, the coefficient of $z^{2g}$ is indeed a polynomial of degree at most $2g$. The symmetry of the formula in the arguments $a_2, \ldots, a_n$ is likewise obvious.

The fact that these numbers satisfy the first identity \eqref{eqn:Agid1} is straightforward, the third identity follows the fact that setting $|a|=0$ is equivalent to setting $k=0$ and the fact that $\cS(0)=1, \cS'(0)=0$.

In genus $0$ we have that $\DR_0(a)=[\oM_{0,n}]$ is independent of $a$ and so
\[
\cA_0(a)=\int_{\oM_{0,n}} \psi_1^{n-3} = 1\,,
\]
e.g. by \cite[Theorem 1]{BSSZ}, which agrees with $B_0=1$.

In genus $1$, we can reduce the check that $\cA_1 = B_1$ to the case $k=0$. Indeed, the fact that $ \omega_C \cong \mathcal{O}_C$ for smooth genus $1$ curves $C$ can be used to show that the double ramification cycle is independent of $k$, in the sense that
\[
\DR_1(a_1, \ldots, a_n) = \DR_1(a_1-k, \ldots, a_n-k)\,.
\]
Alternatively, this equality can be checked directly from the formula of $\DR_1(a)$ by applying known relations between divisor classes on $\oM_{1,n}$. In any case we can conclude that
\begin{equation} \label{eqn:A1equality}
    \cA_1(a_1, \ldots, a_n) =  \cA_1(a_1-k, \ldots, a_n-k)\,.
\end{equation}
Note here that the arguments on the right-hand side of \eqref{eqn:A1equality} indeed sum to zero. As for the function $B_1$, an explicit computation using the substition $a_1 = kn - \sum_{i>1} a_i$ and the expansion $\cS(u)=1+u^2/24+O(u^4)$ shows that
\begin{align*}
    B_1(a) &= [z^2] {\rm exp}\left(\frac{a_1z\cdot \cS'(kz)}{\cS(kz)} \right) \frac{\prod_{i>1} \cS(a_iz)}{\cS(z)\cS(kz)^{n+1}}\\
    &=\frac{1}{24} \left(-1 + \sum_{i=2}^n (a_i -k)^2  \right)\,.
\end{align*}
From this form we then immediately see that
\begin{equation} \label{eqn:B1equality}
    B_1(a_1, \ldots, a_n) =  B_1(a_1-k, \ldots, a_n-k)\,.
\end{equation}
Combining equations \eqref{eqn:A1equality} and \eqref{eqn:B1equality} we see that the functions $\cA_1$ and $B_1$ are determined (via the same transformation) by their values on vectors $a$ with $|a| = 0$. But for such vectors, the equality $\cA_1(a)=B_1(a)$ follows from \eqref{eqn:Agid3}.

Thus, we only need to show that the numbers $B_{g}(a)$ satisfy the identity~\eqref{eqn:Agid2}. We fix a vector $a\in \ZZ^n$, and we introduce the following formal series in $z$:
$$
B_{g}(a)(z)= {\rm exp}\left(\frac{a_1z\cdot \cS'(kz)}{\cS(kz)} \right)\cdot \frac{\prod_{i>1} \cS(a_iz)}{\cS(z)\cS(kz)^{2g-2+n}}.
$$
The derivative of this power series is given by:
\begin{eqnarray*}
\frac{d}{dz}  {B_{g}(a)(z)}  &=&  {B_{g}(a)(z)} \cdot \Bigg( \frac{a_1}{kz}-\frac{a_1}{k { z} \cS(kz)^2} - (k(2g-2+n)-a_1) \frac{\cS'(kz)}{\cS(kz)} \\   &&- \frac{\cS'(z)}{\cS(z)} + \sum_{i>1} a_i \frac{\cS'(a_iz)}{\cS(a_iz)}\Bigg). 
\end{eqnarray*}
\rcomment{Here we have used the expressions of the derivatives of $\cS$:
\begin{equation}
\cS'=\frac{{\rm cosh}(z/2)-\cS}{z}, \,\,\text{ and } \,\, \cS''=\frac{1}{4}\cS - \frac{2}{z}\cS'.
\end{equation}
}
On the other hand, we write the terms involved identity~\eqref{eqn:Agid2} using coefficient extraction formulas:
\begin{eqnarray}
\label{eqn:term1} a_1 B_{g}(a,0)&=& a_1 [z^{2g}] \cS(kz)^{-2} B_{g}(a)(z), \\
\label{eqn:term2} \sum_{i>1} (a_i-k) B_{g}(\ldots,a_i-k,\ldots)
&=&[z^{2g}] B_{g}(a)(z) \Bigg( (2g-a_1/k)k  \\ \nonumber
&& \!\!\!\!\!\!\!\!\!\!\!\!\!\!\!\!\!\!\!\!\!\!\!\!\!\!\!\!\!\!\!\!\!\!\!\!\!\!\!\!+(k(2g-1+n)-a_1)kz\frac{\cS'(kz)}{\cS(kz)}- k \sum_{i>1}  \frac{a_i z\cS'(a_i z)}{\cS(a_i z)}\Bigg),\\
\label{eqn:term3} \sum_{0<j<k} \frac{j(k-j)}{2}  \!B_{g-1}(a,-j,-k+j)
&=& k [z^{2g}]  B_{g}(a)(z) \left(kz\frac{\cS'(kz)}{\cS(kz)} - \frac{z\cS'(z)}{\cS(z)} \right),
\end{eqnarray}
The last expression follows from the formula
\begin{equation} \label{eqn:sinhsumformula}
\sum_{0<j<k} \frac{j(k-j)}{2} \cS(jz) \cS((k-j)z) = \frac{1}{z^2} \left(k \cosh(kz/2)  - \frac{\sinh(kz/2) \cosh(z/2)}{\sinh(z/2)}\right)\,,
\end{equation}
which can be proven by inserting the definition of $\cS$, multiplying both sides with $\sinh(z/2)$, expanding everything in terms of exponential functions and comparing coefficients of each terms $e^{cz/2}$ for $c \in \mathbb{Z}$.
\bigskip

In order to show that $B_g$ satisfies identity~\eqref{eqn:Agid2}, we will use the fact that
$$
-k [z^{2g}] z^{2g+1} \frac{d}{dz}  z^{-2g} {B_{g}(a)(z)}=0,
$$
for all values of $a$, since the $z^{-1}$-coefficient of the derivative of any Laurent series vanishes. Using the above expression of the derivative of $B_g(a)(z)$, we obtain:
\begin{eqnarray*}
\rcomment{0}&\rcomment{=}&-k [z^{2g}] z^{2g+1} \frac{d}{dz}  z^{-2g} {B_{g}(a)(z)} \\
&=& \rcomment{[z^{2g}]} {B_{g}(a)(z)} \cdot \Bigg( (2g{-a_1/k})k+\frac{a_1}{ { } \cS(kz)^2} + (k^2(2g-2+n)-a_1k) \frac{z\cS'(kz)}{\cS(kz)} \\   &&+ \frac{kz\cS'(z)}{\cS(z)} - \sum_{i>1} a_ikz \frac{\cS'(a_iz)}{\cS(a_iz)}\Bigg). \\
&=& a_1 B_{g}(a,0) - \frac{1}{2}\sum_{0<j<k} j(k-j) B_{g-1}(a,-j,-k+j)\\
&& + \sum_{i>1} (a_j-k) B_{g}(\ldots,a_j-k,\ldots),
\end{eqnarray*}
\rcomment{(to go from the second to the third line we have used the expressions~\eqref{eqn:term1}, ~\eqref{eqn:term2} and ~\eqref{eqn:term3} of the terms involved in~\eqref{eqn:Agid2}),
thus showing that the function $B_{g}$ satisfy the identity~\eqref{eqn:Agid2} for all values of $a$.}
\end{proof}

%%%%%%%%%%%%%%%%%%%%%%%%%%%%%%%%%%%%%%%%%%%%%%%%%%%%%%
\section{Top-\texorpdfstring{$\psi$}{psi} for spin components} \label{Sect:spinrefinement}
%%%%%%%%%%%%%%%%%%%%%%%%%%%%%%%%%%%%%%%%%%%%%%%%%%%%%%%%

In this section we analyze integrals of $\psi$-classes on spin components. In order to prove Theorem~\ref{th:spin}, we will follow the same strategy as for Theorem~\ref{th:main}. That is, we will first show that the intersection of $\psi$-classes with strata of $[\oM_g(a)]^\sp$ may be expressed in terms of classes supported on graphs with 2 vertices. These formulas hold for odd values of $k$ and $a$, but we will use the polynomiality  of the function $\cA_{g}^\sp$ implied by Assumption~\ref{assumption}  to find constraints on this function that determine it uniquely (in the spirit of Lemma~\ref{lem:identities} and Proposition  ~\ref{prop:constraint} in the non-spin case).

\subsection{Parity of boundary divisors}
\label{sec:spinboundary} 

Let $k$ be an odd integer and $a$ an odd signature. We recall  from Section \ref{Sect:spin_refinement_intro}  that
for a pair $(C, \eta)$ of a smooth curve $C$ and a meromorphic $k$-differential $\eta$ on $C$ with divisor $\mathrm{div}(\eta) = \sum_i (a_i-k) x_i$, the parity of  $(C, \eta)$  is defined as the parity of the spin structure
$$
\Phi(C,\eta)\coloneqq h^0\left(\omega_C^{\otimes(-k+1)/2} \otimes \mathcal{O}_C\left(\frac{a_1-k}{2}x_1+\ldots+\frac{a_n-k}{2}x_n\right)\right) ({\rm mod}\  2).
$$
\medskip

In this section, we work over the moduli space of multi-scale $k$-differentials $\PP\Xi_g(a)$. Recall  from Section~\ref{Sec:multiscaled} that for an enhanced level graph $\overline{\Gamma}$ with $L+1$ levels, the associated boundary stratum $\PP\Xi(\oGamma)$ parametrizes  multi-scale $k$-differentials associated to $((C_i,[\eta_i])_{i=0,\dots,-L},[\sigma])$, where $(C_i,[\eta_i])_{i=0,\dots,-L}$ is a twisted $k$-differential compatible with a $k$-cover of graphs $\widehat{\Gamma}\to\overline{\Gamma}$ and $[\sigma]$ is an equivalence class of a global prong-matching  on the canonical $k$-cover abelian multi-scale differential which is equivariant with respect to the $k$-cyclic deck transformation. 

\medskip

The moduli space of multi-scale $k$-differentials is smooth, containing $\M_g(a)$ as a dense open subset, see \cite{BCGGM3} and \cite{CMZarea}. Thus the connected components of  $\PP\Xi_g(a)$ are closures of connected components of $\M_g(a)$\footnote{This fact is not true for the incidence variety compactification as a point in the boundary may be in the closure of several connected components of $\M_g(a)$.}. This implies that the parity function can be extended to the boundary by continuity. In the present subsection, we fix a twisted $k$-differential  $(C_i,[\eta_i])_{i=0,-1}$ compatible with a cover of graphs $\widehat{\Gamma}\to\overline{\Gamma}$ where $\overline{\Gamma}$ has only two levels with one vertex each. Then for a global prong-matching equivalence class $[\sigma]$, we denote by $\Phi([\sigma])$ the parity of the associated point in the moduli space of multi-scale $k$-differentials. By the discussion above, this parity is  the parity of the generic point of \emph{any} smoothing of $((C_i,[\eta_i])_{i=0,-1},[\sigma])$ in  $\PP\Xi_g(a)$. The following proposition provides an (almost) explicit description of this function.

\begin{proposition}\label{prop:multibanana} If $\oGamma$ has one vertex at each level then the following holds:
\begin{enumerate}
\item If $\oGamma$ has only odd twists, then $\Phi([\sigma])=\Phi(C_0,\eta_0)+\Phi(C_{-1},\eta_{-1})$ for all global prong-matchings $[\sigma]$.
    \item If $\sigma$ and $\sigma'$ are two global prong-matchings (but we no longer assume that all edges $e$ of $\oGamma$ have only odd twists $\kappa_e$), then 
    \begin{equation} \label{eqn:parity_difference_product}
        (-1)^{\Phi([\sigma])-\Phi([\sigma'])}= \underset{\text{s.t. ${\kappa_e}$ 
 even}}{\prod_{e \in \overline{\Gamma},}} \prod_{\widehat{e}\to e} \left(\frac{\sigma_{\widehat{e}}}{\sigma_{\widehat{e}}'}\right)^{\frac{\widehat{\kappa}_{e}}{2}},
    \end{equation}
    where $\widehat{\kappa}_{e}\coloneqq \frac{\kappa_e}{{\rm gcd}(\kappa_e,k)}$. Here 
 $\sigma_{\widehat{e}}/\sigma_{\widehat{e}}'$ is the ratio of the two prong matchings at the node of the canonical cover associated  with $\widehat{e}$, which is a $\widehat{\kappa}_{e}$-th root of unity (so the values in this product are either $1$ or $-1$). %\footnote{Since the number of edges of $\overline{\Gamma}$ with even twist is even, the right hand side is independent of the choice of representatives of $[\sigma]$ and $[\sigma']$. Moreover, the number of preimages $\widehat{e}$ of an edge $e$ with even twist $\kappa_e$ is $\gcd(\kappa_e, k)$, which is an odd number. As the factors in the above product over $\widehat{e}\to e$ are all the \emph{same} number $\pm 1$ (due to the equivariance of the prong matchings), this product could be replaced by a single factor $(\sigma_{\widehat{e}}/\sigma'_{\widehat{e}})^{\widehat{\kappa}_e/2}$ for \emph{any} choice of a preimage $\widehat{e}$ of $e$.}
\end{enumerate}
\end{proposition}
%\jcomment{Just to make sure, is the following correct: "The number of preimages $\widehat{e}$ of an edge $e$ with even twist $\kappa_e$ is $\gcd(\kappa_e, k)$, which is an odd number. As the factors in the above product over $\widehat{e}\to e$ are all the \emph{same} number $\pm 1$ (due to the equivariance of the prong matchings), this product could be replaced by a single factor $(\sigma_{\widehat{e}}/\sigma'_{\widehat{e}})^{\widehat{\kappa}_e/2}$ for \emph{any} choice of a preimage $\widehat{e}$ of $e$." If that is correct, we could add it as a footnote after "are either $1$ or $-1$)." above.}\acomment{Indeed, this was the idea. I think that I describe this trick a bit further. The alternative was to fix a pre-image for each edge (and state that the result would not depend on this choice), but I think that the formula is more elegant this way.}

We make some comments on formula~\eqref{eqn:parity_difference_product}. First, we remark that each edge $e$ of $\oGamma$ with even twist $\kappa_e$ has an odd number $\gcd(\kappa_e, k)$ of pre-images in $\widehat{\Gamma}$. Furthermore, each of these pre-images contribute to the product with the same factor $\pm 1$ due to the $k$-equivariance of the prong matchings. Thus the term  $\prod_{\widehat{e}\to e} \left(\sigma_{\widehat{e}}/{\sigma_{\widehat{e}}'}\right)^{\frac{\widehat{\kappa}_{e}}{2}}$ can be replaced by $ \left({\sigma_{\widehat{e}}}/{\sigma_{\widehat{e}}'}\right)^{\frac{\widehat{\kappa}_{e}}{2}}$ for \emph{any} choice of pre-image $\widehat{e}\to e$.

Secondly, we verify as a sanity-check that if $\sigma$ and $\sigma'$ are representatives of the same prong-matching equivalence class $[\sigma]=[\sigma']$ then the RHS of \eqref{eqn:parity_difference_product} gives a factor $1$. To see this note that the level rotation group (which generates the equivalence relation between prong-matchings) is generated by the operation 
\[
\sigma = (\sigma_{\widehat e})_{\widehat e \in E(\widehat \Gamma)} \mapsto \sigma' = (\exp(2 i \pi/\widehat{\kappa}_{e}) \cdot  \sigma_{\widehat e})_{\widehat e \in E(\widehat \Gamma)}\,.
\]
Plugging this into \eqref{eqn:parity_difference_product} we obtain a factor $-1$ for each edge $e$ with even twist $\kappa_e$. Since $\oGamma$ has an even number of such edges\footnote{Each such edge corresponds to an odd number of odd zeros or poles of the abelian differential on the components of the canonical cover.}, the whole product evaluates to $+1$. 
%$(\sigma_{\widehat e})_{\widehat e \in E(\widehat \Gamma)}$ to $(\exp(2 i \pi/\widehat{\kappa}_{e}) \cdot  \sigma_{\widehat e})_{\widehat e \in E(\widehat \Gamma)}$  
%note that the LHS of~\eqref{eqn:parity_difference_product} only depends on the equivalence classes of $\sigma$ and $\sigma'$,  while  each factor of the RHS depends on the choice of representatives.  Indeed, note that a generator of the level rotation torus acts by multiplying a global prong-matching by primitive $\widehat{\kappa}_\widehat{e}$ roots of unity at all edges $\widehat{e}$, so the claim follows since  $\Gamma$ has an even number of edges with even twist. 

Finally, the main application of formula~\eqref{eqn:parity_difference_product} in the rest of the text is the following corollary.  

\begin{corollary} \label{Cor:prong_balance}
    If $\oGamma$ has at least one edge $e_0$ with even twist $\kappa_{e_0}$, then half of the prong-matching equivalence classes have even parity and half have odd parity.
\end{corollary}

\begin{proof}
Let $\mathcal{P}$ be the set of prong-matching equivalence classes, then we define a bijective map $t:\mathcal{P} \to \mathcal{P}$ as follows: given $[\sigma]\in \mathcal{P}$ we let $[\sigma']=t([\sigma])$ be the prong-matching equivalence class with
\[
\sigma'_{\widehat e} = \begin{cases}
    \exp(2 i \pi/\widehat{\kappa}_{e_0}) \sigma_{\widehat e} & \text{ if $\widehat e$ maps to $e_0$,}\\
    \sigma_{\widehat e} &\text{ otherwise}.
\end{cases}
\]
Then $\sigma'$ is invariant under the cyclic action of $\mathbb{Z}/k\mathbb{Z}$ on $\widehat \Gamma$ and thus gives a well-defined prong-matching for the multi-scale $k$-differential. Since the action of the level-rotation group is compatible, this leads to a well-defined map $t : \mathcal{P} \to \mathcal{P}$ on the prong-equivalence classes. Its inverse $t^{-1}$ is defined by multiplying by $\exp(-2 i \pi/\widehat{\kappa}_{e_0})$ in the formula above, so $t$ is bijective. Finally, it follows from \eqref{eqn:parity_difference_product} that $$(-1)^{\Phi([\sigma])-\Phi([\sigma'])} = \prod_{\widehat e \to e_0} \underbrace{\exp(2 i \pi/\widehat{\kappa}_{e_0})^{\widehat{\kappa}_{e_0}/2}}_{=-1} = (-1)^{\gcd(k,\kappa_{e_0})} = -1.$$
Thus $t$ exchanges the odd and even prong matching equivalence classes, which thus must be of the same cardinality.
% We fix a reference global prong-matching $\sigma$. Then the set of prong-matching equivalence classes is identified with 
% $$
% \left(\prod_{e\in E} \ZZ/{\widehat{\kappa}_e}\ZZ\right)\bigg/ (\ZZ/L\ZZ).
% $$
% Indeed, any prong-matching is determined from $\sigma$ by a 
% choice of ${\widehat{\kappa}}_e$-th root of unity at each edge, and $\ZZ/L\ZZ$ accounts for the action of the rotation torus on prong-matching.
% Formula~\eqref{??} $$
% \left(\prod_{e\in E} \ZZ/{\widehat{\kappa}_e}\ZZ\right)\bigg/ (\ZZ/L\ZZ)
% $$
% (it is in fact is a torsor for this group),  where the quotient is determined by the action the rotation torus. The 
\end{proof}

 To prove Proposition \ref{prop:multibanana}, we first establish the case $k=1$, and then reduce the general statement to $k=1$.

\begin{proof}[Proof of Proposition~\ref{prop:multibanana} for k=1]   Recall that in the case of abelian differentials, the parity of a spin structure may also be defined in the following way using the Arf invariant (see \cite{EskMasZor} or \cite{KZ}).
Consider a flat surface $(C,\eta)$ and fix a symplectic basis of homology $\{A_i,B_i\}_{i=\,\dots,g}$. When the zeroes of the abelian differential $\eta$ have only even orders, we can define the parity of the spin structure via
\[\Phi(C,\eta)=\sum_{i=1}^g (\Ind_\eta(A_i)+1)(\Ind_\eta(B_i)+1) \ (\mod\ 2),\]
where $\Ind_\eta(\alpha)$ is the index of the tangent vector field of a closed curve $\alpha$ with respect to the flat metric induced by $\eta$. One can show that this parity is independent of the choices.

Let $\phi_{[\sigma]}\colon B\to \PP\Xi_g(a)$ be a smoothing of the multi-scale differential associated to a prong matching $[\sigma]$, i.e. a family of differentials 
parametrized by a disk $B$ centered at $0$ such that the differential over $b\neq 0$ is supported on a smooth curve, and 0 is mapped to $((C_i,[\eta_i])_{i=0,-1},[\sigma])$. 
In order to use the above expression of the parity, we will first fix a homology basis of the smooth fibers over a simply connected open subset of $B\setminus\{0\}$ whose closure contains 0.
We start by fixing an edge $e_0$.  We  choose a symplectic basis $(A_i,B_i)_{1\leq i\leq g(C_0)+g(C_{-1})}$ of the homology of  $C_0$ and $C_{-1}$ and complete this basis by cycles $v_e, u_e$ for each edge $e\neq e_0$ where: 
\begin{itemize}
    \item The cycle $v_e$ is the vanishing cycle at the node associated with $e$ (it is represented by a simple loop with limit contracted at the node).
    \item The cycle $u_e$ is represented by a loop  whose limit is a union of two simple curves in $C_0$ and $C_{-1}$ that join $e_0$ and $e$ (and we impose that $u_e$ and $v_e$ meet once). This cycle is not uniquely determined, as a Dehn twist along a simple loop representing $v_e$ or $v_{e_0}$ produces a loop $u_e'$ satisfying the same constraints. However, the following computation is not affected by this choice.
 \end{itemize}
The cycles $(A_i,B_i,u_e,v_e)$ form a symplectic basis on each smooth fiber  $(C,\eta)$. With this notation we have:
\begin{eqnarray*}
    \Phi([\sigma])&=& \sum_{i=1}^{g(C_0)+g(C_{-1})}(\Ind_\eta(A_i)+1)(\Ind_\eta(B_i)+1)  \\ && +\sum_{e\neq e_0} (\Ind_\eta(v_e)+1)(\Ind_\eta(u_e)+1)  \ (\mod\ 2).
\end{eqnarray*}
The first sum is locally constant, and it is independent from the choice of different prong-matchings  because the cycles $A_i, B_i$ can be represented by simple loops that do not intersect some neighborhood of the nodes in the universal curve. 
By considering the standard form of the family of differentials near the node shown in \cite[Theorem 4.3]{BCGGM3}, the parity of the index $\Ind_\eta(v_e)$ of the vanishing cycle equals the parity of the twists at the node $e$ (which is the same as the number of prongs at $e$).

If an edge $e$ has an odd twist, then $\Ind_\eta(v_e)+1$ is even and so the contribution of this edge to the parity is trivial. In particular, if we are in case (1), then $\Ind_\eta(v_e)+1$ is even for every $e$, so the parity of $(C,[\eta])$ is the sum of the parities of the two levels.

Let $e\neq e_0$ be an edge of $\oGamma$. We denote now by $t_e(\sigma)$ the global prong-matching obtained from $[\sigma]$ by multiplying the local prong-matching at $e$ with $e^{2i\pi/\kappa_e}$ while leaving the other values unchanged.

We claim that  if $e$ has an odd twist, then the parity of $\sigma$ and $t_e(\sigma)$ are the same, while if $e$ has an even twist the parity of $\sigma$ and $t_e(\sigma)$ are different, consistent with formula \eqref{eqn:parity_difference_product} stating that
\[
(-1)^{\Phi([\sigma])-\Phi([t_e(\sigma)])}=\left(\frac{\sigma_e}{e^{2i \pi/\kappa_e} \sigma_e} \right)^{\kappa_e/2} = e^{-i\pi}=-1 \,.
\]

The proposition then follows as any prong-matching $\sigma'$ can be obtained from $\sigma$ by a sequence of such operations $t_e$ at different edges, and the parity of $\sigma'$ will differ from the parity of $\sigma$ by the number of such operations (modulo 2).

To prove the above claim, we consider a smoothing family $\phi_{[t_e(\sigma)]}\colon B\to \PP\Xi_g(a)$ of the multi-scale differential $((C_i, [\eta_i])_{i=0,-1}, [t_e(\sigma)])$ obtained from the family $\phi_{[\sigma]}$ by  twisting the upper part of the node associated to $e$ by $e^{2i\pi/\kappa_e}$ (see the proof of the paragraph~\ref{subsub:twisting}  for the more in depth definition of twisted families for general $k$). Note that we are not changing the differentials in neighborhoods of the nodes in the total space of the family, but only the gluing functions attaching the plumbing fixtures to the  upper part of the nodes.
Among the $A,B,u,$ and $v$ cycles, only the index of the cycle $u_e$ is transformed non-trivially by this operation. The image of $u_e$ is now a path that does not close. However, it can be completed into a closed curve $u_e'=u_e+u_e^C$ if we join the two extremities of this path by a curve $u_e^C$. Even though there is not a unique choice to construct $u_e'$,  the difference between two choices is given by a multiple of $v_e$, and this will not affect the argument. 

If an edge $e$ has an odd twist, then $\Ind_\eta(v_e)+1$ is even and so $u_e$ or $u_e'$ give no contribution to the parity, which implies that the parity of $\sigma$ and $t_e(\sigma)$ are the same. If $e$ has an even twist, then $\Ind_\eta(v_e)+1=\Ind_{\eta'}(v_e)+1$ is equal to $1$ (modulo 2), where $\eta'$ is an abelian differential on a smooth fiber of the twisted family. This implies that  the contribution of the edge $e$ to the parity $\Phi([\sigma])$, resp. $\Phi([t_e(\sigma)])$, is equal to $\Ind_\eta(u_e)+1$, resp. $\Ind_{\eta'}(u'_e)+1$ (and this value is independent from the choice of $u_e$ or $u_e'$).  Since the differential $\eta'$ is equal to $\eta$ in a neighborhood of the node in the total space of the family, the claim then follows since ${\rm Ind}_{\eta'}(u_e')={\rm Ind}_{\eta'}(u_e+u_e^C)={\rm Ind}_{\eta}(u_e)+1$ (modulo 2).
\end{proof}

For $k=1$, the same method can be applied to  prove the analog of the first part of Proposition~\ref{prop:multibanana} for graphs of compact type.
\begin{proposition} \label{pro:parityct_improved}
If $k=1$ and  $\oGamma$ is a tree, then all edges have odd twist (and thus are vertical). If $((C_v,[\eta_v])_{v \in V(\oGamma)},[\sigma])$ is a point of $\PP\Xi(\oGamma)$, then the parity of this point is independent of $\sigma$, and given by $ \sum_{v \in V(\oGamma)} \Phi(C_v,\eta_v)\,.$

\end{proposition}

%\rcommentthree{at the node corresponding to the even twist edge~$e$} while keeping all the other nodes fixed gives a different equivalence class, since the level rotation torus acts by twisting the prong-matchings at all the nodes at the same time. Moreover, this twisting clearly alters the parity of $\Ind_\eta(u_e)$ by 1. 

%Hence, we find that for half of the prong-matchings equivalence classes, the number  $\Ind_\eta(u)$ is of even parity, and for the other half, it is of odd parity, so we obtain the second claim when $k=1$.

\subsubsection{Constructing an auxiliary family of one-differentials} We no longer assume that $k=1$. Let $[\sigma]$ be an equivalence class of a global prong-matching, and let $\phi_{[\sigma]}\colon B\to \PP\Xi_g(a)$ be a smoothing of the multi-scale differential  $\phi_{[\sigma]}(0)=((C_i,[\eta_i])_{i=0,-1},[\sigma])$ for the special point $0 \in B$. 
We denote by $\pi_{[\sigma]}\colon \mathcal{C}_{[\sigma]}\to B$ the underlying family of pointed curves. \rcommentfour{Up to a choice of a smaller $B$, we fix a trivialization $\eta_{[\sigma]}\colon B \to \mathcal{O}(-1)^*$ of the (pull back from the IVC compactification of the) universal line bundle. In other words, we fix a family of $k$-differentials $(\eta_{b,[\sigma]})_{b\in B}$ that  recovers the family $\phi_{[\sigma]}$ after quotient by $\CC^*$. On the central fiber we can normalize this section to have $\eta_{0,[\sigma]}=\eta_0$ on $C_0$, while $\eta_{0,[\sigma]}$ vanishes on the component $C_{-1}$.} %and by $(\eta_{b,[\sigma]})_{b\in B}$ the family of $k$-differentials on these curves. 
%\rcommentfour{While a priori the space $\PP\Xi_g(a)$ defines such families of differentials up to rescaling, we pick here one representative $\eta_{b,[\sigma]}$. On the fiber over $0 \in B$ this differential vanishes on the component $C_{-1}$, but as usual there exists a twisted differential on $C_{-1}$ (obtained as a rescaled limit) which is meromorphic with zeros and poles at markings and nodes.}

In order to reduce Proposition~\ref{prop:multibanana} to the special case $k=1$, we first show that we can find a family of meromorphic stable $1$-differentials $(\eta'_{b,[\sigma]})_{b\in B}$ on the family of curves $\mathcal{C}_{[\sigma]}\to B$ with a particular set of properties (see Lemma \ref{lem:spin0}). 
We then show that for $b \neq 0$, the parity of the $k$-differential $\eta_{b,[\sigma]}$ 
agrees with the parity of the $1$-differential 
$\eta_{b,[\sigma]}/(\eta'_{b,[\sigma]})^{k-1}$, allowing us eventually to reduce to the proven case of $1$-differentials.

%The construction starts by considering the canonical $k$-cover associated to the multi-scale differential in order to lift the problem to a stratum of abelian differentials. There one can construct a $k$-equivariant degenerating family by gluing a degenerating family of annuli equipped with a standard family of differentials and adding a modifying differential to match residues. For us the key properties of this construction are that it is purely local near the nodes and that the information of the prong-matching equivalence class $[\sigma]$ is needed to glue, since it is used to identify the top and bottom sides of the annuli.   

\begin{lemma}\label{lem:spin0}
    There exists a meromorphic $1$-form  $\eta'$ on the central fiber, i.e. the nodal curve $\pi_{[\sigma]}^{-1}(0) = C_0\cup C_{-1}$, satisfying the following constraints:
    \begin{enumerate}
    \item[a)] it has a pole of order $M\gg0$ at a point $Q_i \in C_i$ which is different from the nodes and the singularities of $\eta_i$, for $i=0$ and $-1$,
    \item[b)] it has simple poles at the branches of the nodes, with nonzero residues,
    \item[c)] it has no other poles, and the zeros are simple and away from nodes, the points $Q_0$, $Q_{-1}$, and the singularities of $\eta_0$ and $\eta_{-1}$.
\end{enumerate}
Moreover, after possibly shrinking to an \'etale neighborhood of $0 \in B$, there exists a meromophic section $(\eta'_{b,[\sigma]})_{b\in B}$ of $\omega_{\mathcal{C}_{[\sigma]}/B}$ such that on the central fiber $\eta_{0,[\sigma]}'=\eta'$, and at any smooth fiber the differential $\eta'_{b,[\sigma]}$ has the same pattern of zeros and poles (at smooth points) as $\eta'$, and the singularities of $\eta_{b,[\sigma]}'$ are not at the markings. 
\end{lemma}
\begin{proof}
    This lemma follows from classical arguments. We first choose points $Q_i \in C_i$ away from other special points. Then for sufficiently large $M$, the line bundle $\omega_{C}(M(Q_0 + Q_{-1}))$ is very ample on the central fiber $C_0\cup C_{-1}$.  This embeds $C_0\cup C_{-1}$ in some projective space. Choosing a hyperplane section that intersects $C_0\cup C_{-1}$ transversally away from all special points, we obtain the desired section $\eta' \in H^0(C_0\cup C_{-1}, \omega_{C_0\cup C_{-1}}(M(Q_0 + Q_{-1})))$. Indeed, transversality implies that zeros are simple, and these zeros missing the nodes implies that the residues are nonzero.

    To extend this meromophic differential to a family, we note that after taking an \'etale cover of a neighborhood of $0 \in B$, we can assume that the two chosen points $Q_0, Q_{-1}$ extend to sections $q_0, q_{-1}$ of $\pi_{[\sigma]}$ which still continue to miss all the special points. Moreover, by vanishing of the higher cohomology, the sheaf $(\pi_{[\sigma]})_* \omega_{\pi_{[\sigma]}}(M(q_0 + q_{-1}))$ is a vector bundle. We let $(\eta'_{b,[\sigma]})_{b \in B}$ be (the values of) a section of this bundle extending the chosen section $\eta'=\eta'_{0,[\sigma]}$ on the central fiber above. The differentials $\eta'_{b,[\sigma]}$ continue to have simple zeros which are away from all other special points, since this is an open condition in the family.
\end{proof}

We use this lemma to define a family of multi-scale $1$-differentials on the original family of curves $(\widetilde{\pi}_{[\sigma]}:\widetilde{\mathcal{C}}_{[\sigma]} \to B,[\widetilde{\eta}_{b,[\sigma]}],[\widetilde{\sigma}])$ on $B$ as follows. We set $\widetilde{\mathcal{C}}_{[\sigma]}\setminus  \widetilde{\pi}_{[\sigma]}^{-1}(0)= \mathcal{C}_{[\sigma]}\setminus  \pi_{[\sigma]}^{-1}(0)$ and there we define 
$$\widetilde{\eta}_{b,[\sigma]}=\eta_{b,[\sigma]}/(\eta_{b,[\sigma]}')^{k-1}.$$ 
Since at any smooth fiber the differential $\eta'_{b,[\sigma]}$ has the same pattern of zeros and poles and they are away from all the markings defined by $\eta_{b,[\sigma]}$, the abelian differentials $\widetilde{\eta}_{b,[\sigma]}$ have the same pattern of zeros and poles for all $b \in B \setminus \{0\}$. Hence, after replacing  $B$ by an \'etale cover, we can find sections enumerating all the simple zeros and obtain a family in a (fixed) stratum of meromorphic differentials $\M_g(\widetilde{a})$.

By the properness of the multi-scale compactification of this stratum, we obtain a multi-scale 1-differential over the special point $0\in B$. In this construction, we did not change the underlying family of smooth curves $\varphi_{[\sigma]}: B \setminus \{0\} \to \oM_{g}$ so the special fiber is also uniquely determined as the curve  glued from components $C_0, C_{-1}$ as above, and we obtain that $\widetilde{\mathcal{C}}_{[\sigma]}\to B$ is equivalent to $\mathcal{C}_{[\sigma]}$ (we only added some sections associated with markings).

Moreover, we claim that the twisted differentials $\widetilde{\eta}_0, \widetilde{\eta}_{-1}$ on $C_0, C_{-1}$ obtained by this procedure are given by $\widetilde{\eta}_i=\eta_i/(\eta'_i)^{k-1}$, where $\eta_i'$ is the restriction of $\eta'$ to $C_i$. Indeed, it follows from \cite[Definition 11.2]{BCGGM3} that (in the complex-analytic category) the twisted differentials $\widetilde{\eta}_i$ on the components of the central fiber can be obtained as rescaled limits of the differentials $\widetilde{\eta}_{b,[\sigma]}=\eta_{b,[\sigma]}/(\eta'_{b,[\sigma]})^{k-1}$ on the points $b \in B \setminus \{0\}$. However, using the same scaling parameters as for the family $\eta_{b,[\sigma]}$ of $k$-differentials (which converge to the twisted $k$-differentials $(\eta_{0},\eta_{-1})$ by \cite{BCGGM2}), the $1$-differentials $\widetilde{\eta}_{b,[\sigma]}$ converge to $\eta_i/(\eta'_i)^{k-1}$, since the rescaled numerator and the denominator of this fraction extend in the family to $b=0$.

To summarize, we have now constructed a family of multi-scale $1$-differentials \rcommentfour{(with a trivialization of the universal line bundle)}, smoothing the central fiber $((C_0, [\widetilde{\eta}_0]), (C_{-1},[\widetilde{\eta}_{-1}]), [\widetilde \sigma])$, where we use the notation $\widetilde{\sigma}$ for the prong-matching equivalence class determined by the above limit\footnote{A priori, the prong-matching $\widetilde{\sigma}$ depends on the choice the family $\eta'_b$ extending the meromorphic stable differential from Lemma~\ref{lem:spin0}. As we show below, it is in fact independent of this choice.}. Moreover, on the singular fiber, if $x_e$ is the branch of a node on the component $C_0$ or $C_{-1}$, then
$$
{\rm ord}_{x_e}(\widetilde{\eta_i})={\rm ord}_{x_e}({\eta_i})+(k-1).
$$
Thus  $\widetilde{\eta_i}$ and $\eta_i$ have the same enhancements $\kappa_e$ (or log-orders) at the branches of the node associated to the edge $e$ of $\overline{\Gamma}$, with the same parities since $k$ is odd. 
Here we used the fact that, by property b) above, the differentials $\eta_i'$ have simple poles at the branches of the nodes. Note also that since the zero and pole orders of $1/(\eta_i')^{k-1}$ are multiples of the even number $k-1$,  all entries of the (log-convention) signature $\widetilde a$ of $\widetilde{\eta}_{b,[\sigma]}$ are odd.

Finally, we claim that the spin structures defined by $\widetilde{\eta}_{b,[\sigma]}$ and $\eta_{b,[\sigma]}$ are equal on smooth fibers $\mathcal C_{[\sigma],b}$ for $b \neq 0$, i.e. 
\begin{equation} \label{eqn:spin_parity_on_smooth_fibers}
\Phi(\mathcal C_{[\sigma],b}, \eta_{b,[\sigma]}) = \Phi(\mathcal C_{[\sigma],b}, \widetilde \eta_{b,[\sigma]})\,.
\end{equation}
Indeed, the spin structure associated to $\widetilde{\eta}_{b,[\sigma]}$ is obtained by forming its divisor
\[
\mathrm{div}(\widetilde \eta_{b,[\sigma]}) = \mathrm{div}(\eta_{b,[\sigma]}) - (k-1) \mathrm{div}(\eta_{b,[\sigma]}') = \left(\sum_{i=1}^n (a_i-k) (x_i)\right) - (k-1) \mathrm{div}(\eta_{b,[\sigma]}'),
\]
so we have the equality
\[
\mathcal{O}_{\mathcal C_b}\left(\frac{\mathrm{div}(\widetilde \eta_{b,[\sigma]})}{2}\right) =  \mathcal{O}_{\mathcal C_b}\left(\sum_{i=1}^n\frac{a_i-k}{2}x_i\right) - \frac{k-1}{2}\omega_{C_{[\sigma],b}} 
\]
which is  precisely  the spin structure associated with $\eta_{b,[\sigma]}$ by~\eqref{eq:spinbundle}. Therefore, we need to determine the dependence of the parity of the family of abelian differentials $(\widetilde{\eta}_{b,[\sigma]})$ in terms of $[\sigma]$. In order to use the proven case $k=1$ of Proposition~\ref{prop:multibanana} for this purpose, we will describe the dependence of the prong-matching $[\widetilde{\sigma}]$ in terms of $[\sigma]$.

\subsubsection{Twisting a family of $k$-differentials}\label{subsub:twisting} We explain in more detail the process of twisting a prong-matching at a node in the case of $k$-differentials.  
Consider the family of abelian differentials $(\widehat{\eta}_{b,[\sigma]})_{b\in B}$ coming from the canonical cover of the family $(\eta_{b,[\sigma]})_{b\in B}$. We denote by $\widehat{\pi}_{[\sigma]}\colon \widehat{\mathcal{C}}_{[\sigma]}\to B$ the associated family of curves, i.e. the canonical cover of the family $\pi_{[\sigma]}$. Let $e$ be an edge of $\oGamma$, and let  $\widehat{e}\to e$ be  an edge of the covering graph mapping to $e$. Let $\widehat{\mathcal{X}}_{\widehat e}\subset \widehat{\mathcal{C}}_{[\sigma]}$ be an open neighborhood of the node $q_{\widehat e}$ corresponding to $\widehat{e}$. We fix a local isomorphism
\[
\widehat{\phi}\colon\widehat{\mathcal{X}}_{\widehat e}\overset{\cong}{\longrightarrow} \widehat{\mathbb{V}}_{e}=\{(\widehat u,\widehat v,b) \in B_\epsilon(0)^2 \times B : \widehat u \widehat v = \widehat f_e(b)\}
\]
to a plumbing fixture  such that the restriction of the family $(\widehat{\eta}_{b,[\sigma]})_{b\in B}$ to $\widehat{\mathcal{X}}_{\widehat e}$ is the pull-back of the family of differentials in the standard form
\begin{equation} \label{eqn:normal_form_etahat_bsigma}
\widehat{\eta}^+_{b,[\sigma]} = (\widehat u^{\widehat{\kappa}_e} + \widehat f_e^{\widehat{\kappa}_e} r) \frac{d\widehat u}{\widehat u} \text{ and }\widehat{\eta}^-_{b,[\sigma]} = - (\widehat v^{-\widehat{\kappa}_e} + r) \frac{d\widehat v}{\widehat v }\,
\end{equation}
where $\widehat{\kappa}_e=\frac{\kappa_e}{\gcd(\kappa_e,k)}$ and $r\in \cO_{B}$. Here we mean that $\widehat{\eta}^+_{b,[\sigma]}$ and $ \widehat{\eta}^-_{b,[\sigma]}$ are two rescaled versions of the same differential which pull back to $\widehat{\eta}_{b,[\sigma]}$ and $\widehat f_e^{-\widehat{\kappa}_e} \widehat{\eta}_{b,[\sigma]}$ under the isomorphism $\widehat{\phi}$ above. Putting $\widehat{\eta}_b$ in this normal form \eqref{eqn:normal_form_etahat_bsigma} is possible thanks to the standard form shown in \cite[Theorem 4.3]{BCGGM3} for abelian differentials. 

In these coordinates,  the prong matching  has the form 
\begin{equation}\label{eq:prongvalue}\sigma_{\widehat{e}} = \widehat{\phi}^*(d\widehat u \otimes d\widehat v) \in T_{q^{(0)}}^* \widehat{C}_0 \otimes T_{q^{(-1)}}^* \widehat{C}_{-1}\end{equation}
(see e.g. ~\cite[Section 11.1, above Definition 11.5]{BCGGM3}), where $q^{(i)}$ is the preimage of the node  $q_{\widehat e}$ under normalisation, and supported on $\widehat{C}_i$ for $i=0$ or $-1$.

Let $G=\mathbb{Z}/k \mathbb{Z}$ be the cyclic group acting on the curve $\widehat{\mathcal C}_{[\sigma]}$ and let $\tau \in G$ be a generator of the stabilizer group of $q_{\widehat e}$. We distinguish two cases:
\begin{enumerate}
    \item[(i)] If $k | \kappa_e$, then the $G$-stabilizer of $q_{\widehat e}$ is trivial (so $\tau = \mathrm{id}$), and the preimage of the node associated to $e$ has $k$ elements.
    \item[(ii)] If $k \nmid \kappa_e$, then we claim first that $r=0$. Indeed, for $b \in B$ the value $r(b)$ is the integral of  $\widehat{f}^{-\widehat{\kappa}_e}_e \widehat{\eta}_{b,[\sigma]}$ along a simple vanishing cycle $\gamma$. If $\tau$ is a non-trivial element in the stabilizer of $q_{\widehat{e}}$ then it maps $\gamma$ to a loop in the same homotopy class. However, $\tau^*\widehat{\eta}_{b,[\sigma]}=\zeta_k\widehat{\eta}_{b,[\sigma]}$ (for a non trivial $k$-th root of unity $\zeta_k$), so we have $r(b)=\zeta_k r(b)$, thus $r(b)=0$. Then as in the proof of \cite[Lemma 4.4]{BCGGM2} the action of $\tau$ is given by $\tau(\widehat u, \widehat v, b) = (\zeta_k\widehat u, \zeta_k^{-1} \widehat v, b)$ for a suitable $k$-th root of unity $\rho_k$. Up to possibly shrinking $\widehat{\mathcal X}_{\widehat e}$, we can ensure that it is invariant under the action of $\tau$.
\end{enumerate}

We fix a primitive $\widehat{\kappa}_e$-th root of unity $\rho\coloneqq e^{2i\pi/\widehat{\kappa}_e}$. \rcommentfour{We will now construct a new family of multi-scale $k$-differentials by equivariantly twisting the plumbing fixtures associated to $e$ by $\rho$, i.e. we describe the equivariant version of the twisting operation described in the $k=1$ case.} Let $\widehat{\mathcal{U}}^+$ be the family of open annuli $\widehat{\mathcal{U}}^+\subset \widehat{\mathcal{X}}_{\widehat e}$ defined as $\widehat{\phi}^{-1}(\{r^-<|\widehat{u}|<r^+\})$ for sufficiently small real numbers $r^-,r^+$. We denote by $\partial^{+},$ and $\partial^{-}$ the boundaries of these annuli.  Note that in both cases (i) and (ii) above, the annuli $\widehat{\mathcal{U}}^+$ and their boundaries $\partial^\pm$ are invariant under $\tau$. Let $G \cdot \widehat{\mathcal{U}}^+$ and $G \cdot \partial^\pm$ be the respective $G$-orbits, which are disjoint unions of $\gcd(k, \kappa_e)$ copies of $\widehat{\mathcal{U}}^+$ and $\partial^\pm$, respectively.

\rcommentfive{
We now want to construct a new family $\widehat{\mathcal{C}}_{t_{e}[\sigma]}$ by performing a local twisting operation around each of the nodes in the orbit $G \cdot q_{\widehat e}$. We first describe this operation in an informal way, then we provide a rigorous construction.}

\rcommentfive{
For the informal picture, note that we can see a small analytic neighbourhood of the closure of $\widehat{\mathcal U}^+$ as being glued from an upper and a lower part as in Figure \ref{Fig:twistedgluing}. Here the lower part $\widehat{\mathcal C}^-$ are points of that local neighbourhood which are below $\delta^+$, and similarly $\widehat{\mathcal C}^+$ are local points above $\delta^-$, where "above" and "below" reference the levels $0,-1$ of the central fiber. These two parts overlap along the common open set $\widehat{\mathcal U}^+$, and gluing these two copies along the identity of $\widehat{\mathcal U}^+$ we would recover the original family. To construct the twisted family $\widehat{\mathcal{C}}_{t_{e}[\sigma]}$, we instead glue the copy $\widehat{\mathcal U}^+ \subseteq \widehat{\mathcal C}^-$ to the copy $\widehat{\mathcal U}^+ \subseteq \widehat{\mathcal C}^+$ along the isomorphism
\[
\widehat{\mathcal U}^+ \to \widehat{\mathcal U}^+, (\widehat{u}, \widehat{v}, b) \mapsto (\rho \widehat{u}, \rho^{-1} \widehat{v}, b)\,.
\]
To ensure that the resulting family is still $G$-equivariant, we perform the same operation around all $G$-orbits of $\widehat{\mathcal U}^+$.}

\rcommentfive{\begin{remark}
    This operation is similar to the partial Dehn twists used, in particular, to define the Frenchel-Nielsen on moduli spaces of hyperbolic surfaces. The difference is that the surgery is performed along an open annulus in our setting instead of a simple closed curve for partial Dehn twists (e.g. $\delta^+$ or $\delta^-$). One can prove that both construction are equivalent, however patching surfaces along open sets is more natural from the point of view of descents, and provides easily the definition of the new complex structure and ($k$)-differential.
\end{remark}}

\rcommentfive{
To give a fully formal and rigorous construction, one can define the twisted family $\widehat{\mathcal{C}}_{t_{e}[\sigma]}$  as $$\left(\left(\widehat{\mathcal{C}}_{[\sigma]}\setminus G \cdot \partial^{+} \right) \sqcup \left(\widehat{\mathcal{C}}_{[\sigma]}\setminus G \cdot \partial^{-}\right)\right) \bigg/ \sim_\rho.$$
}
\rcommentfive{To specify the equivalence relation $\sim_\rho$, we denote elements in the disjoint union by tuples $(x,i)$ where $i=1,2$ is the index  specifying whether we are in $\widehat{\mathcal{C}}_{[\sigma]}\setminus G \cdot \partial^{+}$ or $\widehat{\mathcal{C}}_{[\sigma]}\setminus G \cdot \partial^{-}$, respectively. Then $\sim_\rho$ is the equivalence relation defined by 
\begin{itemize}
    %\item $(x,i) \sim (x,i)$ for all $x,i$,
    \item $(x,1) \sim_\rho (x,2)$ if $x$ is in 
$\widehat{\mathcal{C}}_{[\sigma]}\setminus  \overline{G \cdot \widehat{\mathcal{U}}^+}$, and
\item $((\widehat{u}, \widehat{v}, b),1) \sim_\rho ((\rho \widehat{u}, \rho^{-1} \widehat v, b),2)$ if $(\widehat{u}, \widehat{v}, b)$ is a point in $G \cdot \widehat{\mathcal{U}}^+$.\footnote{Here we abuse notation by using the same symbols $\widehat{u}, \widehat{v}$ for coordinates on all elements of the $G$-orbit of $\widehat{\mathcal{U}}^+$ - we simply mean that one performs the same identification procedure on each open set in this orbit.}
\end{itemize}
Note that with this convention, we indeed have 
\begin{itemize}
    \item precisely one equivalence class $(x,1) \sim_\rho (x,2)$ for any point $x$ not in the orbit of the closure of $\widehat{\mathcal U}^+$,
    \item precisely one equivalence class $(x,1)$ for each $x \in G \cdot \partial^-$ and one equivalence class $(x,2)$ for each $x \in G \cdot \partial^+$,
    \item precisely one equivalence class $[(x,1)]$ for any $x$ in the orbit of $\widehat{\mathcal U}^+$.
\end{itemize}
Overall, one verifies that this formal quotient under $\sim_\rho$ exactly realizes the informal picture that is depicted in Figure \ref{Fig:twistedgluing}.
}

\begin{figure}[htb]
	\centering
\begin{tikzpicture} [scale=.50] 
		\draw    [](0,0) to[out=-55,in=55] (0,-5);
		\draw    [](4,0) to[out=-125,in=125] (4,-5);
		\node[] at (4, -3) {$\widehat{\mathcal{X}}_{\widehat{e}}$};
		\node[] at (2, 0.6) {$\delta^+$};
		\draw[dotted] (0,0) to[bend  right=10] (4,0);
		\draw[dotted] (0,0) to[bend  left=10] (4,0);
		\draw[dotted] (0.52,-1) to[bend  right=10] (3.48,-1);
		\draw[dotted] (0.52,-1) to[bend  left=10] (3.48,-1);
		\draw    [](0,-5) to[out=-125,in=0] (-2,-6);
	\draw    [](4,-5) to[out=-55,in=-180] (6,-6);
	\draw[] (0,-6.5) to[bend  right=30] (1.5,-6.5);
	\draw[] (0.2,-6.6) to[bend  left=20] (1.3,-6.6);
	\draw[] (3,-6.5) to[bend  right=30] (4.5,-6.5);
\draw[] (3.2,-6.6) to[bend  left=20] (4.3,-6.6);
		\node[] at (-2, -5) {$\widehat{\mathcal{C}}^-$};
		\draw    [](15,0) to[out=-55,in=108] (15.52,-1);
		\draw    [](19,0) to[out=-125,in=63]  (18.45,-1);
		\draw[dotted] (15,0) to[bend  right=10] (19,0);
			\draw[dotted] (15,0) to[bend  left=10] (19,0);
		\draw[dotted] (15.52,-1) to[bend  right=10] (18.45,-1);
		\draw[dotted] (15.52,-1) to[bend  left=10] (18.45,-1);
		\draw    [](15,0) to[out=125,in=0] (13,1);
		\draw    [](19,0) to[out=55,in=180] (21,1);
				\node[] at (17, -1.5) {$\delta^-$};
					\draw[] (15,1.5) to[bend  right=30] (16.5,1.5);
				\draw[] (15.2,1.4) to[bend  left=20] (16.3,1.4);
				\draw[] (18,1.5) to[bend  right=30] (19.5,1.5);
				\draw[] (18.2,1.4) to[bend  left=20] (19.3,1.4);
		\node[] at (21, 0) {$\widehat{\mathcal{C}}^+$};
		\draw [<->] (5.5,-0.5) -- (13,-0.5);
		\node[] at (9.3, 0) {$(\widehat{u}, \widehat{v}, b) \sim (\rho \widehat{u}, \rho^{-1} \widehat v, b)$};
  \node[] at (5, -0.3) {$\widehat{\mathcal{U}}^+$};
  \node[] at (13.8, -0.3) {$\widehat{\mathcal{U}}^+$};
	\end{tikzpicture}	
 \caption{The local picture of the gluing construction at one node associated to a preimage $\widehat{e}$ of the edge $e$.} \label{Fig:twistedgluing}
\end{figure}
% \begin{itemize}
%     %\item $(x,i) \sim (x,i)$ for all $x,i$,
%     \item $(x,1) \sim_\rho (x,2)$ if $x$ is in 
% $\widehat{\mathcal{C}}_{[\sigma]}\setminus  \overline{G \cdot \widehat{\mathcal{U}}^+}$, and
% \item $((\widehat{u}, \widehat{v}, b),1) \sim_\rho ((\rho \widehat{u}, \rho^{-1} \widehat v, b),2)$ if $(\widehat{u}, \widehat{v}, b)$ is a point in $G \cdot \widehat{\mathcal{U}}^+$.\footnote{Here we abuse notation by using the same symbols $\widehat{u}, \widehat{v}$ for coordinates on all elements of the $G$-orbit of $\widehat{\mathcal{U}}^+$ - we simply mean that one performs the same identification procedure on each open set in this orbit.}
% \end{itemize}

We claim that for this new twisted family of curves $\widehat{\mathcal{C}}_{t_{e}[\sigma]}$, we obtain a family of multi-scale $k$-differentials\rcommentfour{, i.e. a family of multi-scale 1-differentials with an action of the cyclic group $G$ for which the family is equivariant}. For this, first observe that the family of  differentials $(\widehat{\eta}_{b,[\sigma]})$ descends to the twisted family since the normal form \eqref{eqn:normal_form_etahat_bsigma} is invariant under the equivalence relation $\sim_\rho$. Furthermore, the action of $G$ descends to $\widehat{\mathcal{C}}_{t_{e}[\sigma]}$. Indeed, the only thing to check is that $\sim_\rho$ is compatible with the action of the stabilizer $\tau$ of the node $q_{\widehat e}$ (since any element of $G$ acts by a power of $\tau$ composed with an element simply permuting the copies of $\widehat{\mathcal{U}}^+$). In case (i) we have that $\tau$ is the identity so the compatibility is trivial. In case (ii) we have that $\tau$ acts by multiplication with $\zeta_k^{\pm 1}$ on $\widehat u, \widehat v$ and $\sim_\rho$ is defined via a similar multiplication with $\rho^\pm$, which commute and thus are compatible.

We also remark that the central fiber of the twisted family $\widehat{\mathcal{C}}_{t_{e}[\sigma]}$ is still isomorphic to $C_0\cup C_1$, the central fiber of the original family. In this isomorphism, we simply rotated a union of disks in the upper component $C_0$   around the preimages of $G \cdot q_{\widehat e}$.

Finally, we claim that the local prong-matchings for the twisted family $(t_{e}(\sigma))_{\widehat{e}}$ are $\rho \sigma_{\widehat{e}}$ at $q_{\widehat e}$ for all preimages $\widehat e$ of $e$. Indeed, the new family still contains an open set $\widehat{\mathcal X}^t_{\widehat{e}}$ isomorphic to the plumbing fixture $ \widehat{\mathbb{V}}_e$. Since on the plumbing fixture the new prong matching is still given by $d \widehat u \otimes d \widehat v$ and   the restriction to the $b=0$ locus of the new identification $\widehat{\phi}^t\colon\widehat{\mathcal{X}}^t_{\widehat e}\overset{\cong}{\rightarrow}  \widehat{\mathbb{V}}_{e}$  is changed by the above rotation,  we have  
\begin{equation}\label{eq:twistcomp}
(t_{e}(\sigma))_{\widehat{e}}=(\widehat{\phi}^t)^*(d \widehat u \otimes d \widehat v)=\rho \cdot \widehat{\phi}^*(d \widehat u \otimes d \widehat v)=\rho\sigma_{\widehat e}.
\end{equation}

Let $\widehat{\mathcal{C}}_{[t_e(\sigma)]}\to {\mathcal{C}}_{[t_e(\sigma)]}$ be the $k$-cover obtained by taking the quotient under the $G$-action, and consider the induced family of $k$-differentials $({\eta}_{b,[t_e(\sigma)]})$, whose canonical root on the canonical cover is $\widehat{\eta}_{b,[t_e(\sigma)]}$. 

%(see the equivariant plumbing for $k$-differentials shown in \cite[Proof of Thm. 1.5]{BCGGM2}). 

To simplify the notation, we fix an edge $e$ of $\oGamma$ and denote by $t:=t_e$ the above twisting operation at $e$. As the central fibers of ${\mathcal{C}}_{[\sigma]}$ and ${\mathcal{C}}_{[t(\sigma)]}$ are isomorphic, we can choose the same differential $\eta'$ satisfying the constraints of the first part of  Lemma~\ref{lem:spin0}. By applying the second part of Lemma~\ref{lem:spin0} to these two families, and by repeating the above construction, we obtain two families of $1$-differentials $(\widetilde{\eta}_{b,[\sigma]})$ and $(\widetilde{\eta}_{b,[t(\sigma)]})$ deforming the same twisted differential $(C_i,\widetilde{\eta}_i)_{i=0,-1}$. We denote by $\widetilde{\sigma}$ and $\widetilde{t(\sigma)}$ the associated prong-matchings.

\begin{lemma}\label{lem:twistePM} 
The ratios of local prong-matchings for the families constructed above satisfy  
$$
\left(\frac{\sigma_{\widehat{e}}}{t(\sigma)_{\widehat{e}}}\right)^{k/\gcd(\kappa_e, k)}= \left(\frac{\widetilde{\sigma}_{{e}}}{\widetilde{t(\sigma)_e}}\right).
$$
for any preimage $\widehat{e}$ of $e$.
\end{lemma}
In particular, by taking both sides of this equality to the power $\kappa_e/2$, this lemma implies that $$\left(\frac{\sigma_{\widehat{e}}}{t(\sigma)_{\widehat{e}}}\right)^{\frac{\widehat{\kappa}_e}{2}}= \left(\frac{\widetilde{\sigma}_{{e}}}{\widetilde{t(\sigma)}_e}\right)^{\frac{{\kappa}_e}{2}}$$
because $(\sigma_{\widehat{e}}/t(\sigma)_{\widehat e})^{\widehat \kappa_e/2} = \rho^{\widehat \kappa_e/2}=-1$ and $k$ is odd (so $(-1)^k = -1$). Before proving this lemma, we finish the proof of Proposition~\ref{prop:multibanana} assuming that it holds.

\begin{proof}[Proof of Proposition~\ref{prop:multibanana} for general $k$] To prove the first part of the proposition, we simply write the chain of equalities
\[
\Phi(\mathcal C_{[\sigma],b}, \eta_{b,[\sigma]}) = \Phi(\mathcal C_{[\sigma],b}, \widetilde \eta_{b,[\sigma]}) \overset{(k=1)}{=} \sum_{i=-1}^{0} \Phi(C_i, \widetilde \eta_{i}) = \sum_{i=-1}^{0} \Phi(C_i, \eta_{i})\,,
\]
where the first equality is \eqref{eqn:spin_parity_on_smooth_fibers}, the  second  follows from the case $k=1$ of the proposition, and the last one follows from the identity $\Phi(C_i, \widetilde \eta_{i})=\Phi(C_i, \eta_{i})$ for $i=0,$ or $-1$.
The second part of the proposition follows from the following second chain of equalities 
\begin{align*}
(-1)^{\Phi(\sigma)-\Phi(\sigma')}=(-1)^{\Phi(\widetilde{\sigma})-\Phi(\widetilde{\sigma}')} & = \underset{\text{s.t. ${\kappa_e}$ 
 even}}{\prod_{e \in \Gamma,}}  \left(\frac{\widetilde{\sigma}_{{e}}}{\widetilde{\sigma}_{{e}}'}\right)^{\frac{{\kappa}_{e}}{2}}\\
 & =\underset{\text{s.t. ${\kappa_e}$ 
 even}}{\prod_{e \in \Gamma,}} \prod_{\widehat{e}\to e} \left(\frac{\sigma_{\widehat{e}}}{\sigma_{\widehat{e}}'}\right)^{\frac{\widehat{\kappa}_{e}}{2}}.
\end{align*}
The first equality follows from \eqref{eqn:spin_parity_on_smooth_fibers}, the second equality follows from the case $k=1$ of the proposition, while the last equality is a consequence of Lemma~\ref{lem:twistePM} and the fact that each $e$ has $k/{\rm gcd}(\kappa_e,k)$  (which is odd) pre-images that contribute with the same sign in the last product.
\end{proof}

\begin{proof}[Proof of Lemma~\ref{lem:twistePM}] We consider a neighborhood $\mathcal{X}_e$ of the node corresponding to $e$ in $\mathcal{C}_{[\sigma]}$. We can assume that this open set is contained in the image of $\widehat{\mathcal{X}}_{\widehat{e}}$ under the canonical cover. Then we can choose a first chart 
\[
\phi\colon {\mathcal{X}}_e \overset{\cong}{\longrightarrow} \mathbb{V}_{e}= \{(u, v,b) \in B_{\epsilon'}(0)^2 \times B : u v = f_e(b)\}
\]
such that the canonical cover at the level of plumbing fixtures is given by 
$$(\widehat u, \widehat v, b) \mapsto (u,v,b)= ( \widehat u^{k/\gcd(\kappa_e, k)},  \widehat v^{k/\gcd(\kappa_e, k)}, b)\,.$$ 
Indeed this follows from the form of the action of $\tau$ in the cases (i) and (ii) discussed above.
Next, we denote by 
$$\mathcal{U} = \phi^{-1}\left(\{ (r^-)^{k/\gcd(\kappa_e, k)}<|u|<(r^+)^{k/\gcd(\kappa_e, k)}\}\right)$$
the image of  the annulus $\widehat{\mathcal{U}}$ under the canonical cover.  
In these coordinates, the equivalence relation $(\widehat{u}, \widehat{v}, b) \sim_\rho (\rho \widehat{u}, \rho^{-1} \widehat v, b)$ descends to the equivalence relation $(u, v, b) \sim (\rho^{k/\gcd(\kappa_e, k)}  u, \rho^{-k/\gcd(\kappa_e, k)}  v, b)$.
% map $\widehat{u}\mapsto \rho \widehat{u}$ descends to $u\mapsto \rho^{k/\gcd(\kappa_e, k)} u$. 
Therefore the twisted family $\mathcal{C}_{[t(\sigma)]}$ is obtained from $\mathcal{C}_{[\sigma]}$ by a rotation of the annuli $\mathcal{U}$ by $\rho^{k/\gcd(\kappa_e, k)}$ in the  $u$-coordinate. 
In order to conclude, we need to show that the prong-matching $\widetilde{\sigma}$ at the node corresponding to $e$ is given by $$\widetilde{\sigma}_{{e}} = \phi^*(d u \otimes d v) \in T_{q^{(0)}}^* {C}_0 \otimes T_{q^{(-1)}}^* {C}_{-1}.$$
Indeed, if this is the case, then the same argument as in \eqref{eq:twistcomp} implies that $\widetilde{t(\sigma)}_{{e}}=\rho^{k/{\rm gcd}(\kappa_e,k)}\widetilde{\sigma}_{{e}}$ because of the twisting operation, and the Lemma follows. 

The difficulty is that the chart $\phi$ above does not put the family of differentials $(\widetilde{\eta}_{b,[\sigma]})_{b \in B}$ in the standard form so we cannot apply the local characterization of the prong-matching~\eqref{eq:prongvalue} directly. To remedy this issue, we fix a second chart 
\[
\widetilde{\phi}\colon \widetilde{\mathcal{X}}_e \overset{\cong}{\longrightarrow} \widetilde{\mathbb{V}}_{e}= \{(\widetilde u, \widetilde v,b) \in B_{\epsilon'}(0)^2 \times B :  \widetilde{u} \widetilde{v} =  f_e(b)\}
\]
such that the differential forms $\widetilde{\eta}_{b,[\sigma]}$ are given in these coordinates by 
\[
\widetilde{\eta}^+_{b,[\sigma]} = (\widetilde u^{{\kappa}_e} +  f_e^{\widetilde{\kappa}_e} r) \frac{d\widetilde u}{\widetilde u} \text{ and }\widetilde{\eta}^-_{b,[\sigma]} = - (\widetilde v^{{\kappa}_e} + r) \frac{d\widetilde v}{\widetilde v }.
\]
THen  the prong matching $\widetilde{\sigma}_e$ is given by 
\[\widetilde{\sigma}_e=\widetilde{\phi}^*(d\widetilde u\otimes d \widetilde v).\]
Note that in the definition of $\widetilde{\mathbb{V}}_{e}$ we can choose the \emph{same} smoothing parameter $f_e(b)$, since the geometry of the node has not changed.
To use these coordinates we consider the holomorphic function 
\[
(u,v,b)\mapsto \frac{\widetilde{u}}{u} = \frac{p_{\widetilde{u}}(\widetilde{\phi}\circ\phi^{-1}(u,v,b))}{u}
\]
(where $p_{\widetilde u}$ stands for the projection on the first variable). This function is defined outside the locus $\{u=0\}$. 
However, we have 
$$
\frac{\widetilde{v}}{v}\frac{\widetilde{u}}{u}=1,
$$ 
if we define $\widetilde{v}/v$ similarly  outside $\{v=0\}$. As these two functions do not vanish, both extend first along the divisor $\{uv=0\}$ outside the origin. Finally, the two functions extend to the origin by Hartog's theorem, since $\mathcal{X}_e$ is normal (it is a surface with an $A_N$-singularity at the origin for some $N$). 
% \footnote{While in general the space $\mathcal{X}^0$ can have a singularity at $(0,0,0)$ of the form $\{(u,v,t): uv=t^N\}$, this singularity can be resolved by a finite cover (choosing an $N$-th root of $t$) and we can then apply Hartog's theorem to this smooth cover space.}). 
% \jcomment{I am not sure why we can apply Hartog's theorem, or at least in which version - the space can be quite singular in general, I think, looking like $\{(u,v,t): uv=t^50\}$.}\acomment{You can apply it directly to $\{(u,v,t): uv=t^n\}$ is a $n$ ramified cover of $\{(u,v,t): uv=t\}$, in particular it can be resolved by a finite map...}
% \jcomment{But then why does the holomorphic map descend to the downstairs space? What if the function is the chosen $n$-th root of $t$?}\acomment{}
Solving the equation above we get
\[
\widetilde{u} = \frac{v}{\widetilde v} u \implies d \widetilde{u} = \left( \frac{v}{\widetilde v}\right) du + u \cdot d\left( \frac{v}{\widetilde v}\right)
\]
and similarly
\[
d \widetilde{v} = \left( \frac{u}{\widetilde u}\right) dv + v \cdot d\left( \frac{u}{\widetilde u}\right)\,.
\]
Evaluating at $(u,v,b)=(0,0,0)$ we see that the terms $u d(v/\widetilde v)$ and $v d(u/\widetilde u)$ vanish and taking the tensor product we have
\[
d \widetilde u \otimes d \widetilde v|_{(0,0,0)} = \underbrace{\left( \frac{v}{\widetilde v} \cdot \frac{u}{\widetilde u} \right)}_{=1} du \otimes dv|_{(0,0,0)} \in (T_{(0,0,0)}^* \mathbb{V}_e)^{\otimes 2}\,.
\]
%As both $\widehat{u}$ and $u$ are local isomorphisms with a disk, therefore $\widetilde{u}=\lambda_u u+ \ldots$ and $\widetilde{v}=\lambda_v v+\ldots$ (where the dots stand for higher order terms). By continuity of the function 
Pulling this equality back under $\phi$ shows the desired identity $\phi^* (du\otimes dv)=\widetilde \phi^* (d\widetilde{u}\otimes d\widetilde{v})=\widetilde{\sigma}_e$ at the origin.
% \acomment{a couple of words can be added but I think this is essentially the proof}
% \jcomment{Where does this last step come from? I think probably the argument has to use at some point that $V(u)=V(\widetilde u)$ and $V(v)=V(\widetilde v)$ But the thing is that $\mathcal{X}^0$ is not necessarily smooth, so not sure how much this buys you. Also the argument will use that with $du \otimes dv$ we really mean that you evaluate this form at $(0,0,0)$ \emph{and} restrict it to the fiber over $b=0$.}
\end{proof}

\subsection{Splitting formulas with spin parity}

Using the result of the previous section, we prove a splitting formula for $\psi$-classes that takes into account the spin parity. Recall that ${\rm LG}_1^2(g,a)$ is the set of level graphs with exactly 2 vertices $v_0$ of level 0 and $v_{-1}$ of level $-1$, and no horizontal edges. 

\begin{lemma} \label{Lem:kdiffspinsplitting}
We assume that $k$ and the $a_i$'s are all odd, and we chose $1\leq s<t\leq n$ such that $k$ does not divide $s$ or $t$. Then, we have the following relation:
\begin{align*}
    &(a_s \psi_s - a_t \psi_t)\cdot [\oM_g(a)]^\sp= \sum_{(\Gamma,I)\in {\rm LG}_1^2(g,a)}\!\!\!\! \beta(s,t,\Gamma,I),
\end{align*}
where 
\begin{eqnarray*}
     \beta(s,t,\Gamma,I)&=& f_{s,t}(\Gamma,I) \frac{m(\Gamma,I)}{|{\rm Aut}(\Gamma,I)|}   \zeta_{\Gamma *}\left([\oM_{g_0}(I(v_0))]^\sp \otimes [\oM_{g_{-1}}(I(v_{-1}))]^\sp\right) 
\end{eqnarray*}
if $I$ has only odd entries, and $\beta(s,t,\Gamma,I)=0$ otherwise.
\end{lemma}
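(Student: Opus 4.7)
The plan is to follow the strategy of Lemma~\ref{Lem:kdiffsplitting} (the non-spin case), lifting all computations to the smooth compactification $\PP\Xi_g(a)$, and then to incorporate the spin refinement by invoking the parity results of Section~\ref{sec:spinboundary}. Let $p : \PP\Xi_g(a) \to \oM_{g}(a)$ be the natural map and let $[\PP\Xi_g(a)]^\sp = [\PP\Xi_g(a)^\even] - [\PP\Xi_g(a)^\odd]$, which pushes forward under $p$ to $[\oM_g(a)]^\sp$. By the projection formula we may compute $(a_s\psi_s - a_t\psi_t)\cdot [\oM_g(a)]^\sp$ as $p_*$ of the same product on $\PP\Xi_g(a)$.

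First I would mimic the opening of the proof of Lemma~\ref{Lem:kdiffsplitting}: use Proposition~\ref{prop:Adrienrel} to write
\[
(a_s\psi_s - a_t\psi_t) = (a_s\psi_s - \eta) - (a_t\psi_t - \eta),
\]
so the left side becomes a signed combination of boundary divisors indexed by two-level graphs $\oGamma \in \LG_1(g,a)$ in which $s$ and $t$ sit on different levels. Applying $p_*$ kills every graph with more than one vertex at level $0$ for dimension reasons, and the hypothesis $k \nmid a_s, a_t$ combined with Remark~\ref{rem:GRC} forces the GRC on level $0$ to be empty, so the same dimension argument kills every graph with more than one vertex at level $-1$. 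Exactly as in Lemma~\ref{Lem:kdiffsplitting}, this leaves only graphs in $\LG_1^2(g,a)$, each contributing with coefficient $f_{s,t}(\oGamma)\, m(\oGamma)/|\Aut(\oGamma)|$ times the pushforward of a class supported on $\PP\Xi(\oGamma)$.

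The new spin input enters when identifying this supported class. For a graph $(\oGamma,I) \in \LG_1^2(g,a)$, the boundary component $\PP\Xi(\oGamma)$ is commensurable (up to the $\Aut(\oGamma)$ and prong data already accounted for in the coefficient $m(\oGamma)/|\Aut(\oGamma)|$) with the product $\PP\Xi(\oGamma)^{[0]} \times \PP\Xi(\oGamma)^{[-1]}$, and I would split the analysis into two cases according to the parity of the twists of $I$:
\begin{itemize}
    \item \textbf{All twists odd.} This is precisely the case covered by Proposition~\ref{prop:multibanana}(1) (or its single-edge specialisation Proposition~\ref{prop:parityct}): the parity of a nearby smoothing is the sum of the parities on the two levels. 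Consequently the restriction of $[\PP\Xi_g(a)]^\sp$ to $\PP\Xi(\oGamma)$ factors as $[\PP\Xi(\oGamma)^{[0]}]^\sp \otimes [\PP\Xi(\oGamma)^{[-1]}]^\sp$, and pushing forward under $p$ yields $\zeta_{\Gamma *}([\oM_{g_0}(I(v_0))]^\sp \otimes [\oM_{g_{-1}}(I(v_{-1}))]^\sp)$, giving the stated $\beta(s,t,\oGamma,I)$.
    \item \textbf{At least one twist even.} Here Proposition~\ref{prop:multibanana}(2) (which already treats the multibanana case which is exactly a graph in $\LG_1^2$ with multiple edges) asserts that, among the prong matching equivalence classes parametrising $\PP\Xi(\oGamma)$, exactly half yield even and half yield odd nearby smoothings. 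The restriction of $[\PP\Xi_g(a)]^\sp$ to $\PP\Xi(\oGamma)$ therefore vanishes, so this boundary component contributes $0$ to the expansion. This is exactly $\beta(s,t,\oGamma,I) = 0$.
\end{itemize}

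Collecting these contributions over all $(\oGamma,I) \in \LG_1^2(g,a)$ gives the stated identity. The main subtlety is the second case: the vanishing of $\beta$ for graphs with at least one even twist is not a dimension statement but a genuine cancellation between the even and odd components, and the whole content of Propositions~\ref{prop:parityct}, \ref{lem:parityk} and especially \ref{prop:multibanana} in Section~\ref{sec:spinboundary} is what makes this cancellation rigorous. With those propositions in hand, matching the combinatorial coefficients $f_{s,t}(\oGamma) m(\oGamma)/|\Aut(\oGamma)|$ is identical to the non-spin case, and no further bookkeeping is required.
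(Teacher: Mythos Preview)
Your proposal is correct and follows essentially the same approach as the paper: reduce to graphs in $\LG_1^2(g,a)$ exactly as in the non-spin Lemma~\ref{Lem:kdiffsplitting}, then invoke Propositions~\ref{prop:parityct} and~\ref{prop:multibanana} to factor the spin class when all twists are odd and to obtain the cancellation when some twist is even. One small imprecision: in the even-twist case the restriction of $[\PP\Xi_g(a)]^\sp$ to $\PP\Xi(\oGamma)$ does not literally vanish as a class on $\PP\Xi_g(a)$ (the even and odd prong-matching components are distinct), but rather each prong-matching equivalence class contributes the same cycle after pushforward, so the signed sum vanishes---this is exactly how the paper phrases it, and your conclusion is unaffected.
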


\begin{proof}
We consider the components $\PP\Xi_g(a)^\odd$ or $\PP\Xi_g(a)^{\rm even}$  of the moduli space of multi-scale differentials. Using the same arguments as in the non-spin case of Lemma \ref{Lem:kdiffsplitting}, we can use Proposition \ref{prop:Adrienrel} in order to  write  $(\eta+a_s\psi_s)$ and $(\eta+a_t\psi_t)$ on $\PP\Xi_g(a)^\odd$ or $\PP\Xi_g(a)^{\rm even}$ as a linear combination of components of the boundary divisors indexed by twisted graphs  with 2 levels. As in the non-spin case, the contribution of a graph $(\Gamma,I)$ vanishes if the graph is not in in ${\rm LG}_1^2(g,a)$.

%We now assume that $(\Gamma,I)$ has at most 2 edges and one of the vertices is of genus 0. Then, an open and dense subset of the divisor associated to $(\Gamma,I)$ is isomorphic to the product:
%$$
%\M_{g_0}(a_0,I_0) \times \M_{g_{-1}}(a_{-1},I_{-1}).
%$$
Fix now a divisor $(\Gamma,I)\in {\rm LG}_1^2(g,a)$.  Thanks to Proposition~\ref{prop:multibanana}, if all entries of $I$ are odd, then the parity of a connected component of this divisor is equal to the sum of the parities of the projections of this connected component in $\M_{g_0}(a_0,I_0)$ and $ \M_{g_{-1}}(a_{-1},I_{-1})$. Thus the contribution of $(\Gamma,I)$ to $(a_s\psi_s-a_t\psi_t)[\oM_{g}(a)]^\sp$ is given by:
$$
f_{s,t}(\Gamma,I) \frac{m(\Gamma,I)}{|{\rm Aut}(\Gamma,I)|} \cdot  \zeta_{\Gamma *}\left([\oM_{g_0}(a_0,I_0)]^\sp \otimes [\oM_{g_{-1}}(a_{-1},I_{-1})]^\sp\right).
$$

If there is at least an even entry of $I$, then by Corollary ~\ref{Cor:prong_balance} half of the prong-matchings equivalence classes give a nearby even or odd differential. 
%We recall from~\cite{Sau3} that there are ${\rm gcd}(I_{e_1},I_{e_2})$ choices of prong-matching while each choice of prong-matching contributes to $(a_s\psi_s-a_t\psi_t)[\PP\Xi_g(a)^{\even}]$ as
%$$
%f_{s,t}(\Gamma,I) \frac{{\rm lcm}(I_{e_1},I_{e_2})}{|{\rm Aut}(\Gamma,I)|} \cdot  \zeta_{\Gamma *}\left([\oM_{g_0}(a_0,I_0)] \otimes [\oM_{g_{-1}}(a_{-1},I_{-1})]\right),
%$$
%if it sits in the even connected component (same for the odd component). 
Since each choice of prong-matchings equivalence classes contributes with the same amount to the difference of the psi-classes on each component, the contribution of $(\Gamma,I)$ to $(a_s\psi_s-a_t\psi_t)[\oM_{g}(a)]^\sp$ is trivial as the even and odd contributions cancel out.
\end{proof}

 Using Assumption~\ref{assumption} we denote by $\cA_g^\sp$ the unique polynomial extending $\cA_g^\sp$. We will prove the following refinement of the identities of Lemma~\ref{lem:identities} taking into account the spin parity. 

\begin{lemma}\label{lem:identitiesspin} For all even $k$, and all vectors $a$ with even entries, we have:
\begin{eqnarray}
\cA_{g}^\sp(a,k) &=&  \cA_{g}^\sp(a)\quad  \text{ for }k=\frac{a_1+\ldots + a_n}{2g-2+n},\label{eqn:Agspinid1}\\
a_1\cdot \cA_{g}^\sp(a,0)\,\, + && \!\!\!\!\!\!\!\!\!\!\!\!\!  \sum_{i>1} (a_i-k) \cA_{g}^\sp(\ldots,a_i-k,\ldots) \label{eqn:Agspinid2}\\
&=& \frac{1}{2}\sum_{\begin{smallmatrix}j \,\,{\rm odd},\\ \nonumber 0<j<k\end{smallmatrix}} j(k-j) \cA_{g-1}^{\rm spin}(a,-j,j-k)\\\nonumber
 \quad  \text{ for }k=\frac{a_1+\ldots + a_n}{2g-2+n+1},&& \nonumber
\end{eqnarray}
\begin{eqnarray}
&& (2g-2+n)a_1\cA_{g}^\sp(a)_{|k=0}   \label{eqn:Agspinid3} \\ \nonumber = &&  - \sum_{j>i>1} (a_i+a_j) \cA_g^\sp(\ldots,\widehat{a_i},\ldots,\widehat{a_j},\ldots,a_i+a_j) \\
  && -\frac{1}{2} \!\! 
  \sum_{1 < i \leq n}
  \sum_{\begin{smallmatrix}j,\ell \,\,{\rm odd},\\ \nonumber j\cdot \ell>0, j+\ell=a_i \end{smallmatrix}} \!\!\!\! {\rm sign}(a_i) \cdot j\ell \cdot \cA_{g-1}^{\rm spin}(\ldots, \widehat{a_i}, \ldots, a_n,j,\ell) + R(a)\,,
\end{eqnarray}
where $R(a)$ is the evaluation of the polynomial expression 
\[
\frac{1}{6}  \sum_{\begin{smallmatrix}j,\ell,m \,\,{\rm odd}, \\  \mathrm{sign}(j)=\mathrm{sign}(\ell)=\mathrm{sign}(m),\\  j+\ell+m=b \end{smallmatrix}} \!\!\!\! j\ell m \cdot \cA_{g-2}^{\rm spin}(a_1, \ldots,a_n,j,\ell,m)
\]
in $b$ at $b=0$.
\end{lemma}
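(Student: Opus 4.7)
The plan is to mimic the proof of Lemma~\ref{lem:identities}, systematically replacing $\DR_g$ by $\DR_g^\sp$ and carefully tracking the effect of the parity restrictions imposed by Assumption~\ref{assumption}. The first identity~\eqref{eqn:Agspinid1} will follow immediately from property~(2) of Assumption~\ref{assumption}: the pullback formula $\pi^*\DR_g^\sp(a)=\DR_g^\sp(a,k)$, combined with the elementary relation $\pi^*\psi_1\cdot\psi_1^{m-1}=\psi_1^m$ on $\oM_{g,n+1}$, reduces the integral defining the left-hand side to $\cA_g^\sp(a)$ by exactly the same computation as in~\eqref{eqn:Agid1}.

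To prove~\eqref{eqn:Agspinid2} I would first establish the spin analogue of Proposition~\ref{prop:psiDRformula}:
\[
(a_s\psi_s-a_t\psi_t)\DR_g^\sp(a)=\!\!\!\!\sum_{\substack{(\Gamma,I)\in {\rm LG}_1^2(g,a)\\ I \text{ odd}}}\!\! f_{s,t}(\Gamma)\frac{m(\Gamma,I)}{|\Aut(\Gamma,I)|}\zeta_{\Gamma*}\!\left(\DR_{g_0}^\sp(I(v_0))\otimes\DR_{g_{-1}}^\sp(I(v_{-1}))\right).
\]
This identity would be obtained by combining Lemma~\ref{Lem:kdiffspinsplitting} (the splitting at the level of $[\oM_g(a)]^\sp$) with Conjecture~\ref{conj:DRspinformula}/Assumption~\ref{assumption}(3) to transfer intersection numbers with $\psi$-monomials from strata of $k$-differentials to spin DR cycles, and then extended to a polynomial identity in $a$ using the Zariski density of the lattice of odd vectors, in the spirit of Lemma~\ref{Lem:psisplittingrestricta}. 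Once this spin splitting is in hand, the derivation of~\eqref{eqn:Agspinid2} proceeds just as in the proof of~\eqref{eqn:Agid2}: apply the formula with $s=1$, $t=n+1$, $a_{n+1}=0$, and use the same dimension analysis to isolate the graphs whose non-$1$ vertex has genus $0$ with exactly three half-edges. The ``$I$ odd'' constraint in the spin splitting is precisely what produces the restriction ``$j$ odd'' in the resulting sum.

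The third identity~\eqref{eqn:Agspinid3} is the main obstacle and is genuinely new to the spin setting. The prefactor $(2g-2+n)$ together with the structure of the right-hand side suggests that this identity should be obtained by averaging the spin splitting formula above over the pairs $(1,t)$ with $t>1$, or equivalently by applying it to $(2g-2+n)a_1\psi_1-\sum_{i\geq 1}a_i\psi_i$, and then using the relation $\sum_i a_i=0$ that is available at $k=0$. The three terms on the right-hand side would then correspond to the three graph topologies in ${\rm LG}_1^2(g,a)$ that survive the resulting dimension count: single-edge graphs whose non-$1$ vertex is a genus-zero component carrying two markings $a_i,a_j$ (first sum); banana graphs with two odd edges of twists $j,\ell$ of the same sign and one marking $a_i=j+\ell$ on the genus-zero vertex (second sum, with $1/2$ coming from the banana automorphism); and theta graphs with three odd edges of twists $j,\ell,m$ (giving $R(a)$, with $1/6$ from the theta automorphism group).

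The hardest step is the theta-graph contribution. Because a sum of three odd integers is odd, the equality $j+\ell+m=0$ required at $k=0$ cannot hold on genuine integer vectors, so the theta contribution must be interpreted as a polynomial in the auxiliary parameter $b=j+\ell+m$ and evaluated at $b=0$, exploiting the polynomiality of $\cA_{g-2}^\sp$ guaranteed by Assumption~\ref{assumption}(1). Making this polynomial extension rigorous, matching the combinatorial multiplicities against the automorphism factors $1/2$ and $1/6$ via a bookkeeping analogous to the one at the end of the proof of Proposition~\ref{prop:psiDRformula}, and verifying through the dimension count that no additional graph topologies contribute, are the principal remaining tasks.
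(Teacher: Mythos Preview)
Your plan for \eqref{eqn:Agspinid1} and \eqref{eqn:Agspinid2} is essentially right, with one caveat: the cycle-level spin splitting for $\DR_g^\sp$ you display is not derivable from Assumption~\ref{assumption} alone. Part~(3) of the Assumption equates only \emph{intersection numbers} with certain $\psi$-monomials, not the underlying cycles, so the route of Proposition~\ref{prop:psiDRformula} (which rests on the cycle-level Theorem~\ref{Thm:ConjA}) would require the full Conjecture~\ref{conj:DRspinformula}. The paper avoids this by never asserting a cycle-level identity for $\DR_g^\sp$: it applies Lemma~\ref{Lem:kdiffspinsplitting} to $[\oM_g(a')]^\sp$ for genuinely odd $a'$ and $k$, pairs immediately with the desired $\psi$-monomial, uses Assumption~\ref{assumption}(3) to replace $[\oM_g(a')]^\sp$ by $\DR_g^\sp(a')$ on both sides, and only then extends polynomially and specializes to the even vector $(a,0)$.

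The substantive gap is in your plan for \eqref{eqn:Agspinid3}. Summing the splitting over pairs $(1,t)$ with $t>1$ on $\oM_{g,n}$ gives $(n-1)a_1\psi_1-\sum_{t>1}a_t\psi_t$ on the left, not $(2g-2+n)a_1\psi_1$, and after pairing with $\psi_1^{2g-4+n}$ the cross-terms $a_t\int\psi_t\psi_1^{2g-4+n}\DR_g^\sp(a)$ are mixed $\psi$-integrals, not values of $\cA_g^\sp$. The paper's device is different: it passes to $\oM_{g,n+1}$ with an \emph{extra} marking, applies Lemma~\ref{Lem:kdiffspinsplitting} with $s=1$, $t=n+1$, and pairs with $\psi_{n+1}\psi_1^{2g-4+n}$. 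After polynomial extension one specializes to $a_{n+1}'=0$, $k=0$; the left-hand side becomes $a_1\int_{\oM_{g,n+1}}\psi_{n+1}\psi_1^{2g-3+n}\DR_g^\sp(a,0)$, and it is precisely Assumption~\ref{assumption}(2), namely $\DR_g^\sp(a,0)=\pi^*\DR_g^\sp(a)$, combined with the projection formula that converts this to $(2g-2+n)\,a_1\,\cA_g^\sp(a)$. Your three graph types then arise as graphs where the genus-zero vertex carries the extra marking $n{+}1$ together with $\{i,j\}$, $\{i\}$, or $\varnothing$ respectively; a fourth type, a single edge to a genus-one vertex carrying only $n{+}1$, also appears in the intermediate step but contributes zero after the specialization.
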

Concering equation \eqref{eqn:Agspinid3}, it will turn out a posteriori from our formula that the term $R(a)$ vanishes. However, this is not obvious from the basic properties of $\cA_g$, so for now we state the lemma in the slightly complicated version above.
% \jcomment{The proof below now works with two additional assumptions added to Assumption \ref{assumption}:
% \begin{itemize}
%     \item for a monomial $P$ in classes $\psi_i$ for which $a_i$ is negative or not divisible by $k$, we have $P([\oM_g(a)]^\sp - \DR_g(a)^\sp)=0$,
%     \item we have $\pi^* \DR_g(a)^\sp = \DR_g(a,k)^\sp$ for $\pi : \oM_{g,n+1} \to \oM_{g,n}$ the forgetful map.
% \end{itemize}
% }
\begin{proof}
First, we note that Lemma~\ref{Lem:kdiffspinsplitting} is only valid for odd values of $a$ and $k$. Moreover, by definition
$$\cA_{g}^\sp(a)= \rcomment{\int_{{\oM}_{g,n}}}\psi_1^{2g-3+n}\cdot [\oM_g(a)]^\sp$$
if $a_1$ is either negative or not divisible by $k$.
For all three identities above, the strategy is to prove the corresponding identity with these constraints and write
%Therefore, we need in all 3 cases to write identities with these constraint and write 
their polynomial extension for even values of $a$.

Identity~\eqref{eqn:Agspinid1} follows immediately from the property $\pi^* \DR_g^\sp(a) = \DR_g^\sp(a,k)$ from Assumption \ref{assumption}.\footnote{Alternatively, it can be proven using only the polynomiality of $\cA_g^\mathrm{spin}$ and property (3) from Assumption \ref{assumption}, since for $a,k$ odd, the cycle $[\oM_{g}(a,k)]^\sp$ is the pull-back of $[\oM_{g}(a)]^\sp$. Note however that for a later part of the argument we need the full strength of part (2) of Assumption \ref{assumption}, and thus we cannot omit it.}

To prove identity~\eqref{eqn:Agspinid3}, first note that the identity is trivial in case $n=1$: the restriction $k=0$ forces $a=(0)$, so that the left-hand side vanishes due to the factor $a_1=0$ and the right-hand side vanishes since the index-sets of the sum are empty. Thus we may assume $n \geq 2$. 

To prove \eqref{eqn:Agspinid3}, we will first perform a calculation on a moduli space with one additional marking. Consider vectors $a' \in \mathbb{Z}^{n+1}$ with $|a'|=k(2g-1+n)$ so that both $k$ and all entries of $a'$ are odd and such that both $a_1'$ and $a_{n+1}'$ are not divisible by $k$. 
Then we can use Lemma~\ref{Lem:kdiffspinsplitting} above with $s=1$, and $t=n+1$ to get the following expression:
\begin{equation} \label{eqn:spinhelp1}
(a'_1\psi_1  - a_{n+1}' \psi_{n+1})  [\oM_{g}(a')]^\sp= \sum_{(\Gamma,I)} \beta(1,n+1,\Gamma,I).
\end{equation}
When taking the intersection number of \eqref{eqn:spinhelp1} with $\psi_{n+1}\psi_1^{2g-4+n}$, we note that on the left-hand side we can replace the cycle $[\oM_{g}(a')]^\sp$ by $\DR_g^\sp(a')$ by Assumption \ref{assumption}. On the other hand, for the right hand side
almost all terms $\beta(1,n+1,\Gamma,I)$
contribute zero for dimension reasons. Going through all possible pairs $(\Gamma, I)$, one sees that only four types of graphs allow nonzero intersection numbers:
\begin{itemize}
    \item two vertices, connected by a single edge such that one vertex has genus zero and carries markings $i,j,n+1$ (for $1<i<j<n+1$),
    \item two vertices, connected by two edges such that one vertex has genus zero and carries markings $i, n+1$ (for $1 < i \leq n$),
    \item two vertices, connected by three edges such that one vertex has genus zero and carries marking $n+1$,
    \item two vertices, connected by a single edge such that one vertex has genus one and carries marking $n+1$.
\end{itemize}
Using the explicit description of $ \beta(1,n+1,\Gamma,I)$ given in Lemma~\ref{Lem:kdiffspinsplitting}, by pairing  \eqref{eqn:spinhelp1} with $\psi_{n+1}\psi_1^{2g-4+n}$ we get the following expression:
\begin{eqnarray}\label{eqn:Rap}
&&  \int_{\oM_{g,n+1}} \psi_{n+1}\psi_1^{2g-4+n}(a_1' \psi_1  - a_{n+1}' \psi_{n+1})  \DR_g^\sp(a') \label{eqn:spinhelp2} \\  
= && \!\!  - \!\! \sum_{j>i>1} (a_i'+a_j'+a_{n+1}'-2k) \cA_g^\sp(\ldots,\widehat{a_i'},\ldots,\widehat{a_j'},\ldots,a_i'+a_j'+a_{n+1}'-2k)\nonumber \\
&&  - \frac{1}{2}  \sum_{1<i\leq n} \!\!\!\! \sum_{\begin{smallmatrix}j,\ell \,\,{\rm odd}, \nonumber\\ \nonumber j\cdot \ell>0, j+\ell=a_i'+a_{n+1}'-2k \end{smallmatrix}} \!\!\!\! {\rm sign}(a_i'+a_{n+1}'-2k) \cdot j\ell \cdot \cA_{g-1}^{\rm spin}(\ldots,\widehat{a_i'},\ldots,j,\ell)
\\
&& \nonumber - (a_{n+1}'-2k) \cdot \cA_{1}^{\rm spin}(a_{n+1}', 2k-a_{n+1}') \cA_{g-1}^{\rm spin}(a_1', \ldots, a_n',-2k+a_{n+1}') \nonumber
\\
&& + R(a')\,, \nonumber
\end{eqnarray}
where
\begin{equation} \label{eqn:Raprime}
    R(a') = \frac{1}{6}  \sum_{\begin{smallmatrix}j,\ell,m \,\,{\rm odd}, \\  \mathrm{sign}(j)=\mathrm{sign}(\ell)=\mathrm{sign}(m),\\  j+\ell+m=a_{n+1}'-2k \end{smallmatrix}} \!\!\!\! j\ell m \cdot \cA_{g-2}^{\rm spin}(a_1', \ldots,a_n',j,\ell,m)\,.
\end{equation}
\rcomment{The sign conditions for the indexes of the sums in the RHS of~\eqref{eqn:Rap} and \eqref{eqn:Raprime} follow from the definition of 2-twisted graphs, while the sign in front of the sums are determined by the sign function $f_{s,t}$ appearing  in Lemma~\ref{Lem:kdiffspinsplitting} and defined in Proposition~\ref{prop:psiDRformula} (these signs are the same as the non-spin case by~\cite{BSSZ} Corollary~2.1)}
From Assumption \ref{assumption} and Lemma~\ref{lem:polynomialodd} it follows that 
all terms in the equality \eqref{eqn:spinhelp2} except possibly $R(a')$ are polynomial on vectors $a'$ such that all numbers $a_i'+a_{n+1}'-2k$ are even, and of fixed signs, whereas $R(a')$ is only polynomial for vectors $a'$ satisfying the slightly stronger condition that the entire vector $a'$ is odd.

Thus, fixing a collection $\eta \in \{\pm 1\}^{n-1}$ of signs for the numbers $a_i'+a_{n+1}'-2k$ cuts out a polyhedron in the space $\mathbb{R}^{n+1}$ of all vectors $a'$ and we see that on the intersection $\Lambda_\eta$ of that polyhedron with the lattice  of vectors $a'$ such that the numbers $a_i'+a_{n+1}'-2k$ are even, the two sides of \eqref{eqn:spinhelp2} are given by a polynomial in $a'$, where $R(a')$ now really means the polynomial extending \eqref{eqn:Raprime}.
Moreover, the vectors $a'$ we considered above ($k,a_i'$ odd and $a_1', a_{n+1}'$ not divisible by $k$) are Zariski dense in each such domain, so that we have \eqref{eqn:spinhelp2} as an equality on each $\Lambda_\eta$.

Now let us return to equality \eqref{eqn:Agspinid3}.
Consider an even vector $a \in \mathbb{Z}^n$ with $|a|=0$ and so that none of the $a_i$ vanish. Again, since these are Zariski dense, it suffices to prove the statement for such vectors. Then we can consider \eqref{eqn:spinhelp2} at the point $a' = (a_1, \ldots, a_n,0)$, which satisfies $k=0$ and lies in $\Lambda_\eta$ for $\eta = (\mathrm{sign}(a_i))_{i=2}^n$. 
Then the right hand side of \eqref{eqn:spinhelp2} simplifies to the right hand side of \eqref{eqn:Agspinid3}.

%\jcomment{Here is the problem: we need that it is even in $j,\ell,m$, but for $n>0$ this does not follow from the evenness of the whole function. I am seriously no longer sure why this term should vanish with the specialization that we choose.}

On the other hand, for the left-hand side we have
\begin{align*}
& a_1 \int_{\oM_{g,n+1}} \psi_{n+1} \psi_1^{2g-3+n} \DR_g^\sp(a_1, \ldots, a_n,0)\\
= & a_1 (2g-2+n) \int_{\oM_{g,n}} \psi_1^{2g-3+n} \DR_g^\sp(a_1, \ldots, a_n)\\
=& a_1 (2g-2+n) \cA^\sp_g(a)\,,
\end{align*}
using the property $\pi^* \DR_g^\sp(a) = \DR_g^\sp(a,0)$ for the forgetful morphism $\pi: \oM_{g,n+1} \to \oM_{g,n}$ from Assumption \ref{assumption}. This concludes the proof of equality \eqref{eqn:Agspinid3}.

The proof of identity~\eqref{eqn:Agspinid2} uses a similar strategy, now pairing equation \eqref{eqn:spinhelp1} with $\psi_1^{2g-3+n}$, again for vectors $a'$ such that both $k$ and all $a_i'$ are odd, the numbers $a_1', a_{n+1}'$ are not divisible by $k$ and additionally such that $a_1<0$.\footnote{The condition on $a_1$ being negative ensures that certain graphs with nontrivial residue conditions cannot appear below.} The only two kinds of graphs that can contribute are
\begin{itemize}
    \item two vertices, connected by a single edge such that one vertex has genus zero and carries markings $i,n+1$ (for $1<i<j<n+1$),
    \item two vertices, connected by two edges such that one vertex has genus zero and carries marking $n+1$.
\end{itemize}
We then obtain the equality
\begin{eqnarray*}
&& \!\!\!\!\!\!\!\!\!\!\!\!\!\!\!\!\!\!\!\!\!\!\!\!\!\!\!\! a'_1 \cA_g^\sp(a')- \rcomment{\int_{\oM_{g,n+1}}} a'_{n+1} \psi_{n+1}\psi_1^{2g-3+n}  [\oM_{g}(a')]^\sp \\ 
= && \!\!  - \!\! \sum_{1<i\leq n} (a'_i+a'_{n+1}-k) \cA_g^\sp(\ldots,a'_i+a'_{n+1}-k,\ldots)\\
&&  - \frac{1}{2}  \sum_{\begin{smallmatrix}j \text{ odd},\\ 0<j<k-a_{n+1}' \end{smallmatrix}} \!\!\!\!  j(k-a_{n+1}'-j) \cdot \cA_{g-1}^{\rm spin}(a_1',\ldots, a_n',-j,j+a_{n+1}'-k)\,.
\end{eqnarray*}
Combining Assumption \ref{assumption} with Lemma \ref{lem:polynomialodd} we see that both sides of this equality are polynomial in vectors $a'$ for which $k-a_{n+1}'$ is even and that the equality holds on the level of these polynomials. Specializing to even $k$ and even vectors $a'=(a_1, \ldots, a_n, 0)$ produces equality \eqref{eqn:Agspinid2}.
% The proof of identity~\eqref{eqn:Agspinid2} uses a similar strategy. Let us fix an even $k$ and an even vector $a=(a_1,\ldots,a_n,0)$ that sums up to $k(2g-1+n)$ such that $a_1<0$.  We fix a vector of odd values $\epsilon=(\epsilon_1,\ldots,\epsilon_n,1)$ that sums to $(2g-1+n)$ and such that $\epsilon_i$ has the same sign as $a_i$ for all $1\leq i\leq n$. Then, for $k'>0$ odd , we denote by $a(k')=a+k'\epsilon$. Then, using Lemma~\ref{Lem:kdiffspinsplitting}, we obtain the following expression:
% \begin{eqnarray*}
% && \!\!\!\!\!\!\!\!\!\!\!\!\!\!\!\!\!\!\!\!\!\!\!\!\!\!\!\! a(k')_1 \cA_g^\sp(a(k'))- k' \psi_{n+1}\psi_1^{2g-3+n}  [\oM_{g}(a(k'))]^\sp \\ 
% = && \!\!  - \!\! \sum_{1<i\leq n} (a(k')_i-k'-k) \cA_g^\sp(\ldots,a(k')_i-k'-k,\ldots)\\
% &&  - \frac{1}{2}  \sum_{\begin{smallmatrix}j \text{ odd},\\ 0<j<k \end{smallmatrix}} \!\!\!\!  j(k-j) \cdot \cA_{g-1}^{\rm spin}(\ldots,\widehat{a(k)_i},\ldots,-j,j-k).
% \end{eqnarray*}
% Each term of the expression is a polynomial in $k'$. Evaluating this formula at $k'=0$ recovers the identity~\eqref{eqn:Agspinid2}.
\end{proof}

\begin{proposition}\label{prop:constraintspin}
The functions $\cA_{g}^\sp$ are determined by $\cA_0^\sp$, $\cA_1^\sp$, and the identities of Lemma~\ref{lem:identitiesspin}.
\end{proposition}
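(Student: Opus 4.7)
My plan is to closely mirror the strategy of Proposition~\ref{prop:constraint}, organising the argument as an induction on the genus $g$. The base cases $g=0,1$ are given; fix $g\geq 2$ and assume $\cA_{g'}^\sp$ has been determined for all $g'<g$. First I would introduce the shifted function
\[
\widetilde{\cA}_g^\sp(y_1,\dots,y_n) = \cA_g^\sp(y_1+k,\dots,y_n+k),\qquad k=\frac{y_1+\dots+y_n}{2g-2},
\]
which, by Assumption~\ref{assumption}, is polynomial of degree at most $2g$ and symmetric in $y_2,\dots,y_n$. For $n>2g$ I would expand it uniquely in the basis $\{y_1^d e_\mu(y)\}$ supplied by Lemma~\ref{Lem:symmfunc} as $\sum c_{g,d,\mu}^\sp\, y_1^d e_\mu(y)$. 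Identity~\eqref{eqn:Agspinid1} translates into $\widetilde{\cA}_g^\sp(y_1,\dots,y_n,0)=\widetilde{\cA}_g^\sp(y_1,\dots,y_n)$, so the last part of Lemma~\ref{Lem:symmfunc} shows that the coefficients $c_{g,d,\mu}^\sp$ are independent of $n$.

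Next I would use identity~\eqref{eqn:Agspinid3} to pin down the coefficients $c_{g,d,\mu}^\sp$ with $M(\mu)=0$, equivalently the restriction of $\cA_g^\sp$ to the locus $k=0$. By the outer induction hypothesis, the terms involving $\cA_{g-1}^\sp$ and the remainder $R(a)$ (which depends only on $\cA_{g-2}^\sp$) are known. The only remaining unknowns on the right-hand side are values of $\cA_g^\sp|_{k=0}$ with $n-1$ markings; thus an inner induction on $n$ yields $\cA_g^\sp|_{k=0}$ for all $n$, once one divides the polynomial identity by the nonzero factor $(2g-2+n)a_1$. The $n$-independence established in the first step ensures that fixing $\cA_g^\sp|_{k=0}$ for one sufficiently large value of $n$ determines all $c_{g,d,\mu}^\sp$ with $M(\mu)=0$ simultaneously.

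Finally, the remaining coefficients $c_{g,d,\mu}^\sp$ with $M(\mu)\geq 1$ are determined from identity~\eqref{eqn:Agspinid2} by essentially verbatim repetition of the argument from the proof of Proposition~\ref{prop:constraint}. Namely, one rewrites \eqref{eqn:Agspinid2} in the $\widetilde{\cA}$-variables, expands both sides in the basis $\{y_1^d e_\mu(y)\}$ of $\QQ[y_1]\otimes \QQ[y_2,\dots,y_n]^{S_{n-1}}$, and observes that the coefficient of $y_1^d e_{(1)+\mu}(y)$ on the left-hand side contains a nonzero scalar multiple of $c_{g,d,\mu}^\sp$ plus contributions of coefficients $c_{g,d',\mu'}^\sp$ with either $d'<d$, $M(\mu')<M(\mu)$, or $M(\mu')=0$---all of which are already known, either from the previous step or by a nested induction on the pair $(d,M(\mu))$. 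The main technical obstacle I anticipate lies in step two: one must verify that after dividing by $(2g-2+n)a_1$ the right-hand side of~\eqref{eqn:Agspinid3} yields a well-defined polynomial in $a$. This requires checking that the right-hand side is divisible by $a_1$ as a polynomial, and that the polynomial extensions of the constrained sums of odd integers furnished by Lemma~\ref{lem:polynomialodd} are compatible with this division on the Zariski-dense locus of valid even input vectors.
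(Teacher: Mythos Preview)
Your proposal is correct and follows essentially the same route the paper intends: the paper's own proof is a one-line remark that the argument of Proposition~\ref{prop:constraint} adapts directly, and your write-up is a faithful unpacking of that adaptation, including the extra inner induction on $n$ needed because identity~\eqref{eqn:Agspinid3} is recursive rather than a closed formula for $\cA_g^\sp|_{k=0}$.

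One small simplification: your worry about checking that the right-hand side of~\eqref{eqn:Agspinid3} is divisible by $a_1$ is unnecessary. You are proving uniqueness, not existence, so you may assume $\cA_g^\sp$ exists and satisfies the identity; then the equality $(2g-2+n)a_1\,\cA_g^\sp(a)|_{k=0} = \text{RHS}(a)$ determines $\cA_g^\sp|_{k=0}$ simply because multiplication by $a_1$ is injective on the relevant polynomial ring (for $n\ge 2$). No separate divisibility check is required.
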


\begin{proof}
The arguments used to prove Proposition~\ref{prop:constraint} may be directly adapted to prove this Proposition. 
\end{proof}

As for Theorem~\ref{th:main}, the proof of Theorem~\ref{th:spin} is obtained by a combination of Lemma~\ref{lem:identitiesspin} and Proposition~\ref{prop:constraintspin} .
\begin{proof}[Proof of  Theorem~\ref{th:spin}]
We denote by 
\begin{eqnarray*}
B_{g}^\sp:\ZZ^n&\to& \QQ\\
a&\mapsto& 2^{-g} [z^{2g}] {\rm exp}\left(\frac{\rcomment{a_1}z\cdot \cS'(kz)}{\cS(kz)} \right) \frac{\ch(z/2)}{{\cS}(z)} \frac{\prod_{i>1} \cS(a_iz)}{\cS(kz)^{2g-1+n}}
\end{eqnarray*}
the polynomial of degree $2g$ defined by the RHS of formula~\eqref{eqn:mainspin}.
By direct computation we have $\cA_0^\sp=B_0^\sp$ and $\cA_1^\sp=B_1^\sp$, using the identity $[\oM_0(a)]^\sp = [\oM_{0,n}]$ and the formula \eqref{eqn:M1aspin} for $[\oM_1(a)]^\sp$ discussed in Section \ref{Sect:spinDR} \rcomment{which provides the numerical identity
$$
\cA_1^\sp= \cA_1(a)-2 \cA_1\left(\frac{a_1-k}{2},\ldots,\frac{a_n-k}{2}\right)\,.
$$}
\rcomment{The general equality $\cA_1^\sp=B_1^\sp$ then may either be proven by using hyperbolic identities or simply by checking that it holds for $n=3$ as in the non-spin case.}

Thus we will finish the proof by showing that the function $B_g^\sp$  satisfies the three relations of Lemma~\ref{lem:identitiesspin}. By Proposition~\ref{prop:constraintspin}, this implies that $B_{g}^\sp=\cA_{g}^\sp$ for all $g$. The fact that $B_g^{\rm spin}$ satisfies identity~\eqref{eqn:Agspinid1} is straightforward. 

We first show that $B_g^{\rm spin}$ satisfy identity~\eqref{eqn:Agspinid3}. 
\rcomment{
First, we claim that the term $R(a)|_{k=0}$ in \eqref{eqn:Agspinid3} vanishes. Recall that $R(a)$ is given as the evaluation of
\[
\frac{1}{6}  \sum_{\begin{smallmatrix}j,\ell,m \,\,{\rm odd}, \\  \mathrm{sign}(j)=\mathrm{sign}(\ell)=\mathrm{sign}(m),\\  j+\ell+m=b \end{smallmatrix}} \!\!\!\! {j\ell m \cdot B_{g-2}^{\rm spin}(a_1, \ldots,a_n,j,\ell,m)}
\]
at $b=0$. To argue that this expression vanishes after specializing to $k=|a|/(2g-2+n)=0$ and $b=0$, we first note that $B_{g-2}^{\rm spin}(a_1, \ldots, a_n, j, \ell, m)$ is given by
\[
2^{-(g-2)} [z^{2(g-2)}] {\rm exp}\left(\frac{\rcomment{a_1}z\cdot \cS'(k'z)}{\cS(k'z)} \right) \frac{\ch(z/2)}{{\cS}(z)} \frac{\prod_{i>1} \cS(a_iz) \cdot \cS(jz) \cS(\ell z) \cS(mz)}{\cS(k'z)^{2(g-2)-1+n}}
\]
with
\[
k' = \frac{|a|+j+\ell+m}{2(g-2)-2+(n+3)} = \frac{2g-2+n}{2(g-2)-2+(n+3)} \cdot k + \frac{1}{2(g-2)-2+(n+3)} b\,,
\]
where again $b=j + \ell + m$.
Setting $k=b=0$ commutes with the summation in the definition of $R(a)$.\footnote{\rcomment{To be more precise, when taking a summation over $j+\ell+m=b$ and setting $b=0$ as in the definition of $R(a)$, any modification of the summand by a polynomial divisible by $b$ does not change the result. In practice, this allows us to set $b=0$ in any part of the formula of the summand which only depends on $b$.}} Since this substitution implies $k'=0$, the formula simplifies substantially and we get
\[
B_{g-2}^{\rm spin}(a, j, \ell, m)|_{k=b=0} = 2^{-(g-2)} [z^{2(g-2)}] \frac{\ch(z/2)}{{\cS}(z)} \prod_{i>1} \cS(a_iz) \cdot \cS(jz) \cS(\ell z) \cS(mz)\,.
\]
In this formula, the variables $j, \ell, m$ appear with even degrees, so that in the expression
\[
P(a_1, \ldots, a_n, j, \ell, m) = j\ell m \cdot B_{g-2}^{\rm spin}(a_1, \ldots,a_n,j,\ell,m)|_{k=b=0}
\]
all monomials have odd degree in $j, \ell, m$. This implies that
\[
Q(a,b) = \frac{1}{6}  \sum_{\begin{smallmatrix}j,\ell,m \,\,{\rm odd}, \\  \mathrm{sign}(j)=\mathrm{sign}(\ell)=\mathrm{sign}(m),\\  j+\ell+m=b \end{smallmatrix}} \!\!\!\! P(a_1, \ldots, a_n, j, \ell, m)
\]
is divisible by $b$ using the last statement in Lemma \ref{lem:polynomialodd}. Then $$R(a)|_{k=0} = Q(a,b)|_{k=b=0} = 0$$ as desired.
}

% If $k=0$, then the numbers $B_g^\sp(a)$ take the simpler form:
% $$
% B_g^\sp(a)= 2^{-g}[z^{2g}] \frac{{\rm cosh}\left(\frac{z}{2}\right)}{\mathcal{S}(z)}    \prod_{j\neq 1} \cS(a_jz) 
% $$
% The first thing we note in this expression, is that every variable $a_2, \ldots, a_n$ has even degree in each monomial that appears. This implies that the term $R(a)$ in \eqref{eqn:Agspinid3} vanishes by the last statement in Lemma \ref{lem:polynomialodd}.

For the first term on the RHS of identity~\eqref{eqn:Agspinid3}, we use addition formulas of the hyperbolic sine and the form of $B_g^\sp$ for $k=0$ to obtain: 
\begin{eqnarray*} &&
 \sum_{j>i>1} (a_i+a_j)  B_g^\sp(\ldots,\widehat{a_i},\ldots,\widehat{a_j},\ldots,a_i+a_j) \\
 =&& -2^{-g} [z^{2g}] \frac{{\rm cosh}\left(\frac{z}{2}\right)}{\mathcal{S}(z)}  \sum_{i>1} (a_i+a_1) {\rm cosh}\left(\frac{a_iz}{2}\right)  \prod_{j\neq i,1} \cS(a_jz) \,.
\end{eqnarray*}
For the second term, we use the identity
\begin{equation} \label{eqn:sinhsumformula2}
\mathrm{sign}(m) \sum_{\begin{smallmatrix}j,\ell \,\,{\rm odd},\\ j\cdot \ell>0, j+\ell=m \end{smallmatrix}} \frac{j\ell}{2} \cS(jz) \cS(\ell z) = \frac{1}{z^2} \left(\frac{m}{2} \cosh(mz/2)  - \frac{\sinh(mz/2)}{\sinh(z)}\right)\,,
\end{equation}
which can be shown in a way analogous to the proof of identity \eqref{eqn:sinhsumformula}. Then we have
\begin{eqnarray*}
 &&
 \frac{1}{2}  \sum_{i>1}  \sum_{\begin{smallmatrix}j,\ell \,\,{\rm odd},\\ \nonumber j\cdot \ell>0, j+\ell=a_i \end{smallmatrix}} \!\!\!\! {\rm sign}(a_i) \cdot j\ell \cdot B_{g-1}^{\rm spin}(\ldots, \widehat{a_i}, \ldots, j,\ell)\\
 =&& 2^{-g+1} [z^{2g}] \frac{{\rm cosh}\left(\frac{z}{2}\right)}{\mathcal{S}(z)} \sum_{i>1} \left(\frac{a_i}{2} {\rm cosh}\left(\frac{a_iz}{2}\right) - \frac{{\rm sinh}\left(\frac{a_iz}{2}\right) }{{\rm sinh}(z) }\right) \prod_{j\neq 1,i} \cS(a_jz)
\end{eqnarray*}
The sum of these two terms gives: 
\begin{eqnarray*} 
 &&2^{-g} [z^{2g}]  \sum_{i>1} \left( - \frac{ {\rm sinh}\left(\frac{a_iz}{2}\right) }{{ \rm sinh}\left(\frac{z}{2}\right){ \rm cosh}\left(\frac{z}{2}\right) } -a_1{\rm cosh}\left(\frac{a_iz}{2}\right) \right) \frac{{\rm cosh}\left(\frac{z}{2}\right)}{\mathcal{S}(z)} \prod_{j\neq 1,i} \cS(a_jz)\\
&=&\rcomment{2^{-g} [z^{2g}]  \sum_{i>1} \left(  \frac{ a_i }{{ \rm sinh}\left(\frac{z}{2}\right){ \rm cosh}\left(\frac{z}{2}\right) } - a_1\frac{{\rm cosh}\left(\frac{a_iz}{2}\right)}{\cS(a_iz)}\right) \frac{{\rm cosh}\left(\frac{z}{2}\right)}{\mathcal{S}(z)} \prod_{j\neq 1} \cS(a_jz)}\\
&=&\rcomment{2^{-g}a_1 [z^{2g}]   \left( \frac{ z/2 }{{ \rm sinh}\left(\frac{z}{2}\right){ \rm cosh}\left(\frac{z}{2}\right)} - \sum_{i>1}\left(\frac{{\rm cosh}\left(\frac{a_iz}{2}\right)-\cS(a_iz)}{\cS(a_iz)} +1\right) \right) \frac{{\rm cosh}\left(\frac{z}{2}\right)}{\mathcal{S}(z)} \prod_{j\neq 1} \cS(a_jz)}\\
&=&\rcomment{2^{-g}a_1 [z^{2g}]   \left( 1+ z\frac{\cS'(z)}{\cS(z)} - \frac{z\, {\rm sinh}\left(\frac{z}{2}\right)}{2{ \rm cosh}\left(\frac{z}{2}\right)} - (n-1) - \sum_{i>1} z\frac{a_i\cS'(a_iz)}{\cS(a_iz)}  \right) \frac{{\rm cosh}\left(\frac{z}{2}\right)}{\mathcal{S}(z)} \prod_{j\neq 1} \cS(a_jz)}\\
 &=& - 2^{-g} a_1 [z^{2g}]  \left(z \frac{d}{dz}+n-2\right) \frac{{\rm cosh}\left(\frac{z}{2}\right)}{\mathcal{S}(z)} \prod_{i>1} \cS(a_iz)\\
 &=& - 2^{-g} a_1 (2g-2+n) [z^{2g}]  \frac{{\rm cosh}\left(\frac{z}{2}\right)}{\mathcal{S}(z)} \prod_{i>1} \cS(a_iz)
\end{eqnarray*}
which is the opposite of the LHS of identity~\eqref{eqn:Agspinid3}.  

Finally, to check that the numbers $B_g^\sp(a)$ satisfy the identity~\eqref{eqn:Agspinid2} we proceed as in the non-spin case. For all $a\in \ZZ^n$, we by introduce the formal series:
\begin{eqnarray*}
 B_{g}^\sp (a)(z)
 &=& 2^{-g}  {\rm exp}\left(\frac{\rcomment{a_1}z\cdot \cS'(kz)}{\cS(kz)} \right) \frac{\ch(z/2)}{{\cS}(z)} \frac{\prod_{i>1} \cS(a_iz)}{\cS(kz)^{2g-2+n}}.
\end{eqnarray*}
Then the derivative of this function is similar to the derivative of $B_g(a)(z)$
\begin{eqnarray*}
 \frac{d}{dz}B_{g}^\sp (a)(z)&=& \rcomment{ {B_{g}^\sp(a)(z)} \cdot \Bigg( \frac{a_1}{kz}-\frac{a_1}{k { z} \cS(kz)^2} - (k(2g-2+n)-a_1) \frac{\cS'(kz)}{\cS(kz)}} \\   &&\rcomment{+\frac{1}{z}- \frac{1}{{\rm sinh}(z)} + \sum_{i>1} a_i \frac{\cS'(a_iz)}{\cS(a_iz)}\Bigg).}
\end{eqnarray*}
Then we can check that:
\begin{eqnarray*}
-k [z^{2g}]\!\!\! \!\!\! && \!\!\!\!\!\!z^{2g+1} \frac{d}{dz} z^{-2g} B_g^\sp(a)(z) \\&= & a_1 B_{g}^\sp(a,0) + \sum_{i>1} (a_j-k) B_g^\sp(\ldots,a_j-k,\ldots) \\&& \,\,\,\, -\frac{1}{2}\sum_{\begin{smallmatrix} j \,\,{\rm odd},\\  0<j<k\end{smallmatrix}} j(k-j) B_{g-1}^{\rm spin}(\ldots,-j,j-k).
\end{eqnarray*}
The left hand side vanishes, and the vanishing of the RHS shows that $B_g$ satisfies identity~\ref{eqn:Agspinid2}.
\end{proof}

%%%%%%%%%%%%%%%%%%%%%%%%%%%%%%%%%%%%%%%%%%%%%%%%
\section{Euler characteristic of the minimal strata} \label{Sect:Eulerchar}
%%%%%%%%%%%%%%%%%%%%%%%%%%%%%%%%%%%%%%%%%%%%%%%%

In this section, we will work only under the assumption  $k=1$. We will show how to obtain the formula of Euler characteristic of minimal strata and of the spin refinement of this Euler characteristic under Assumptions~\ref{assumption}.% and~\ref{assumption:spin}.

\subsection{Top-\texorpdfstring{$\psi$}{psi} for strata with residue conditions}
A central ingredient in our study will be spaces of meromorphic differentials on curves satisfying residue conditions at some of their poles and the intersection numbers of their fundamental classes with powers of $\psi$-classes. These spaces are very natural since they appear in the description of boundary strata of the usual spaces of abelian differentials (see e.g. \cite{BCGGM3}). Moreover, their fundamental classes have many nice properties and explicit descriptions. For instance, as shown in \cite{BRZ} these classes form a partial cohomological field theory when requiring the residues to vanish at all of the poles, and for the case of differentials with precisely two zeroes there exists a connection to the KP hierarchy. 

%\jcomment{Insert reference to https://arxiv.org/abs/2110.01419 + diffstrata}
We start by choosing a vector $a=(a_0,\ldots,a_n)$ such that $|a|=2g-1+n$, $a_0>0$ and $a_i<0$ for $1\leq i\leq n$. For $m \in \{1,\ldots,n-1\}$, we denote by $\M_g^ {\frakR(m)}(a)\subset \M_g(a)$, the subset of the stratum of abelian differentials cut out by the condition that the residues $r_1, \ldots, r_m$ at markings $1, \ldots, n$ vanish. We denote by $\PP\Xi_g^{\frakR(m)}(a)$ its multi-scale differentials compactification as defined in \cite[Prop. 4.2]{CMZ20}. In analogy with the previous sections, we define the intersection number
\[\cA_{g}^{{\frakR(m)}}(a)=\int_{\PP\Xi^{\frakR(m)}(a)}\psi_0^{2g+n-3-m}.\]
%\jcomment{Isn't the dimension wrong for $m=n$, since the last residue condition is redundant? Should maybe $m=n$ be excluded?}

We use Proposition \ref{prop:AdrienR} which translates a stratum cut out by a residue condition as a tautological in class in the ambient stratum. We obtain a recursive formula in $m$, describing the previously defined intersection numbers.
\begin{lemma}\label{lem:residuecond}
Let $a=(a_0,a_1,\dots,a_n)$ be a vector such that $a_0>0$ and $a_i<0$ for $i>1$. For any $1\leq m\leq n-1$, we have 
\begin{align*}
	&\cA_{g}^{\frakR(m)}(a)=-a_0	\cA_{g}^{{\frakR(m-1)}}(a)+\!\!\sum_{j=m+1}^n\sum_{I\subseteq\{1,\dots,m-1\}}\!\!\! m_{I,j}\cdot \cA_{0}^{\frakR(|I|)}(a_{I,j})\cdot \cA_{g}^{\frakR(|I^\complement|)}(\hat{a}_{I,j}),
\end{align*}
where $m_{I,j}=-a_{m}-\sum_{i\in I\cup \{j\}}(a_i-1),$ and: 
\begin{align*}
    & a_{I,j}=(m_{I,j},\{a_i\}_{i\in I},a_{m},a_j)\,,\\
    &\hat{a}_{I,j}=(a \setminus \{a_i\}_{i\in I\cup \{m,j\}},-m_{I,j})\,.
\end{align*}
\end{lemma}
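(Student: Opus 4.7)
The plan is to apply Proposition~\ref{prop:AdrienR} with $\frakR = \frakR(m)$ and $\frakR_0 = \frakR(m-1)$, which expresses $[\PP\Xi^{\frakR(m)}_g(a)]$ in $A^1(\PP\Xi^{\frakR(m-1)}_g(a))$ as $-\eta - \sum_{\oGamma \in \mathrm{LG}_{1,\frakR(m)}} \ell(\oGamma) [\PP\Xi(\oGamma)]$. Capping with $\psi_0^{2g+n-3-m}$ and pushing forward to a point will write $\cA_g^{\frakR(m)}(a)$ as the sum of a contribution from $-\eta$ and one from the boundary divisors. For the $-\eta$ piece I would further invoke Proposition~\ref{prop:Adrienrel} with $i=0$, writing $\eta = a_0 \psi_0 - (\mathrm{correction})$, so that the main term contributes precisely $-a_0\,\cA_g^{\frakR(m-1)}(a)$ and matches the first term of the claimed recursion. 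The correction involves divisors indexed by $\tensor[_0]{\mathrm{LG}}{_1}(g,a)$ with marking $0$ on the lower level; a dimension-and-twist count using the sign hypothesis on the $a_i$ (in particular that all $a_i$ with $i \geq 1$ are non-positive) will rule out or cancel all such terms.

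Next, I would analyse the remaining boundary contributions coming from $\mathrm{LG}_{1,\frakR(m)}(g,a)$. Since marking $0$ sits on the top level and $\psi_0^{2g+n-3-m}$ exhausts the dimension there, the level opposite to marking $0$ must be zero-dimensional, forcing a genus $0$ bottom vertex. The defining condition of $\mathrm{LG}_{1,\frakR(m)}$, namely that the half-edge of $\frakR \setminus \frakR_0 = \{r_m=0\}$ goes to the lower level, forces marking $m$ onto this bottom vertex. Its other half-edges must then consist of a single vertical edge to the top and a leg for some $j \in \{m+1,\dots,n\}$, together with a freely chosen subset $I \subseteq \{1,\dots,m-1\}$ of further pole legs: each additional such leg enlarges the bottom vertex by one unit of dimension, which is exactly compensated by the associated residue condition being pulled down from $\frakR(m-1)$. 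The twisting relation at the genus $0$ vertex then forces the edge twist to equal exactly $m_{I,j} = -a_m - \sum_{i \in I \cup \{j\}}(a_i-1)$, and the $m-1$ conditions of $\frakR(m-1)$ split as $|I|$ conditions on the bottom and $|I^\complement| = m-1-|I|$ on the top. The additional condition $r_m=0$ being cut out by the divisor is automatic on the genus $0$ side via the residue theorem (once the $|I|$ conditions and the edge being a zero are taken into account, the residues at $m$ and $j$ are mutually opposite), so it does not enter the count of conditions for the $\cA_0$ factor.

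Finally, the commensurability of $[\PP\Xi(\oGamma)]$ with the product of level strata together with~\eqref{eq:convfactor}, combined with $\ell(\oGamma) = m(\oGamma) = m_{I,j}$ and $|\mathrm{Aut}(\oGamma)|=1$ for a one-edge graph, will yield a contribution of $m_{I,j} \cdot \cA_0^{\frakR(|I|)}(a_{I,j}) \cdot \cA_g^{\frakR(|I^\complement|)}(\hat{a}_{I,j})$ for each pair $(I,j)$; summing over these reproduces the claimed recursion. The main technical obstacle is the careful verification that graphs with two or three edges between the two levels — which are combinatorially allowed by the conditions in Proposition~\ref{prop:AdrienR} but are absent from the final formula — do not contribute, either by vanishing individually for dimension reasons or by cancelling against the analogous boundary$_0$ terms from $\tensor[_0]{\mathrm{LG}}{_1}(g,a)$, and this case-by-case analysis together with the matching of all multiplicities and automorphism factors is where the bulk of the bookkeeping lies.
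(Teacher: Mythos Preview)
Your overall plan—combine Propositions~\ref{prop:Adrienrel} and~\ref{prop:AdrienR}, then run a dimension count on the surviving boundary divisors—is precisely the paper's approach, and your identification of the edge twist $m_{I,j}$ and of the final multiplicities is correct. However, the bookkeeping of \emph{which} divisors survive is inverted, and this forces an impossible level assignment.

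Because $a_0$ is the unique positive entry, marking~$0$ must lie on level~$-1$ in \emph{every} two-level graph: any level-$(-1)$ vertex satisfies $\sum_{h} I(h) = 2g(v)-2+n(v) > 0$, but all edge half-edges there and all pole legs carry negative twist, so the unique zero leg must be present. Hence $\tensor[_0]{\mathrm{LG}}{_1}(g,a) = \mathrm{LG}_1(g,a)$: the correction $\delta$ in $-\eta = -a_0\psi_0 + \delta$ is the \emph{full} boundary and cannot simply be ``ruled out.'' What actually happens is a subtraction: writing $\delta(m)$ for the $\mathrm{LG}_{1,\frakR(m)}$-sum from Proposition~\ref{prop:AdrienR}, one gets $[\PP\Xi^{\frakR(m)}] = -a_0\psi_0 + (\delta - \delta(m))$, so the surviving divisors are those \emph{not} in $\mathrm{LG}_{1,\frakR(m)}$, namely those where marking~$m$ sits on level~$0$ and the condition $r_m=0$ is \emph{not} automatic on that vertex. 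The latter forces some unrestricted pole $j\in\{m+1,\dots,n\}$ onto the same level-$0$ vertex. After the dimension count, the genus-$0$ vertex (carrying $m$, $j$, and $I$) sits on level~$0$ and the genus-$g$ vertex with marking~$0$ sits on level~$-1$—the opposite of what you describe. In particular your proposed configuration, with marking~$0$ on top and marking~$m$ on a genus-$0$ bottom vertex carrying only pole legs and a negative-twist edge, violates the vertex balancing condition and does not occur at all.
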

%\jcomment{I wrote the following paragraph and inserted a picture, since otherwise the proof the next proposition is quite hard to follow.}
We illustrate the lemma in equation \eqref{eqn:residuelemmaillustration} below. Here, a stable graph in brackets should be replaced by the product over the vertices $v$ of the graph of values $\cA_{g(v)}^{\frakR(m(v))}(a(v))$, where $a(v)$ are the orders of zeros and poles at half-edges incident to $v$ and $m(v)$ is the number of such half-edges with imposed residue conditions (which are drawn in red). We also write the order $m_{I,j}$ of the unique zero on the genus zero vertex in blue.
\begin{align} \label{eqn:residuelemmaillustration}
\left[
{\begin{tikzpicture}[scale=0.5, vert/.style={circle,draw,font=\Large,scale=0.7, outer sep=0}, cvert/.style={circle,draw,font=\Large,scale=2, outer sep=0}, thick, baseline=-0.15cm]
\node [vert] (A) at (0,0) {$g$};
% \node [vert] (B) at (0,3) {$0$};
%\draw [-,black!30!green] (A) to (B);
% \draw [-] (A) to node[near end, right, blue]{$m_{I,j}$} (B);
\draw[-] (A) -- ++(40:1) node[right]{$m+1$};
\draw (A) ++(0:1) node[right]{$\ldots$};
\draw[-] (A) -- ++(-40:1) node[right]{$n$};
\draw[-, red] (A) -- ++(140:1) node[left]{$m-1$};
\draw[-, red] (A) -- ++(180:1) node[left]{$\ldots$};
\draw[-, red] (A) -- ++(220:1) node[left]{$1$};
\draw[-, red] (A) -- ++(90:1) node[above]{$m$};
\draw[-] (A) -- ++(-90:1) node[below]{$\psi_0^{\text{top}}$};
% \node[blue] at (0,6.3) {$\begin{array}{c} \underline{b} \end{array}$};
%\node at (0,-1) {$\overline{\Delta}_{\overline{\Gamma}}$};
\end{tikzpicture}}     
\right] =& - a_0 \cdot 
\left[
{\begin{tikzpicture}[scale=0.5, vert/.style={circle,draw,font=\Large,scale=0.7, outer sep=0}, cvert/.style={circle,draw,font=\Large,scale=2, outer sep=0}, thick, baseline=-0.15cm]
\node [vert] (A) at (0,0) {$g$};
% \node [vert] (B) at (0,3) {$0$};
%\draw [-,black!30!green] (A) to (B);
% \draw [-] (A) to node[near end, right, blue]{$m_{I,j}$} (B);
\draw[-] (A) -- ++(40:1) node[right]{$m+1$};
\draw (A) ++(0:1) node[right]{$\ldots$};
\draw[-] (A) -- ++(-40:1) node[right]{$n$};
\draw[-, red] (A) -- ++(140:1) node[left]{$m-1$};
\draw[-, red] (A) -- ++(180:1) node[left]{$\ldots$};
\draw[-, red] (A) -- ++(220:1) node[left]{$1$};
\draw[-] (A) -- ++(90:1) node[above]{$m$};
\draw[-] (A) -- ++(-90:1) node[below]{$\psi_0^{\text{top}}$};
% \node[blue] at (0,6.3) {$\begin{array}{c} \underline{b} \end{array}$};
%\node at (0,-1) {$\overline{\Delta}_{\overline{\Gamma}}$};
\end{tikzpicture} }    
\right]\\
&+\sum_{j=m+1}^n \sum_{I \subseteq \{1, \ldots, -m-1\}} m_{I,j} \cdot  \left[
{\begin{tikzpicture}[scale=0.5, vert/.style={circle,draw,font=\Large,scale=0.7, outer sep=0}, cvert/.style={circle,draw,font=\Large,scale=2, outer sep=0}, thick, baseline=0.65cm]
\node [vert] (A) at (0,0) {$g$};
\node [vert] (B) at (0,3) {$0$};
%\draw [-,black!30!green] (A) to (B);
\draw [-] (A) to node[near end, right, blue]{$m_{I,j}$} node[near end, left]{$\psi^{\text{top}}$} (B);
\draw[-] (A) -- ++(30:1);
\draw[-, red] (A) -- ++(160:1);
\draw[-, red] (A) -- ++(180:1) node[left]{$I^\complement$};
\draw[-] (A) -- ++(-90:1) node[below]{$\psi_0^{\text{top}}$};
%\draw[-, red] (B) -- ++(160:1);
\draw[-, red] (B) -- ++(180:1) node[left]{$I$};
\draw[-] (B) -- ++(70:1) node[above]{$m$};
\draw[-] (B) -- ++(30:1) node[above right]{$j$};
% \node[blue] at (0,6.3) {$\begin{array}{c} \underline{b} \end{array}$};
%\node at (0,-1) {$\overline{\Delta}_{\overline{\Gamma}}$};
\end{tikzpicture}  }   
\right] \nonumber
\end{align}

 \begin{proof}  Let $1\leq m\leq n-1$. We can use Propositions~\ref{prop:Adrienrel} and~\ref{prop:AdrienR} in order to express the locus cut out by one residue condition $r_m=0$ as:
 \[-\eta\cdot [\PP\Xi^{\frakR(m-1)}_{g}(a)]=[\PP\Xi^{\frakR(m)}_{g}(a)] + \delta(m) \in A^1(\PP\Xi^{\frakR(m-1)}_{g}(a)),\]
 such that $\delta(m)$ is a linear combination of the boundary divisors of  $\PP\Xi^{\frakR(m-1)}_{g}(a)$, along which the residue $r_m$ vanishes identically. Besides, we also have:
 \[(-\eta+a_0\psi_0)\cdot [\PP\Xi^{\frakR(m-1)}_{g}(a)]=\delta \in A^1(\PP\Xi^{\frakR(m-1)}_{g}(a)),\]
 where $\delta$ is the linear combination of all the boundary divisors $\PP\Xi^{\frakR(m-1)}_{g}(a)$ as $a_0$ is the only entry of $a$ that is positive. Taking the difference of the two expressions implies: 
 $$[\PP\Xi^{\frakR(m)}_{g}(a)]=-a_0\psi_0\cdot [\PP\Xi^{\frakR(m-1)}_{g}(a)] - \delta(m)+\delta.$$
Thus, the only divsors that are involved in $\delta(m)-\delta$ are the ones defined by level graphs in ${\rm LG}_1(g,a)$ such that the $m$th marking is supported on a vertex of level 0, and such that at least one of the markings in $\{m+1,\ldots, n\}$ is attached to the same vertex. 

Let $\oGamma$ be such a level graph. 
Note that since marking $0$ is the only zero of the differential, the graph has a unique vertex on level $-1$ and this vertex carries the leg associated to this marking.
The intersection of the divisor associated to this graph with $\psi_0^{2g-3+n-m}$  vanishes, unless there is exactly one edge between one vertex of level 0 of genus 0 and  one vertex of level $-1$ of genus $g$, and only one leg in $\{m+1,\ldots,n\}$ is adjacent to the vertex of level 0. The coefficients $m_{i,j}$ are given by the conversion factor \eqref{eq:convfactor}, in particular noting that these graphs have only the trivial automorphism.
 \end{proof}
Note that apart from the theoretical argument given above, we were also able to check the formula from Lemma~\ref{lem:residuecond} in many non-trivial cases using the software package \texttt{diffstrata} \cite{diffstrata}, which can compute formulas for the pushforwards of $\PP\Xi_g^{\frakR(m)}(a)$ to $\oM_{g,n+1}$ in terms of tautological classes.
 
We denote by ${\rm TR}(g,a)$ the set of genus $g$ twisted graphs with legs $a$ 
such that the vertex $v_0$  carrying the marking 0 has genus $g$.
In particular, this forces the graph to be a tree, with all vertices apart from $v_0$ having genus $0$.
%of compact type (i.e. with only one edge joining two distinct vertices) \jcomment{this "i.e." sentence is confusing, shouldn't it rather be "in particular with only ..."? also, the fact that the vertex $v_0$ carrying the marking $0$ has genus $g$ implies automatically that the graph is of compact type, so why ask for it explicitly?},  (and thus the other vertices have genus 0). 
Such twisted graphs are uniquely determined by the  underlying stable graph. Moreover there are no automorphisms of such objects as the poles are marked. 

By a straightforward analysis, each vertex $v$ has exactly one half-edge adjacent to it with a positive twist. We denote this number by $I(v)^+$. For such a twisted graph $(\Gamma,I)$, we define
\begin{equation}\label{eq:tildeell}
  F(\Gamma,I)=  \prod_{v\in V(\Gamma)}-\left(-I(v)^+\right)^{n(v)-2},
\end{equation}
where $n(v)$ is the number of half edges attached to $v$. 
\begin{proposition}\label{prop:fullresidue}
Let $a=(a_0,a_1,\dots,a_n)$ with $a_0>0$ and $a_i<0,$ for $i=1,\dots,n$. Then, we have:
    \begin{align*}
\cA_g^{\frakR(n-1)}(a)=\sum_{(\Gamma,I)\in {\rm TR}(g,a) }-\cA_g(I(v_0)) \cdot F(\Gamma,I).
\end{align*}
\end{proposition}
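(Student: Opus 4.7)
The plan is to prove the identity by induction on $n$, built around iterated applications of Lemma~\ref{lem:residuecond}. The base case $n=1$ will be immediate: no residue conditions are imposed, so $\cA_g^{\frakR(0)}(a) = \cA_g(a_0, a_1)$, and the only element of ${\rm TR}(g, a)$ is the trivial tree, whose single vertex $v_0$ satisfies $n(v_0)=2$ and $I(v_0)^+ = a_0$, giving $F=-1$ and matching the claimed formula.

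For the inductive step, I will apply Lemma~\ref{lem:residuecond} with $m=n-1$ to split $\cA_g^{\frakR(n-1)}(a)$ into an ``absorb'' term $-a_0\,\cA_g^{\frakR(n-2)}(a)$ and a sum of ``split'' terms, each involving a genus-zero factor $\cA_0^{\frakR(|I|)}(a_{I,n})$ and a reduced central factor $\cA_g^{\frakR(|I^c|)}(\hat a_{I,n})$ on a signature with strictly fewer than $n$ poles. The inductive hypothesis then expands the reduced central factor as a sum over trees in ${\rm TR}(g, \hat a_{I,n})$. The absorb term is handled by iterating the lemma further (reducing $m$ by one at each step and extracting additional split terms), eventually reaching $\cA_g^{\frakR(0)}(a) = \cA_g(a)$. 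The genus-zero factors $\cA_0^{\frakR(|I|)}(a_{I,j})$ are in turn treated by their own iteration of the lemma at genus zero, using $\cA_0 \equiv 1$ as the eventual base case. Each resulting composite term is naturally indexed by a tree $(\Gamma, I) \in {\rm TR}(g, a)$, obtained by gluing the central subtree to the genus-zero subtrees along the new edges introduced at the splits, and every tree in ${\rm TR}(g, a)$ arises this way.

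The remaining step is to verify that for each $(\Gamma, I)$, the total coefficient accumulated from all iteration histories producing it equals exactly $-\cA_g(I(v_0)) F(\Gamma, I)$. Since $F(\Gamma,I) = \prod_v -(-I(v)^+)^{n(v)-2}$ factors multiplicatively, this matching reduces to a local accounting at each vertex: at $v_0$, the factor $-(-a_0)^{n(v_0)-2}$ should assemble from the $(-a_0)$'s picked up during the absorb steps between consecutive splits introducing the direct children; at each non-root vertex $v$, the factor $-(-I(v)^+)^{n(v)-2}$ should come from the $m_{I,j} = I(v)^+$ created at the introducing split, combined recursively with the iterated $\cA_0$-expansion of the subtree rooted at $v$. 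The main obstacle will be precisely this combinatorial matching: in particular, one must show that the genus-zero factor $\cA_0^{\frakR(|I|)}(a_{I,j})$, which itself evaluates to a nontrivial polynomial in the leg-weights via the genus-zero iteration (it is \emph{not} simply $1$ except in the smallest cases), combines with the $m_{I,n}$ prefactor from the lemma to exactly reproduce the vertex-wise contribution $-(-I(v)^+)^{n(v)-2}$ at the corresponding child of $v_0$. Working out low cases such as $n=2,3$ to uncover the correct bookkeeping of signs and multiplicities will be essential before setting up the general argument.
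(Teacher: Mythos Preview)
Your overall approach --- iterate Lemma~\ref{lem:residuecond} until all residue conditions are gone, then match the resulting terms to trees in ${\rm TR}(g,a)$ --- is exactly what the paper does. However, the ``induction on $n$'' wrapper has a gap. You are right that at the first application ($m=n-1$) the reduced central factor $\cA_g^{\frakR(|I^c|)}(\hat a_{I,n})$ has $n'=n-|I|-1$ poles with exactly $n'-1$ residue conditions, so the inductive hypothesis applies to it. But when you then handle the absorb term by iterating the lemma at $m=n-2,n-3,\ldots$, the split terms extracted there carry central factors $\cA_g^{\frakR(m-1-|I|)}(\hat a_{I,j})$ on $n-|I|-1$ poles, and now $m-1-|I|$ is strictly smaller than $(n-|I|-1)-1$. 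These factors have \emph{too few} residue conditions to match the form of the proposition, so the inductive hypothesis does not cover them. There is no clean generalization of the proposition to $\cA_g^{\frakR(m)}$ for $m<n-1$ (already the trivial graph gives the wrong constant), so the fix is simply to drop the induction-on-$n$ wrapper and iterate the lemma uniformly, at both the genus-$g$ vertex and all genus-$0$ vertices, in the fixed order $m=n-1,\ldots,1$. This is how the paper proceeds.

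On the combinatorial matching you correctly flag as the crux: the paper also leaves this as an assertion, but the mechanism is that the fixed order of removing residue conditions forces a unique history for each tree in ${\rm TR}(g,a)$ (so there is no sum over histories to control). Your concern about the genus-zero factor $\cA_0^{\frakR(|I|)}(a_{I,j})$ being a nontrivial polynomial is resolved by this same full iteration: one continues applying the lemma at each genus-zero vertex until only $\cA_0=1$ remains, and the accumulated factors --- one $m_{I,j}=I(v)^+$ from the split that created $v$, and one $-I(v)^+$ for each subsequent absorb at $v$ --- assemble exactly into the local contribution $-(-I(v)^+)^{n(v)-2}$.
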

\begin{proof}
The expression is obtained by applying Lemma \ref{lem:residuecond} iteratively $n-1$ times to impose the vanishing of the residues at the $(n-1)$ first poles (and thus at all poles). 
Looking at equation \eqref{eqn:residuelemmaillustration}, we see that when removing the residue condition at marking $h$ incident to a vertex $v$, we obtain a sum of terms, corresponding to either
\begin{itemize}
    \item leaving the graph unchanged, dropping the residue condition at $h$ and obtaining a factor equal to minus the order of the unique zero on $v$, or
    \item sprouting off a genus zero vertex from $v$, carrying some markings $I$ with residue conditions, the marking $h$ and precisely one additional marking without residue condition. In this case, the multiplicity is given by the order of the unique zero on that new vertex. 
\end{itemize}
Continuing to remove one residue condition at a time, we see terms with more complicated underlying stable graphs appearing, all of them trees with a unique genus $g$ vertex, and thus contained in ${\rm TR}(g,a)$. 

The terms appearing after $n-1$ steps are given by intersections numbers on strata of  genus 0 differentials with residue constraints. For each of these strata we apply Lemma \ref{lem:residuecond} until only the functions $\cA_{g}$ and $\cA_0$ are needed. 

After $n-1$ steps, it can be shown that all twisted graphs in ${\rm TR}(g,a)$ appear once in the resulting formula with coefficient given by $-\cA_g(I(v_0)) \cdot F(\Gamma,I)$.
This equality uses that after having removed all residue conditions, the contribution from the genus zero vertices is given by
\[
\int_{\oM_{0,n}} \psi_1^{n-3} = 1 \qedhere
\]
%\jcomment{I must say this proof is pretty sketchy; I assume we use in the final step that $\int_{\oM_{0,n}} \psi_1^{n-3} = 1$ to explain that after removing all residue conditions the associated factor is $1$?}
\end{proof}

The following proposition will not be used in the sequel, but it is interesting to note that in the genus 0 case, we can provide a closed formula.
 
 \begin{proposition}\label{prop:residuecondgenus0}
 We assume here that $g=0$. Let $a=(a_0,a_1,\dots,a_n)$ with $a_0>0$ and $a_i<0$ for $i=1,\dots,n$.
 For $n\geq 2$ and $0\leq m\leq n-2$, we have:
 \[\cA_{0}^{\frakR(m)}(a_0,\dots,a_n)=\frac{(n-2)!}{(n-m-2)!}\prod_{i=1}^m (-a_i)\]
 \end{proposition}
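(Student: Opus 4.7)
The plan is to proceed by induction on $m$ using the recursive relation of Lemma~\ref{lem:residuecond}. The base case $m=0$ requires showing $\cA_0^{\frakR(0)}(a) = 1$, which agrees with $\frac{(n-2)!}{(n-2)!} = 1$ and reduces to the statement $\cA_0(a) = 1$, i.e.\ a special case of Theorem~\ref{th:main} at $g=0$ (equivalently, the standard identity $\int_{\oM_{0,n+1}} \psi_0^{n-2} = 1$).

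For the inductive step, assume the claim holds for all residue parameters strictly smaller than $m$ and for arbitrary pole tuples. Note that both indices $|I|$ and $|I^\complement| = m-1-|I|$ appearing in Lemma~\ref{lem:residuecond} are strictly less than $m$, so the induction hypothesis applies to both factors $\cA_0^{\frakR(|I|)}(a_{I,j})$ and $\cA_0^{\frakR(|I^\complement|)}(\hat a_{I,j})$. Substituting the closed-form expressions and using $\prod_{i\in I}(-a_i) \cdot \prod_{i\in I^\complement}(-a_i) = \prod_{i=1}^{m-1}(-a_i)$, one obtains
\[
\cA_0^{\frakR(m)}(a) = \prod_{i=1}^{m-1}(-a_i) \cdot \Biggl[-a_0\,\frac{(n-2)!}{(n-m-1)!} + \frac{1}{(n-m-2)!} \sum_{j=m+1}^n \sum_{I \subseteq \{1,\ldots,m-1\}} m_{I,j}\, |I|!\, (n-|I|-3)!\Biggr].
\]
It therefore remains to verify that the bracketed expression equals $\frac{(n-2)!}{(n-m-2)!}(-a_m)$. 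Expanding $m_{I,j} = -a_m - a_j + 1 + |I| - \sum_{i\in I} a_i$ and using the constraint $\sum_{i=0}^n a_i = n-1$ to eliminate $\sum_{j=m+1}^n a_j$ in terms of $a_0,\ldots,a_m$, the identity becomes a polynomial equality in $a_0$, $a_m$, and $\sum_{i=1}^{m-1} a_i$; matching coefficients of each of these and of the constant reduces the claim to elementary binomial identities of the form
\[
\sum_{s=0}^{m-1} \binom{m-1}{s}\, s!\, (n-s-3)! = \frac{(n-2)!}{n-m-1},
\]
together with the two analogous identities obtained by either inserting a factor of $s$ or shifting the index to produce $\binom{m-2}{s-1}$. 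All three admit short proofs by induction on $m$.

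The main obstacle is the combinatorial bookkeeping rather than anything conceptual: the recursive formula of Lemma~\ref{lem:residuecond} involves a double sum over subsets $I$ and indices $j$, and isolating the binomial sums controlling the coefficients of $a_0$, $a_m$, $\sum_{i=1}^{m-1} a_i$ and the constant requires careful organisation. Once those three elementary identities are established, the inductive step falls out mechanically.
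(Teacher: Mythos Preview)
Your approach is correct, but it takes a genuinely different (and more laborious) route than the paper's argument. You use the recursion of Lemma~\ref{lem:residuecond} as a black box and reduce the inductive step to the binomial identities you list; these are indeed equivalent to the known formula $\sum_{s=0}^{m-1}\binom{m-1}{s}/\binom{n-3}{s}=\frac{n-2}{n-m-1}$ and its relatives, so the scheme goes through. (In fact only two identities are needed: once $S_1:=\sum_s\binom{m-1}{s}s!(n-s-3)!=\frac{(n-2)!}{n-m-1}$ is known, the $a_m$-coefficient follows automatically, and the constant and $A$-coefficients both reduce to the same statement about $S_3:=\sum_s\binom{m-2}{s-1}s!(n-s-3)!$ via $s\binom{m-1}{s}=(m-1)\binom{m-2}{s-1}$.)

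By contrast, the paper bypasses Lemma~\ref{lem:residuecond} entirely and proves the stronger class-level identity
\[
[\PP\Xi^{\frakR(m)}_{0}(a)]=(-1)^m\prod_{i=1}^m a_i\psi_i
\]
in $A^m(\PP\Xi_0(a))$, from which the proposition follows by a single multinomial integral on $\oM_{0,n+1}$. The key observation is that in genus~$0$, expressing $-\eta$ via $a_m\psi_m$ (rather than $a_0\psi_0$) produces exactly the same boundary correction $\delta(m)$ as the one appearing when removing the residue condition $r_m=0$; hence $[\PP\Xi^{\frakR(m)}_0(a)]=-a_m\psi_m\cdot[\PP\Xi^{\frakR(m-1)}_0(a)]$ with no graph sum at all. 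Your approach has the advantage of relying only on the already-established recursion, but the paper's argument is considerably cleaner and yields a stronger conclusion.
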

 \begin{proof} We will show the stronger statement
\begin{equation}\label{eqn:resgenus0}
    [\PP\Xi^{\frakR(m)}_{0}(a)]=(-1)^m\prod_{i=1}^m a_i\psi_i.
\end{equation}
which implies the statement above by the dilaton equation.
%\jcomment{Do you use the dilaton equation to show the form of the statement above?}
 The proof is similar to the proof of~\ref{lem:residuecond} above. We write:
\begin{eqnarray*}
-\eta\cdot [\PP\Xi^{\frakR(m-1)}_{0}(a)] &=&[\PP\Xi^{\frakR(m)}_{0}(a)]+ \delta(m) \\
(-\eta+a_m\psi_m)\cdot [\PP\Xi^{\frakR(m-1)}_{0}(a)]&=&\delta(m).
\end{eqnarray*}
Indeed, in genus 0, the only boundary components along which the residue $r_m$ vanishes identically are the ones such that the $m$-th marking belong to the level $-1$. These boundary components are also the only ones contributing to $(-\eta+a_m\psi_m)$ (and with the same coefficients). Thus 
$$
[\PP\Xi^{\frakR(m)}_{0}(a)] = -a_m\psi_m \cdot [\PP\Xi^{\frakR(m-1)}_{0}(a)],
$$
and by induction we get the desired identity~\eqref{eqn:resgenus0}
 \end{proof}
 
\subsection{Euler characteristic and minimal strata}

As before we consider $a=(a_0,a_1,\ldots,a_n)$ with $a_0>0$ and $a_i<0$ for $i<0$, and we fix $0\leq m\leq n$.  We will first work on the stratum $\PP\Xi_{g}^{\frakR(m)}(a)$ and denote by $d^m_g(a)$ its dimension. These strata are the ones that will appear as level -1 strata of boundary divisors of minimal holomorphic strata.

Let  $\overline{\Gamma}$ be a level graph of depth $L$, and $-L\leq i\leq 0$. We denote by  $d_{\overline{\Gamma}}^{[i]}$ the dimension of $\PP\Xi(\overline{\Gamma})^{[i]}$ (the space of multi-scale differentials parametrizing the $i$-th level of this graph), and by $\eta_{\overline{\Gamma}}^{[i]}$ the first Chern class of its tautological line bundle. We will also denote by $\LG_L^{\frakR(m)}(a)$ the set of level strata parametrizing boundary components of $\PP\Xi_g^{\frakR(m)}(a)$ (see \cite[Prop. 4.2]{CMZ20}). Note that, for $\oGamma\in\LG_L^{\frakR(m)}(a)$, the residue condition defining the level strata $\PP\Xi(\overline{\Gamma})^{[i]}$ is a combination of the GRC of the graph and the condition $\frakR(m)$. 
\begin{proposition}\label{prop:toppsiformula}
We assume that $a_0>0$, and $a_i<0$ for all $i\geq 1$. Then:
 \[\int_{\PP\Xi_{g}^{\frakR(m)}(a)}(a_0\psi_0)^{d^m_g(a)}= \,\sum_{L=0}^{d^m_g(a)} \,\sum_{\overline{\Gamma} \in \LG_L^{\frakR(m)}(g,a)} \frac{m(\overline{\Gamma})}{|\Aut(\oGamma)|}
 \prod_{i=0}^{-L}
 \int_{\PP\Xi(\overline{\Gamma})^{[i]}}(\eta_{\oGamma}^{[i]})^{d_{\oGamma}^{[i]}}\]
 where $m(\overline{\Gamma})$ is the product of the twists at the edges of $\oGamma$.
\end{proposition}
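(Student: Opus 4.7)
\medskip
\noindent\textbf{Plan of proof.} The argument will proceed by induction on the dimension $d = d^m_g(a)$ of the ambient stratum (or more generally, on the dimension of any stratum with prescribed residue conditions on some of its poles). The base case $d=0$ is trivial: the stratum is a collection of points and only the trivial level graph of depth $L=0$ contributes to the right-hand side, giving $\int 1$.

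For the inductive step, I would apply Proposition~\ref{prop:Adrienrel} in the case $i=0$, $k=1$. Since $k=1$ we get the simplification $m(D) = m(\oGamma)/|\Aut(\oGamma)|$ and so
\begin{equation*}
a_0\psi_0 \;=\; \eta \;+\; \sum_{\oGamma \in \tensor[_0]{\LG}{_1}^{\frakR(m)}(g,a)} \frac{m(\oGamma)}{|\Aut(\oGamma)|}\,\zeta_{\oGamma *}\!\left([\PP\Xi(\oGamma)^{[0]}]\otimes [\PP\Xi(\oGamma)^{[-1]}]\right).
\end{equation*}
Iterating this identity to substitute each factor in $(a_0\psi_0)^d$ one at a time, and collecting terms by which factor is the first at which a boundary contribution is picked up, yields
\begin{equation*}
(a_0\psi_0)^d \;=\; \eta^d \;+\; \sum_{\oGamma,\,j} \frac{m(\oGamma)}{|\Aut(\oGamma)|}\,\eta^{j}\cdot\zeta_{\oGamma*}\!\left([\PP\Xi(\oGamma)^{[0]}]\otimes[\PP\Xi(\oGamma)^{[-1]}]\right)\cdot (a_0\psi_0)^{d-1-j}.
\end{equation*}
Integrating and using the projection formula, together with the facts that $\eta$ restricts to $\eta^{[0]}_{\oGamma}$ on the top level and $\psi_0$ restricts to $\psi_0$ on the level $-1$ component containing marking $0$, each summand becomes
\begin{equation*}
\frac{m(\oGamma)}{|\Aut(\oGamma)|}\int_{\PP\Xi(\oGamma)^{[0]}}(\eta^{[0]}_{\oGamma})^{j}\cdot \int_{\PP\Xi(\oGamma)^{[-1]}}(a_0\psi_0)^{d-1-j}.
\end{equation*}
By dimension counting, only the term with $j=d_{\oGamma}^{[0]}$ survives, leaving $(a_0\psi_0)^{d_\oGamma^{[-1]}}$ on level $-1$.

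The level $-1$ stratum has strictly smaller dimension and carries a residue condition on its poles (coming from GRC on $\oGamma$ combined with the original $\frakR(m)$), so the inductive hypothesis (in the more general formulation allowing arbitrary residue conditions) applies to expand $\int (a_0\psi_0)^{d_\oGamma^{[-1]}}$ as a sum over level graphs $\oGamma'$ in $\LG_{L-1}(\PP\Xi(\oGamma)^{[-1]})$ of products of top $\eta$-integrals on all their level strata. The key combinatorial step is to identify the pair $(\oGamma,\oGamma')$ with a level graph $\tilde\oGamma \in \LG_L^{\frakR(m)}(g,a)$ of total depth $L$ by gluing: $\oGamma$ contributes the passage between levels $0$ and $-1$, while $\oGamma'$ contributes the remaining passages below. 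Under this bijection, the multiplicities multiply, $m(\tilde\oGamma) = m(\oGamma)\cdot m(\oGamma')$, and likewise $|\Aut(\tilde\oGamma)| = |\Aut(\oGamma)|\cdot|\Aut(\oGamma')|$, so the product structure in the combinatorial factor $m(\tilde\oGamma)/|\Aut(\tilde\oGamma)|$ is preserved.

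The main obstacle will be two-fold. First, one must formulate and prove the statement in sufficient generality that the inductive hypothesis applies to $\PP\Xi(\oGamma)^{[-1]}$, whose residue conditions are a mixture of GRC-type and the original $\frakR(m)$; this requires checking that Proposition~\ref{prop:Adrienrel} and the commensurability factor~\eqref{eq:convfactor} still behave well in this generality. Second, the gluing bijection $(\oGamma, \oGamma') \leftrightarrow \tilde\oGamma$ must be matched up carefully at the level of automorphism groups and edge-twist products, keeping track of which edges of $\tilde\oGamma$ cross the cut between levels $0$ and $-1$. The rest of the argument is bookkeeping.
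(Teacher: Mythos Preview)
Your proposal is correct and follows essentially the same approach as the paper's proof. Both arguments peel off one factor of $a_0\psi_0$ at a time using the relation $a_0\psi_0 = \eta + (\text{boundary})$ from Proposition~\ref{prop:Adrienrel}, observe that by dimension reasons each boundary term contributes a top power of $\eta$ on the top level and a top power of $a_0\psi_0$ on the bottom level, and then recurse on the bottom-level stratum (which again has a unique positive entry, hence a unique bottom vertex carrying marking~$0$); you phrase this as induction on dimension, the paper as ``repeating the procedure $d_g^m(a)$ times,'' but the content is the same. Your two flagged obstacles are exactly the points the paper addresses: it notes that the bottom level $\PP\Xi(\oGamma)^{[-1]}$ is again a stratum with a single zero (so the argument applies recursively), and its ``crucial observation'' that gluing a depth-one graph into the bottom vertex of a depth-$L$ graph gives each depth-$(L+1)$ graph uniquely is precisely the bijection you describe; the multiplicativity of $m(\oGamma)/|\Aut(\oGamma)|$ under this gluing is the bookkeeping you anticipate, though your claim that $|\Aut(\tilde\oGamma)|$ simply factors deserves a line of justification (the paper sidesteps this by working with the $\ell(\oGamma)$-weighted divisor classes and the conversion factor~\eqref{eq:convfactor} directly).
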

\begin{proof}
Since there is only one zero, by Proposition~\ref{prop:Adrienrel} and the conversion factor \eqref{eq:convfactor}, we have %\jcomment{this is an equality in $A^1(\PP\Xi_{g}^{\frakR(m)}(a))$?}
\[\eta=a_0\psi_0-\sum_{\overline{\Gamma} \in \LG_1^{\frakR(m)}(g,a)} \ell(\overline{\Gamma})\cdot [\PP\Xi(\oGamma)] \in A^1(\PP\Xi_{g}^{\frakR(m)}(a)).\]
Consider now the chain of equalities 
\begin{align}
&\nonumber\int_{\PP\Xi_{g}^{\frakR(m)}(a)}\eta^{d_g^m(a)}=\int_{\PP\Xi_{g}^{\frakR(m)}(a)}\eta^{d_g^m(a)-1}\left(a_0\psi_0-\sum_{\oGamma \in \LG_1^{\frakR(m)}(g,a)} \ell(\overline{\Gamma})[\PP\Xi(\oGamma)]\right)\\
\nonumber &=\int_{\PP\Xi_{g}^{\frakR(m)}(a)}a_0\psi_0\cdot \eta^{d_g^m(a)-1}\\
&\nonumber \quad \quad -\sum_{\oGamma \in \LG_1^{\frakR(m)}(g,a)} \frac{m(\oGamma)}{|\Aut(\oGamma)|} \int_{\PP\Xi(\overline{\Gamma})^{[0]}}(\eta_{\oGamma}^{[0]})^{d_g^m(a)-1}\int_{\PP\Xi(\overline{\Gamma})^{[-1]}}(a_0\psi_0)^{0}\\
\label{eqn:etapsiintegrals} &=\int_{\PP\Xi_{g}^{\frakR(m)}(a)}(a_0\psi_0)^{d_g^m(a)}\\
&\nonumber \quad \quad -\sum_{\oGamma \in \LG^{\frakR(m)}_1(g,a)}\frac{m(\oGamma)}{|\Aut(\oGamma)|} \int_{\PP\Xi(\overline{\Gamma})^{[0]}}(\eta_{\oGamma}^{[0]})^{d_\oGamma^{[0]}}\int_{\PP\Xi(\overline{\Gamma})^{[-1]}}(a_0\psi_0)^{d_\oGamma^{[-1]}},
\end{align}
where we used  the conversion factor \eqref{eq:convfactor} to pass from class of boundary divisors to the integrals on their level strata and where the last equality follows by repeating  the replacement procedure of $\eta$ by $\psi_0$ a total number of $d_g^m(a)$ times. The crucial observation is that for a given graph $\oGamma \in \LG^{\frakR(m)}_1(a)$ the product of the two integrals in the sum vanishes for dimension reasons unless we are in the step number $d_\oGamma^{[-1]}$ of the replacement procedure. Note that for this argument it is important that $\psi_0$ is always supported on bottom level. 

Now the bottom levels $\PP\Xi(\overline{\Gamma})^{[-1]}$ of every graph of a stratum with only one positive entry in its signature is again a stratum of the same type (in particular the level $-1$ of $\oGamma$ consists of a single vertex), so we can re-apply the previous relation to the last term $\int_{\PP\Xi(\overline{\Gamma})^{[-1]}}(a_0\psi_0)^{d_\oGamma^{[-1]}}$. We then obtain:
 \begin{align*}
 \int_{\PP\Xi_{g}^{\frakR(m)}(a)}\!\!\!\!\!\!(a_0\psi_0)^{d_g^m(a)}
 &=\int_{\PP\Xi_{g}^{\frakR(m)}(a)}\eta^{d_g^m(a)}  \\ & \!\!\!\!\!\!\!\!\! \!\!\!\!\!\!\!\!\! \!\!\!\!\!\!\!\!\! \!\!\!\!\!\!\!\!\!  + \!\!\!\!\!\!\!\!\!\!\sum_{\oGamma \in \LG^{\frakR(m)}_1(g,a)}\frac{m(\oGamma)}{|\Aut(\oGamma)|} \int_{\PP\Xi(\overline{\Gamma})^{[0]}}(\eta_{\oGamma}^{[0]})^{d_\oGamma^{[0]}}\int_{\PP\Xi(\overline{\Gamma})^{[-1]}}(\eta_{\oGamma}^{[-1]})^{d_\oGamma^{[-1]}}\\
 &\!\!\!\!\!\!\!\!\! \!\!\!\!\!\!\!\!\! \!\!\!\!\!\!\!\!\! \!\!\!\!\!\!\!\!\!  +\!\!\!\!\! \!\!\!\!\! \sum_{\oGamma \in \LG^{\frakR(m)}_2(g,a)}\frac{m(\oGamma)}{|\Aut(\oGamma)|} \int_{\PP\Xi(\overline{\Gamma})^{[0]}}\!\!\! (\eta_{\oGamma}^{[0]})^{d_\oGamma^{[0]}}\int_{\PP\Xi(\overline{\Gamma})^{[-1]}}\!\!\! (\eta_{\oGamma}^{[-1]})^{d_\oGamma^{[-1]}}\int_{\PP\Xi(\overline{\Gamma})^{[-2]}}\!\!\! (a_0\psi_0)^{d_\oGamma^{[-2]}}\!\!\!.
  \end{align*}
Repeating this procedure ${d_g^m(a)}$ times, we get the claim of the proposition.
For this, a crucial observation is that for any $L \geq 0$, the level graphs obtained by gluing an arbitrary graph with $1$ level passage into the bottom vertex of a graph in $\LG^{\frakR(m)}_L(a)$ precisely form the set $\LG^{\frakR(m)}_{L+1}(a)$ and each graph in $\LG^{\frakR(m)}_{L+1}(a)$ is obtained like this in a unique way.
%\jcomment{Make remark that level $L$ graphs are precisely obtained by gluing level $1$ graphs in lowest vertices of level $L-1$ graphs.}
\end{proof}
We apply  the previous proposition in order to simplify the expression for the Euler characteristic shown in \cite{CMZ20} specialized to the case of  strata of abelian differentials with only one positive entry in their signature. The original formula expresses the Euler characteristic as a sum over all possible level graphs of some intersection numbers, while here we simplify the problem by expressing the Euler characteristic as a sum indexed by level graph with $L=1$.
\begin{corollary}\label{cor:eclg1}
	 If $(a_0,a_1,\dots,a_n)$ is a vector such that  $a_0>0$ and $a_i\leq 0$ for all $i=1,\dots,n$, and $1\leq m\leq n$, then we have:
	 \begin{align*}
	 	(-1)^{d_g^m(a)}\chi(\M_g^{\frakR(m)}(a))&=(d_g^m(a)+1)\int_{\PP\Xi_g^{\frakR(m)}(a)}(a_0\psi_0)^{d_g^m(a)}\\
	 	&\!\!\!\!\!\!\!\!\! \!\!\!\!\!\!\!\!\! \!\!\!\!\!\!\!\!\! -\!\!\!\!  \sum_{\oGamma\in \LG_1^{\frakR(m)}(g,a)}\!\!\!\!  (d_\oGamma^{[-1]}+1)\frac{m(\oGamma)}{|\Aut(\oGamma)|}\int_{\PP\Xi(\overline{\Gamma})^{[0]}}(\eta_{\oGamma^{[0]}})^{d_\Gamma^{[0]}}\int_{\PP\Xi(\overline{\Gamma})^{[-1]}}(a_0\psi_{0})^{d_\oGamma^{[-1]}}
	 \end{align*}
\end{corollary}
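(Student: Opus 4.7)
The proof will rely on the main formula of \cite{CMZ20}, which expresses $(-1)^{d_g^m(a)}\chi(\M_g^{\frakR(m)}(a))$ as a sum over \emph{all} level graphs $\oGamma \in \LG^{\frakR(m)}(g,a)$ of a product of top-$\eta$-power integrals on each level stratum, weighted by $m(\oGamma)/|\Aut(\oGamma)|$ together with a combinatorial coefficient $C(\oGamma)$ depending only on the sequence of level dimensions of $\oGamma$. In our setting (a single positive entry $a_0$ and possibly some residue conditions), a direct translation of the \cite{CMZ20} formula shows that $C(\oGamma) = d_\oGamma^{[0]}+1$, i.e. one more than the dimension of the top level stratum. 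The goal is to recognize the two terms on the right-hand side of the corollary as two applications of Proposition \ref{prop:toppsiformula} whose telescoping exactly reproduces this weighted graph sum.

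First, Proposition \ref{prop:toppsiformula} applied to the full stratum writes $\int_{\PP\Xi_g^{\frakR(m)}(a)}(a_0\psi_0)^{d_g^m(a)}$ as a sum over every $\oGamma \in \LG^{\frakR(m)}(g,a)$ of $\prod_i\int(\eta_\oGamma^{[i]})^{d_\oGamma^{[i]}}$, weighted only by $m(\oGamma)/|\Aut(\oGamma)|$. Multiplying by the constant $d_g^m(a)+1$ gives a first candidate for $(-1)^{d_g^m(a)}\chi$, but with the too-large constant $d_g^m(a)+1$ in place of the required $C(\oGamma)$. Second, applying Proposition \ref{prop:toppsiformula} to the bottom-level stratum $\PP\Xi(\oGamma)^{[-1]}$ of each depth-$1$ graph $\oGamma \in \LG_1^{\frakR(m)}(g,a)$ expands $\int_{\PP\Xi(\oGamma)^{[-1]}}(a_0\psi_0)^{d_\oGamma^{[-1]}}$ as a sum over all level graphs living on that bottom stratum. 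By the combinatorial observation used at the end of the proof of Proposition \ref{prop:toppsiformula}, gluing such a graph into the bottom vertex of $\oGamma$ produces each depth-$\geq 1$ level graph of the original stratum exactly once, with the multiplicities $m$ and $|\Aut|$ combining correctly. Thus the subtracted term in the corollary is a sum over all depth-$\geq 1$ level graphs of the same $\eta$-top products, weighted now by $d_\oGamma^{[-1]}+1$ of the parent depth-$1$ graph.

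Subtracting the two expansions, a level graph $\oGamma'$ of depth $L'$ appears on the right-hand side of the corollary with coefficient $d_g^m(a)+1$ (for $L'=0$) or $(d_g^m(a)+1) - (d_{\text{parent}}^{[-1]}+1)$ (for $L'\geq 1$), where ``parent'' denotes the depth-$1$ graph retaining only the top level passage. Using the elementary dimension identity $d_g^m(a) = L' + \sum_{i=0}^{-L'}d_{\oGamma'}^{[i]}$ together with its analogue $d_{\text{parent}}^{[-1]} = (L'-1) + \sum_{i=-1}^{-L'}d_{\oGamma'}^{[i]}$, this difference collapses to $d_{\oGamma'}^{[0]}+1$, uniformly in $L'$. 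This is exactly $C(\oGamma')$, so the two sides of the corollary agree term by term.

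The main obstacle is the bookkeeping underlying the two identifications above: one must carefully translate the \cite{CMZ20} Euler-characteristic formula into the shape $\sum_\oGamma C(\oGamma)\,m(\oGamma)/|\Aut(\oGamma)|\,\prod_i\int(\eta_\oGamma^{[i]})^{d_\oGamma^{[i]}}$ in the presence of residue conditions and check that $C(\oGamma) = d_\oGamma^{[0]}+1$, and then verify multiplicativity of the $m/|\Aut|$ factors under gluing into a bottom vertex. Once these two ingredients are in place, the telescoping described above is routine and the corollary follows.
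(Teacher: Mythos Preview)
Your proposal is correct and follows essentially the same approach as the paper: both arguments start from the \cite{CMZ20} formula \eqref{eqn:CMZeulercharformula} with coefficient $d_{\oGamma'}^{[0]}+1$, apply Proposition~\ref{prop:toppsiformula} to the bottom level of each depth-$1$ graph to regroup the deeper graphs by their first undegeneration, and conclude via the dimension identity $d_g^m(a)-d_\oGamma^{[0]}=d_\oGamma^{[-1]}+1$. The only cosmetic difference is that the paper first passes through the intermediate expression with $\int\eta^{d_g^m(a)}$ on the trivial graph and then invokes relation~\eqref{eqn:etapsiintegrals}, whereas you expand both sides fully over all level graphs and compare coefficients directly; the underlying telescoping is identical.
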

\begin{proof}
To start, let us recall the formula for the Euler characteristic of $\M_g^{\frakR(m)}(a)$ proven in \cite[Theorem 1.3]{CMZ20}:
\begin{equation}\label{eqn:CMZeulercharformula}(-1)^{d_g^m(a)}\chi(\M_g^{\frakR(m)}(a))=\sum_{L=0}^{d_B} \,\sum_{{\oGamma'} \in \LG_L^{\frakR(m)}(g,a)} (d_{\oGamma'}^{[0]}+1)\frac{m({\oGamma'})}{|\Aut({\oGamma'})|}
 \prod_{i=0}^{-L}
 \int_{\PP\Xi({\oGamma'})^{[i]}}
(\eta_{{\oGamma'}}^{[i]})^{d_{{\oGamma'}}^{[i]}}\end{equation}
Given $\oGamma'\in \LG_L^{\frakR(m)}(g,a)$ for $L \geq 1$ as in the formula above, we write $\delta_1(\oGamma')=\oGamma$ for the first undegeneration of $\oGamma'$, that is the $2$-level graph $\oGamma$ obtained from $\oGamma'$ by contracting all level passages of $\oGamma'$ apart from the first one.
To prove the formula from the corollary, we group all summands of \eqref{eqn:CMZeulercharformula} associated to non-trivial graphs $\oGamma'$ according to their undegeneration $\oGamma=\delta_1(\oGamma')$.

To simplify the formula from there, observe that given any $\oGamma\in \LG_1^{\frakR(m)}(g,a)$, we obtain the equality
\begin{align*}
&\frac{m(\oGamma)}{|\Aut(\oGamma)|}\cdot \int_{\PP\Xi(\overline{\Gamma})^{[0]}}(\eta_{\oGamma^{[0]}})^{d_\Gamma^{[0]}}\cdot \int_{\PP\Xi(\overline{\Gamma})^{[-1]}}(a_0\psi_{0})^{d_\oGamma^{[-1]}}\\
&\quad =\sum_{L=1}^{d_g^m(a)} \sum_{\oGamma'\in \LG_L^{\frakR(m)}(g,a) \atop \delta_1(\oGamma')=\oGamma}  \frac{m(\oGamma')}{|\Aut(\oGamma')|}
 \prod_{i=0}^{-L}
 \int_{\PP\Xi(\oGamma')^{[i]}}
(\eta_{\oGamma'}^{[i]})^{d_{\oGamma'}^{[i]}}.
\end{align*}
by applying Proposition \ref{prop:toppsiformula} to the bottom level stratum $\PP\Xi(\overline{\Gamma})^{[-1]}$. 
Multiplying this equality by $(d_\oGamma^{[0]}+1)$ and summing over all choices of $\oGamma$, we obtain all summands of the right-hand side of \eqref{eqn:CMZeulercharformula} associated to non-trivial graphs $\oGamma'$. In particular, writing the term of the trivial graph separately, we obtain %\jcomment{I don't understand why in the second line below we have the product over $i=0,-1$ of $\eta$-integrals and not $\eta$ on level $0$ and $\psi$ on level $-1$. Is this a typo?}
% In the above equality, the notation  We now want to regroup all non-horizontal level graphs into disjoint subsets defined by their first undegenerations.
% %i.e.
% %\[\LG_L(\overline{B})=\bigsqcup_{\Delta\in \LG_1(\overline{B})} \{\Gamma\in \LG_L(\overline{B}) \colon \delta_1(\Gamma)=\Lambda\}. \]
%  Using the previous remarks we can rewrite the last equality as 
\begin{align*}
	 &	(-1)^{d_g^m(a)}\chi(\M_g^{\frakR(m)}(a))=(d_g^m(a)+1)\int_{\PP\Xi_g^{\frakR(m)}(a)}\eta^{d_g^m(a)}+\\
	 	&\quad \quad +\sum_{\oGamma \in \LG_1(g,a)} (d_\oGamma^{[0]}+1)\frac{m(\oGamma)}{|\Aut(\oGamma)|}
 \int_{\PP\Xi(\oGamma)^{[0]}}
(\eta_{\oGamma}^{[0]})^{d_{\oGamma}^{[0]}}\int_{\PP\Xi(\oGamma)^{[-1]}}
(a_0\psi_0)^{d_{\oGamma}^{[-1]}}
\end{align*}
In order to show the main statement, it is enough to rewrite $\int_{\PP\Xi_g^{\frakR(m)}(a)}\eta^{d_g^m(a)}$ using the relation \eqref{eqn:etapsiintegrals} and using that $d_g^m(a)-d_\oGamma^{[0]}=d_\oGamma^{[-1]}+1$.
\end{proof}

\begin{remark}
Following a similar approach, one can show that the  Euler characteristic of a general stratum  (not necessarily with only one zero), may be written as a sum only on two level graphs (even more, only the ones with only one vertex on top level). However this sum would contain integrals of powers of $\eta$ on meromorphic strata which are a priori hard to compute.
%\jcomment{Do the graphs in that formula really have precisely two levels? Right now I don't even see how to obtain a formula as described here in the case of minimal strata. EDIT: this one vertex at the top is because of the argument described in the next paragraph, right? Maybe it makes sense to move the Remark below the next proposition?}
\end{remark}
Now, we apply Corollary \ref{cor:eclg1} to the case of the minimal stratum $\cM_{g}(2g-1)$. In this case, the top level of a 2-level graph is basically given by a product of holomorphic strata. Then, as we explain below, we can use the fact that top-$\eta$ powers of non-minimal holomorphic strata vanish thanks to \cite{Sau:volumeminimal}, to simplify further the expression of the Euler characteristic.
%\jcomment{To be precise the top level is some kind of weird product of line bundles over holomorphic strata modulo common scaling action, but then it's true that the top $\eta$ is the product of the top $\eta$s on the individual factors?}
\begin{proposition}\label{prop:ECmin}
	For $g\geq 1$, we denote $d_g=2g-1$. Then for all $g\geq 1$, we have:
	\begin{align*}
\chi(\cM_{g}(d_g))&=2g\cdot d_g^{d_g-1}\cdot \cA_g(d_g)\\
&\!\!\!\!\!\!\!\!+\sum_{g^\bot=1}^{g-1} 2g^\bot \sum_{\underline{g}=(g_1, \dots , g_n)\atop |\underline{g}|=g-g^\bot}	\frac{ (-1)^{n+1} }{n!}d_g^{d_{g^\bot}}\cA_{g^\bot}^{\frakR(n-1)}(d_g,-d_{\underline{g}})\prod_{i=1}^n d_{g_i} \int_{\PP\Xi_{g_i}(d_{g_i})}\eta^{d_{g_i}}\\
	\end{align*}
%	\jcomment{Using Control+F there are 4 instances of $\frakR(n-1)$ in this document (outside of this comment); I think all should be replaced by $\frakR(n-1)$?}
	where the second sum is on all vectors \rcomment{with positive entries} of size size $g-g^{\bot}$, and  $d_{\underline{g}}=(d_{g_1},\dots, d_{g_n})$. 
\end{proposition}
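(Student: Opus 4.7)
The plan is to apply Corollary \ref{cor:eclg1} with $a = (d_g)$ and $m = 0$, and then classify the contributing two-level graphs $\oGamma \in \LG_1(g, (d_g))$. The trivial-graph contribution of the Corollary is $(d_g + 1) \int_{\PP\Xi_g(d_g)} (d_g \psi_0)^{d_g}$; combined with the overall sign $(-1)^{d_g} = -1$ (since $d_g = 2g-1$ is odd) and the identity $\cA_g(d_g) = -d_g \int_{\PP\Xi_g(d_g)} \psi_0^{d_g}$ from \eqref{eq:drstrata}, this produces exactly the leading term $2g \cdot d_g^{d_g-1} \cA_g(d_g)$ of the claimed formula.

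The next step is to show that among all $\oGamma \in \LG_1(g, (d_g))$ only ``star graphs'' (one bottom vertex connected by single edges to $n$ top vertices) give a nonzero contribution. Three observations suffice. First, marking $0$ must lie on the bottom level, since otherwise the insertion $\psi_0^{d_\oGamma^{[-1]}}$ on the bottom vanishes. Second, every top vertex $v_i$ has only edge half-edges with positive twists and no legs, so it parametrizes a holomorphic stratum; by the vanishing of the top-$\eta$ integral on non-minimal holomorphic strata from \cite{Sau:volumeminimal}, only single-edge top vertices contribute, forcing each such edge to have twist $d_{g_i} = 2g_i - 1$. Third, any hypothetical additional bottom vertex without the marking would have to be genus $0$ with only upward edges; the twist balance $\sum I(h) = 2g_w - 2 + k_w$ together with the stability of $\oM_{0,n}$ rules this out. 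The remaining graphs are precisely stars with a single bottom vertex $v_0$ of genus $g^\bot = g - |\underline{g}|$ joined to $n$ top vertices of genera $g_1, \ldots, g_n \geq 1$ by single edges; the requirement $d_{g^\bot} \geq 0$ for the bottom integral also forces $g^\bot \geq 1$.

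For each such star the contribution factorizes. Since the star is of compact type, the GRC (Remark \ref{rem:GRC}) forces all $n$ pole residues on the bottom to vanish, which by the residue theorem is equivalent to $n-1$ independent conditions. Thus the bottom integral equals $d_g^{d_{g^\bot}} \cA_{g^\bot}^{\frakR(n-1)}(d_g, -d_{\underline{g}})$. The top level stratum $\PP\Xi(\oGamma)^{[0]}$ is a single projectivization of $\bigoplus_i \Om_{g_i}(d_{g_i})$ rather than a product of individual projectivizations, and a Segre-class computation then yields
\[
\int_{\PP\Xi(\oGamma)^{[0]}}(\eta^{[0]})^{d_\oGamma^{[0]}} = (-1)^{n+1} \prod_{i=1}^n \int_{\PP\Xi_{g_i}(d_{g_i})}\eta^{d_{g_i}},
\]
which accounts for the sign $(-1)^{n+1}$ appearing in the formula. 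Combining with $d_\oGamma^{[-1]}+1 = 2g^\bot$, $m(\oGamma) = \prod_i d_{g_i}$, and converting the sum over unordered graphs (with automorphisms permuting equal-genus top vertices) to a sum over ordered tuples $\underline{g}$ (thereby replacing $1/|\Aut(\oGamma)|$ by $1/n!$), one obtains the formula stated in the Proposition.

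The most delicate point is the reduction to minimal top vertices via the vanishing result of \cite{Sau:volumeminimal}, which must be invoked in the appropriate form for top-$\eta$ powers on holomorphic strata with more than one zero; the Segre-class identity providing the sign $(-1)^{n+1}$ is the other subtle step, requiring careful comparison between the global projectivization on the top level and the individual projectivizations, together with the commensurability factor \eqref{eq:convfactor}. Once these two ingredients are established, the remaining combinatorial bookkeeping follows routinely from the machinery set up in Sections \ref{Sect:DRmultiscale} and the preceding part of this section.
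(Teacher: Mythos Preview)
Your proof is correct and follows the same approach as the paper: apply Corollary~\ref{cor:eclg1}, decompose the top-level $\eta$-integral via a Segre-class computation into a product over the connected top strata (this is equation~\eqref{eq:topetadisconnected} in the paper and is what produces the sign $(-1)^{n+1}$), invoke the vanishing of top-$\eta$ powers on non-minimal holomorphic strata from \cite{Sau:volumeminimal} to reduce to star graphs, and identify the bottom-level contribution with $\cA_{g^\bot}^{\frakR(n-1)}$. Two small points of precision: the uniqueness of the bottom vertex and the fact that marking $0$ lies there are structural consequences of the twist balance (an extra bottom vertex $w$ would have only negative half-edge twists summing to $2g(w)-2+n(w)>0$, a contradiction for any $g(w)$, not just $g(w)=0$); and logically the Segre decomposition must come \emph{before} applying the vanishing result, since the latter is stated for individual connected strata rather than for the global top-level projectivization.
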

%\jcomment{With a view towards \eqref{eq:chiI1} I find it a bit concerning that when specializing the summands in the second term to $g^\bot = g, n=0$ one does not obtain the first term, since the power of $d_g$ is one too high. Should the first term maybe be $2g\cdot d_g^{d_g-1}\cdot \cA_g(d_g)$?}
\begin{proof}
We want to describe the 2-level graphs which give a non-trivial contribution in Corollary \ref{cor:eclg1}. In order to do that, we want to investigate top $\eta$-powers on top level strata of 2-level graphs. 

Since these level strata are in general disconnected, we first reduce the computation of powers of $\eta$ on disconnected strata in terms of powers of $\eta$ on connected ones. For a graph $\oGamma\in \LG_1(g,(d_g))$,  the total space of the top level stratum $\PP\Xi(\oGamma)^{[0]}$ is isomorphic to the total space of the projective bundle $\pi: \PP(\cE_\oGamma) \to Y_\oGamma$ where 
$$Y_\oGamma = \prod_{v \in V(\oGamma):\ l(v)=0} \PP\Xi_{g(v)}(I(v))$$
and $$\cE_\oGamma = \bigoplus_{v \in V(\oGamma):\ l(v)=0}
p_v^* \cO_{\PP\Xi_{g(v)}(I(v))}(-1),$$ with $p_v:Y_\oGamma\to \PP\Xi_{g(v)}(I(v))$ being the projection map. Then, we have:
\begin{align}\label{eq:topetadisconnected}
%\int_{\overline{B}}\eta^{d_B}_B=(-1)^{n+1} \prod_{i=1}^n \int_{\overline{B_i}}\eta^{d_{B_i}}_{B_i}. 
\int_{\PP\Xi(\oGamma)^{[0]}} \eta^{d_\oGamma^{[0]}} &=(-1)^{d_\oGamma^{[0]}} \int_{Y_\oGamma} s_{d_\oGamma^{[0]}-v_\oGamma^{[0]}+1}(\cE_{\oGamma})\\
&\nonumber =(-1)^{v_\oGamma^{[0]}+1} \!\!\!\!\! \prod_{v\in V(\oGamma):\ l(v)=0} \int_{\PP\Xi_{g(v)}(I(v))}\eta^{d_{g(v)}+|I(v)|-1},
\end{align}
where $v_\oGamma^{[0]}$ is the number of vertices of level $0$. 
%\jcomment{Also, the segre class is usually defined using $\mathcal{O}(1)$, so are the signs in the first equality correct? }\mcomment{Is it ok now?}

By \cite[Prop. 3.3]{Sau:volumeminimal}, the top $\eta$-powers on non-minimal holomorphic strata vanish. Hence, thanks to \eqref{eq:topetadisconnected}, the only non trivial contribution from a 2-level graph on the right hand side of the statement of Corollary \ref{cor:eclg1} is given by star-shaped graphs, i.e. 2-level graphs with only one vertex on bottom level (this is the only possiblity in a stratum with only one positive entry in its signature) and with some vertices on top level, each top vertex connected to the bottom vertex via only one edge. 

We can parametrize such 2-level graphs by choosing a partition $(g_1,\dots,g_n)$ of the possible total genus $g^\top$ on top level.\footnote{We remark that a $2$-level graph only determines an unordered partition $g^\top = g_1 + \ldots + g_n$. By a standard combinatorial argument using the Orbit-Stabilizer theorem, summing over such partitions and weighting by $1/|\Aut(\oGamma)|$ as in Corollary \ref{cor:eclg1} is equivalent to summing over vectors $(g_1, \ldots, g_n)$ and dividing by $n!$ as in Proposition \ref{prop:ECmin}.}  Note that in the formula the number $g^\top$ was rewritten as the difference $g-g^\bot$. The bottom level stratum of a 2-level graph defined by a partition $(g_1,\dots,g_n)$ is given by $\PP\Xi^{\frakR(n-1)}_{g^\bot,n+1}(2g-1,-2g_1+1,\dots,-2g_n+1)$, since the global residue condition imposes zero residue as every pole. We conclude by using \eqref{eq:topetadisconnected} and rewriting the top $\psi_0$ contributions using the symbols previously introduced. Note that in the contribution from the trivial graph, the term  $\int_{\oM_g(d_g)}\psi_{1}^{d_g}$ has to be replaced by $-\cA_g(d_g)/d_g$, since $\M_g(2g-1)$ is a holomorphic stratum (see the relation~\eqref{eq:drstrata}).
\end{proof}

We may now state and prove the following theorem, which is equivalent to Theorem~\ref{thm:eulerseries} stated in the introduction after using coefficient extraction formulas.

\begin{theorem}\label{thm:ECminformula}
For all $g\geq 1,$ we have
    \begin{align*}
&\chi(\M_g(d_g))=\sum_{1\leq g^\bot\leq g \atop n\geq 0}  2 g^\bot \sum_{\underline{g}=(g_1,\dots,g_n)\atop \sum g_i=g-g^\bot}	\frac{(-1)^n }{n!}d_g^{d_{g^\bot}+n-1}\cA_{g^\bot}(d_g,-d_{\underline{g}})\prod_{i=1}^n (d_{g_i})!\cdot b_{g_i}\\
\end{align*}
where \rcomment{the vector $\underline{g}$ has positive entries,} $d_{\underline{g}}=(d_{g_1},\dots, d_{g_n})$ and where the numbers $b_{g_i}$ are determined by 
$$\frac{z/2}{{\rm sinh}(z/2)}=1+\sum_{g\geq 0} b_g z^{2g}\,.$$
\end{theorem}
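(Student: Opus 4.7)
The plan is to start from Proposition \ref{prop:ECmin}, whose right-hand side I need to match with the theorem statement, and to substitute its two non-elementary ingredients. First, I would use Proposition \ref{prop:fullresidue} to expand $\cA_{g^\bot}^{\frakR(n-1)}(d_g,-d_{\underline g})$ as a signed sum over twisted trees in $\mathrm{TR}(g^\bot,a)$ with $a=(d_g,-d_{g_1},\ldots,-d_{g_n})$. Second, I would invoke Sauvaget's formula from \cite{Sau:volumeminimal} to rewrite $d_{g_i}\int_{\PP\Xi_{g_i}(d_{g_i})}\eta^{d_{g_i}}$ in terms of the coefficients $b_{g_i}$ of $\cS(z)^{-1} = 1 + \sum_g b_g z^{2g}$ (together with lower-genus corrections coming from the expansion of $\eta$ in terms of $\psi_0$ and boundary classes).

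For the tree expansion, the crucial observation is that a direct induction on subtree size, using the twist compatibility $\sum_h I(h)=n(v)-2$ at every genus-$0$ vertex, shows that the positive twist on the edge from $v_0$ to any subtree collecting the legs $\{g_i\}_{i \in J}$ for $J\subseteq\{1,\ldots,n\}$ equals $d_{G_J}$ with $G_J=\sum_{i\in J}g_i$. Consequently the insertion at $v_0$ simplifies to $\cA_{g^\bot}(d_g,-d_{G_{J_1}},\ldots,-d_{G_{J_{|\pi|}}})$, where $\pi=\{J_1,\ldots,J_{|\pi|}\}$ is the set partition of $\{1,\ldots,n\}$ induced at the root, and the root contributes the factor $-(-d_g)^{|\pi|-1}=(-1)^{|\pi|}d_g^{|\pi|-1}$ to $F(\Gamma,I)$ via \eqref{eq:tildeell}.

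After performing both substitutions, the right-hand side of Proposition \ref{prop:ECmin} becomes a multi-sum over $(g^\bot,n,\underline g,\Gamma)$. I would then reindex the sum by first fixing the reduced genus vector $(G_1,\ldots,G_{|\pi|})$ at $v_0$ and then summing over refinements of each $G_j$ into a composition of individual $g_i$'s together with the corresponding subtree structure. The target formula corresponds to the trivial partition $\pi=\{\{1\},\ldots,\{n\}\}$, so I expect the main obstacle to be proving the combinatorial identity that, for every block $J$ of size $k\geq 2$ with prescribed genus sum $G$, the weighted sum
\[
\sum_{g_1+\cdots+g_k=G,\ g_i\geq 1}\sum_{T_J} F(T_J)\prod_{i=1}^k (d_{g_i})!\,b_{g_i}
\]
combined with the Sauvaget-style correction terms produces exactly $(d_G)!\,b_G$ multiplied by an appropriate power of $d_g$. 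In generating-function form this is an identity for $\cS(z)^{-1}$ in the spirit of the exponential formula for weighted rooted trees; I would prove it by induction on block size, exploiting the defining relation $\cS(z)\cdot\cS(z)^{-1}=1$. Once this collapsing identity is established, careful bookkeeping of the signs $(-1)^{|\pi|}$ and of the powers of $d_g$ collected at the root reproduces the closed form claimed in the theorem, with the leading $g^\bot=g$, $n=0$ term absorbing the first summand of Proposition \ref{prop:ECmin}.
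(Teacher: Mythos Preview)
Your overall architecture matches the paper's: start from Proposition~\ref{prop:ECmin}, expand the residue term via Proposition~\ref{prop:fullresidue}, observe that the positive twist on the edge joining $v_0$ to a subtree of total genus $G$ is $d_G$, and regroup by the genera $(G_1,\ldots,G_m)$ of the subtrees hanging off the root. After this regrouping, the theorem is equivalent to the identity that the weighted sum over each subtree of total genus $G$ equals $(d_G)!\,b_G$; this is exactly Lemma~\ref{lem:regrouping} in the paper.

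The gap is in how you handle that last identity. Two related problems:
\begin{itemize}
\item The per-block identity is intrinsic to the block: the weights $-(-I(v)^+)^{n(v)-2}$ inside a subtree of total genus $G$ involve only $d_{G'}$ for $G'\leq G$, never the global $d_g$. So there is no ``appropriate power of $d_g$'' to absorb; the subtree sum must equal $(d_G)!\,b_G$ on the nose.
\item More seriously, the identity cannot be proved from $\cS(z)\cdot\cS(z)^{-1}=1$, because $\cS$ does not appear in it. The subtree identity relates the numbers $a_g=-\int_{\PP\Xi_g(d_g)}\eta^{d_g}$ and the $b_g$. The only bridge between these is Sauvaget's theorem from~\cite{Sau:volumeminimal}, which is the \emph{implicit} relation $[z^{2g}]\big(1+\sum_{h\geq 1}(2h-1)a_hz^{2h}\big)^{2g}=(2g)!\,b_g$, not a direct substitution ``with lower-genus corrections coming from the expansion of $\eta$ in terms of $\psi_0$ and boundary classes.'' Unwinding that implicit relation into the form needed (namely $(2g-1)a_g=\sum_{n\geq 1}\sum_{g_1+\cdots+g_n=g}\frac{(-2g+1)^{n-1}}{n!}\prod_i(2g_i-1)!\,b_{g_i}$) is a Lagrange-inversion / Bell-transformation statement; the paper quotes~\cite[Corollary~2(ii)]{Combinatorics}. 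Your proposal does not supply this input, and without it the combinatorial collapse does not go through.
\end{itemize}
A minor point: after the regrouping by $(G_1,\ldots,G_m)$ and the subtree collapse, one directly obtains the theorem with $G_j$ in the role of the $g_j$; the ``trivial partition'' is only the single-leaf term in each subtree sum, not the target itself.
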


Before giving the proof of this theorem we introduce a set of twisted graphs that will be used only in this proof and in the proof of Lemma~\ref{lem:regrouping} below. We fix $g\geq 1$ and  $0\leq g^\bot\leq g$. The set $\widetilde{\rm TR}(g,g^\bot)$ is the set of graphs of compact type (trees) of genus $g$ with 1 leg satisfying the following properties:
\begin{itemize}
    \item the root $v^\bot$ of the tree has genus $g^\bot$ and carries the unique leg;
    \item the leaves of the tree have positive genus (we denote by ${\rm Leaf}(\Gamma)$ the set of leaves);
    \item internal nodes (vertices which are neither the root nor leaves) have genus 0 (this set will be denoted by ${\rm Int}(\Gamma)$).
\end{itemize}
Each such graph carries a unique twist such that the twist at the unique leg is $2g-1$. Moreover, we define $\widetilde{\rm TR}(g)$ to be the set obtained from $\widetilde{\rm TR}(g,0)$ by adding the trivial graph in genus $g$ with one leg. 

\begin{proof} We start from Proposition \ref{prop:ECmin}. First we will use Proposition \ref{prop:fullresidue} to replace integrals on strata with residue conditions by integrals on classical strata:
 \begin{align} \label{eqn:AgRneqn}
\cA_{g^\bot}^{\frakR(n-1)}(d_g,-d_{\underline{g}})=-\sum_{(\Gamma,I)\in {\rm TR}(g^\bot,(d_g,-d_{\underline{g}})) }\cA_{g^\bot}(I(v_0)) \cdot F(\Gamma,I).
\end{align}
%\jcomment{I added the following two sentences to justify why we sum up to $g^\bot = g$ in the following:}
Note that if we define $\cA_g^{\frakR(-1)}(d_g) = \cA_g(d_g)/d_g$, then it is immediate from the definition of $F(\Gamma, I)$ that equation \eqref{eqn:AgRneqn} is still satisfied in this case. Moreover
the first term in Proposition \ref{prop:ECmin} can be written as
\[2g \cdot d_g^{d_g-1} \cdot \cA_g(d_g) = 2g \cdot d_g^{d_g} \cdot \cA_g^{\frakR(-1)}(d_g)\]
and can then be integrated as the $g^\bot=g, n=0$ term of the remaining sum.

By inserting equality \eqref{eqn:AgRneqn} into the formula of Proposition \ref{prop:ECmin}, we can replace the expression of $\chi(\M_g(d_g))$ as a sum over star-shaped graphs where one central vertex contributes with a function $\cA$ with residue condition, by a sum  indexed by the sets $\widetilde{\rm TR}(g,g^\bot)$ where the central vertex contributes with the function $\cA$ (without residues): %\jcomment{the notation $d_{\underline{g}}$ below does not make sense anymore; I propose to just write $\cA_{g^\bot}(I(v^\bot))$}
 \begin{align}
\label{eq:chiI1}&\chi(\M_g(d_g))=\sum_{1\leq g^\bot\leq g}  2 g^\bot \sum_{\Gamma \in \widetilde{\rm TR}(g,g^\bot)}	\frac{(-1)^{n(v^\bot)+1} }{|{\rm Aut}(\Gamma)|} d_g^{d_{g^\bot}+n(v^\bot)-2}\cA_{g^\bot}(I(v^\bot)) \\ 
\nonumber&\ \hspace{80pt} \times\left(\prod_{v \in {\rm Int}(\Gamma)}  -(-I(v)^+)^{n(v)-2}\right) \times \left(\prod_{v \in {\rm Leaf}(\Gamma)} d_{g(v)}a_{g(v)}\right),
\end{align}
where, as above we used the notation $I(v)^+$ for the unique positive twist at an internal vertex, and we used the notation: $$a_g=-\int_{\PP\Xi_g(2g-1)}\eta^{2g-1},$$
for all $g\geq 1$.

We will regroup this sum as follows. In order to construct a tree in $\widetilde{\rm TR}(g,g^\bot)$, one may equivalently chose: (i) the number $n = n(v^\bot)-1 \geq 0$ of descendants of the root; (ii) the genera $g_1,g_2,\ldots,g_n$ of the descendants (i.e. the $n$ trees obtained by removing the root); (iii) a tree in $\widetilde{\rm TR}(g_i)$ for all $1\leq i\leq n$. An autormorphism of this new data is given by a permutation $\sigma$ of $\{1,\ldots,n\}$ and an isomorphism of $\Gamma_i$ and $\Gamma_{\sigma(i)}$ for all $1\leq i\leq n$.

%\jcomment{Here, there is a subtlety: if for $1 \leq i \leq n$ the descendent $i$ is just a single leaf, then we would like to say that the associated $\Gamma_i$ is just the trivial graph in genus $g_i$ with $n_i=1$. However, this trivial graph does not fit our definition of $\widetilde{\rm TR}(g_i,0)$. I propose that we introduce $\widetilde{\rm TR}(g_i) = \widetilde{\rm TR}(g_i,0) \cup \{\text{trivial graph}\}$ and use this here and in Lemma 6.9 below.}
Then formula~\eqref{eq:chiI1} becomes:
 \begin{align*}
&\chi(\M_g(d_g))=\sum_{1\leq g^\bot\leq g \atop n\geq 0}  2 g^\bot \sum_{\underline{g}=(g_1,\dots,g_n)\atop \sum g_i=g-g^\bot}	\frac{(-1)^n }{n!}d_g^{d_{g^\bot}+n-1}\cA_{g^\bot}(d_g,-d_{\underline{g}})
\\ &\ \hspace{2pt} \times \prod_{i=1}^n \sum_{\Gamma_i \in \widetilde{\rm TR}(g_i)} \!\!\! \frac{1}{|{\rm Aut}(\Gamma_i)|}
\left(\prod_{v \notin {\rm Leaf}(\Gamma_i)}  \!\!\!\!\!\!\!\!\!-(-I(v)^+)^{n(v)-2}\right) \times\left(\prod_{v \in {\rm Leaf}(\Gamma_i)} \!\!\!\!\!\!\!\!\! d_{g(v)} a_{g(v)}\right).
\end{align*}
 Thus, the theorem follows from Lemma~\ref{lem:regrouping} below. 
\end{proof}

\begin{lemma}\label{lem:regrouping}
For all $g\geq 1$, we have: 
$$(2g-1)!b_g=\sum_{\Gamma \in \widetilde{\rm TR}(g)} \frac{1}{|{\rm Aut}(\Gamma)|}
\left(\prod_{v \notin {\rm Leaf}(\Gamma)}  \!\!\!\!\!\!-(-I(v)^+)^{n(v)-2}\right) \times\left(\prod_{v \in {\rm Leaf}(\Gamma)}\!\!\!\!\!\! d_{g(v)} a_{g(v)}\right).
$$
\end{lemma}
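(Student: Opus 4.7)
My approach is to reduce the lemma to a formal power series identity that can be verified using the closed form of $\mathcal{A}_g(d_g)$ from Theorem~\ref{th:main}.

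I would first introduce the generating function $U(y) := \sum_{g\geq 1} U_g\, y^{2g}$, where $U_g$ denotes the right-hand side of the lemma for genus $g$, together with the leaf series $L(y) := \sum_{g\geq 1} d_g a_g\, y^{2g}$. Decomposing a non-trivial tree $\Gamma \in \widetilde{\rm TR}(g,0)$ at its root (genus $0$, with $c \geq 2$ children subtrees, each of the same combinatorial type, whose connecting edge carries the positive twist $2g(\tau)-1$) and using the Euler operator $D = y\partial_y$ with $(D-1)y^{2g} = (2g-1)y^{2g}$ to produce the root weight $(-1)^c(2g-1)^{c-1}$, I obtain the functional recursion
\begin{equation*}
U(y) \;=\; L(y) \;+\; \sum_{c\geq 2}\frac{(-1)^c}{c!}(D-1)^{c-1}\!\left[U(y)^c\right]\,.
\end{equation*}
The lemma is then equivalent to $U(y) = V(y) := \sum_{g\geq 1}(2g-1)!\,b_g\,y^{2g}$, which by inverting this recursion reduces to the explicit identity
\begin{equation*}
d_g\, a_g \;=\; (2g-1)!\,b_g \;-\; \sum_{c\geq 2}\frac{(-1)^c(2g-1)^{c-1}}{c!}\,[y^{2g}]V(y)^c \qquad (\star)
\end{equation*}
for each $g \geq 1$.

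To establish $(\star)$, I would iterate Proposition~\ref{prop:toppsiformula} on the minimal stratum $\PP\Xi_g(d_g)$. Using $\int_{\PP\Xi_g(d_g)}\psi_1^{d_g} = -\mathcal{A}_g(d_g)/d_g$ (from~\eqref{eq:drstrata}), the case $m=0$ of that proposition yields
\begin{equation*}
a_g \;=\; d_g^{d_g-1}\,\mathcal{A}_g(d_g) \;+\!\!\sum_{L\geq 1}\sum_{\oGamma\in\LG_L(g,(d_g))}\!\!\frac{m(\oGamma)}{|\Aut(\oGamma)|}\prod_i\int_{\PP\Xi(\oGamma)^{[i]}}(\eta_{\oGamma}^{[i]})^{d_{\oGamma}^{[i]}}\,.
\end{equation*}
The vanishing of $\int\eta^d$ on non-minimal holomorphic strata, already used in the proof of Proposition~\ref{prop:ECmin} and due to \cite{Sau:volumeminimal}, forces each non-bottom vertex of a contributing $\oGamma$ to be a minimal stratum, contributing a factor $-a_{g_i}$. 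Iterating the expansion on the bottom integrals yields $a_g$ as a tree sum of $\widetilde{\rm TR}(g)$-shape, with $\mathcal{A}_{g^\bot}(d_g, -d_{\underline{g}})$-factors at the roots of the sub-iterations. Inserting the closed forms from Theorem~\ref{th:main} and the defining identity $\sum_g b_g z^{2g} = \mathcal{S}(z)^{-1}-1$ then recasts $(\star)$ as a rational-function identity in $\mathcal{S}(z)$ and $\mathcal{S}'(z)$.

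That final identity can be verified by the logarithmic-derivative trick $[z^{2g}]\,z^{2g+1}\frac{d}{dz}\bigl(z^{-2g}F(z)\bigr)=0$ employed in the proof of Theorem~\ref{th:main}, with $F(z)$ built explicitly from $\mathcal{S}(z)$ and $\mathcal{S}'(z)$. The main obstacle will be the combinatorial bookkeeping: the edge-twist factors $m(\oGamma)$, the residue-conversion factors $\ell(\oGamma)$ from Proposition~\ref{prop:AdrienR}, and the automorphism numbers $|\Aut(\oGamma)|$ appearing in the iterated expansion must collectively translate into the simple vertex weight $-(-I(v)^+)^{n(v)-2}$ featuring in $\widetilde{\rm TR}(g)$. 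This translation mirrors the one already carried out in Proposition~\ref{prop:fullresidue}, and once it is in place the matching with $(2g-1)!\,b_g$ reduces to the formal power series identity verified above.
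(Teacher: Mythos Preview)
Your reduction to the identity $(\star)$ is correct and matches the paper's argument: the paper does the same root-decomposition of trees in $\widetilde{\rm TR}(g)$ by induction on $g$, arriving at exactly your equation $(\star)$ (written there as equation~\eqref{eqn:lemregrouping}).

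The gap is in your proposed proof of $(\star)$ itself. The paper does not attempt any intersection-theoretic expansion here; instead it invokes two external ingredients: the identity $[z^{2g}]\mathcal{F}(z)^{2g}=(2g)!\,b_g$ with $\mathcal{F}(z)=1+\sum_{g\geq 1}(2g-1)a_g z^{2g}$ from~\cite{Sau:volumeminimal}, which already encodes the precise relation between the $a_g$ and the $b_g$, and then a Lagrange-inversion-type identity from~\cite{Combinatorics} to convert that implicit relation into the explicit form $(\star)$. Your proposal tries to re-derive this relation from scratch via Proposition~\ref{prop:toppsiformula} and Theorem~\ref{th:main}, and this is where it breaks down. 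Proposition~\ref{prop:toppsiformula} converts a top $\psi_0$-power into a sum of top $\eta$-powers over level graphs, not the reverse; so after isolating the $L=0$ term to get your displayed expression for $a_g$, the remaining $L\geq 1$ summands are products of $\eta$-integrals, with the bottom-level factor being $\int\eta^{d}$ on a stratum \emph{with residue conditions}. There is no mechanism in the paper to convert such an $\eta$-integral back into an $\mathcal{A}_{g^\bot}$-value (Proposition~\ref{prop:fullresidue} handles $\psi$-integrals, not $\eta$-integrals), so your ``iteration on the bottom integrals'' has no clear meaning and the appearance of $\mathcal{A}_{g^\bot}(d_g,-d_{\underline g})$-factors is unjustified.

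Consequently, the final step---recasting $(\star)$ as a rational-function identity in $\mathcal{S}$ and $\mathcal{S}'$ and checking it by the logarithmic-derivative trick---never gets off the ground: Theorem~\ref{th:main} gives closed forms for $\mathcal{A}_g$, not for $a_g$, and without the input from~\cite{Sau:volumeminimal} you have no independent handle on $a_g$. The clean route is the paper's: keep the first half of your argument, then cite~\eqref{eq:defag} and the combinatorial inversion.
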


\begin{proof}We first recall from~\cite[Theorem 1.6]{Sau:volumeminimal} that the series $b_g$ and $a_g$ satisfy the following identity for all $g\geq 1$
\begin{equation}[z^{2g}] \label{eq:defag} \mathcal{F}(z)^{2g}= (2g)! b_g,
\end{equation}
where $\mathcal{F}(z)=1+\sum_{g\geq 1} (2g-1) a_g z^{2g}$.\footnote{Note that the paper \cite{Sau:volumeminimal} works with the class $\xi=-\eta$ in the definition of its numbers $a_g$ and the function $(t/2)/\sin(t/2)$ for its analogue of the numbers $b_g$, both resulting in a factor $(-1)^g$ compared to our definition.}

We will prove the lemma by induction on $g\geq 1$. First, we note that the initial step is trivial as $b_1=a_1$ and there is only one (trivial) graph in $\widetilde{\rm TR}(1)$. 

Let $g\geq 2$. As in the previous proof, we may decompose a sum over non-trivial graphs in $\widetilde{\rm TR}(g)$ as a sum over: (i) choice of $n\geq 2$; (ii) a vector $(g_1,\ldots,g_n)$ of size $g$; (iii) choices of $n$ trees in $\widetilde{\rm TR}(g_i)$. Thus we get: %\jcomment{Double-check that we did this correctly: I believe that in the second line below, there should already be (in the numerator of $1/n!$) a factor $-(-2g+1)^{n-1}$ from the root vertex of $\Gamma$.}
\begin{eqnarray*}
&&\sum_{\Gamma \in \widetilde{\rm TR}(g)} \frac{1}{|{\rm Aut}(\Gamma)|}
\left(\prod_{v \notin {\rm Leaf}(\Gamma)}  -(-I(v)^+)^{n(v)-2}\right) \times\left(\prod_{v \in {\rm Leaf}(\Gamma)} d_{g(v)}a_{g(v)}\right)  
\\ &=& d_g a_g- \sum_{n\geq 2\atop g_1+\ldots+g_n=g} \frac{(-2g+1)^{n-1}}{n!} \prod_{i=1}^n   \sum_{\Gamma_i \in \widetilde{\rm TR}(g_i)} \frac{1}{|{\rm Aut}(\Gamma_i)|}
\\
&& \hspace{30pt} \left(\prod_{v \notin {\rm Leaf}(\Gamma_i)}  -(-I(v)^+)^{n(v)-2}\right) \times\left(\prod_{v \in {\rm Leaf}(\Gamma_i)} d_{g(v)} a_{g(v)}\right)\\
&=& d_ga_g - \sum_{n\geq 2\atop g_1+\ldots+g_n=g} \frac{(-2g+1)^{n-1}}{n!} \prod_{i=1}^n (2g_i-1)! b_{g_i}.
\end{eqnarray*}
The last line was obtained by applying the induction hypothesis. Thus we need to show that 
\begin{equation} \label{eqn:lemregrouping}
(2g-1) a_g=\sum_{n\geq 1\atop g_1+\ldots+g_n=g}\frac{(-2g+1)^{n-1}}{n!} \prod_{i=1}^n (2g_i-1)! b_{g_i}
\end{equation}
for all $g\geq 2$. This last identity is equivalent to formula~\eqref{eq:defag} by~\cite[Corollary 2.(ii)]{Combinatorics}, which we apply to $x_n=(2n-1)a_n, y_n=(2n-1)!b_n$, and $(a,b,c,d)=(-2,0,0,1)$  in the notation of this paper, thus finishing the proof.
%\jcomment{Adrien: I can kind of see how this cited Corollary is the kind of statement that we would need, but I haven't had the energy to guess which sequences $x_n, y_n$ and constants $a,b,c,d$ we need in the notation of that paper.}
%\jcomment{I am a bit confused that we apply part ii of the Corollary (which assumes $c=0$) in a situation where $c=-1$.}
\end{proof}

Finally, we are able to prove the version of the formula for the Euler characteristic that is claimed in the introduction.
\begin{proof}[Proof of Theorem \ref{thm:eulerseries}] We use the expression of $\chi(\M_g(2g-1))$ given by Theorem~\ref{thm:ECminformula} to obtain the coefficient extraction formula of Theorem \ref{thm:eulerseries}. We begin with a slight rearrangement of terms:
\begin{eqnarray*}
&&\sum_{1\leq g^\bot\leq g \atop n\geq 0}  2 g^\bot \sum_{\underline{g}=(g_1,\dots,g_n)\atop \sum g_i=g-g^\bot}	\frac{(-1)^n }{n!}d_g^{d_{g^\bot}+n-1}\cA_{g^\bot}(d_g,-d_{\underline{g}})\prod_{i=1}^n (d_{g_i})!\cdot b_{g_i}\\
&=& d_g^{2g-2} \sum_{1\leq g^\bot\leq g \atop n\geq 0}  \frac{2 g^\bot}{n!}   \sum_{\underline{g}=(g_1,\dots,g_n)\atop \sum g_i=g-g^\bot} \cA_{g^\bot}(d_g,-d_{\underline{g}})\prod_{i=1}^n -d_g^{1-2g_i} (d_{g_i})!\cdot b_{g_i}.
\end{eqnarray*}
Now, we use Theorem~\ref{th:main} to write the function $\cA_{g^\bot}$ with a coefficient extraction formula. Then the $\chi(\M_g(2g-1))$ takes the following shape: %\jcomment{Double-check that we did this correctly: I think in the second line below the factor $cS(d_gz)$ should be $cS(d_{g_i}z)$?}
%\jcomment{Double-check that we did this correctly: I think in the fourth line, the factor $d_{\widetilde{g}}/\cS(z)$ should be $d_{{g}}/\cS(z)$.}
\begin{eqnarray*}
&& d_g^{2g-2} \sum_{1\leq g^\bot\leq g \atop n\geq 0}  \frac{2 g^\bot}{n!}  [z^{2g^\bot}] \frac{{\rm exp}(d_gz\cS'(z)/\cS(z))}{\cS(z)^{2g+1}} \\
&& \hspace{20pt} \sum_{\underline{g}=(g_1,\dots,g_n)\atop \sum g_i=g-g^\bot}  \prod_{i=1}^n -\cS(d_{g_i}z)(d_g/\cS(z))^{1-2g_i} (d_{g_i})!\cdot b_{g_i} \\
&=& d_g^{2g-2} [z^{2g}] \frac{{\rm exp}(d_gz\cS'(z)/\cS(z))}{\cS(z)^{2g+1}} \\
&& \hspace{13pt} \sum_{1\leq g^\bot\leq g }  {2 g^\bot}  [y^{-2(g-g^\bot)}] {\rm exp}\left( \sum_{\widetilde{g}>0} -\left(\frac{z}{y}\right)^{2\widetilde{g}}\cS(d_{\widetilde{g}}z)(d_{{g}}/\cS(z))^{1-2\widetilde{g}} (d_{\widetilde{g}})!\cdot b_{\widetilde{g}} \right)\\
&=& d_g^{2g-2} [z^{2g}] \frac{{\rm exp}(d_gz\cS'(z)/\cS(z))}{\cS(z)^{2g+1}} \\
&& \hspace{13pt} \sum_{1\leq g^\bot\leq g }  {2 g^\bot}  [y^{-2(g-g^\bot)}] {\rm exp}\left(- d_g \mathcal{H}(d_g y, z)\right)\\
&=& d_g^{2g-2} [z^{2g}] \frac{{\rm exp}(d_gz\cS'(z)/\cS(z))}{\cS(z)^{2g+1}} \left(\left(y+1+y\frac{\partial}{\partial y}\right){\rm exp}\left(-d_g\mathcal{H}\right)\right)\big|_{y=2g-1}\\
&=& d_g^{2g-2} [z^{2g}]  \left(\frac{y+1-y^2\frac{\partial \cH}{\partial y}}{\cS(z)^{2}} {\rm exp}\left(y\left(\frac{z\cS'(z)}{\cS(z)}-{\rm ln}(\cS(z))-\mathcal{H}\right)\right)\right)\big|_{y=2g-1}.
\end{eqnarray*}
Note that in the third equality we use the fact that for a Laurent series $G(y,z)$ such that all Laurent monomials $y^e z^f$ satisfy $f \geq -e \rcomment{\geq} 0$ \rcomment{and $e$ even}, which we later specialize to $G(y,z) = \exp(-d_g \mathcal{H}(y,z))$, we have that
\begin{equation} \label{eqn:crazyequationwithLaurentseries}
[z^{2g}] \sum_{1 \leq g^\bot \leq g} 2g^\bot [y^{-2(g-g^\bot)}] G(d_g y, z) = [z^{2g}] (y+1+y  \frac{\partial}{\partial y}) G(y,z) \big|_{y=2g-1}\,.
\end{equation}
The equality \eqref{eqn:crazyequationwithLaurentseries} can be checked on Laurent monomials.
%\jcomment{The above argument is the level of detail that I needed to verify the equality; please double-check}\mcomment{Thanks, I also get it now, I think it's fine.}
% \jcomment{??}
% Note that in the third equality, the operator  $\left(y+1+y\frac{\partial}{\partial y}\right)$ appears to replace the term $2g^\bot$ above. \jcomment{I still don't understand}
\end{proof}

\subsection{Euler characteristic of connected components of minimal strata}

We want to refine the previous Euler characteristic computation and compute the Euler characteristic of the connected components of minimal strata. In order to do so, we show first a spin version of Theorem \ref{thm:ECminformula}. Recall that we set  
\[\chi(\M_g(2g-1))^\sp:= \chi(\M_g(2g-1)^\even)-\chi(\M_g(2g-1)^\odd)\]
to be difference of the orbifold Euler characteristics of the even and odd components of the minimal stratum $\M_g(2g-1)$.

\begin{theorem}\label{thm:ECminformulaspin}
Assume part (4) of Assumption \ref{assumption} holds, then for all $g\geq 1,$ we have
    \begin{align*}
&\chi(\M_g(d_g))^\sp= \!\!\sum_{1\leq g^\bot\leq g \atop n\geq 0} \!2 g^\bot \!\!\!\! \sum_{\underline{g}=(g_1,\dots,g_n)\atop \sum g_i=g-g^\bot}	\frac{(-1)^n }{n!}d_g^{d_{g^\bot}+n-1}\cA^\sp_{g^\bot}(d_g,-d_{\underline{g}})\prod_{i=1}^n (d_{g_i})!\cdot \widetilde{b}_{g_i}\\
	\end{align*}
	where $d_g = 2g-1$, $\widetilde{b}_g=\frac{-2^{g-1}}{2^{2g-1}-1}b_g$ and $d_{\underline{g}}=(d_{g_1},\dots, d_{g_n})$.
\end{theorem}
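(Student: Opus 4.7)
The plan is to follow the proof strategy of Theorem \ref{thm:ECminformula} step by step, substituting the spin-weighted fundamental class $[\oM_g(a)]^\sp$ for the ordinary class throughout. The key observation enabling this substitution is Proposition \ref{pro:parityct_improved}: since $\M_g(2g-1)$ has odd signature and $k=1$ is odd, all compact-type boundary graphs arising in the analysis carry only odd twists on their edges, and the parity of a nearby smoothing equals the sum of the parities at the vertices. Consequently, the spin class of each such boundary stratum splits as a tensor product across its vertices.

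First, I would establish a spin analogue of Proposition \ref{prop:ECmin}. Running the arguments of Proposition \ref{prop:toppsiformula} and Corollary \ref{cor:eclg1} with $[\oM_g(d_g)]^\sp$ in place of $[\oM_g(d_g)]$, and invoking Proposition \ref{pro:parityct_improved} to factor the spin class of each tree boundary stratum across its vertices, one obtains a formula involving a residue-conditioned spin intersection number $\cA^{\sp,\frakR(n-1)}_{g^\bot}$ on the bottom level and a top-$\eta$-power $a_g^\sp := -\int_{\PP\Xi_g(2g-1)^\sp}\eta^{2g-1}$ on each top vertex. As in the non-spin case, only star-shaped graphs survive: the vanishing of top $\eta$-powers on non-minimal holomorphic strata from \cite{Sau:volumeminimal} needs to be promoted to its spin-refined version, which is itself a plausible intermediate statement but must be verified. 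Assumption \ref{assumption}(4) then identifies the $g^\bot = g$ contribution with $\cA_g^\sp(d_g)$.

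Next, I would adapt Lemma \ref{lem:residuecond} and Proposition \ref{prop:fullresidue} to the spin setting. The tautological relations of Propositions \ref{prop:Adrienrel} and \ref{prop:AdrienR} can be intersected against the spin class, and since the relevant boundary divisors are all of compact type with odd twists, Proposition \ref{pro:parityct_improved} again splits the parity across vertices. The identical recursion then yields
$$\cA_g^{\sp,\frakR(n-1)}(a) = -\!\!\sum_{(\Gamma,I)\in \mathrm{TR}(g,a)}\!\! \cA^\sp_g(I(v_0)) \cdot F(\Gamma,I).$$

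Substituting this expansion into the decomposition of $\chi(\M_g(d_g))^\sp$ and regrouping the resulting tree sum exactly as in the proof of Theorem \ref{thm:ECminformula} reduces the claim to a spin analogue of Lemma \ref{lem:regrouping}, namely
$$(2g-1)!\,\widetilde{b}_g = \sum_{\Gamma \in \widetilde{\mathrm{TR}}(g)} \frac{1}{|\mathrm{Aut}(\Gamma)|} \prod_{v \notin \mathrm{Leaf}(\Gamma)} -(-I(v)^+)^{n(v)-2} \prod_{v \in \mathrm{Leaf}(\Gamma)} d_{g(v)}\, a^\sp_{g(v)}.$$
The main obstacle is the proof of this identity, which demands a spin analogue of the Masur-Veech formula $[z^{2g}]\mathcal{F}(z)^{2g} = (2g)!\,b_g$ of \cite{Sau:volumeminimal}. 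Concretely, one must compute $a_g^\sp$ in sufficiently closed form and verify that $\mathcal{F}^\sp(z) := 1 + \sum_{g\geq 1}(2g-1)a_g^\sp z^{2g}$ satisfies $[z^{2g}]\mathcal{F}^\sp(z)^{2g} = (2g)!\,\widetilde{b}_g$. The peculiar combinatorial ratio $\widetilde{b}_g/b_g = -2^{g-1}/(2^{2g-1}-1)$ strongly suggests an input from the theory of spin Hurwitz numbers, or alternatively from a direct computation involving the hyperelliptic (odd) spin components of minimal strata, where the canonical cover and its ramification behavior should produce precisely these factors. Once this spin Masur-Veech identity is established, the inductive argument of Lemma \ref{lem:regrouping} applies verbatim and concludes the proof.
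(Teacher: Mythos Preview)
Your proposal is correct and follows essentially the same approach as the paper. The two points you flag as needing verification are precisely the places where the paper supplies external input: for the vanishing of top $\eta$-powers, the paper observes that the arguments in \cite[Section 3.1]{Sau:volumeminimal} already establish this vanishing on \emph{each connected component} of every non-minimal holomorphic stratum, so no separate spin-refined statement is required; and for the spin analogue of Lemma~\ref{lem:regrouping}, the required identity
\[
(2g-1)\,a_g^{\sp} \;=\; \sum_{\substack{n\geq 1\\ g_1+\ldots+g_n=g}} \frac{(-2g+1)^{n-1}}{n!} \prod_{i=1}^n (2g_i-1)!\,\widetilde{b}_{g_i}
\]
is imported from \cite[Corollary~6.11]{chmosaza}, which computes the difference of Masur--Veech volumes of the odd and even components by representation-theoretic methods (your guess about a spin Hurwitz input is on the right track).
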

\begin{proof}
First of all recall that by a generalized version of the Gauss-Bonnet theorem (see for example \cite[Sec. 2]{CMZ20}), we have
\[(-1)^{d_g(a)}\chi(\M_g^\sp(a))=c_{d_g(a)}(\Omega^1_{\PP\Xi_g(a)}(\log(D)))\cdot [\PP\Xi^\sp_g(a)]\]
where $D=\PP\Xi_g(a)\setminus \M_g(a)$. 
The top Chern class of the logarithmic cotangent bundle $\Omega^1_{\PP\Xi_g(a)}(\log(D))$ was computed (as a cohomology class) in \cite[Theorem 9.10]{CMZ20}. It can be written as a sum over all level graphs $\oGamma'$ for $\PP\Xi_g(a)$ as in the right-hand side of \eqref{eqn:CMZeulercharformula}, with integrals replaced by gluing pushforwards.
As in the proof of Corollary \ref{cor:eclg1} we can then regroup the summands according to their first undegeneration $\oGamma$ and use Proposition \ref{prop:toppsiformula} to obtain 
\begin{align} \label{eqn:topChernlogcotangent}
	 	c_{d_g(a)}(\Omega^1_{\PP\Xi_g(a)}(\log(D)))&=(d_g(a)+1)\cdot (a_0\psi_0)^{d_g(a)}\cdot [\PP\Xi_g(a)]\\
	 	&\!\!\!\!\!\!\!\!\! \!\!\!\!\!\!\!\!\! \!\!\!\!\!\!\!\!\! -\!\!\!\!  \sum_{\oGamma\in \LG_1(g,a)}\!\!\!\!  (d_\oGamma^{[-1]}+1)\cdot \ell_\oGamma \cdot \eta^{d_\Gamma^{[0]}}\cdot (a_0\psi_{0})^{d_\oGamma^{[-1]}} \cdot [\PP\Xi(\overline{\Gamma})] \nonumber
 \end{align}
Here we note that while Proposition \ref{prop:toppsiformula} was formulated on the level of intersection numbers, the proof only uses relations in the Chow groups and can thus be lifted to an equality of cohomology classes.

%\jcomment{I added the next two paragraphs in order to make more precise the manipulations we have to perform to generalize Proposition \ref{prop:ECmin}. Please proofread and see if you agree.}

Now we want to argue that, as in the proof of Proposition \ref{prop:ECmin}, all non-star graphs in the sum above have a trivial contribution. To see this, first note that when pairing the class \eqref{eqn:topChernlogcotangent} with $[\oM_g(a)]^\sp$, the fundamental class of each connected component of the loci $\PP\Xi(\overline{\Gamma})$ is simply multiplied by $\pm 1$, depending on the parity of the associated differential. Then, we may carry out the argument in Proposition \ref{prop:ECmin}, which converts the top power of $\eta$ on $\PP\Xi_{g(v)}(I(v))^{[0]}$ into a product over the level zero strata, on each component of $\PP\Xi_{g(v)}(I(v))^{[0]}$ separately. Finally, we observe that the proofs explained in Section 3.1 of \cite{Sau:volumeminimal} show that the top power of $\eta$ vanishes \emph{on each connected component} of every non-minimal stratum of holomorphic differentials. Thus, even in the spin setting, only terms coming from star graphs $\oGamma$ can give non-trivial contributions in \eqref{eqn:topChernlogcotangent}.

Since these terms are all of compact type, we know that the parity under the associated gluing map is simply the sum of parities at all vertices by Proposition \ref{pro:parityct_improved}. %\mcomment{WRONG citation. We should add a remark in Section \ref{Sect:spinrefinement} about star shaped graphs spin, citing CMSZ20?}.
%\jcomment{Oh, thanks for catching this, I meant Proposition \ref{prop:parityct}!}
%\jcomment{Add corollary, prove by moving vertices to different levels}
This implies that we can now repeat the proof of Proposition \ref{prop:ECmin}, replacing all fundamental classes by their spin counterparts, and arrive at the following formula:

 \begin{align*}
&\chi(\cM^\sp_{g}(d_g))=2g\cdot d_g^{d_g-1}\cdot \cA^\sp_g(d_g)\\
&+\sum_{g^\bot=1}^{g-1} 2g^\bot \sum_{\underline{g}=(g_1, \dots , g_n)\atop |\underline{g}|=g-g^\bot}	\frac{ (-1)^{n+1} }{n!}d_g^{d_{g^\bot}}\cA_{g^\bot}^{\sp,\frakR(n-1)}(d_g,-d_{\underline{g}})\prod_{i=1}^n d_{g_i} \int_{\PP\Xi^\sp_{g_i}(d_{g_i})}\eta^{d_{g_i}}.\\
\end{align*}
Note that in order to obtain the first term above, we use part (4) of Assumption \ref{assumption} to replace $\int_{[\oM_g(d_g)]^\sp} \psi_1^{d_g}$ by $-\cA_g^\sp(d_g)/d_g$.
%\jcomment{Propose to add: "Note that in order to obtain the first term above, we use part (4) of Assumption \ref{assumption} to replace $\int_{[\oM_g(d_g)]^\sp} \psi_1^{d_g}$ by $-\cA_g^\sp(d_g)/d_g$." }
%
%\jcomment{We should define $\cA_{g^\bot}^{\sp,\frakR(n-1)}$, even though it's obvious what it should be.}

Before continuing, we remark that Lemma \ref{lem:residuecond} and its consequence Proposition \ref{prop:fullresidue} are true in the spin case after substituting the terms with their spin counterparts. Indeed the only graphs appearing are of compact type, and so again the parity of the graph is given by the sum of the parities of its vertices. Moreover, when generalizing the proof of Proposition \ref{prop:fullresidue} we use in the end that in genus zero, the spin cycle agrees with the fundamental class $[\oM_{0,n}]$.

Using the analogue of Proposition \ref{prop:fullresidue}, we can now run the same argument as in the proof of Theorem \ref{thm:ECminformula} to reduce to the spin-version of Lemma \ref{lem:regrouping}, where $b_g$ is replaced by $\widetilde{b}_g$ and $a_g$ is replaced by 
$$
a_g^\sp=-\int_{[\PP\Xi(2g-1)]^\sp} \eta^{2g-1}\,.
$$
In the proof of this lemma, the role of equality
\eqref{eqn:lemregrouping}
is played by the corresponding equality
\begin{equation*} 
(2g-1) a_g^\sp=\sum_{n\geq 1\atop g_1+\ldots+g_n=g}\frac{(-2g+1)^{n-1}}{n!} \prod_{i=1}^n (2g_i-1)! \widetilde{b}_{g_i}
\end{equation*}
which is proved in ~\cite[Corollary~6.11]{chmosaza} by means of representation theory. Indeed, from~\cite{CMZ20} and~\cite{Sau:volumeminimal}, the intersection number $a_g^\sp$ is (up to a simple combinatorial coefficient) the difference between the Masur-Veech volumes of the odd and even component of $\cM_g(2g-1)$ computed in \cite{chmosaza}.

% \eqref{eq:defag} is played by the coefficient extraction formula
% \begin{equation}\label{eq:defagsp}
%     [z^{2g}] \left(1+\sum_{g>1} (2g-1) a_g^\sp z^{2g}\right)^{2g} = (2g)! \widetilde{b}_g\,.
% \end{equation}
% \jcomment{Avoid double-inversion, give number to equation above}
% \mcomment{ASK:} Indeed, from~\cite{CMZ20} and~\cite{Sau:volumeminimal}, the intersection number $a_g^\sp$ is (up to a simple combinatorial coefficient) the difference between the Masur-Veech volumes of the odd and even component of $\cM_g(2g-1)$. This volume cannot be computed directly by intersection theory, but relation~\ref{eq:defagsp} is proved in ~\cite[Corollary~6.11]{chmosaza} by means of representation theory.
%\jcomment{Adrien: Is it possible to give a bit more context here? E.g. some reference where the volume is explicitly compared to the top power of $\eta$? I must say I am a bit confused since \cite[Corollary~6.11]{chmosaza} looks like it has something related to $a_g^\sp$ on the left-hand side and some purely formal power-series extraction right-hand side, which seems kind of the opposite of the equality above (which has power-series extraction + $a_g^\sp$ on the left and a pure combinatorial term on the right).}
The final observation that we need is that when generalizing the proof of Lemma \ref{lem:regrouping}, which was an induction in $g \geq 1$, the initial equation to check here is $\widetilde{b}_1 = a_1^\sp$. On the one hand, from the formula we have $\widetilde{b}_1 = - b_1$. On the other hand, the equality $[\oM_1(1)]^\sp = -[\oM_{1,1}]$ following from \eqref{eqn:M1aspin} implies $a_1^\sp = -a_1$ and so we can conclude from the known equality $b_1 = a_1$.
\end{proof}
Using this theorem, the proof of Theorem~\ref{thm:eulerseriesspin} is analogous to the proof of Theorem~\ref{thm:eulerseries} given in the previous section. Here we use Theorem \ref{th:spin} to explicitly compute the function $\cA_{g^\bot}^\sp$ and thus we need parts (1) to (3) of Assumption \ref{assumption}.

Note that one can then compute the Euler characteristic of all the connected components of minimal strata. Indeed recall from \cite[Cor. 1]{KZ} that for $g=2$ there is only the hyperelliptic component, for $g=3$ there is hyperelliptic and the odd component, and for $g\geq 4$ there are exactly three components given by the hyperelliptic component and the non-hyperelliptic odd and even components.
%\jcomment{Maybe it's standard to write like this, but wouldn't it be more precise to write "... given by the hyperelliptic component and the non-hyperelliptic odd and even components"? Since the hyperelliptic is part of the odd or even locus respectively.}
Moreover by  \cite[Cor. 3]{KZ} we know that the hyperelliptic component has even parity for odd genera and odd parity for even genera. Hence, using the previous information together with the result $\chi(\M_g^{\text{hyp}}(d_g))=\frac{-1}{4g(2g+1)}$ (see \cite[Prop. 10.4]{CMZ20}), we can use the formulas we have shown for $\chi(\M_g(d_g))$ and $\chi(\M_g(2g-1))^\sp$ to compute the Euler characteristic of every connected component of the minimal strata.

%\mcomment{Added (or put in intro?):}
Using the statements of the previous paragraph, one can independently compute the Euler characteristic of the spin components in genus $g=2,3$. One can then double-check that the values
given in Table \ref{cap:EulerHolo} using the formula of Theorem \ref{thm:eulerseriesspin} are indeed correct in genus $2$ and $3$.

\begin{appendix}
\section{Polynomiality properties in the splitting formula} \label{Sect:Polyproperties}
The goal of this section is to prove the following result used in the proof of Proposition \ref{prop:psiDRformula}.
\begin{lemma} \label{Lem:DRsplittingpolynomiality}
For $g,n \geq 0$ with $2g-2+n>0$, the right-hand side of equation \eqref{eqn:psiDRformula} from Proposition \ref{prop:psiDRformula} is given by a (cycle-valued) polynomial in $a$.
\end{lemma}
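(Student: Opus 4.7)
The plan is to group the sum on the right-hand side of \eqref{eqn:psiDRformula} according to combinatorial ``shapes'' of two-level graphs, reducing polynomiality to a classical fact about polynomial summation over lattice points in an open simplex.

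For $(\oGamma, I) \in \mathrm{LG}_1^2(g,a)$, its shape consists of the genera $g_0, g_{-1}$ of the two vertices, the partition $\{1,\ldots,n\} = L_0 \sqcup L_{-1}$ of legs (with $g_0 + g_{-1} + h - 1 = g$), and the number $h \geq 1$ of edges; the shape is independent of $a$ and of the twist values. Once a shape is fixed, specifying $I$ amounts to choosing a positive integer vector $(w_1, \ldots, w_h) \in \mathbb{Z}_{>0}^h$ with
\[
w_1 + \cdots + w_h = W(a) := k(2g_0 - 2 + |L_0| + h) - \sum_{i \in L_0} a_i,
\]
which is a linear function of $a$ for $k = |a|/(2g-2+n)$. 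Passing from the unordered sum with weight $1/|\Aut(\oGamma)|$ to an ordered sum over labelled edges with weight $1/h!$, each summand of a fixed shape becomes
\[
\frac{f_{s,t}}{h!} \Bigl(\prod_{j=1}^h w_j\Bigr) \zeta_{\Gamma *}\bigl(\DR_{g_0}(a_{L_0}, w_\bullet) \otimes \DR_{g_{-1}}(a_{L_{-1}}, -w_\bullet)\bigr),
\]
using that $m(\oGamma) = \prod_j w_j$ and that $f_{s,t}$ depends only on the shape.

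Next I would invoke the polynomiality of the double ramification cycle from \cite{JPPZ17, PZ21}: the tensor product $\DR_{g_0} \otimes \DR_{g_{-1}}$ is a tautological-class-valued polynomial in the $a_i$ and $w_j$ of total degree $2(g_0+g_{-1}) = 2(g-h+1)$, and multiplying by $\prod_j w_j$ preserves polynomiality while raising the total degree by $h$. Writing $Q(a, w)$ for the resulting polynomial, the contribution of the shape to the right-hand side of \eqref{eqn:psiDRformula} is
\[
\frac{f_{s,t}}{h!}\, \zeta_{\Gamma *} \Biggl( \sum_{\substack{w_j \geq 1 \\ w_1 + \cdots + w_h = W(a)}} Q(a, w_1, \ldots, w_h) \Biggr).
\]
The key remaining input is the classical fact that for any polynomial $Q(w_1, \ldots, w_h)$, the sum $\sum_{w_j \geq 1,\, \sum w_j = W} Q(w)$ agrees with a polynomial in $W$ for $W$ sufficiently large. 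After substituting $w_h = W - \sum_{j<h} w_j$, this follows by iterated application of Faulhaber's formula: each single-variable summation of a polynomial over an arithmetic range returns a polynomial in the running upper bound.

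The main obstacle I expect is the mismatch between this asymptotic polynomiality and the fact that the actual sum vanishes when $W < h$, together with the need to carefully track degree bounds (since the subsequent characterization of $\DR$-intersections relies on a uniform degree estimate). Both issues are harmless in the present application: the lemma is invoked in the proof of Proposition~\ref{prop:psiDRformula} only to enable a Zariski-density argument, and a polynomial identity holding on a Zariski-dense subset of $\mathbb{Z}^n$ uniquely determines its extension. Summing the polynomial contributions over the finitely many shapes then yields the desired cycle-valued polynomial in $a$.
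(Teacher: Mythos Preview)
Your grouping by shape and reduction to Faulhaber-type summation is the right start, and matches the paper. The gap is in the final step, where you dismiss the ``mismatch'' as harmless.

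Your shape records the level assignment, so a single underlying stable graph $\Gamma$ gives rise to two shapes $\Gamma^+,\Gamma^-$ with \emph{opposite} linear functions: $W_{\Gamma^-}(a)=-W_{\Gamma^+}(a)$. For a given $a$, exactly one of the two shapes lies in $\mathrm{LG}_1^2(g,a)$ (the one with positive $W$), and the other contributes zero. Thus there is no region of $a$'s on which all shapes are simultaneously in their polynomial regime, and your proposed ``sum of polynomial contributions over the finitely many shapes'' is not the right-hand side of \eqref{eqn:psiDRformula}: either you sum only over shapes with $W(a)>0$ (which is only piecewise polynomial), or you sum the polynomial extrapolations over all shapes (which double-counts, giving $P^+_\Gamma+P^-_\Gamma$ rather than the actual piecewise function). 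Your Zariski-density remark does not rescue this: the argument in Lemma~\ref{Lem:psisplittingrestricta} needs the right-hand side to equal \emph{one} polynomial on a Zariski-dense set, not different polynomials on different chambers.

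The missing ingredient is a parity argument showing $P^+_\Gamma=P^-_\Gamma$ as polynomials, so that the piecewise function is in fact a single polynomial. The paper obtains this from two facts: (i)~$\DR_h(-A)=\DR_h(A)$, so the integrand $Q(a,w)$ is even in $(a,w)$; (ii)~for $Q$ even, the function $c\mapsto\sum_{b_1+\cdots+b_e=c,\,b_i\ge1} b_1\cdots b_e\,Q(a,b)$ is an \emph{odd} polynomial in $c$ (this is Lemma~\ref{Lem:technicalsummationQ}, and is where the extra factor $b_1\cdots b_e=m(\oGamma)$ is essential). Oddness gives $P^+(-a)=-P^+(a)$, while comparing the two orientations directly gives $P^-(-a)=-P^+(a)$; together these force $P^+=P^-$. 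Once you have this, the contribution of $\Gamma$ is globally equal to the single polynomial $P^+_\Gamma$, and summing over (unoriented) $\Gamma$ finishes the proof.
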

Before we begin, we need two technical preliminaries about sums of polynomials over partitions of given numbers. Here we remark that for the entire section we have the convention that when iterating over sums $b_1 + \ldots + b_e = c$ for $e,c$ fixed, the terms are ordered (so that for $e=2, c=3$ the sums $1+2=2+1=3$ are counted separately).
\begin{lemma} \label{Lem:technicalsumformula}
Let $f \geq 1$, then the function
\[
S_f : \mathbb{Z}_{\geq 0} \to \mathbb{Z}, c \mapsto \sum_{\substack{b_1 +b_2 = c\\b_i \in \mathbb{Z}_{\geq 1}}} (b_1 b_2)^f
\]
is given by a polynomial in $c$ satisfying $S_f(-c)=-S_f(c)$.
\end{lemma}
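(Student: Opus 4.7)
The plan has two steps: first polynomiality of $S_f$, then its oddness. For polynomiality, I would expand $(c-b)^f = \sum_{j=0}^f \binom{f}{j} c^{f-j}(-b)^j$ inside the sum, yielding
\begin{equation*}
S_f(c) \;=\; \sum_{b=1}^{c-1} b^f(c-b)^f \;=\; \sum_{j=0}^{f}(-1)^j\binom{f}{j}c^{f-j}\,P_{f+j}(c),
\end{equation*}
where $P_k(c) := \sum_{b=1}^{c-1} b^k$. Each $P_k$ is the classical Faulhaber polynomial, which we may write as $P_k(c) = \tfrac{1}{k+1}\bigl(B_{k+1}(c) - B_{k+1}(1)\bigr)$ with $B_{k+1}$ the Bernoulli polynomial, and hence extends to a polynomial in $c$. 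This immediately gives the polynomial extension of $S_f$.

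For the antisymmetry $S_f(-c) = -S_f(c)$, I would first establish the reflection identity
\begin{equation*}
P_k(-c) \;=\; (-1)^{k+1}\bigl(P_k(c) + c^k\bigr) \qquad (k\geq 1),
\end{equation*}
which follows by combining the functional equation $B_{k+1}(1-x) = (-1)^{k+1} B_{k+1}(x)$ with the telescoping relation $B_{k+1}(c+1)-B_{k+1}(c) = (k+1)c^k$, and noting that the odd-indexed Bernoulli numbers $B_{k+1}$ ($k\geq 2$ even) vanish. Substituting this identity, together with $(-c)^{f-j} = (-1)^{f-j} c^{f-j}$, into the expansion above and collecting sign exponents gives
\begin{equation*}
S_f(-c) \;=\; -\,S_f(c) \;-\; c^{2f}\sum_{j=0}^{f}(-1)^j\binom{f}{j}.
\end{equation*}
The final sum equals $(1-1)^f = 0$ since $f\geq 1$, which concludes the proof.

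The work is essentially bookkeeping: the only place where care is required is tracking the four sign factors $(-1)^j$, $(-1)^{f-j}$, $(-1)^{f+j+1}$, and the $(-1)^{2f}$ arising from the reflection. I do not expect any conceptual obstacle; the slick cancellation at the end is what makes the statement clean.
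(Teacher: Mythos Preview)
Your proof is correct and follows essentially the same approach as the paper: both expand $(c-b)^f$ binomially, invoke Faulhaber's formula for polynomiality, and conclude oddness using the vanishing of odd Bernoulli numbers together with the identity $\sum_{j=0}^f (-1)^j\binom{f}{j}=0$. The only cosmetic difference is that the paper extracts the coefficient of $c^{2f}$ directly from the Bernoulli-number expansion, whereas you package the same facts into the reflection identity $P_k(-c)=(-1)^{k+1}(P_k(c)+c^k)$ before substituting.
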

\begin{proof}
Let $c \geq 1$ be an integer, then we write $b_1=b, b_2 = c-b$ and expand the binomial to compute
\begin{align*}
S_f(c)&=\sum_{b=1}^{c-1} b^f (c-b)^f = \sum_{b=1}^{c-1} b^f \sum_{i=0}^f \binom{f}{i} c^{f-i} (-b)^i \\
&= \sum_{i=0}^f (-1)^i \binom{f}{i} c^{f-i} \sum_{b=1}^{c-1} b^{f+i}\,.
\end{align*}
Using Faulhaber's formula, we can compute the sum of the $b^{f+i}$ in terms of Bernoulli numbers $B_j$, obtaining
\begin{align*}
S_f(c)&=\sum_{i=0}^f (-1)^i  \binom{f}{i} c^{f-i} \frac{1}{f+i+1} \sum_{j=0}^{f+i} \binom{f+i+1}{j} B_j c^{f+i+1-j}\\
&=\sum_{i=0}^f \sum_{j=0}^{f+i} (-1)^i  \binom{f}{i} \binom{f+i+1}{j} \frac{1}{f+i+1}   B_j c^{2f+1-j}\,.
\end{align*}
Since all odd Bernoulli numbers except for $B_1$ vanish, the lemma is proved once we show that the coefficient of $c^{2f+1-1}$ vanishes, since then only odd powers of $c$ appear above. This corresponds to extracting the terms for $j=1$ and so indeed we obtain
\begin{align*}
[c^{2f}] S_f(c) &= B_1 \sum_{i=0}^f (-1)^i  \binom{f}{i} \binom{f+i+1}{1} \frac{1}{f+i+1}  \\
&=B_1 \sum_{i=0}^f (-1)^i  \binom{f}{i} = 0\,.
\end{align*}
\end{proof}

\begin{lemma} \label{Lem:technicalsummationQ}
Let $Q(A,B) = Q(a_1, \ldots, a_n, b_1, \ldots, b_e)$ be a polynomial with rational coefficients in $n+e$ variables (for $n \geq 0, e \geq 1$) satisfying $Q(-A, -B)=Q(A,B)$. Let $c : \mathbb{Q}^n \to \mathbb{Q}$ be a nonzero $\mathbb{Q}$-linear map and denote by $\Lambda_c^+ \subseteq \mathbb{Z}^n$ the set of integer vectors $A$ for which $c(A)$ is integral and non-negative. Then the expression
\begin{equation} \label{eqn:Ptechnical}
P(A) = \sum_{\substack{b_1 + \ldots + b_e = c(A)\\b_i \in \mathbb{Z}_{\geq 1}}} b_1 \cdots b_e \cdot Q(A,B),\ \ A \in \Lambda_c^+
\end{equation}
is given by a polynomial expression in the entries of $A$, and this polynomial is divisible by $c(A)$ and satisfies $P(-A)=-P(A)$.
\end{lemma}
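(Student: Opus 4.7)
The strategy is to decouple the $A$- and $B$-variables, reducing to a polynomiality and reflection statement for the combinatorial sum
\[
T_\nu(c) \;:=\; \sum_{\substack{b_1+\ldots+b_e=c\\ b_i \in \mathbb{Z}_{\geq 1}}} b_1^{\nu_1} \cdots b_e^{\nu_e}, \qquad \nu=(\nu_1,\ldots,\nu_e) \in \mathbb{Z}_{\geq 1}^e.
\]
First I would expand $Q(A,B) = \sum_{\mu} Q_\mu(A)\, B^\mu$, where $\mu$ ranges over multi-indices in $\mathbb{Z}_{\geq 0}^e$; the hypothesis $Q(-A,-B) = Q(A,B)$ translates into $Q_\mu(-A) = (-1)^{|\mu|} Q_\mu(A)$. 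Pulling this through the sum, for $A\in \Lambda_c^+$ one has
\[
P(A) \;=\; \sum_{\mu} Q_\mu(A)\cdot T_{\mu+\mathbf 1}\bigl(c(A)\bigr), \qquad \mu+\mathbf{1}=(\mu_1+1,\ldots,\mu_e+1),
\]
so the lemma reduces to three properties of $T_\nu$ for $\nu \in \mathbb{Z}_{\geq 1}^e$: (i) polynomiality in $c$, (ii) divisibility of $T_\nu(c)$ by $c$, and (iii) the reflection identity $T_\nu(-c) = (-1)^{|\nu|+e+1}\,T_\nu(c)$.

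For (i), the substitution $b_i=y_i+1$ rewrites $T_\nu(c)$ as a sum of the polynomial $\prod (y_i+1)^{\nu_i}$ over the lattice points of the dilated standard simplex $\{y\in \mathbb{Z}_{\geq 0}^e:\sum y_i=c-e\}$, and the generalized Ehrhart theorem (polynomial sums over dilates of a rational polytope are polynomial in the dilation factor) gives that $T_\nu(c)$ is a polynomial in $c$. Property (ii) is immediate: the condition $b_i\geq 1$ forces $\sum b_i\geq e$, so the combinatorial sum is empty for $0\leq c\leq e-1$ and the polynomial $T_\nu(c)$ vanishes at these $e$ values; in particular $c\mid T_\nu(c)$.

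The main step is (iii), which I would deduce from Ehrhart--MacDonald reciprocity in its polynomial-weighted form: for the $(e-1)$-dimensional rational simplex $\Delta=\{y_i\geq 0,\sum y_i=1\}\subset \mathbb{R}^e$ with relative interior $\Delta^\circ=\{y_i>0,\sum y_i=1\}$ and any polynomial $f$ of degree $d$, the quantities $\sum_{y\in t\Delta\cap \mathbb{Z}^e} f(y)$ and $\sum_{y\in t\Delta^\circ\cap \mathbb{Z}^e} f(y)$ are polynomials in $t$ satisfying
\[
\sum_{y\in t\Delta\cap \mathbb{Z}^e}\! f(y)\bigg|_{t\mapsto -t} \;=\; (-1)^{(e-1)+d}\sum_{y\in t\Delta^\circ\cap \mathbb{Z}^e}\! f(y).
\]
Applied to $f(y)=\prod y_i^{\nu_i}$ with each $\nu_i\geq 1$, boundary terms (some $y_i=0$) vanish, so both the closed and the relative-interior sums equal $T_\nu(t)$; substituting gives $T_\nu(-t)=(-1)^{|\nu|+e-1}T_\nu(t)=(-1)^{|\nu|+e+1}T_\nu(t)$, which is exactly (iii). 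As an alternative route avoiding polytopal machinery, one can argue by induction on $e$ using Faulhaber's formula and the Bernoulli-polynomial reflection $B_n(1-x)=(-1)^n B_n(x)$, which generalises Lemma~\ref{Lem:technicalsumformula}.

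Combining (i)--(iii), the expression $P(A)=\sum_\mu Q_\mu(A)\cdot T_{\mu+\mathbf 1}(c(A))$ is a polynomial in the entries of $A$, is divisible by $c(A)$, and satisfies $P(-A)=-P(A)$: each summand on the right picks up a sign $(-1)^{|\mu|}\cdot (-1)^{|\mu+\mathbf 1|+e+1}=(-1)^{2|\mu|+2e+1}=-1$. The principal obstacle is the reflection identity (iii); polynomiality, divisibility, and the final sign bookkeeping are routine.
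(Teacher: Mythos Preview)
Your proof is correct, and it takes a genuinely different route from the paper's argument. The paper proceeds by induction on $e$: it handles $e=1$ directly, then $e=2$ by symmetrizing in $b_1,b_2$, expanding in the elementary symmetric functions $b_1+b_2$ and $b_1b_2$, and invoking the explicit Faulhaber computation of Lemma~\ref{Lem:technicalsumformula}; for $e\geq 3$ it peels off the last two variables, applies the proven $e=2$ case to the inner sum $\sum_{b_{e-1}+b_e=\widetilde b}$, and reduces to the case $e-1$. Your approach instead decomposes $Q$ into $B$-monomials once and for all, reducing the whole lemma to the three properties (i)--(iii) of the pure combinatorial sums $T_\nu(c)$, which you then obtain from weighted Ehrhart--MacDonald reciprocity for the standard simplex. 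This is cleaner and more conceptual, and it yields a slightly sharper intermediate statement (the exact reflection law for $T_\nu$ with any exponent vector $\nu\in\mathbb{Z}_{\geq 1}^e$). The paper's argument, by contrast, is entirely elementary and self-contained, requiring no polytopal input beyond Faulhaber's formula. Your remark that the reflection (iii) can alternatively be proved by induction with Faulhaber and $B_n(1-x)=(-1)^nB_n(x)$ is essentially the spirit of the paper's inductive scheme.
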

\begin{proof}
We prove this result by induction on $e$, treating the cases $e=1,2$ separately. For $e=1$ the expression \eqref{eqn:Ptechnical} takes the simple shape $P(A) = c(A) \cdot Q(A,c(A))$, which is clearly polynomial in $A$, divisible by $c(A)$ and satisfies
\[
P(-A) = c(-A) \cdot Q(-A,c(-A)) = - c(A) \cdot Q(A, c(A)) = - P(A)\,,
\]
using that $c$ is linear and that $Q$ is even.

For the case $e=2$ we first note that we can assume without loss of generality, that $Q$ is symmetric in the variables $b_1, b_2$. Indeed, since the tuples $(b_1, b_2)$ in the sum over $b_1 + b_2 = c(A)$ are symmetric under exchanging $b_1, b_2$, we can replace $Q$ by the symmetric average $\tilde Q(A,B) = ( Q(A,b_1, b_2) + Q(A,b_2, b_1))/2$ without changing the value of $P$. 

Since the symmetric functions in $b_1, b_2$ are generated as an algebra by the elementary symmetric functions $b_1 + b_2$ and $b_1 b_2$, it suffices to prove the lemma for $Q$ of the form
\[
Q(A,b_1, b_2) = q(A) \cdot (b_1 + b_1)^{f_1} (b_1 b_2)^{f_2}\,,
\]
where $q$ is a polynomial, which must be even for $f_1$ even and odd for $f_1$ odd. Plugging this form into the expression for $P$ we have
\begin{align}
P(A) &= q(A) \sum_{\substack{b_1 + b_2 = c(A)}} b_1 b_2 (b_1 + b_1)^{f_1} (b_1 b_2)^{f_2} \nonumber\\
&= q(A) c(A)^{f_1} \sum_{\substack{b_1 + b_2 = c(A)}} (b_1 b_2)^{f_2+1} = q(A) c(A)^{f_1} S_{f_2+1}(c(A))\,, \label{eqn:Pfore2}
\end{align}
where for the last equality we use the result and notation of Lemma \ref{Lem:technicalsumformula}. From the form \eqref{eqn:Pfore2} we see that $P$ is polynomial. Moreover, for $f_1$ either even or odd, we see that the expression $q(A) c(A)^{f_1}$ is even in $A$. Thus, since $S_{f_2+1}$ is odd by  Lemma \ref{Lem:technicalsumformula}, the overall expression for $P$ is odd and the oddness of $S_{f_2+1}$ also implies that the term $S_{f_2+1}(c(A))$ is divisible by $c(A)$. 

We conclude by proving the result for arbitrary $e \geq 3$, assuming by induction that the result is true for smaller values of $e$. For this, we split the sum over $b_1, \ldots, b_e$ into two parts:
\begin{align}
P(A) &= \sum_{b_1 + \ldots + b_e = c(A)} b_1 \cdots b_e \cdot Q(A,B) \nonumber \\
&= \sum_{b_1 + \ldots + b_{e-2} + \widetilde b = c(A)} b_1 \cdots b_{e-2} \cdot \sum_{b_{e+1}+b_e = \widetilde b} b_{e+1} b_e \cdot Q(A,B)\,. \label{eqn:Pforebigger2}
\end{align}
By applying the proven case $e=2$ of the lemma to the auxiliary functions
\begin{align*}
\widetilde{Q}(\underbrace{A,b_1, \ldots, b_{e-2}, \widetilde b}_{=\widetilde A}, \underbrace{b_{e-1}, b_e}_{= \widetilde B}) = Q(A,B),\ \widetilde c(\widetilde A, \widetilde B) = \widetilde b\,,
\end{align*}
we find that
\[
F(A,b_1, \ldots, b_{e-2}, \widetilde b) = \sum_{b_{e+1}+b_e = \widetilde b} b_{e+1} b_e \cdot Q(A,B)
\]
is an odd polynomial in the entries of $A,B,\widetilde b$, which is divisible by $\widetilde c = \widetilde b$. In particular we can write it as $F = \widetilde b \cdot \overline{F}$ for an even polynomial $\overline{F}$. Plugging this expression back into \eqref{eqn:Pforebigger2} we find
\[
P(A) = \sum_{b_1 + \ldots + b_{e-2} + \widetilde b = c(A)} b_1 \cdots b_{e-2} \cdot \widetilde b \cdot \overline{F}(A,b_1, \ldots, b_{e-2}, \widetilde b)\,.
\]
But this sum is now covered by the proven case of the lemma for $e-1$ and all desired properties of $P$ follow.
\end{proof}

\begin{proof}[Proof of Lemma \ref{Lem:DRsplittingpolynomiality}]
For the proof, we group the summands $(\Gamma, I)$ in \eqref{eqn:psiDRformula} according to the underlying graph $\Gamma$ and show that each partial sum is a polynomial in $a$. Such a graph $\Gamma$ is specified by the data of
\begin{itemize}
\item the number $e \geq 1$ of its edges,
\item a partition $g-e+1 = g' + g''$ of the remaining genus and
\item a partition $J' \sqcup J'' = \{1, \ldots, n\}$ of the marked points.
\end{itemize}
In order to have a nonzero contribution, the markings $s,t$ must go to different vertices, and below we assume that $s \in J'', t \in J'$.
Fixing such a graph $\Gamma$ there are two possible level-assignments $\Gamma^{\pm}$ on $\Gamma$, depending on the choice of the vertex of genus $g'$ going to level $0$ (in $\Gamma^+$) or level $-1$ (in $\Gamma^-$). We claim that depending on the input vector $a$, at most one of the two orientations can give a nonzero contribution to the sum \eqref{eqn:psiDRformula}. Indeed, denote by $c : \mathbb{Q}^n \to \mathbb{Q}$ the linear function defined by
\begin{equation}
c(a) = k(2g'-2+n') - \sum_{j \in J'} a_j\text{ for }k=\frac{\sum_{i=1}^n a_i}{2g-2+n} \text{ and }n'=|J'|+e\,.
\end{equation}
In Proposition \ref{prop:psiDRformula} we only consider $a$ such that the $k$ defined above is an integer, and then in the formula \eqref{eqn:psiDRformula} for $c(a) \geq 0$ we only see contributions $(\Gamma,I)$ with underlying level-graph $\Gamma^+$  (similarly for $\Gamma^-$ and $c(a) \leq 0$). For $c(a) \geq 0$, the possible twists $I$ on $\Gamma^+$ are enumerated by partitions $b_1 + \ldots + b_e = c(a)$ for positive integers $b_i$, and the sum of all contributions from graph $\Gamma$ is given by
\begin{equation} \label{eqn:PplusDRpoly}
P^+(a)=\sum_{b_1 + \ldots + b_e = c(a)}  \frac{1}{e!} b_1 \cdots b_e  \cdot \zeta_{\Gamma *}\left(\DR_{g'}(a_{J'}, \underline{b}), \otimes \DR_{g''}(a_{J''}, -\underline{b})\right)\,.
\end{equation}
Here $\underline{b}=(b_1, \ldots, b_n)$ and again we run through the partitions of $c(a)$ with the order of the summands taken into account. Compared to the original formula this is compensated by the fact that we divide by the size $e!$ of the full automorphism group of $\Gamma$ instead of the group of automorphisms fixing a given twist.
We also note that by our conventions of $s \in J'', t \in J'$ we have $f_{s,t}(\Gamma,I)=1$ here. On the other hand, at points $a$ with $c(a) \leq 0$ the sum of contributions is given by
\begin{equation} \label{eqn:PminusDRpoly}
P^-(a)=-\sum_{b_1 + \ldots + b_e = -c(a)}  \frac{1}{e!} b_1 \cdots b_e  \cdot \zeta_{\Gamma *}\left(\DR_{g'}(a_{J'}, -\underline{b}), \otimes \DR_{g''}(a_{J''}, \underline{b})\right)\,.
\end{equation}
Using the fact that the double ramification cycle is an even polynomial in its entries (i.e. that $\DR_{g_i}(-A) = \DR_{g_i}(A)$, as follows from its formula or from Invariance I of \cite{BHPSS20}) we can apply Lemma \ref{Lem:technicalsummationQ} to conclude that both $P^+$ and $P^-$ are given by polynomials on their respective half-spaces $\{a:c(a) \geq 0\}$ and $\{a:c(a)\leq 0\}$. Moreover, using again that the $\DR_{g_i}$-cycles are even, one sees from \eqref{eqn:PplusDRpoly} and \eqref{eqn:PminusDRpoly} that $P^-(-a)=-P^+(a)$. Combining this with the fact that $P^-$ is odd by Lemma \ref{Lem:technicalsummationQ}, i.e. $P^-(-a)=-P^-(a)$, it follows that $P^+ = P^-$. Thus indeed the contribution of $\Gamma$ to the formula \eqref{eqn:psiDRformula} is polynomial everywhere, which concludes the proof of the lemma.
\end{proof}

For the results involving intersection numbers of spin double ramification cycles, we also need the following lemma on sums of polynomials over odd integers.

\begin{lemma}\label{lem:polynomialodd}
Let $n,m \geq 0$ be integers and $P\in \QQ[x_1,\ldots,x_{n+m}]$ any polynomial. Then there exists a polynomial $Q\in \QQ[x_1,\ldots,x_{n},a]$, such that for $a \geq 0$ of the same parity as $m$, we have
\begin{equation} \label{eqn:oddsumlemma}
Q(x_1,\ldots, x_{n}, a)=\sum_{\begin{smallmatrix} j_1, \ldots, j_m >0 \text{ odd},\\ j_1 + \ldots + j_m=a\end{smallmatrix}} P(x_1,\ldots, x_{n},j_1, \ldots, j_m)\,.
\end{equation}
Moreover, for $m$ odd and all terms of $P$ having of odd total degree in $j_1, \ldots, j_m$, the polynomial $Q$ is divisible by $a$.
\end{lemma}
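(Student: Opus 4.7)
The plan is as follows. By linearity of both sides of \eqref{eqn:oddsumlemma} in $P$, I first reduce to the case where $P$ is a monomial $P(x,j) = p(x_1, \ldots, x_n) \cdot j_1^{f_1} \cdots j_m^{f_m}$. The factor $p(x)$ then passes outside the summation, so it suffices to treat $P(x,j) = j_1^{f_1} \cdots j_m^{f_m}$; the ``odd total degree'' hypothesis becomes $|f| := f_1 + \ldots + f_m$ odd. The substitution $j_i = 2k_i + 1$ identifies the index set $\{j_i > 0 \text{ odd}, \sum j_i = a\}$ with the lattice simplex $\{k_i \geq 0 : \sum k_i = c\}$ for $c = (a-m)/2$, so that
\[
Q(x,a) = p(x) \cdot F(c), \qquad F(c) := \sum_{\substack{k_i \geq 0 \\ \sum k_i = c}} \prod_{i=1}^m (2k_i + 1)^{f_i}.
\]
It therefore suffices to prove that $F$ is a polynomial in $c$ which, in the divisibility case, satisfies $F(-c-m) = -F(c)$.

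For polynomiality I would use the generating function
\[
\mathcal{F}(t) := \sum_{c \geq 0} F(c)\, t^c = \prod_{i=1}^m G_{f_i}(t), \qquad G_f(t) := \sum_{k \geq 0} (2k+1)^f t^k = (2t \partial_t + 1)^f \tfrac{1}{1-t}.
\]
Each $G_f$ is a proper rational function of the form $N_f(t)/(1-t)^{f+1}$ with $\deg N_f \leq f$, so $\mathcal{F}$ is a proper rational function with a unique pole at $t=1$, of order $|f|+m$. Standard partial-fraction theory then gives that $F(c)$ agrees with a polynomial in $c$ of degree $|f| + m - 1$ for all $c \geq 0$, and hence $Q(x,a)$ is a polynomial in $a$.

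For the divisibility claim the key step is the palindromic identity
\[
G_f(1/t) = (-1)^{f+1}\, t \cdot G_f(t),
\]
which I would prove by induction on $f$: the base case $f=0$ follows from $G_0(1/t) = -t/(1-t) = -t\, G_0(t)$, and the inductive step uses that under $t \mapsto 1/t$ the operator $2t\partial_t + 1$ transforms to $-2t\partial_t + 1$, together with the computation $(-2t\partial_t + 1)(t \cdot h) = -t \cdot (2t\partial_t + 1)(h)$ for any $h$. Multiplying over $i$ yields $\mathcal{F}(1/t) = (-1)^{|f| + m}\, t^m\, \mathcal{F}(t)$. The standard Laurent duality for proper rational functions with pole only at $t = 1$---which identifies the polynomial $F(c)$ read from the Taylor expansion at $t = 0$ with the negation of the coefficient of $t^c$ in the Laurent expansion at $t = \infty$, valid for $c \leq -1$---then translates this functional equation into the reciprocity
\[
F(-c-m) = (-1)^{|f| + m + 1}\, F(c)
\]
of polynomials in $c$. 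Substituting $c = (a-m)/2$ gives $Q(x, -a) = (-1)^{|f|+m+1}\, Q(x,a)$; when $m$ and $|f|$ are both odd the sign is $-1$, so $Q$ is odd in $a$, and in particular $Q(x,0) = 0$, proving divisibility by $a$.

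The main technical point will be the Laurent duality step, which requires matching the polynomial $F$ extracted from $\mathcal{F}$ at $t = 0$ with the coefficients of its expansion at $t = \infty$. This reduces via partial fractions to the basic identity for $1/(1-t)^j$, whose Taylor coefficient at $t=0$ is $\binom{c + j - 1}{j - 1}$ while its Laurent expansion at $t = \infty$ has coefficient of $t^c$ equal to $-\binom{c + j - 1}{j - 1}$ for $c \leq -1$. All other steps are elementary, but precise sign bookkeeping throughout is essential.
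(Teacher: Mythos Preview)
Your proof is correct and follows the same reduction as the paper: decompose $P$ into monomials, pull out the $x$-factors, substitute $j_i = 2k_i + 1$ to turn the sum into a lattice-simplex sum in $c = (a-m)/2$, and then establish polynomiality in $c$ together with a reciprocity $F(-c-m) = (-1)^{|f|+m+1} F(c)$, which for $m$ and $|f|$ odd yields $Q(-a) = -Q(a)$ and hence $a \mid Q$.

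The difference is in how you establish polynomiality and reciprocity. The paper simply invokes ``a suitable version of Ehrhart reciprocity'' (together with the techniques of the preceding Lemma) to assert that $\sum_{i_\ell \geq 0,\, \sum i_\ell = b} P'(i)$ is polynomial in $b$ and that its value at $b < 0$ is $(-1)^{m-1}$ times the sum over strictly negative $i_\ell$, and then reads off the oddness directly. You instead give a self-contained generating-function argument: each $G_f(t) = (2t\partial_t + 1)^f \tfrac{1}{1-t}$ is rational with sole pole at $t=1$, so partial fractions give polynomiality; the palindromic identity $G_f(1/t) = (-1)^{f+1} t\, G_f(t)$ (proved by a clean one-line induction) gives $\mathcal{F}(1/t) = (-1)^{|f|+m} t^m \mathcal{F}(t)$, and comparing the Taylor expansion at $0$ with the Laurent expansion at $\infty$ via the basic identity for $1/(1-t)^j$ yields the reciprocity. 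Your route is more explicit and avoids citing Ehrhart reciprocity as a black box; the paper's route is shorter but leans on an external result. Both are sound.
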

\begin{proof}
By decompositing $P$ into monomials and drawing out the factors $x_1, \ldots, x_n$, it is easy to reduce to the case $n=0$ and $P$ being a monomial of some degree $e$. Then, we begin with a preparatory remark: using similar techniques as in the proof of Lemma \ref{Lem:technicalsummationQ}, or alternatively a suitable version of Ehrhart reciprocity, it is possible to show that for any polynomial $P' \in \mathbb{Q}[x_1, \ldots, x_m]$ the assignment
\begin{equation} \label{eqn:ehrhartreciprocity}
\mathbb{Z} \to \mathbb{Q}, b \mapsto 
\begin{cases}
\sum_{\begin{smallmatrix} i_1, \ldots, i_m \geq 0,\\ i_1 + \ldots + i_m=b\end{smallmatrix}} P'(i_1, \ldots, i_m) & b \geq 0\\
(-1)^{m-1} \sum_{\begin{smallmatrix} i_1, \ldots, i_m < 0,\\ i_1 + \ldots + i_m=b\end{smallmatrix}} P'(i_1, \ldots, i_m) & b < 0
\end{cases}
\end{equation}
is given by a polynomial $Q' \in \mathbb{Q}[b]$.

Then, if the original polynomial $P$ is of pure degree $e$, we can parameterize the odd numbers $j_\ell$ in \eqref{eqn:oddsumlemma} as $j_\ell = 2i_\ell + 1$. Using moreover that
\[
P(2i_1+1, \ldots 2 i_m+1) = 2^e P(i_1 + \frac{1}{2}, \ldots, i_m + \frac{1}{2})\,,
\]
we can obtain the sum \eqref{eqn:oddsumlemma} from \eqref{eqn:ehrhartreciprocity} by choosing $P'(x_1, \ldots, x_m) = 2^e P(x_1+1/2, \ldots, x_m+1/2)$ and making the substitution b = (a-m)/2. This shows that $Q$ is indeed a polynomial.

Finally, going through the substitutions above one checks that for negative $a$ of the same parity as $m$, the polynomial $Q$ is given by
\begin{equation} \label{eqn:oddsumlemmaneg}
Q(a)=(-1)^{m-1} \sum_{\begin{smallmatrix} j_1, \ldots, j_m < 0 \text{ odd},\\ j_1 + \ldots + j_m=a\end{smallmatrix}} P(j_1, \ldots, j_m)\,.
\end{equation}
For $m$ and $P$ odd, it is then immediate that $Q(-a)=-Q(a)$, so that indeed $Q$ is divisible by $a$.
% still for $n=0$, assume that $m$ is odd and $P$ has only terms of odd degree. 
% For $m=2$ we prove the statement for monomials $P=j_1^{e_1} j_2^{e_2}$, where for even $a$ we observe that \eqref{eqn:oddsumlemma} is given by
% \begin{align*}
%     \sum_{\begin{smallmatrix} j_1, j_2 >0, j_1 + j_2=a\end{smallmatrix}} P(j_1,j_2) - \sum_{\begin{smallmatrix} j_1', j_2' >0, j_1' + j_2'=a/2\end{smallmatrix}} P(2j_1',2j_2')= Q'(a) - 2^{e_1+e_2} Q'(a/2)\,,
% \end{align*}
% which is clearly a polynomial in $a \geq 0$.
% A similar strategy works for $m=3$ and $P=j_1^{e_1} j_2^{e_2} j_3^{e_3}$. Here, given $a$ odd, the terms that need to be subtracted from $Q'(a)$ are of the form
% \begin{equation}
%     \sum_{\begin{smallmatrix} j_1, j_2,j_3 >0,\\ j_1, j_2 \text{ even}\\j_1 + j_2+j_3=a\end{smallmatrix}} P(j_1,j_2,j_3) = \sum_{\begin{smallmatrix}j_3, u>0 \text{ odd}\\j_3+u=a-1\end{smallmatrix}} \underbrace{\sum_{\begin{smallmatrix}j_1', j_2'>0,\\j_1' + j_2' = (u+1)/2\end{smallmatrix}} P(2j_1', 2j_2', j_3)}_{\text{polynomial in $j_3,u$ by case $m=2$}}\,,
% \end{equation}
% where we made the substitution $j_1 + j_2 = 2j_1' + 2j_2' = u+1$,
% and thus again polynomial in $a$. \jcomment{continue here ... this is really annoying stuff}
%The case of arbitrary $m \geq 4$, while not needed below, follows by induction on $m$ together with an inclusion-exclusion argument on the slots $j_1, \ldots, j_m$ assumed to be even numbers.
\end{proof}

% \begin{lemma}\label{lem:polynomialodd}
% Let $n\in \NN^*$, and let $a$ be a positive even integer. Let $P\in \QQ[x_1,\ldots,x_{n+1}]$ and $Q\in \QQ[x_1,\ldots,x_{n}]$ be polynomials satisfying the following identity
% $$
% Q(x_1,\ldots, x_{n-1}, a')=\sum_{\begin{smallmatrix} j,\ell \text{ odd},\\ \ell,j>0, j+\ell=a'\end{smallmatrix}} P(x_1,\ldots, x_{n-1},j,\ell)
% $$
% for infinitely many values of $a'$ even and $a'\geq a$, then this identity holds for $a'=a$.
% \end{lemma}

% \begin{proof}
% The general lemma reduces to $P$ monic monomials. We write $P=j^{n_j}\ell^{n_\ell} \widetilde{P}$ where $\widetilde{P}$ is a monomial in the variables $x_1,\ldots,x_{n-1}$. Then, there exists a polynomial $B\in \QQ[x]$ such that: 
% $$
% B(x)=\sum_{ \ell,j>0, j+\ell=x} j^{n_j}\ell^{n_\ell},
% $$
% for all $x>0$. Then by assumption, there exists an infinite number of even values $a'\geq a$, such that
% $$
% Q(x_1,\ldots,x_{n-1},a')= \widetilde{P}(B(a')-2^{n_j+n_\ell} B(a'/2)).
% $$
% Therefore, we have:
% $$
% Q(x_1,\ldots,x_{n-1},a)= \widetilde{P}(B(a)-2^{n_j+n_\ell} B(a/2))=\sum_{\begin{smallmatrix} j,\ell \text{ odd},\\ \ell,j>0, j+\ell=a'\end{smallmatrix}} P(x_1,\ldots,j,\ell).
% $$
% \end{proof}

\end{appendix}

\bibliographystyle{halpha}
\bibliography{biblio}

\end{document}